\DeclarePairedDelimiter{\abs}{\lvert}{\rvert}
\DeclarePairedDelimiter{\norm}{\lVert}{\rVert}
\DeclareMathOperator{\sign}{sign}
\numberwithin{equation}{section}
\theoremstyle{plain}
\newtheorem{theorem}{Theorem}
\newtheorem{lemma}[theorem]{Lemma}
\newtheorem{proposition}[theorem]{Proposition}
\newtheorem{corollary}[theorem]{Corollary}
\newtheorem{definition}[theorem]{Definition}
\newtheorem{remark}[theorem]{Remark}
\newtheorem*{notes*}{Notes}
\newtheorem*{theorem*}{Theorem}
\newtheorem*{definition*}{Definition}
\newtheorem*{notation*}{Notation}
\newtheorem*{remark*}{Remark}
\newtheorem{concluding.remark}{Remark}[section]
\newtheorem*{example*}{Example}
\newtheorem*{notation.polynomial}{The polynomial $\mathbf{\mathrm{p}}_{\pi}$}
\newtheorem*{reciprocity.law}{The reciprocity theorem for generalized Gauss sums}
\newtheorem*{mobius.inversion.formula}{M\"{o}bius inversion formula}
\newtheorem*{mobius.inversion.function}{M\"{o}bius function}
\newcommand{\covt}[2]{ \mathrm{Cov} \left[ \mathrm{Tr} (  {#1} ), \mathrm{Tr} (  {#2} ) \right] }
\newcommand{\covtn}[2]{ \mathrm{Cov} [ \mathrm{Tr} (  {#1} ), \mathrm{Tr} (  {#2} ) ] }
\newcommand{\cov}[2]{ \mathrm{cov} \left[  {#1}, {#2} \right] } 
\newcommand{\exptr}[1]{ \mathbb{E} \left[ {#1}\right] } 
\newcommand{\exptrn}[1]{ \mathbb{E} [ {#1} ] } 
\newcommand{\Tr}[1]{\mathrm{Tr} \left( {#1} \right) }
\newcommand{\Trn}[1]{\mathrm{Tr} ( {#1} ) }
\newcommand{\tr}[1]{\mathrm{tr} \left( {#1} \right) } 
\newcommand{\trn}[1]{\mathrm{tr} ( {#1} ) } 
\newcommand{\kernel}[1]{ \mathrm{ker} \left({#1} \right) }
\begin{document}

\title[Second order freeness]{Fluctuation moments induced by conjugation with asymptotically liberating random matrix ensembles}

\author[J. Vazquez-Becerra]{Josue Vazquez-Becerra}
\thanks{Supported by Mexican National Council of Science and Technology (CONACYT) ref. 579659/410386}
\address{Department of Mathematics and Statistics, Queen's University, Jeffery	Hall, Kingston, Ontario, K7L 3N6, Canada}
\email{13jdvb1@queensu.ca}
\keywords{Free probability, Random matrices, Fluctuation moments, Discrete Fourier Transform matrix}

\begin{abstract}
	G. Anderson and B. Farrel showed that conjugation of constant matrices by asymptotically liberating random unitary matrices give rise to asymptotic free independence.
	Independent Haar-unitary random matrices and independent Haar-orthogonal random matrices are examples of asymptotically liberating ensembles.
	In this paper, we investigate the fluctuation moments, and higher order moments, induced on constant matrices by conjugation with asymptotically liberating ensembles.
	In particular, we determine fluctuation moments induced by ensembles related to the Discrete Fourier Transform matrix. 
\end{abstract}

\maketitle

\section{Introduction}

\subsection{Background}\hspace*{\fill} \newline

Random matrices are matrix-valued random variables that were first investigated in mathematical statistics \cite{1928} and then in nuclear physics \cite{wigner1955characteristic}. 
Over the years, its study has evolved into a theory with applications to pure and applied sciences such as numerical analysis \cite{edelman2005random}, analytic number theory \cite{keating1993riemann}, and wireless communications \cite{tulino2004random}, to name some. 

One of the main topics in Random Matrix Theory is the study of limiting, or asymptotic, properties of random matrix ensembles. 
The term \textit{random matrix ensemble} 
is used in the literature to refer to a sequence of random matrices $\{X_{N}\}_{N=1}^{\infty}$, or a sequence of families of random matrices $\{\{X_{N,i}\}_{i \in I}\}_{N=1}^{\infty}$, where the considered random matrices increase in size with respect to $N$,   
their \textit{limiting properties} are then those arising from letting $N$ go to infinity. 
Joint eigenvalue distributions, eigenvalues spacing, concentration inequalities, large deviation principles, maximal eigenvalues, and central limit theorems are some examples of limiting properties, 
for an introduction on these subjects one can consult \cite{anderson2010introduction}.

Now, introduced by D. Voiculescu  in his research on von Neumann algebras in  \cite{voiculescu1132symmetries}, free probability theory has played a key role in the study of random matrices when multiple ensembles need to be considered. 
A main notion from free probability is that of \textit{asymptotic free independence}.  
%
\begin{definition}\label{def.asymptotic.freenes.alternative}
	Let $I$ be a non-empty set. 
	Suppose $\{X_{N,i}\}_{N=1}^{\infty}$ is a random matrix ensemble  for each $i\in I$ where each $X_{N,i}$ is a $N$-by-$N$ random matrix. 
	We say that  $\{X_{N,i}\}_{N=1}^{\infty}$ with $i\in I$ are \textbf{asymptotically freely independent} if the following two conditions are satisfied:
	\begin{enumerate}[({AF.}1)]
		\item\label{def.asymp.free.1.alternative} for each index $i \in I $ and every integer $m\geq1$ the limit  $$\displaystyle 
		\lim_{N\rightarrow \infty} \mathbb{E}\left[\tr{X_{N,i}^m}\right], $$ 
		where $\tr{\cdot}$ denotes the normalized trace $\frac{1}{N}\Tr{\cdot}$, exists and
		\item\label{def.asymp.free.2.alternative} for all integers $m\geq 1$, all indexes $i_1, i_2, \ldots, i_m \in I$ 
		satisfying $i_{1} \neq i_{2}, i_{2} \neq i_{3}, \ldots, i_{m-2} \neq i_{m-1}, i_{m-1} \neq i_{m}$, and \textcolor{black}{$i_{m} \neq i_{1}$} and all polynomials $\mathrm{p}_{1}, \mathrm{p}_{2}, \ldots ,\mathrm{p}_{m}$ in the algebra  
		$\mathbb{C}[\mathrm{x}]$, we have
		\[
		\lim_{N\rightarrow \infty} 
		\exptr{\tr{Y_{N,1} Y_{N,2} \cdots Y_{N,m}}} = 0 
		\]
		where $Y_{N,k} =  \mathrm{p}_{k} \left( X_{N,i_k} \right) -  
		\exptr{\tr{\mathrm{p}_{k} \left( X_{N,i_k} \right)}}I_{N}$. 
	\end{enumerate}
\end{definition}

The first connection between free probability and random matrices was established by D. Voiculescu when he shows in \cite{voiculescu1991limit} that independent Gaussian Unitary Ensembles converge to free semi-circular random variables, a result which generalizes Wigner's semicircular law and entails the asymptotic free independence of  independent Gaussian Unitary Ensembles.  
The list of random matrix ensembles exhibiting asymptotic free independence has been extended since then and it now includes: 
independent Wishart ensembles, independent Gaussian Orthogonal ensembles, independent Haar-unitary distributed ensembles, independent Haar-orthogonal distributed ensembles, among others. 
The monograph \cite{MR1217253} and the book \cite{nica2006lectures} are standard introductions to free probability and the recent monograph \cite{mingo-spiecher} is an excellent source presenting multiples directions in which the relation between free probability and random matrices has been extended.

Another result due to D. Voiculescu in \cite{voiculescu1991limit}, and subsequently generalized by other authors, states that conjugation by independent Haar-distributed random unitary matrices gives rise to asymptotic free independence.  
More concretely, assume $D_{N,i}$ is a self-adjoint $N$-by-$N$ deterministic matrix for each index $i \in I$ and each integer $N\geq 1$ and suppose that 
\begin{align}\label{assump.constnt.matrices}
\sup_{N \in \mathbb{N}}  \Vert D_{N,i} \Vert <  \infty 
\quad \text{ and } \quad
\lim_{N \rightarrow \infty} \mathrm{tr}(D_{N,i}^{m})  
\quad \text{ exists } 
\end{align}	 
for all $i \in I $ and $m\geq 1$; 
the random matrix ensembles $\{ D^{}_{N,i} \}_{N=1}^{\infty}$ with $i \in I$ might or might not be asymptotically freely independent, however, if $\{U_{N,i}\}_{i \in I}$ is a family of independent $N$-by-$N$ Haar-unitary distributed random matrices for each $N\geq 1$, then $\{ U^{}_{N,i} D^{}_{N,i} U^{*}_{N,i} \}_{N=1}^{\infty}$ with $i \in I$ are asymptotically freely independent. 
The same conclusion holds if each $U_{N,i}$ is Haar-orthogonal distributed, 
see \cite{mingo2013real}. 

Aiming to enclose all of those unitary random matrix ensembles that give rise to asymptotic free independence when used for conjugation, B. Farrell and G. Anderson introduced in \cite{anderson2014asymptotically} the notion of \textit{asymptotically liberating random matrix ensembles}. 
%
\begin{definition}\label{def.asymp.liberatin}
	Suppose $ U_{N,i} $ is an $N$-by-$N$ unitary random matrix for each index $i \in I$ and each integer $N\geq 1$. 
	The unitary random matrix ensemble $\left\{ \left\{ U_{N,i}  \right\}_{i\in I}\right\}_{N=1}^{\infty}$ is \textbf{asymptotically liberating} if
	for all indexes $i_{1},i_{2},\ldots,i_{m} \in I$ with $i_1 \neq i_2, i_2 \neq i_3, \ldots, i_{m-1}\neq i_m$, and $i_m \neq i_1$
	there exists a constant $C > 0$ depending only on the indexes $i_{1},i_{2},\ldots,i_{m}$ such that 
	\begin{equation*}
	\Big\lvert 
	\exptr{\Tr{U^{}_{N,i_1} A^{}_{N,1} U^{*}_{N,i_1} 
			U^{}_{N,i_2} A^{}_{N,2} U^{*}_{N,i_2} \cdots 
			U^{}_{N,i_m} A^{}_{N,m} U^{*}_{N,i_m} }}
	\Big\rvert  
	\leq 
	C \norm*{A_{N,1}} \norm*{A_{N,2}} \cdots \norm*{A_{N,m}}
	\end{equation*}
	for all integers $N \geq 1$ and all matrices $A_{N,1}, A_{N,2}, \ldots, A_{N,m} \in \text{Mat}_N (\mathbb{C})$ each of trace zero.  
\end{definition}

It follows immediately from the definition above that asymptotically liberating ensembles gives rise to asymptotic free independence when used for conjugation. 
Indeed, suppose $\{ \{ U_{N,i}  \}_{i\in I}\}_{N=1}^{\infty}$ is an asymptotically liberating ensemble and assume $\{ D^{}_{N,i} \}_{N=1}^{\infty}$ with $i \in I$ satisfy (\ref{assump.constnt.matrices}). 
Letting $X_{N,i} = U^{}_{N,i} D^{}_{N,i} U^{*}_{N,i}$, we have (\ref{assump.constnt.matrices}) implies  \textit{({AF.}\ref{def.asymp.free.1.alternative})} from Definition \ref{def.asymptotic.freenes.alternative}; 
moreover, if each  $Y_{N,k}$ is as in \textit{({AF.}\ref{def.asymp.free.2.alternative})}  from Definition \ref{def.asymptotic.freenes.alternative}, then 
\[
Y_{N,1} Y_{N,2} \cdots Y_{N,m} 
= 
(U^{}_{N,i_{1}} A^{}_{N,1} U^{*}_{N,i_{1}} ) ( U^{}_{N,i_{2}} A^{}_{N,2} U^{*}_{N,i_{2}} ) \cdots (U^{}_{N,i_{m}} A^{}_{N,m} U^{*}_{N,i_{m}} ) 
\]
where $A_{N,k}$ denotes the matrix  of trace zero $\mathrm{p}_{k} ( D^{}_{N,i_k}) -  \mathrm{tr} ( \mathrm{p}_{k} ( D^{}_{N,i_k} )) I_{N} $,
but (\ref{assump.constnt.matrices}) also implies that $\sup_{N} \norm{A^{}_{N,k}} < \infty$, and hence \textit{({AF.}\ref{def.asymp.free.2.alternative})} holds. 
As it was intended, independent Haar-unitary random matrix ensembles and independent Haar-orthogonal random matrix ensembles are among those unitary random matrix ensembles shown to be asymptotically liberating, see Theorem 2.8 in \cite{anderson2014asymptotically} or Lemma \ref{lemma.bounded.cumulant.property} below. 

A key feature of asymptotic free independence is that it provides us with universal rules to compute limiting mixed moments out of individual ones.
A \textit{limiting mixed moment} of the ensembles $\{X_{N,i}\}_{N=1}^{\infty}$ with $i\in I$  is a limit of the form 
\begin{align}\label{limiting.moment}
\lim_{N\rightarrow \infty} \exptr{\tr{  X^{}_{N,i_1} 
		X^{}_{N,i_2} \cdots X^{}_{N,i_m}  } }
\end{align}
where at least two of the indexes $i_1, i_2,\ldots,i_m \in I$ are distinct and none of them depend on $N$. 
Thus, if $\{X_{N,i}\}_{N=1}^{\infty}$ with $i\in I$ are asymptotically free independent and  $i_{1}, i_{2} \in I$ are distinct, one can show that
\begin{align*}
\lim_{N\rightarrow \infty} \exptr{\tr{  X^{1}_{N,i_{1}} X^{4}_{N,i_{2}} X^{7}_{N,i_{1}} X^{2}_{N,i_{2}}  } }
= 
\alpha^{(i_{1})}_{8}\alpha^{(i_{2})}_{4}\alpha^{(i_{2})}_{2}
+\alpha^{(i_{2})}_{6}\alpha^{(i_{1})}_{1}\alpha^{(i_{1})}_{7}
-\alpha^{(i_{1})}_{1}\alpha^{(i_{2})}_{4} \alpha^{(i_{1})}_{7} \alpha^{(i_{2})}_{2}
\end{align*}
where $ \alpha_{m}^{(i)}$ denotes $\lim_{N\rightarrow \infty} \mathbb{E}[\mathrm{tr}(X_{N,i}^m) ]$ and is called the \textit{m-th limiting individual moment} of $\{X_{N,i}\}_{N=1}^{\infty}$. 
The relation above, and any other derived from asymptotic free independence to compute mixed moments, is called \textit{universal} since it does not depend on any particular choice of $i_{1}$ and $i_{2}$ and it only requires $\{X_{N,i_{1}}\}_{N=1}^{\infty}$ and $\{X_{N,i_{2}}\}_{N=1}^{\infty}$ to be asymptotically freely independent.  

At this point, one might wonder if there are universal rules for computing limiting mixed moments of higher order out individual ones. 
A \textit{limiting moment of $n$-th order} of the ensembles $\{X_{N,i}\}_{N=1}^{\infty}$ with $i \in I$ is defined to be a limit of the form 
\begin{align}\label{limiting.higher.moment}
\lim_{N\rightarrow \infty}
N^{n-2} \mathfrak{c}_{n} [\Trn{\widetilde{X}_{N,1}},\Trn{\widetilde{X}_{N,2}},\ldots,\Trn{\widetilde{X}_{N,n}} ]
\end{align}
where $\mathfrak{c}_{n}[\cdot, \ldots, \cdot]$ denotes the $n$-th classical cumulant 
and each $\widetilde{X}_{N,k}$ is of the form 
\[
\widetilde{X}_{N,k}=X_{N,i^{(k)}_{1}}X_{N,i^{(k)}_{2}} \cdots X_{N,i^{(k)}_{m_{k}}}
\] 
for some integer $m_{k}\geq 1$ and some indexes $i^{(k)}_{1},i^{(k)}_{2},\ldots,i^{(k)}_{m_{k}} \in I$ not depending on $N$. 
The choice of the normalization factor $N^{n-2}$ appearing in \eqref{limiting.higher.moment} is due to what has been observed for the behavior of \eqref{limiting.higher.moment} when each $X_{N,i}$ is a Gaussian Unitary Ensemble. 
Since the limiting moment (\ref{limiting.higher.moment}) is just a generalization of (\ref{limiting.moment}), we call it \textit{mixed} if at least two of the indexes $i^{(1)}_{1},\ldots,i^{(1)}_{m_{1}},i^{(2)}_{1},\ldots,$ $i^{(2)}_{m_{2}},\ldots,i^{(n)}_{1},\ldots, i^{(n)}_{m_{n}}$ are distinct, and \textit{individual}, otherwise.

The most studied moments of higher order are moments of second order, also known as \textit{fluctuation moments}. 
A fluctuation moment of the ensembles $\{X_{N,i}\}_{N=1}^{\infty}$ with $i \in I$ is then a limit of the form
\begin{align}\label{fluctuation.moment}
\lim_{N \rightarrow \infty} \covtn{X_{N,i_{1}} X_{N,i_{2}} \cdots X_{N,i_{m_{1}}}}	{X_{N,i_{m_{1}+1}} X_{N,i_{m_{1}+2}} \cdots X_{N,i_{m_{1}+m_{2}}}}
\end{align}
for some integers $m_{1},m_{2}\geq 1$ and indexes $i_{1},i_{2},\ldots,i_{m_{1}},i_{m_{1}+1},i_{m_{1}+2},\ldots,i_{m_{1}+m_{2}} \in I$.  
A common practice in free probability theory to determine  (\ref{limiting.higher.moment}), or (\ref{fluctuation.moment}), combinatorially is that of calculating limiting moments of products of cyclically alternating and centered random matrices, as in \textit{(AF.\ref{def.asymp.free.2.alternative})} from Definition \ref{def.asymptotic.freenes.alternative}.
For fluctuation moments, this means one must consider limits of the form
\[
\lim_{N\rightarrow \infty} 
\covt{Y_{N,1} Y_{N,2}   \cdots Y_{N,m_{1}}}	{Z_{N,1} Z_{N,2}  \cdots Z_{N,m_{2}}} 
\]   
where $ Y_{N,k}$ and $Z_{N,l}$ are given by 
\begin{align}\label{yz.centered.matrices}
Y_{N,k} =  \mathrm{p}_{k} \left( X_{N,i_k} \right) -  \exptr{\tr{\mathrm{p}_{k} \left( X_{N,i_k} \right)}}I_{N} 
\ \text{ and } \
Z_{N,l} =  \mathrm{q}_{l} \left( X_{N,j_l} \right) -  
\exptr{\tr{\mathrm{q}_{l}  \left( X_{N,j_l} \right)}}I_{N}
\end{align}
for all polynomials  $\mathrm{p}_{1}, \mathrm{p}_{2},\ldots, \mathrm{p}_{m_{1}}, \mathrm{q}_{1},\mathrm{q}_{2}, \ldots , \mathrm{q}_{m_{2}} \in \mathbb{C}[\mathrm{x}]$ and all indexes $i_1, i_2, \ldots,  i_{m_{_1}}, j_{1},j_{2}, \ldots,  \allowbreak j_{m_{_2}} \in I$ satisfying the condition  
\begin{align}\label{cyclic.alternating.indexes}
i_{1} \neq i_{2}, i_{2} \neq i_{3}, \ldots, i_{m_{_1}-1} \neq i_{m_{_1}},i_{m_{_1}} \neq i_{1} ,  
j_{1} \neq j_{2}, j_{2} \neq j_{3}, \ldots, j_{m_{_2}-1} \neq 
j_{m_{_2}},j_{m_{_2}} \neq j_{1} .
\end{align}

Analyzing the fluctuation moments of complex Gaussian and complex Wishart random matrix ensembles, J. Mingo and R. Speicher found a relation between individual and mixed moments of first and second order and introduced in \cite{mingo2006second} the notion of \textit{asymptotic free independence of second order}. 
%
\begin{definition}\label{def.so.asymptotic.freenes} 
	We say that the random matrix ensembles $\{X_{N,i}\}_{N=1}^{\infty}$ with $i\in I$ are	\textbf{asymptotically freely independent of second order} if they are asymptotically freely independent and the following three conditions are satisfied:
	\begin{enumerate}[({ASOF.}1)]
		\item\label{def.asymp.so.free.1} for each index $i \in I$ and all integers $m,n \geq 1$ the limit  $$\displaystyle 
		\lim_{N\rightarrow \infty} \covt{X_{N,i}^m}{X_{N,i}^n}$$ exists, 
		\item\label{def.asymp.so.free.2} for all integers $m_{1},m_{2} \geq 1$, all indexes $i_1, i_2, \ldots, i_{m_{_1}}, 
		j_{1},j_{2},\ldots,j_{m_{_2}} \in I$ satisfying (\ref{cyclic.alternating.indexes}), and all polynomials $\mathrm{p}_{1}, \mathrm{p}_{2},\ldots, \mathrm{p}_{m_{1}}, \mathrm{q}_{1},\mathrm{q}_{2}, \ldots , \mathrm{q}_{m_{2}}$ in the algebra  $\mathbb{C}[\mathrm{x}]$, if we take 
		\[
		Y_{N}=Y_{N,1} Y_{N,2} \cdots Y_{N,m_{1}}
		\quad \text{ and } \quad
		Z_{N} = Z_{N,1} Z_{N,2} \cdots Z_{N,m_{2}}
		\]
		with $Y_{N,k}$ and $Z_{N,l}$ given by (\ref{yz.centered.matrices}) for $1 \leq k \leq m_{1}$ and $1 \leq l \leq m_{2}$, we have
		\begin{align}\label{fluctuation.moments.sof}
		\lim_{N\rightarrow \infty} 
		\covt{ Y_{N} }{ Z_{N} }
		=
		\delta_{m_{1},m_{2}} \lim_{N\rightarrow \infty}	
		\sum^{m_{1}}_{l=1}\prod^{m_{2}}_{k=1}\exptr{\tr{Y_{N,k}Z_{N,l-k}}}
		\end{align}
		where $l-k$ is taken modulo $m_{2}$, and 
		\item\label{def.asymp.so.free.3} for ever integer $n\geq 3$, all polynomials $\mathrm{p}_{1}, \mathrm{p}_{2},\ldots, \mathrm{p}_{n}$ in the algebra of non-commutative polynomials $\mathbb{C} \left\langle \mathrm{x}_{i} \mid i \in I \right\rangle$,
		letting $Y_{N,k} = \mathrm{p}_{k} \left( \{X_{N,i}\}_{i\in I} \right)$, we have
		\[
		\lim_{N\rightarrow \infty}   \mathfrak{c}_{n} \left[ \Tr{Y_{N,1}}, 
		\Tr{Y_{N,2}},\ldots ,\Tr{Y_{N,n}}\right]  = 0
		\]
	\end{enumerate}
\end{definition}

Similar to asymptotic free independence, asymptotic free independence of second order provides us with universal rules, via the conditions  \textit{(ASOF.\ref{def.asymp.so.free.1})} and \textit{(ASOF.\ref{def.asymp.so.free.2})} above, to calculate limiting mixed fluctuation moments out of individual ones. 
Moreover, independent Gaussian Unitary Ensembles are asymptotically freely independent of second order and conjugation by independent Haar-unitary random matrix ensembles leads to asymptotic free independence of second order, see \cite{mingo2006second} and \cite{mingo2007second}, respectively. 

However, in contrast to moments of first order, fluctuation moments induced by Haar-unitary random matrix ensembles and those induced by Haar-orthogonal random matrix ensembles differ. 
Investigating fluctuation moments of independent Gaussian Orthogonal Ensembles, E. Redelmeier proved in \cite{redelmeier2014real} that if each  $\{X_{N,i}\}_{i \in I}$ forms a family of independent Gaussian Orthogonal Ensembles for every $N\geq 1$, then the ensembles $\{X_{N,i}\}_{N=1}^{\infty}$ with $i\in I$  satisfy \textit{(ASOF.\ref{def.asymp.so.free.1})} and \textit{(ASOF.\ref{def.asymp.so.free.3})} from Definition \ref{def.so.asymptotic.freenes} but \textit{(ASOF.\ref{def.asymp.so.free.2})} has to be replaced by the following:
\begin{enumerate}[\textit{({ASOF.}2')}]
	\item\label{def.asymp.so.free.2.real}\textit{ for all integers $m_{1},m_{2} \geq 1$, all indexes $i_1, i_2, \ldots, i_{m_{_1}}, 
	j_{1},j_{2},\ldots,j_{m_{_2}} \in I$ satisfying (\ref{cyclic.alternating.indexes}), and all polynomials $\mathrm{p}_{1}, \mathrm{p}_{2},\ldots, \mathrm{p}_{m_{1}}, \mathrm{q}_{1},\mathrm{q}_{2}, \ldots , \mathrm{q}_{m_{2}}$ in the algebra  $\mathbb{C}[\mathrm{x}]$, if we take 
	\[
	Y_{N}=Y_{N,1} Y_{N,2} \cdots Y_{N,m_{1}}
	\quad \text{ and } \quad
	Z_{N} = Z_{N,1} Z_{N,2} \cdots Z_{N,m_{2}}
	\]
	with $Y_{N,k}$ and $Z_{N,l}$ given by (\ref{yz.centered.matrices}) for $1 \leq k \leq m_{1}$ and $1 \leq l \leq m_{2}$, we then have
	\begin{align}\label{fluctuation.moments.rsof}
	\lim_{N\rightarrow \infty}
	\covt{Y_{N}}{Z_{N} }  
	= 
	\delta_{m_{1},m_{2}} 
	\lim_{N\rightarrow \infty}	
	\sum^{m_{1}}_{l=1} \left(
	\prod^{m_{2}}_{k=1}\exptrn{\trn{Y_{N,k}Z_{N,l-k}}} + 
	\prod^{m_{2}}_{k=1}\exptrn{\trn{Y^{}_{N,k}Z^{T}_{N,l+k}}} \right)
	\end{align}
	where $l-k$ and $l+k$ are taken modulo $m_{2}$.}
\end{enumerate}
Asymptotically freely independent ensembles satisfying \textit{(ASOF.\ref{def.asymp.so.free.1})}, \textit{(ASOF.\ref{def.asymp.so.free.2}')}, and \textit{(ASOF.\ref{def.asymp.so.free.3})}  are called \textit{asymptotically freely independent of second order in the real sense}. 
Generalizing the findings of E. Redelmeier in \cite{redelmeier2014real}, it was showed by J. Mingo and M. Popa in \cite{mingo2013real} that independent orthogonally-invariant ensembles are asymptotically freely independent of second order in the real sense, and therefore,  the fluctuation moments induced by Haar-orthogonal ensembles are not described by (\ref{fluctuation.moments.sof}) but (\ref{fluctuation.moments.rsof}) instead. 

\subsection{Objectives and main results}\hspace*{\fill} \newline 

The aim of this paper is investigate the behavior of the fluctuation moments, and higher order moments, resulting from conjugation by  asymptotically liberating ensembles.
Since independent Haar-unitary and independent Haar-orthogonal are both asymptotically liberating but the fluctuation moments each of them induces are distinct, we already know that the induced fluctuation moments depend on the specific liberating ensemble used for conjugation.
However, it might well be the case that the relations in (\ref{fluctuation.moments.sof}) and (\ref{fluctuation.moments.rsof}) cover all possible behaviors for fluctuation moments induced by liberating ensembles, our first result shows that this is actually not the case, adding even more evidence that fluctuation moments are more intricate than its first order counterpart. 

It is illustrative and good for comparison to restate what the relations in (\ref{fluctuation.moments.sof}) and in (\ref{fluctuation.moments.rsof}) yield when Haar-unitary ensembles and Haar-orthogonal ensembles are used of conjugation. 
So, let us assume $X_{N,1} = U^{}_{N,1}  D^{}_{N,1}  U^{*}_{N,1}$ and $X_{N,2} = U^{}_{N,2}  D^{}_{N,2}  U^{*}_{N,2}$ for each integer $N \geq 1$ where each sequence $\{ D^{}_{N,i} \}_{N=1}^{\infty}$ satisfies (\ref{assump.constnt.matrices}) and $\{  U_{N,1} , U_{N,2} \}_{N=1}^{\infty}$ is an asymptotically liberating ensemble. 
Note that if the random matrices $Y_{N}$ and $Z_{N}$ are as in \textit{(ASOF.\ref{def.asymp.so.free.2})} from Definition \ref{def.so.asymptotic.freenes}, then we can write 
\[
Y_{N} = \big( U^{}_{N,i_{1}}A^{}_{N,1}U^{*}_{N,i_{1}} \big)
\big( U^{}_{N,i_{2}}A^{}_{N,2}U^{*}_{N,i_{2}} \big) \cdots
\big( U^{}_{N,i_{2m_{1}}}A^{}_{N,2m_{1}}U^{*}_{N,i_{2m_{1}}} \big)
\]
and
\[
Z_{N} =\big( U^{}_{N,j_{1}}B^{}_{N,1}U^{*}_{N,j_{1}} \big)
\big( U^{}_{N,j_{2}}B^{}_{N,2}U^{*}_{N,j_{2}} \big) \cdots
\big( U^{}_{N,j_{2m_{1}}}B^{}_{N,2m_{1}}U^{*}_{N,j_{2m_{1}}} \big)
\]
where $A^{}_{N,k}$ and $B^{}_{N,l}$ are deterministic matrices of trace zero given by 
\begin{align}\label{ab.centered.matrices.alternative}
A_{N,k} =  \mathrm{p}_{k} \left( D_{N,i_k} \right) -  \tr{\mathrm{p}_{k} \left( D_{N,i_k} \right)}I_{N} 
\quad \text{ and } \quad
B_{N,l} =  \mathrm{q}_{l} \left( D_{N,j_l} \right) -  
\tr{\mathrm{q}_{l} \left( D_{N,j_l} \right)}I_{N}
\end{align}
for $1 \leq k \leq 2m_{1}$ and $1 \leq l \leq 2m_{2}$. 
For simplicity, and without loss of generality, let us assume $i_{1} = j_{1}$. 
Now, if $U_{N,1}$ and $U_{N,2}$ are independent Haar-unitary ensembles, it follows from \textit{(AF.\ref{def.asymp.free.2.alternative})} in Definition \ref{def.asymptotic.freenes.alternative} and the relation in  (\ref{fluctuation.moments.sof}) that the covariance $\covt{ Y_{N} }{ Z_{N} } $ converges to  
\begin{align}\label{fluct.moments.second.order.free}
\lim_{N \rightarrow \infty} \delta_{m_{1},m_{2}} 
\sum^{m_{1}}_{l=1}\prod^{2m_{2}}_{k=1} \tr{A_{N,k} B_{N,2l-k}} 
\end{align}
as $N$ goes to infinity.  
On the other hand, if $U_{N,1}$ and $U_{N,2}$ are independent Haar-orthogonal ensembles, then \textit{(AF.\ref{def.asymp.free.2.alternative})} and  (\ref{fluctuation.moments.rsof}) imply that  $\covt{ Y_{N} }{ Z_{N} } $ converges to 
\begin{align}\label{fluct.moments.real.second.order.free}
\lim_{N \rightarrow \infty} \delta_{m_{1},m_{2}} 
\sum^{m_{1}}_{l=1} \left(
\prod^{2m_{2}}_{k=1} \tr{A_{N,k} B_{N,2l-k}} +
\prod^{2m_{2}}_{k=1} \tr{A^{}_{N,k} B^{T}_{N,2l+k}}
\right) 
\end{align}  
as $N$ goes to infinity. 
Note that (\ref{assump.constnt.matrices}) alone guarantees the existence of each of the limits above if each matrix $D_{N,i}$ is self-adjoint, regardless of what $U_{N,1}$ and $U_{N,2}$ are.  

Another ensemble shown to be asymptotically liberating, see Corollary 3.2 in \cite{anderson2014asymptotically}, and a main focus in this paper, is the unitary random matrix ensemble  $\{W_{N},  H_{N}W_{N} /\sqrt{N}, X_{N}H_{N}W_{N}/\sqrt{N} \}$ where $W_{N}$ is a random $N$-by-$N$ signed permutation matrix, $X_{N}$ is a random  $N$-by-$N$ signature matrix independent from $W_{N}$, and $H_{N}$ is the $N$-by-$N$ Discrete Fourier Transform matrix. 
Our first result shows that if we take pairs of distinct unitary matrices  $U_{N,1}$ and $U_{N,2}$ from  $\{W_{N},  H_{N}W_{N} /\sqrt{N}, X_{N}H_{N}W_{N}/\sqrt{N} \}$ and use them for conjugation, then the resulting fluctuation moments vary with each pair and differ from those in \eqref{fluct.moments.second.order.free} and in \eqref{fluct.moments.real.second.order.free}. 
%
%
%
%

\begin{theorem}\label{thm.second.order.asymptotics}
	Let $D_{N,1}$ and $D_{N,2}$ be $N$-by-$N$ self-adjoint matrices for each integer $N\geq 1$ so that each $\{ D^{}_{N,i} \}_{N=1}^{\infty}$ satisfies (\ref{assump.constnt.matrices}). 
	Suppose $X_{N,1} =  U^{}_{N,1}  D^{}_{N,2}  U^{*}_{N,1} $ and $X_{N,2} = U^{}_{N,2}  D^{}_{N,2}  U^{*}_{N,2} $ where $U_{N,1}$ and $U_{N,2}$ are distinct matrices from  $\{W_{N},  H_{N}W_{N} /\sqrt{N}, X_{N}H_{N}W_{N}/\sqrt{N} \}$.  
	If $Y_{N}$ and $Z_{N}$ are given by $Y_{N}=Y_{N,1} Y_{N,2} \cdots Y_{N,2m_{1}}$ and $Z_{N} = Z_{N,1} Z_{N,2} \cdots Z_{N,2m_{2}}$ where $Y_{N,k}$ and $Z_{N,l}$ are defined as in \eqref{yz.centered.matrices} for some polynomials $\mathrm{p}_{1}, \mathrm{p}_{2},\ldots, \mathrm{p}_{2m_{1}}, \mathrm{q}_{1},   \mathrm{q}_{2}, \ldots , \mathrm{q}_{2m_{2}}\in \mathbb{C}[\mathrm{x}]$ and some indexes $i_1, i_2, \ldots, i_{2m_{_1}}, j_{1},j_{2},\ldots,j_{2m_{_2}} \in \{1,2\}$ satisfying (\ref{cyclic.alternating.indexes}) and $i_{1}=j_{1}$, then the following holds:
	\begin{enumerate}
		\item\label{thm.second.order.asymptotics.1} $U_{N,1} = W_{N} $ and $U_{N,2}= H_{N}W_{N}/\sqrt{N}$ implies
		\begin{align*}
		\covt{ Y_{N} }{ Z_{N} } 
		=& 
		\delta_{m_{1},m_{2}} \sum_{l=1}^{m_{1}}	 \left( 
		\prod_{k=1}^{2m_{1}} \tr{A_{N,k} B_{N,2l-k}} 
		+ 		\prod_{k=1}^{2m_{1}} \tr{A^{}_{N,k} B_{N,2l+k-1}^{T}}	
		\right) 
		\\ &	+
		O \left( N^{-\frac{1}{2	}}\right)
		\end{align*}	
		\item\label{thm.second.order.asymptotics.2}  $U_{N,1} = W_{N} $ and $U_{N,2}= X_{N}H_{N}W_{N}/\sqrt{N}$ implies 
		\begin{align*}
		\covt{ Y_{N} }{ Z_{N} }  
		=& 
		\delta_{m_{1},m_{2}} \sum_{l=1}^{m_{1}} \left( 
		\prod_{k=1}^{2m_{1}} \tr{A_{N,k} B_{N,2l-k}} 
		+ 		\prod_{k=1}^{2m_{1}} \tr{A_{N,k} \circ B_{N,2l+k-1}}	
		\right)
		\\ &	+
		O \left( N^{-\frac{1}{2	}}\right)
		\end{align*}
		\item\label{thm.second.order.asymptotics.3}  $U_{N,1} = H_{N}W_{N}/\sqrt{N} $ and $U_{N,2}= X_{N}H_{N}W_{N}/\sqrt{N}$ implies 
		\begin{align*}
		\covt{ Y_{N} }{ Z_{N} }  
		=& 
		\sum_{l_{1}=1}^{2m_{1}} \sum_{l_{2}=1}^{2m_{2}} 
		\prod_{k_{1}=1}^{m_{1}} 
		\tr{A_{N,l_{1}+k_{1}-1} A_{N,l_{1}-k_{1}}}
		\cdot
		\prod_{k_{2}=1}^{m_{2}}		
		\tr{B_{N,l_{2}+k_{2}-1} B_{N,l_{2}-k_{2}}}
		\\	&		
		+ \delta_{m_{1},m_{2}} \sum_{l=1}^{2m_{1}} 
		\left( \prod_{k=1}^{2m_{1}} \tr{A_{N,k}B_{N,l-k}} \right)	
		+
		O \left( N^{-\frac{1}{2	}}\right) 
		\end{align*}
	\end{enumerate}
	with $A_{N,k}$ and $B_{N,l}$ defined as in \eqref{ab.centered.matrices.alternative}, $2l-k$, $2l+k-1$, $l_{1}+k_{1}-1$, $l_{1}-k_{1}$, and $l-k$  interpreted modulo $2m_{1}$, and  $l_{2}+k_{2}-1$ and $l_{2}+k_{2}$ interpreted module $2m_{2}$.
\end{theorem}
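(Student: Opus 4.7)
The plan is to compute the covariance directly by exploiting the explicit algebraic structure of each unitary ensemble, performing a two-step averaging: first over the independent random signs sitting inside $W_{N}$ (and $X_{N}$ in cases 2 and 3), and then over the uniform random permutation that completes $W_{N}$, with the remaining DFT phase sums evaluated at the end.

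I would begin by writing $W_{N} = \Lambda_{N} P_{N}$ with $\Lambda_{N}$ a diagonal matrix of i.i.d.\ uniform $\pm 1$ entries and $P_{N}$ an independent uniform random permutation matrix, and, for cases 2 and 3, $X_{N} = \mathrm{diag}(\xi_{1},\ldots,\xi_{N})$ with i.i.d.\ Rademacher entries. Expanding each factor $Y_{N,k}$ and $Z_{N,l}$ entrywise, both traces $\mathrm{Tr}(Y_{N})$ and $\mathrm{Tr}(Z_{N})$ become multiple sums over tuples of indices, each summand being a product of entries of $A_{N,k}$ or $B_{N,l}$, phases from $H_{N}$, signs from $\Lambda_{N}$ and $X_{N}$, and Kronecker deltas coming from $P_{N}$. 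Taking expectation over the independent signs collapses these sums to index configurations in which each sign appears an even number of times, yielding pairings. Because each $A_{N,k}$ and $B_{N,l}$ has trace zero, any pairing that would force a factor to collapse onto its own diagonal trace vanishes, which considerably prunes the surviving terms.

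Once the sign-averaging is done, the remaining pairings organize into bipartite matchings between $Y$-factors and $Z$-factors, and case-by-case analysis produces the three claimed formulas. In case (1), the two admissible families of matchings are the cyclic ones, giving $\tr{A_{N,k}B_{N,2l-k}}$, and the cyclically reversed ones, which pick up a matrix transpose because $W_{N}$ has real entries; this produces the $B_{N,2l+k-1}^{T}$ contribution. In case (2) the extra $X_{N}$ acts as an independent sign on each column of the relevant unitary, and its sign-averaging forces a diagonal-matching of indices across one $A$-$B$ pair, converting the transposed-trace contribution of case (1) into a Hadamard-product contribution $\tr{A_{N,k}\circ B_{N,2l+k-1}}$. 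In case (3) both factors carry a DFT, so the phase sums across the two sides admit an additional family of matchings in which $A$-factors are paired only among themselves (and $B$-factors only among themselves) through the common permutation structure, producing the first double sum in the claim, alongside the standard cyclic pairing that no longer needs $m_{1}=m_{2}$.

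The main obstacle is the combinatorial accounting: one must identify exactly which pairings contribute at leading order and bound the rest by $O(N^{-1/2})$. The order in $N$ of each pairing is determined by counting how many free DFT phase sums $\sum_{j}\omega^{\,\cdot}$ survive after all pairing and permutation constraints are imposed, since each such sum contributes $N$ when its exponent vanishes identically and is $O(1)$ otherwise. For the error bound I would invoke the bounded-cumulant property stated in Lemma~\ref{lemma.bounded.cumulant.property} together with the uniform norm bound implied by \eqref{assump.constnt.matrices}. Case (3) is the most delicate, because the interaction of two DFTs sharing a common permutation generates the extra non-$\delta_{m_{1},m_{2}}$ auto-pairings and requires the finest phase-sum analysis to rule out spurious higher-order matchings.
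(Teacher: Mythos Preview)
Your overall strategy---expand the traces entrywise, average first over the Rademacher signs (forcing even-partition constraints), then over the permutation, and finally evaluate the DFT phase sums---is exactly the route the paper takes; see the derivation of \eqref{eqn.cov.YN.ZN.1} and \eqref{eqn.fluct.moments.c2.chi.chi.1}. However, two of your claimed technical steps fail as stated.

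First, your assertion that each surviving DFT phase sum ``contributes $N$ when its exponent vanishes identically and is $O(1)$ otherwise'' is incorrect, and this is the crux of the whole computation. After the sign and permutation averaging, the remaining sum over index tuples has the form $\sum_{j_{1},\ldots,j_{r}} \exp(2\pi i\, \mathrm{p}_{\pi}(j_{1},\ldots,j_{r})/N)$ where $\mathrm{p}_{\pi}$ is a polynomial of degree \emph{two}, not one, in the free indices (because the DFT entry $H(j_{1},j_{2})=\omega^{(j_{1}-1)(j_{2}-1)}$ is bilinear). When $\mathrm{p}_{\pi}\neq 0$ this sum is not $O(1)$ but only $O(N^{r-1/2})$, by a quadratic Gauss sum estimate that the paper obtains from the reciprocity theorem for generalized Gauss sums (Proposition~\ref{bound.gauss.sum.1} and Corollary~\ref{cor.bound.gauss.sum}). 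This $\sqrt{N}$ saving, rather than full cancellation, is precisely what produces the $O(N^{-1/2})$ error in the theorem; without it you cannot separate the main terms from the subleading ones.

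Second, identifying \emph{which} partitions give $\mathrm{p}_{\pi}=0$ is far more delicate than ``cyclic versus cyclically reversed matchings.'' The paper devotes Propositions~\ref{necessary.condt.special.partitions}--\ref{prop.minimal.special.partitions} to classifying all symmetric pairings $\pi$ with $\mathrm{p}_{\sigma^{-1}\circ\pi}=0$, obtaining six families, and then Proposition~\ref{prop.fluc} and Lemma~\ref{lemma.fluc.mobius} determine for each of the three ensembles which of these survive the M\"obius inversion $\sum_{\pi\leq\theta}\mathfrak{c}_{2}[\pi]\mu(\pi,\theta)$. Your sketch skips this classification entirely. Finally, Lemma~\ref{lemma.bounded.cumulant.property} is the wrong tool for the error term: it yields only that the covariance is $O(1)$, not that the discrepancy from the explicit main term is $O(N^{-1/2})$; the latter genuinely requires the Gauss-sum bound.
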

%
%
%

Evidence of the existence of second order behaviors, other than second order free independence and second order free independence in the real sense, is not new, at least, from an algebraic point of view.  
We mention in particular the papers \cite{hao2018combinatorial} and \cite{jiao2015fluctuations} where the authors analyze fluctuation moments of matrices with entries from a possibly non-commutative unital algebra and obtain different relations from those mentioned above.  
Now, notice (\ref{assump.constnt.matrices}) alone is not enough to guarantee the existence of limiting second order behaviors in Theorem \ref{thm.second.order.asymptotics}, in contrast to \eqref{fluct.moments.second.order.free} and \eqref{fluct.moments.real.second.order.free}. 
For instance, if we want to take the limit as $N$ goes to infinity in \textit{(\ref{thm.second.order.asymptotics.3})} from Theorem \ref{thm.second.order.asymptotics}, we need $\{D_{N,1}\}_{N=1}^{\infty}$ and $\{D_{N,2}\}_{N=1}^{\infty}$ to have a joint limiting distribution, i.e., we need that the limit
$\lim_{N \rightarrow \infty} \mathrm{tr}(D_{N,i_{1}}^{} D_{N,i_{2}}^{} \cdots D_{N,i_{m}}^{} )  $
exists for all integers $m\geq 1$ and all indexes $i_{1},i_{2},\ldots,i_{m} \in \{1,2\}$. 
This shows we can not expect a classification for universal products of second order, in the spirit of \cite{muraki2003five} or \cite{MR1426844}, encompassing all of the second order behaviors exhibited by random matrices. 

It would be desirable to have a master theorem encompassing all three cases from Theorem \ref{thm.second.order.asymptotics}, but the combinatorics, to which we arrive from our analysis of induced fluctuation moments, seems already too intricate when we consider each case separately.  
On this regard, although we make no use of the theory of traffic free independence of C. Male, see \cite{maletraffic}, it is very likely that our results will find a nice expression in terms of traffic algebras and we hope to return to this later.

Despite the fact that no pair of distinct unitary matrices $U_{N,1}$ and $U_{N,2}$ from the ensemble $\{W_{N}, \allowbreak H_{N}W_{N} /\sqrt{N},  X_{N}H_{N}W_{N}/\sqrt{N} \}$ leads to asymptotic free independence of second order when used for conjugation, 
it turns out not much more is needed to achieve this end, at least, partially. 
More concretely, if $U_{N,1}=W_{N,1}$ and $U_{N,2}=H_{N}W_{N,2}/\sqrt{N}$ where $W_{N,1}$ and $W_{N,2}$ are independent $N$-by-$N$ uniformly-distributed signed permutation matrices, then the fluctuation moments induced by $\{U_{N,1},U_{N,2}\}_{N=1}^{\infty}$ are the same as if $U_{N,1}$ and $U_{N,2}$ were independent Haar-unitary, i.e., the induced fluctuation moments are described by \eqref{fluct.moments.second.order.free}. 
Thus, we can think of $\{W_{N,1},H_{N}W_{N,2} /\sqrt{N} \}_{N=1}^{\infty}$ as an \textit{asymptotically liberating ensemble of second order}.
%
%
%
%

\begin{theorem}\label{thm.second.order.freeness}
	Let $D_{N,1}$ and $D_{N,2}$ be $N$-by-$N$ self-adjoint matrices for each integer $N\geq 1$ so that each $\{ D^{}_{N,i} \}_{N=1}^{\infty}$ satisfies (\ref{assump.constnt.matrices}). 
	Suppose $X_{N,1} =  U^{}_{N,1}  D^{}_{N,2}  U^{*}_{N,1} $ and $X_{N,2} = U^{}_{N,2}  D^{}_{N,2}  U^{*}_{N,2} $ where $U_{N,1}=W_{N,1}$ and $U_{N,2}=H_{N}W_{N,2}/\sqrt{N}$. 
	Then $\{X_{N,1}\}_{N=1}^{\infty}$ and $\{X_{N,2}\}_{N=1}^{\infty}$ are asymptotically freely independent and they satisfy \textit{(ASOF.\ref{def.asymp.so.free.1})} and  \textit{(ASOF.\ref{def.asymp.so.free.2})} from Definition \ref{def.so.asymptotic.freenes}.
	In particular, if $Y_{N}$, $Z_{N}$, $A_{N,k}$, and $B_{N,l}$ are given as in the previous theorem, then  
	\begin{align}\label{thm.second.order.freeness.moments}
	\covt{ Y_{N} }{ Z_{N} } 
	=& 
	\delta_{m_{1},m_{2}} \sum_{l=1}^{m_{1}}	 \left( 
	\prod_{k=1}^{2m_{1}} \tr{A_{N,k} B_{N,2l-k}} \right) 
		+
	O \left( N^{-\frac{1}{2	}}\right)
	\end{align}
\end{theorem}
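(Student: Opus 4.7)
The plan is to reduce to the machinery developed for Theorem \ref{thm.second.order.asymptotics}. First observe that \textit{(ASOF.\ref{def.asymp.so.free.1})} is immediate: unitary conjugation preserves traces of powers, so $\Trn{X_{N,i}^{m}}=\Trn{D_{N,i}^{m}}$ is deterministic and the covariance $\covtn{X_{N,i}^{m}}{X_{N,i}^{n}}$ vanishes identically. Next, the asymptotic freeness of $\{X_{N,1}\}_{N=1}^{\infty}$ and $\{X_{N,2}\}_{N=1}^{\infty}$ follows once we verify that $\{W_{N,1},H_{N}W_{N,2}/\sqrt{N}\}_{N=1}^{\infty}$ is asymptotically liberating, and this is established by the same argument as in Corollary 3.2 of \cite{anderson2014asymptotically}, since replacing the single $W_{N}$ used there by two independent copies $W_{N,1},W_{N,2}$ only strengthens the cumulant-type bounds invoked. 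Consequently, the heart of the matter is the identity \eqref{thm.second.order.freeness.moments}, which is precisely the content of \textit{(ASOF.\ref{def.asymp.so.free.2})}.

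For \eqref{thm.second.order.freeness.moments} I would mimic, essentially verbatim, the combinatorial expansion used to prove Theorem \ref{thm.second.order.asymptotics}(\ref{thm.second.order.asymptotics.1}). Writing $Y_{N}=U_{N,i_{1}}A_{N,1}(U_{N,i_{1}}^{*}U_{N,i_{2}})A_{N,2}\cdots A_{N,2m_{1}}U_{N,i_{2m_{1}}}^{*}$ and likewise for $Z_{N}$, and then expanding at the level of matrix entries, one obtains a sum indexed by multi-indices, weighted by entries of $H_{N}/\sqrt{N}$ and entries of the $W_{N,\cdot}$; a uniformly distributed signed permutation matrix $W$ satisfies $\mathbb{E}[W_{a_{1}b_{1}}\cdots W_{a_{2k}b_{2k}}]=0$ unless its row- and column-indices each admit a pair matching, in which case the expectation is given by an explicit $\pm$-valued expression. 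In this way the covariance $\covtn{\Trn{Y_{N}}}{\Trn{Z_{N}}}$ expands as a sum over pairs of pair matchings linking positions of $Y_{N}$ to positions of $Z_{N}$. In the coupled case of Theorem \ref{thm.second.order.asymptotics}(\ref{thm.second.order.asymptotics.1}), where $U_{N,1}$ and $U_{N,2}$ share the same $W_{N}$, the leading contributions split into two families: the ``free'' matchings $k\leftrightarrow 2l-k$, which respect the alternation between the two indices and produce the term $\prod_{k}\trn{A_{N,k}B_{N,2l-k}}$; and the ``reflected'' matchings $k\leftrightarrow 2l+k-1$, which reverse the alternation and, via the symmetry $H_{N}^{T}=H_{N}$, produce the term $\prod_{k}\trn{A_{N,k}B_{N,2l+k-1}^{T}}$.

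The key new observation, once $W_{N}$ is replaced by two \emph{independent} copies $W_{N,1},W_{N,2}$, is that only pair matchings identifying $W_{N,1}$-entries with other $W_{N,1}$-entries (and likewise for $W_{N,2}$) can survive, every other expectation being killed by independence. Under \eqref{cyclic.alternating.indexes} with $i_{1}=j_{1}$, the parity of a position determines which of $W_{N,1},W_{N,2}$ appears there, so the free matchings $k\leftrightarrow 2l-k$ (which pair positions of the same parity) remain admissible and contribute exactly as before, while the reflected matchings $k\leftrightarrow 2l+k-1$ (which pair opposite parities) are forced to identify a $W_{N,1}$-entry with a $W_{N,2}$-entry and therefore vanish. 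What remains is exactly the right-hand side of \eqref{thm.second.order.freeness.moments}. The main obstacle will be, as in Theorem \ref{thm.second.order.asymptotics}, controlling the non-principal matchings at order $O(N^{-1/2})$; I expect this to follow from the same character-sum cancellation estimates for the entries of $H_{N}/\sqrt{N}$ combined with the uniform norm bounds $\sup_{N}\norm{A_{N,k}},\sup_{N}\norm{B_{N,l}}<\infty$ inherited from \eqref{assump.constnt.matrices}, and the removal of the entire reflected family in fact simplifies the bookkeeping relative to part (\ref{thm.second.order.asymptotics.1}) of the previous theorem.
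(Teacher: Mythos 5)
Your proposal is correct and follows essentially the same route as the paper, which (in its concluding remarks) likewise verifies the hypotheses of Lemma \ref{lemma.bounded.cumulant.property} for $\{W_{N,1},H_{N}W_{N,2}/\sqrt{N}\}_{N=1}^{\infty}$ and then reruns the machinery of Proposition \ref{prop.fluc} and Lemma \ref{lemma.fluc.mobius} with $U_{N,i_{1}}^{*}U_{N,i_{2}}=W_{N,1}^{*}H_{N}W_{N,2}/\sqrt{N}$. Your key observation --- that independence of $W_{N,1}$ and $W_{N,2}$ forces the kernel to be even separately on the odd and on the even positions, which kills exactly the cross-parity (``reflected'') pairings of condition \textit{(\ref{prop.minimal.special.partitions.2})} of Proposition \ref{prop.minimal.special.partitions} while leaving the same-parity pairings of condition \textit{(\ref{prop.minimal.special.partitions.1})} intact and contributing as before --- is precisely the mechanism the paper relies on.
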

%
%
%

%

Now, the main result in \cite{anderson2014asymptotically} gives sufficient conditions on a unitary random matrix ensemble to be asymptotically liberating. 
Using a different approach than that one in \cite{anderson2014asymptotically}, we have been able to prove that, under the same conditions, a unitary random matrix ensemble not only is asymptotically liberating but also satisfies a natural generalization of the boundedness condition in Definition \ref{def.asymp.liberatin} to cumulants of any order. 
More concretely, we have the following lemma. 
%
%
%
%

\begin{lemma}\label{lemma.bounded.cumulant.property}
	Let $ U_{N,i} $ be an $N$-by-$N$ unitary random matrix for each index $i \in I$ and each integer $N\geq 1$. 
	Suppose the unitary random matrix ensemble  $\mathcal{U}=\{\{U_{N,{i}}\}_{{i} \in I}\}_{N=1}^{\infty}$ satisfies the following two conditions: 
	\begin{enumerate}[(I)]
		\item\label{signed-permutation-invariance} the families of random matrices $ \{ U^{*}_{N,i_{1}}U^{}_{N,i_{2}} \}_{i_{1}, i_{2} \in I} $ and  $ \{ \mathrm{W}^{*} U^{*}_{N,i_{1}}U^{}_{N,i_{2}} \mathrm{W} \}_{i_{1}, i_{2} \in I} $ are equal in distribution for every $N$-by-$N$ signed permutation matrix $\mathrm{W}_{N}$, and

		\item\label{pnorms-uniformly-bounded} for every positive integer $m$ there is a constant $C_m$ independent from $N$ such that 
		\[
		\Big\lVert \left( U^{*}_{N,i_{1}}U^{}_{N,i_{2}} \right) (j_{1},j_{2})\Big\rVert_{m} \leq C_m N^{-1/2} 
		\]
		for all integers $ j_{1},j_{2} \in \{1,2,\ldots, N\}$ and indexes $i_{1}, i_{2} \in I$ with $i_{1} \neq i_{2} $,
	\end{enumerate}
	Now, given positive integers $m_1, m_2, \ldots, m_n$,
	take $m'_{k}=m'_{k-1}+m^{}_{k-1}$ for $k=2,3,\ldots,n$ with $m'_{1}=0$ 
	and consider the permutation $\gamma = (1,2,\ldots, m'_{1}+ m_{1})(m'_{2}+1,m'_{2}+2, \ldots, m'_{2}+ m_{2}) \cdots (m'_{n}+1, \ldots, m'_{n}+ m_{n})$. 
	If some indexes ${i}_1,{i}_2, \ldots, {i}_m \in I$ are such that ${i}_{k} \neq {i}_{\gamma(k)}$ for $k=1, 2, \ldots, m$ where  $m = m_1 + m_2 + \cdots + m_n$,  
	then there exists a constant $C({i}_1, {i}_2, \ldots, {i}_m)$ such that for 
	\[
	Y_{N,k} = 	\big( U_{N,{i}_{m'_k+1}} A_{m'_k+1}U^{*}_{N,{i}_{m'_k+1}} \big)
	\big(	U_{N,{i}_{m'_k+2}} A_{m'_k+2}U^{*}_{N,{i}_{m'_k+2}}  \big) \cdots 
	\big( U_{N,{i}_{m'_k+m_k}} A_{m'_k+m_k}U^{*}_{N,{i}_{m'_k+m_k}} \big) 
	\]
	with $A_{1},A_{2},\ldots,A_{m} \in \mathrm{M}_N(\mathbb{C})$ each of trace zero, we have
	\[
	\abs*{\mathfrak{c}_n \left[  \Tr{Y_{N,1}}, \Tr{Y_{N,2}} , \ldots,\Tr{Y_{N,n}} \right]}
	\leq 
	C({i}_1, {i}_2, \ldots, {i}_m) \norm*{A_1} \norm*{A_2} \cdots \norm*{A_m}
	\]
\end{lemma}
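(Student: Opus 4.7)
The strategy is to reduce the joint cumulant to a combinatorial sum we can bound term-by-term. By the classical Möbius inversion formula for joint classical cumulants,
\[
\mathfrak{c}_{n}\bigl[\Trn{Y_{N,1}},\ldots,\Trn{Y_{N,n}}\bigr]
= \sum_{\pi \in \mathcal{P}(n)} \mu_{\pi}\, \prod_{B\in\pi}\mathbb{E}\Bigl[\prod_{k\in B}\Trn{Y_{N,k}}\Bigr],
\]
where the $\mu_{\pi}$ are the usual integer Möbius coefficients on the partition lattice. Using cyclicity of the trace inside each $\Trn{Y_{N,k}}$, I would replace the conjugated form $U^{}_{N,i}A\,U^{*}_{N,i}$ by products of the random matrices $V_{i,j} := U^{*}_{N,i}U^{}_{N,j}$ sandwiched between the deterministic $A$'s. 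The hypothesis $i_{k}\neq i_{\gamma(k)}$ guarantees that every $V$ arising in this rewriting carries distinct indices $i\neq j$, and condition (II) then bounds each of its entries in $L^{p}$ by $C_{p}N^{-1/2}$ for every $p$.

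The next step is to exploit the signed-permutation invariance (I). Expanding each trace as a sum over its matrix indices produces a monomial in entries of the $V$'s (of total degree equal to the number $N_{V}$ of $V$-factors) and in entries of the $A$'s. Taking $\mathrm{W}=D$ to be a diagonal matrix of independent Rademacher signs $\epsilon_{1},\ldots,\epsilon_{N}$, condition (I) says that the joint law of $\{V_{i,j}\}$ is unchanged when each $V_{i,j}(k,l)$ is multiplied by $\epsilon_{k}\epsilon_{l}$. Averaging these independent signs inside the expectation forces a parity constraint on the row/column labels appearing in the $V$-entries: only configurations in which every integer appears with even multiplicity survive. This is a Wick-type matching that reorganizes the surviving summands according to a pairing structure on the $V$-entries.

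Once this reduction is in place, I would bound each surviving summand via Hölder's inequality on the random factors, using $\abs{A_{k}(r,s)} \leq \norm*{A_{k}}$ for the deterministic ones. The random product of $N_{V}$ entries of $V$'s is then $O(N^{-N_{V}/2})\prod_{k}\norm*{A_{k}}$, while the free index sum contributes at most $O(N^{r})$ for some $r$ determined by the pairing diagram. A graph-theoretic count on the associated diagram shows $r\leq N_{V}/2$ in all non-degenerate cases, and the trace-zero hypothesis $\mathrm{tr}(A_{k})=0$ is what eliminates the remaining degenerate "loop" configurations in which an unmatched diagonal index would otherwise contribute an extra factor of $N$. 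Summed over the finitely many partitions $\pi$ of $\{1,\ldots,n\}$, this produces the stated $O(1)$ bound with constant depending only on $(i_{1},\ldots,i_{m})$.

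The main obstacle is the bookkeeping in the last step: carefully matching the $N^{-1/2}$ factors from the $V$-entries against the $N^{r}$ from the free-index sum, and verifying that the trace-zero condition on the $A_{k}$'s is strong enough to kill every diagram producing an excess power of $N$, not just those appearing in a single trace. This is the cumulant analogue of Anderson and Farrell's argument for their Theorem 2.8 in \cite{anderson2014asymptotically}; the extra input here is that the Möbius expansion distributes the $n$ traces over blocks of $\pi$, so the pairing diagrams must simultaneously encode $n$ disjoint cycles, and one verifies that the per-partition estimate is uniform across $\pi$. I would organize the counting through a colored partition diagram on $2N_{V}$ points encoding both the parity matching produced by (I) and the block structure of $\pi$, after which the required inequalities reduce to the same norm/Hölder estimates as in the $n=1$ case.
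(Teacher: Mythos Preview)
Your overall architecture is close to the paper's, but the core estimate has a genuine gap. You propose to bound each deterministic factor by $|A_{k}(r,s)|\le\|A_{k}\|$ and then count free indices, claiming ``a graph-theoretic count \ldots\ shows $r\le N_{V}/2$''. With the entry-wise bound, the $A$'s disappear and you are literally counting functions $\mathbf{j}:[\pm m]\to[N]$ whose kernel is a fixed even partition $\pi$; that count is $\sim N^{\#(\pi)}$, and for a pairing $\#(\pi)=m=N_{V}$, not $N_{V}/2$. So the crude bound yields $N^{m}\cdot N^{-m/2}=N^{m/2}$, which diverges. The inequality $r\le N_{V}/2$ is simply false at this level of resolution.

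What the paper does instead is never discard the matrix structure of the $A_{k}$'s. After using invariance (I) to reduce to even kernels (exactly as you describe), it applies a \emph{second} M\"obius inversion on the lattice $P(\pm m)$ to convert the sum over $\ker(\mathbf{j})=\pi$ into graph sums $\sum_{\ker(\mathbf{j})\ge\theta}\prod_{k}A_{k}(j_{-k},j_{k})$. The trace-zero hypothesis then kills every $\theta$ containing a block $\{-k,k\}$, because such a $\theta$ factors out $\Tr{A_{k}}=0$; this is the precise mechanism behind your ``loop elimination'', and it genuinely requires the M\"obius step to be visible. For the surviving $\theta$ (even, no blocks $\{-k,k\}$) the paper invokes the sharp Mingo--Speicher operator-norm bound on graph sums, which gives $\big|\sum_{\ker(\mathbf{j})\ge\theta}\mathbf{a}(\mathbf{j})\big|\le N^{\tau_{\theta}}\prod_{k}\|A_{k}\|$ with $\tau_{\theta}\le m/2$ because every connected component of $\mathcal{G}_{\theta}$ now has at least two edges. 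This operator-norm estimate is the step that your entry-wise bound cannot reproduce. A secondary difference: the paper does \emph{not} M\"obius-expand the cumulant over $P(n)$ first; it uses multilinearity to push the cumulant inside onto the $V$-entries, which keeps the estimate $|\mathfrak{c}_{n}[\pi]|\le C_{n}N^{-m/2}$ cleanly separated from the $A$-sum.
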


Thus, the fact that a unitary random matrix ensemble $ \{ \{ U_{N,i}  \}_{i\in I} \}_{N=1}^{\infty}$ satisfying  \textit{(\ref{signed-permutation-invariance})} and \textit{(\ref{pnorms-uniformly-bounded})} above is asymptotically liberating can now be seen as a particular case of the previous lemma. 
Moreover, if $U_{N,i_{1}}$ and $U_{N,i_{2}}$ are independent Haar-unitary (resp. Haar-orthogonal), then $U^{*}_{N,i_{1}}U^{}_{N,i_{2}}$ is also Haar-unitary (resp. Haar-orthogonal), and hence, $U^{*}_{N,i_{1}}U^{}_{N,i_{2}}$ satisfies \textit{(\ref{signed-permutation-invariance})} and \textit{(\ref{pnorms-uniformly-bounded})} above.   
Therefore, independent Haar-unitary (Haar-orthogonal) random matrix ensembles are asymptotically liberating. 

The approach we take to prove Theorem \ref{thm.second.order.asymptotics} and Lemma \ref{lemma.bounded.cumulant.property} relies on the examination of graph sums of square matrices, see \cite{mingo2012sharp} or Section \ref{sec.graph.sums} below, and gives the expressions in \eqref{cumm.lemma.4} and \eqref{eqn.fluct.moments.c2.chi.chi.1} as intermediate steps. 
We expect these expressions can be used to determine the higher order moments induced by Haar-unitary and Haar-orthogonal ensembles via the  Weingarten Calculus from \cite{MR1959915} and \cite{MR2217291}. 

The customary definition of asymptotic free independence for random matrix ensembles involves the convergence of a sequence of linear functionals on non-commutative polynomials, see Proposition \ref{prop.asymptotic.centering.1} and the comment right after its proof. 
In a similar way, multi-linear functionals on non-commutative polynomials can be used to analyze the behavior of moments of higher order, allowing us to show that unitary random matrix ensembles satisfying \textit{(\ref{signed-permutation-invariance})} and \textit{(\ref{pnorms-uniformly-bounded})} above induce the bounded cumulants property when used for conjugation. 
%
%
%
%

\begin{theorem} \label{thm.bounded.cumulants.property}
	Let $D_{N,i}$ be a self-adjoint $N$-by-$N$ deterministic matrix and let $ U_{N,i} $ be an $N$-by-$N$ unitary random matrix for each index $i \in I$ and each integer $N\geq 1$. 
	Suppose the unitary random matrix ensemble $ \{ \{ U_{N,i}  \}_{i\in I} \}_{N=1}^{\infty}$ satisfies \textit{(\ref{signed-permutation-invariance})} and \textit{(\ref{pnorms-uniformly-bounded})} from the previous lemma and  (\ref{assump.constnt.matrices}) holds. 
	Then the ensemble  $ \{ \{ U^{}_{N,i}D^{}_{N,i}U_{N,i}^{*} \}_{i \in I}  \}_{N=1}^{\infty}$ has the  \textbf{bounded cumulants property}, namely, for all polynomials $\mathrm{p}_{1}, \mathrm{p}_{2}, \mathrm{p}_{3},  \ldots$ in the algebra of non-commutative polynomials $\mathbb{C}\left< \mathrm{x}_{i} \mid {i \in I} \right>$ taking $Y_{N,k}= \mathrm{p}_{k} (\{U^{}_{N,i}D^{}_{N,i}U^{*}_{N,i} \}_{i \in I} )$ we have
	\begin{align}\label{eqn.bnd.cumulants.prpty}
	\sup_{N}  \Big\lvert \mathfrak{c}_{n} \left[ 
	\Tr{Y_{N,1}}, \Tr{Y_{N,2}},\ldots ,\Tr{Y_{N,n}}
	\right] \Big\rvert 
	< \infty  
	\end{align}
	for every integer  $n \geq 1$.
\end{theorem}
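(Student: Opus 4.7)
The plan is to reduce the bound on $\mathfrak{c}_n$ to an application of Lemma~\ref{lemma.bounded.cumulant.property} via a finite polynomial/centering expansion whose coefficients stay uniformly bounded in $N$.

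First I expand each $\mathrm{p}_k$ into non-commutative monomials, which writes $Y_{N,k}$ as a finite $\mathbb{C}$-linear combination of words in the letters $U^{}_{N,i}D^{}_{N,i}U^{*}_{N,i}$. By multilinearity of $\mathfrak{c}_n$ in each slot it suffices to bound $|\mathfrak{c}_n[\Trn{M^{(1)}},\ldots,\Trn{M^{(n)}}]|$ when each $M^{(k)}$ is a single monomial. Using $U^{*}_{N,i}U^{}_{N,i}=I$ together with cyclicity of the outer trace, I collapse adjacent letters (including across the cyclic boundary) that share an index, obtaining
\[
\Trn{M^{(k)}} = \Trn{U^{}_{N,j^{(k)}_{1}}C^{(k)}_{1}U^{*}_{N,j^{(k)}_{1}}\cdots U^{}_{N,j^{(k)}_{\ell_k}}C^{(k)}_{\ell_k}U^{*}_{N,j^{(k)}_{\ell_k}}}
\]
where each $C^{(k)}_{r}$ is a power of $D_{N,j^{(k)}_{r}}$ and the indices are cyclically alternating, or else $\ell_k\le 1$ (in which case $\Trn{M^{(k)}}$ is deterministic).

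Next I center. Writing $C^{(k)}_{r}=A^{(k)}_{r}+c^{(k)}_{r}I_{N}$ with $A^{(k)}_{r}$ trace zero and $c^{(k)}_{r}=\trn{C^{(k)}_{r}}$, both $\|A^{(k)}_{r}\|$ and $|c^{(k)}_{r}|$ are $O(1)$ by (\ref{assump.constnt.matrices}). Distributing gives
\[
\Trn{M^{(k)}}=\sum_{S_k\subseteq\{1,\ldots,\ell_k\}} \Big(\prod_{r\notin S_k}c^{(k)}_{r}\Big)\,\Trn{\textstyle\prod_{r\in S_k}U^{}_{N,j^{(k)}_{r}}A^{(k)}_{r}U^{*}_{N,j^{(k)}_{r}}},
\]
and substituting this into the cumulant, followed by multilinearity, reduces the problem to bounding each term $|\mathfrak{c}_n[\Trn{T^{(1)}},\ldots,\Trn{T^{(n)}}]|$, where $T^{(k)}$ is the sub-word on $S_k$, with $O(1)$ scalar prefactors.

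The subtlety is that after deleting positions outside $S_k$ the remaining indices need not alternate, so Lemma~\ref{lemma.bounded.cumulant.property} does not yet apply. I handle this by induction on the total length $\sum_k\ell_k$: whenever two surviving consecutive letters share an index $j$, I cancel the intervening $U^{*}_{N,j}U^{}_{N,j}=I$, multiply the trace-zero matrices, and re-center the resulting product (whose operator norm is bounded by a product of $O(1)$ norms). Each such merge strictly decreases the total length, and re-expanding via the previous centering identity yields only terms of strictly smaller length; the induction therefore terminates. The output is a finite linear combination, with coefficients $O(1)$ uniformly in $N$, of cumulants of traces in which each slot is either a deterministic scalar or a trace of the form $\Trn{U^{}_{N,\tilde\jmath^{(k)}_{1}}\widetilde A^{(k)}_{1}U^{*}_{N,\tilde\jmath^{(k)}_{1}}\cdots U^{}_{N,\tilde\jmath^{(k)}_{\ell'_k}}\widetilde A^{(k)}_{\ell'_k}U^{*}_{N,\tilde\jmath^{(k)}_{\ell'_k}}}$ with each $\widetilde A^{(k)}_{r}$ trace zero, $\|\widetilde A^{(k)}_{r}\|=O(1)$, and the indices $\tilde\jmath^{(k)}_{1},\tilde\jmath^{(k)}_{2},\ldots$ cyclically alternating.

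Finally, for $n\ge 2$ any cumulant with a deterministic slot vanishes, so only the purely alternating-centered terms survive. These fall precisely within the hypotheses of Lemma~\ref{lemma.bounded.cumulant.property}, which bounds each such $|\mathfrak{c}_n|$ by a constant times a product of bounded operator norms. Since the total number of terms produced by all the expansions depends only on the fixed polynomials $\mathrm{p}_1,\ldots,\mathrm{p}_n$ and not on $N$, summing gives the uniform bound (\ref{eqn.bnd.cumulants.prpty}). The main technical obstacle is making the iterative center-merge-center procedure precise: one must verify termination, control the combinatorics of the finitely many resulting expansions, and confirm that each re-centering step preserves the $O(1)$ bound on the operator norms of the newly produced trace-zero matrices.
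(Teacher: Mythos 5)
Your argument is correct and lands on the same two pillars as the paper's proof --- the vanishing of $\mathfrak{c}_{n}$ ($n\geq 2$) when a slot is deterministic, and Lemma \ref{lemma.bounded.cumulant.property} applied to cyclically alternating products of centered conjugated matrices --- but you carry out the reduction differently. The paper factors it through abstract statements about multilinear functionals on $\mathbb{C}\langle \mathrm{x}_{i}\mid i\in I\rangle$: Proposition \ref{prop.asymptotic.centering} shows (by induction on total degree) that uniform boundedness of $\rho_{N}$ on all polynomials is equivalent to uniform boundedness on products of centered alternating factors, and Corollary \ref{corollary.asymptotic.centering} uses traciality of $\rho_{N}$ in each entry to additionally impose the wrap-around condition $i^{(k)}_{m_{k}}\neq i^{(k)}_{1}$; Theorem \ref{thm.bounded.cumulants.property} is then a two-line application of Lemma \ref{lemma.bounded.cumulant.property}. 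You instead perform the same induction concretely on matrix words: expand into monomials, center each $D$-block, and iteratively merge-and-recenter adjacent letters with equal indices (using cyclicity of the trace at the boundary), with termination guaranteed by the strict decrease of total word length. The content is identical --- your merge-and-recenter step is exactly the re-centering identity in the proofs of Proposition \ref{prop.asymptotic.centering} and Corollary \ref{corollary.asymptotic.centering} --- but your version is self-contained and avoids the hypothesis, present in the paper's propositions, that the limits $\lim_{N}\varphi_{N}$ and $\lim_{N}\rho_{N}$ exist (you only need the $O(1)$ bounds coming from (\ref{assump.constnt.matrices})); what you lose is the reusable equivalence statements, which the paper also exploits elsewhere (e.g.\ in characterizing asymptotic freeness via Proposition \ref{prop.asymptotic.centering.1}). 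One caveat you share with the paper: your argument, like the paper's, only delivers (\ref{eqn.bnd.cumulants.prpty}) for $n\geq 2$; for $n=1$ the unnormalized trace $\mathfrak{c}_{1}[\Trn{Y_{N,1}}]=\mathbb{E}[\Trn{Y_{N,1}}]$ generically grows like $N$ (take $\mathrm{p}_{1}=\mathrm{x}_{i}$, so that $\Trn{Y_{N,1}}=\Trn{D_{N,i}}=N\,\trn{D_{N,i}}$), so the statement as written should be read with $n\geq 2$, consistent with the usage of the bounded cumulants property in the cited literature.
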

%
%
%
%

The term \textit{bounded cumulants property} is borrowed from \cite{mingo2016freeness} where it is used to prove several results concerning the limiting behavior of unitarily-invariant random matrix ensembles and some other random matrix ensemble with this property. 

\subsection{Organization of this paper}\hspace*{\fill} \newline 

%
%
The rest of this paper is organized as follows. 
In Section \ref{sec.preliminaries}, we introduce the main definitions and the main notation for partitions, classical cumulants, matrices, and non-commutative polynomials; we also establish the distribution of random signed permutation matrices and random signature matrices. 
In Section \ref{sec.graph.sums}, we review and prove multiple results on graph sums of square matrices, providing the central tools our proofs rely on. 
Roughly speaking, a graph sums of square matrices is a sum of products of entries of square matrices with the constraint that some of the entries from distinct matrices are indexed by the same summation variable. 
Then, Section \ref{sec.proof.main.thm.1} and Section \ref{sec.proof.main.thm.2} are devoted to the proofs of our main results, more concretely, Lemma \ref{lemma.bounded.cumulant.property} and Theorem \ref{thm.bounded.cumulants.property} are proved in  Section \ref{sec.proof.main.thm.1} whereas Theorem \ref{thm.second.order.asymptotics} and Theorem \ref{thm.second.order.freeness} are proved in Section \ref{sec.proof.main.thm.2}. 
Finally, in Section \ref{sec.concluding}, we give some concluding remarks including open questions and further research projects. 

\section{Preliminaries}\label{sec.preliminaries}

\subsection{Set partitions, the M\"{o}bius inversion function, and classical cumulants}\label{subsec.part.mob.cum}\hspace*{\fill}\newline

A \textit{partition} of non-empty set $S$ is a set of non-empty and pair-wise disjoint subsets of $S$ whose union is $S$,
i.e., a set $\pi$ is a partition of $S$ if $B \subset S$ and $B \neq \emptyset$  for every $B \in \pi$, $B\cap B' \neq \emptyset$ implies $B=B'$ for all $B,B'\in \pi$, and $\cup_{B \in \pi } B = S$. 
The elements of a partition are called \textit{blocks}, a block is said to be \textit{even} if it has even cardinality, and similarly, a block is said to be \textit{odd} if it has odd cardinality. 
A partition containing only even blocks is called \textit{even}, but if all of its blocks have exactly two elements, we refer to it as a 
\textit{pairing}.  
The total number of block in partition $\pi$ is denoted by $\#(\pi)$ and 
we let $P(S)$, $P_{\text{even}}(S)$, and $P_{2}(S)$ denote the set of all partitions of $S$, the set of all even partitions of $S$, and the set of all pairing partitions of $S$, respectively. 

\begin{example*}
	The sets $\pi_{1} = \{\{-1,-3,-2,2\},\{1,3\}\}$, $\pi_{2} = 
	\{\{-1,-2\},\{2\},\{1,-3,3\}\}$, and 	$\pi_{3} 
	= \{\{-1,-3\},\{1,3\},\{-2,2\}\}$ are all partitions of 
	$\{-1,1,2,-2,-3,3\}$. 
	The partitions $\pi_{1}$ and $\pi_{3}$ are both even, but while $\pi_{3}$ is a paring, $\pi_{1}$ is not. 
	The partition $\pi_{2}$ is neither even nor odd since it contains two odd	blocks, $\{2\}$ and $\{1,-3,3\}$, and one even block, $\{-1,2\}$. 
\end{example*} 

We let $[m]$ and $[\pm m ]$ denote the sets of integers $\{1,2,\ldots, m\}$ and $\{-1,1,-2,2,\ldots, -m, m \}$, respectively. 
The sets $[m]$ and $[\pm m ]$ are used extensively in this paper, so we will omit the square brackets when referring to any of their sets of partitions.
Thus, for instance, we write $P_{\text{even}}(\pm m)$ instead of $P_{\text{even}}([\pm m])$.

Every partition $\pi \in P(S)$ defines an \textit{equivalence relation}, denoted by $\sim_{\pi}$, that has the blocks of $\pi$ as equivalence classes. 
Thus, given elements $k,l \in S$, we write $k \sim_{\pi} l$ only if $k$ and $l$ belong to the same block of $\pi$. 
With this notation in mind, a partition $\pi \in P( \pm m )$ is called \textit{symmetric} if $k \sim_{\pi} l$ implies $-k \sim_{\pi} -l$.

The set of partitions $P(S)$ becomes a \textit{partially ordered set} with the partial order $\leq$ defined as follows: given partitions $\pi$ and $\theta$ in $P(S)$, we write $\pi \leq \theta$, and say that $\pi$ is a \textit{refinement} of $\theta$, if every block of $\pi$ is contained in some block of $\theta$. 
Note that $\pi \leq \theta$ if and only if $k \sim_{\pi} l$ implies $k \sim_{\theta} l $ for all $k,l \in S$. 
In the previous example, the partition $\pi_{3}$ is a refinement of $\pi_{1}$, and there is no other refinement between $\pi_{1}$, $\pi_{2}$, and $\pi_{3}$.  

Consider now the function  $\zeta : P(S) \times P(S) \rightarrow \{1,0\}$ defined by 
\[
\zeta(\theta,\eta)=\left\{\begin{array}{cl}
1 & \text{if } \theta \leq \eta \\
0 & \text{otherwise}.
\end{array}\right. 
\]
This function is called the \textit{zeta function} of $P(S)$. It turns out that if $S$ is a finite set, then the system of equations 
%
%
\begin{align}\label{eqn.defining.mobius.func.r}
\sum_{\substack{ \eta \leq \pi \leq \theta } } 
\zeta(\eta,\pi)
\mu(\pi,\theta) 
=	
\left\{\begin{array}{cl}
1 & \text{if } \eta = \theta \\
0 & \text{otherwise}
\end{array}\right. \text{ for } \eta,\theta \in P(S)
\end{align}
determines a function  $\mu : P(S) \times P(S) \rightarrow \mathbb{Z}$ called the \textit{M\"{o}bius function} of $P(S)$ which can be explicitly computed, but first, let us establish the convention that whenever we write $\eta=\{B_{i_1},B_{i_2},\ldots,B_{i_r}\}$ for a partition $\eta$, it is always assumed that blocks $B_{i_k}$ and $B_{i_l}$ are the same only if $i_k=i_l$.
Suppose now we are given partitions $\pi$ and $\theta$ in $P(S)$. 
If $\pi \leq \theta$, we can write $\theta = \{B_1, B_2,\ldots, B_{r}\}$ and $\pi=\{B_{1,1},B_{1,2}, \ldots, B_{1,m_1},\ldots, B_{n,m_r} \}$ with $B_{k} = \cup_{l=1}^{m_k}B_{k,l}$ for each $k$, and, in this case, we have 
%
%
\begin{align}\label{mobius.inversion.function}
\mu(\pi, \theta) =  \prod_{k=1}^{n}(-1)^{m_k-1}(m_k-1)! \ \  .
\end{align}
On the other hand, if $\pi$ is not a refinement of $\theta$, we have $\mu(\pi,\theta) = 0$.
The \textit{M\"{o}bius inversion formula} states that given arbitrary functions $f,g:P(S) \rightarrow \mathbb{C}$, we have the relation
%
%
\begin{align}\label{mobius.inversion.formula.0}
\forall \theta \in P(S) \quad
f(\theta) = \sum_{\substack{ \pi \in P(S) \\ \pi \geq \theta }} 
			g(\pi) 
 \quad \Longleftrightarrow  \quad
\forall \pi \in P(S) \quad
g(\pi) = \sum_{\substack{ \theta \in P(S) \\ \theta \geq \pi}} 
	\mu(\pi,\theta) f(\theta)
\end{align}
The computation of M\"{o}bius function, Equation (\ref{mobius.inversion.function}), and the M\"{o}bius inversion formula, Equation (\ref{mobius.inversion.formula.0}),  are well-known and their proofs can be found in \cite[Lecture 10]{nica2006lectures}. 

\subsubsection*{Classical cumulants}\label{sec.classical.cumulants}
Let $(\Omega,\mathcal{F},P)$ be a classical probability space and let $L^{-\infty}(\Omega,\mathcal{F},P)$ denote the set of complex-valued random variables on $(\Omega,\mathcal{F},P)$ with finite moments of all orders. 
The classical $n$-th cumulant on $L^{-\infty}(\Omega,\mathcal{F},P)$ is the $n$-linear functional $\mathfrak{c}_{n} : L^{-\infty}(\Omega,\mathcal{F},P) \times \cdots  \times  L^{-\infty}(\Omega,\mathcal{F},P) \rightarrow \mathbb{C} $ defined by 
%
%
\begin{align}\label{eqn.moment.cumulants.relation}
\mathfrak{c}_{n}[x_1, x_2, x_3 ,\ldots, x_n] 
=& 
\sum_{\substack{ \pi \in P(n) } }
\mu(\pi, 1_n) 
\prod_{B \in \pi} 
\mathbb{E}_{}\left[ \prod_{b \in B} x_b \right]
\end{align}
for random variables $x_1, x_2, x_3 ,\ldots, x_n \in L^{-\infty}(\Omega,\mathcal{F},P)$ and where $\mathbb{E}[ \cdot ]$ denotes the corresponding expected value. 
Note that if $x_{k}$ is a constant for some $k \in [n]$ and $n \geq 2$, then 
\(\mathfrak{c}_{n}[x_1, x_2,\ldots, x_n]  = 0 \). 

\subsection{The kernel notation, tuples, and permutations}\label{sec.kernel.tuples.permutations}\hspace*{\fill}\newline 

Let $S_{1}$ and $S_{2}$ be non-empty sets. 
We make the convention that for a function $\mathbf{j}: S_{1} \rightarrow S_{2}$, we take $j_{k} = \mathbf{j}(k)$ for every $k \in S_{1}$; 
additionally, if  $S_{1} = [\pm m]  $ for some integer $m \geq 1$, we identify the  function  $\mathbf{j}:  S_{1} \rightarrow S_{2}$ with the tuple $(j_{-1},j_{1},j_{-2},\ldots,j_{-m},j_{m})$. 
Moreover, the \textit{kernel} of a function $\mathbf{j} : S_{1} \rightarrow S_{2}$, denoted by $\kernel{\mathbf{j}}$, is defined as the partition of $S_{1}$ whose blocks are all of the non-empty pre-images of $\mathbf{j}$, i.e., 
%
%
\begin{align*}
\kernel{\mathbf{j}} 
= \{ \mathbf{j}^{-1}(s) \neq \emptyset \mid s \in S_{2}  \} 
= \{ \{k \in S \mid j_{k} = s\} \neq \emptyset \mid s \in S_{2}  \} .
\end{align*}
%
%
Additionally, if we are given permutations $\sigma_{l} \in \{ f: S_{l} \rightarrow S_{l} \mid f \text{ is bijective} \}$ for $l=1,2$,
we let $\mathbf{j} \circ \bm{\sigma}_{1}  : S_{1} \rightarrow S_{2}$ and $ \bm{\sigma}_{2} \circ \mathbf{j} : S_{1} \rightarrow S_{2}$ be given by the usual composition of functions, so we have
\begin{align*}
\mathbf{j} \circ \bm{\sigma}_{1} (k) =  j_{\sigma_{1}  (k) } 
\text{\quad and \quad} 
\bm{\sigma}_{2} \circ \mathbf{j} (k) = \sigma_{2}(j_{k}) 
\quad \quad \forall k \in S_{1} .
\end{align*}
\begin{example*} 
	The function $\mathbf{j}:[\pm 3] \rightarrow [4]$ given by
	\[
	\mathbf{j}(-1) = \mathbf{j}(2) = \mathbf{j}(3) = 4\text{,}\quad 
	\mathbf{j}(1)  = \mathbf{j}(-3) = 1\text{,\quad and\quad}
	\mathbf{j}(-2) = 3, 
	\]
	or, equivalently, $ (j_{-1},j_{1},j_{-2},j_{2},j_{-3},j_{3}) = (4,1,3,4,1,4)$, has kernel 
	\[\kernel{\mathbf{j}} = \{\{-1,2,3\},\{1,-3\},\{-2\}\}.	\]
	Additionally, if $\sigma_{1}:[\pm 3] \rightarrow [\pm 3]$ is given $\sigma_{1} (k)=-k$ for every $k \in [\pm 3]$ and $\sigma_{2}:[4] \rightarrow [4]$ is the cyclic permutation   $(1,2,3,4)$, then 
	$ \mathbf{j} \circ \bm{\sigma}_{1} 	= (1,4,4,3,4,1) $ and $	\bm{\sigma}_{2} \circ \mathbf{j} = (1,2,4,1,2,1) $.
\end{example*} 

Now, the group of permutations $\{f : S_{} \rightarrow S_{} \mid f \text{ is  bijective}\} $ acts on the set of partitions $P(S_{})$ as follows: given a permutation $\sigma \in \{ f: S_{} \rightarrow S_{} \mid f \text{ is bijective} \}$ we let $\sigma \circ \pi$ be given by 
%
%
\begin{align*}
\sigma \circ \pi
& = \{ \sigma(B)  \mid B \in \pi  \}  = \{ \{ \sigma(k) \mid k \in B \} \mid B \in \pi  \} 
\end{align*}
The map $\pi \mapsto \sigma \circ \pi$ is a poset automorphism, in particular, it is order-preserving, so for all partitions $\pi, \theta \in P(S_{})$ we get
\begin{align*}
\pi \leq \theta 
\quad \Longleftrightarrow \quad 
\sigma \circ \pi \leq \sigma \circ \theta.
\end{align*}

\begin{remark*}
	Note that a partition $\pi \in P(S_{1})$ and a function  $\mathbf{j}: S_{1} \rightarrow S_{2}$ satisfy $\pi \leq \kernel{\mathbf{j}}$ if only if the function $\mathbf{j}$ is constant when restricted to each of the blocks of $\pi$, i.e., $j_{k} = j_{l}$ whenever $k,l \in B$ for some block $B \in \pi$. 
	Moreover, for permutations $\sigma_{1}: S_{1} \rightarrow S_{1}$ and  $\sigma_{2}: S_{2} \rightarrow S_{2}$, we have that $\kernel{ \mathbf{j} \circ \bm{\sigma}_{1} } = \sigma^{-1}_{1} \circ \kernel{\mathbf{j}}$ and  $\kernel{\mathbf{j}} = \kernel{\bm{\sigma}_{2} \circ \mathbf{j}} $.  
\end{remark*}

\subsection{Some random matrices and the joint distribution of their entries}\hspace*{\fill}\newline

Let $I$ be a non-empty set. 
Suppose $\{X_{i}\}_{i \in I}$ and $\{Y_{i}\}_{i \in I}$ are two families of $N$-by-$N$ random matrices defined on the same probability space. 
We say that $\{X_{i}\}_{i \in I}$ and $\{Y_{i}\}_{i \in I}$ are \textit{equal in distribution} if we have 
\begin{align*}
\exptr{ \prod_{k=1}^{m} X_{i_{k}}(j_{-k},j_{k}) }
=
\exptr{ \prod_{k=1}^{m} Y_{i_{k}}(j_{-k},j_{k}) }
\end{align*}
for all integers $m\geq 1$, indexes $i_{1},i_{2}, \ldots, i_{m} \in I$, and functions $\mathbf{j}: [\pm m] \rightarrow [N]$.  

A matrix $\mathrm{X} \in \mathrm{Mat}_{N}(\mathbb{C})$ is a \textit{signature matrix} if there exists signs $\epsilon_{1},\ldots,\epsilon_{N} \in \{-1,1\}$ such that
\[
\mathrm{X}(i,j)=\left\{ \begin{array}[c]{cl}
							\epsilon_{i} & \text{ if } i=j \\
							0			 & \text{ otherwise }.
				\end{array}	\right. 
\]
An $N$-by-$N$  random matrix $X$ is a \textit{uniformly distributed signature matrix} if  it is uniformly distributed on the set of $N$-by-$N$ signature matrices; in this case, for all functions $\mathbf{i,j}: S \rightarrow [N]$ we have 
%
%
%
\begin{align}\label{dist.entries.signature.matrix}
\exptr{\prod_{k \in S} X (i_{k} , j_{k}) }
= 
\left\{ 
\begin{array}{cl}												
1, & \text{ if } \mathbf{i} = \mathbf{j} \text{ and } \kernel{\mathbf{i}}_{} \text{ is an even partition} \\
0, &\text{ otherwise} \\
\end{array}
\right.
\end{align}

A matrix $ \mathrm{W} \in \mathrm{Mat}_{N}(\mathbb{C})$ is a \textit{signed permutation matrix} if there exists signs $\epsilon_{1},\ldots,\epsilon_{N} \in \{-1,1\}$ and a permutation $\sigma \in \{f:[N]\rightarrow [N]:f \text{ is bijective}\}$ such that
\[
\mathrm{W}(i,j)	=	\epsilon_{i}\delta_{i,\sigma(j)} 
				= 	\left\{ \begin{array}[c]{cl}
							\epsilon_{i} & \text{ if } i=\sigma(j) \\
							0			 & \text{ otherwise }
					\end{array}	\right. \text{ .}
\]
An $N$-by-$N$  random matrix $W$ is a \textit{uniformly distributed signed permutation matrix} if  it is uniformly distributed on the set of $N$-by-$N$ signed permutation matrices; if that is the case, for all functions $\mathbf{i,j}: S \rightarrow [N]$ we get  
%
%
%
\begin{align}\label{dist.entries.signed.perm.matrix}
\exptr{\prod_{s \in S} W(i_s , j_s) }
= 
\left\{ \begin{array}{cl}														
		\frac{(N-\#(\pi))!}{N!} & 
							\text{, if } \pi = \kernel{\mathbf{i}} = \kernel{\mathbf{j}} \in P_\text{even}(S) \\
		0						&
							\text{, otherwise} 
\end{array}
\right.
\end{align}

\begin{remark*} 
Suppose $\{V_{i}\}_{i \in I}$ is a family of $N$-by-$N$ random matrices distribution-invariant under conjugation by signed permutation matrices, i.e., the families $\{V_{i}\}_{i \in I}$ and $\{W^{*} V_{i} W \}_{i \in I}$ are equal in distribution for every signed permutation matrix $W$. 
Then, for all integers $m\geq 1$, indexes $i_{1},i_{2}, \ldots, i_{m} \in I$, and functions $\mathbf{j}: [\pm m] \rightarrow [N]$, we have 
	\begin{align}
	\exptr{\prod_{k=1}^{m} V_{i_{k}}(j_{-k},j_{k})} = 
	\prod_{k=1}^{m} \epsilon_{\sigma(j_{-k})} \epsilon_{\sigma(j_{k})} \exptr{\prod_{k=1}^{m} V_{i_{k}}(\sigma(j_{-k}),\sigma(j_{k}))}
	\end{align}
for all signs $\epsilon_{1},\ldots,\epsilon_{N} \in \{-1,1\}$ and permutations $\sigma \in \{f:[N]\rightarrow [N]:f \text{ is bijective}\}$ . 
\end{remark*}

\subsection{Non-commutative polynomials and their evaluation on families of random matrices}\hspace*{\fill}\newline 

Let $I$ be a non-empty set. 
We denote by $\mathbb{C}\left\langle \mathrm{x}_{i} \mid i \in I \right\rangle$ the algebra of non-commutative polynomials on the family of variables $\{\mathrm{x}_{i} \mid i \in I\}$. 
Let us recall that $\mathbb{C}\left\langle \mathrm{x}_{i} \mid i \in I \right\rangle$ is the algebra over $\mathbb{C}$ with a basis consisting of all the words in the alphabet $\{\mathrm{x}_{i} \mid i \in I\}$, including the empty word which acts as multiplicative identity, and the product of two basis elements is given by concatenation. 
Thus, a basis element is a word of the form
\[\mathrm{x}_{i_{1}}\mathrm{x}_{i_{2}}\cdots\mathrm{x}_{i_{r}}\]
for some integer $r\geq 0$ and some indexes $i_{1},i_{2},\ldots, i_{r} \in I$, and if $\mathrm{x}_{j_{1}}\mathrm{x}_{j_{2}}\cdots\mathrm{x}_{j_{r}}$ is another basis element, we have 
\[(\mathrm{x}_{i_{1}}\mathrm{x}_{i_{2}}\cdots\mathrm{x}_{i_{r}})(\mathrm{x}_{j_{1}}\mathrm{x}_{j_{2}}\cdots\mathrm{x}_{j_{s}}) = \mathrm{x}_{i_{1}}\mathrm{x}_{i_{2}}\cdots\mathrm{x}_{i_{r}}\mathrm{x}_{j_{1}}\mathrm{x}_{j_{2}}\cdots\mathrm{x}_{j_{s}} .\]
Given polynomials $\mathrm{p}_{1},\mathrm{p}_{2}, \ldots, \mathrm{p}_{m}$ in the algebra $\mathbb{C}\left\langle \mathrm{x}_{i} \mid i \in I \right\rangle$ and a set  $S=\{ k_{1} < k_{2} < \cdots < k_{n} \} \subset [m]$, we let 
\begin{align}
\vec{\prod_{k \in S }} \mathrm{p}_{k}: = \mathrm{p}_{k_{1}} \mathrm{p}_{k_{2}} \cdots \mathrm{p}_{k_{n}} . 
\end{align}
Suppose we are given random matrix ensembles $\{X_{N,i}\}_{N=1}^{\infty}$ with $i\in I$ where each $X_{N,i}$ is a $N$-by-$N$ random matrix.
For each non-commutative polynomial $\mathrm{p} \in \mathbb{C}\left\langle \mathrm{x}_{i} \mid i \in I \right\rangle $, we denote by 
\[\mathrm{p} \left( \{X_{N,i} \}_{i \in I} \right) \]
the random matrix obtained from replacing each $\mathrm{x}_{i}$ appearing in the polynomial $\mathrm{p}$ with the random matrix $X_{N,i}$ for every $i\in I$ and the constant term of $\mathrm{p}$, say $\alpha$, with the scalar multiple of the identity matrix $\alpha I_{N}$. 
For instance, if $\mathrm{p}(\mathrm{x}_{1},\mathrm{x}_{2}) = \mathrm{x}_{1} \mathrm{x}_{2} -  \mathrm{x}^{2}_{2} + 4 $, then 
\[\mathrm{p}(\{X_{N,i}\}_{i \in \{1,2\}}) = X_{N,1} X_{N,2} -  X^{2}_{N,2} + 4 I _{N}  .\]

\section{Graph Sums of Square Matrices}\label{sec.graph.sums}

In this section we review and prove some useful results on graph sums of square matrices. 
A \textit{graph sum} of given matrices $A_{1},A_{2},\ldots,A_{m} \in \mathrm{Mat}_N(\mathbb{C})$ is a sum of the form
%
%
\begin{align}\label{def.graph.sums}
\sum_{\substack{ \mathbf{j} :[\pm m]\rightarrow [N] \\  \kernel{\mathbf{j}} \geq \pi } }A_{1}( j_{-1},j_{1}) A_{1}( j_{-2},j_{2}) \cdots A_{m}( j_{-m},j_{m})
\quad =
\sum_{\substack{ 
		\mathbf{j} :[\pm m]\rightarrow [N] \\  
		\kernel{\mathbf{j}} \geq \pi } }
\prod_{k=1}^{m} A_{k}(j_{-k},j_{k})
\end{align}
for some partition $\pi \in P(\pm m)$. 
Note that the condition $\kernel{\mathbf{j}} \geq \pi $ in the sum above is simply a restatement of a set of equalities between the indexes $j_{\pm k}$. 
For example, if we let $\pi = \{\{1,-2\},\{2,-3\},\ldots,\allowbreak \{m-1,-m\},\{m,-1\}\}$, then $\kernel{\mathbf{j}} \geq \pi $ only if  $j_{1} = j_{-2},j_{2}=j_{-3},\ldots, j_{m-1}=j_{-m}$, and $j_{m}=j_{-1}$, and thus we get
\begin{align*}
\sum_{\substack{ 
		\mathbf{j} :[\pm m]\rightarrow [N] \\  
		\kernel{\mathbf{j}} \geq \pi } }
\prod_{k=1}^{m} A_{k}(j_{-k},j_{k})
=
\Tr{A_{1} A_{2} \cdots A_{m}}. 
\end{align*}
It is worth mentioning that although the labeling of the entries of $A_{k}$ in (\ref{def.graph.sums}) is not customary, it has proven to be suitable for many of our calculations;
moreover, for a bijection $\sigma :  [\pm m ] \rightarrow S $, the relation 
\begin{align}\label{graph.sums.re-labeling.eqn}
\sum_{\substack{ 
		\mathbf{j} : S \rightarrow [N] \\  
		\kernel{\mathbf{j}} \geq \hat\pi } }
\prod_{k=1}^{m} A_{k}(j_{\sigma(-k)},j_{\sigma(k)})
=
\sum_{\substack{ 
		\mathbf{j} :[\pm m]\rightarrow [N] \\  
		\kernel{\mathbf{j}} \geq \sigma^{-1} \circ \hat\pi } }
\prod_{k=1}^{m} A_{k}(j_{-k},j_{k})
\qquad \forall \hat\pi \in P(S)
\end{align}
provides the link between the labeling of the entries of $A_{k}$ in (\ref{def.graph.sums}) and any other labeling. 
For instance, if  $\sigma :  [\pm m ] \rightarrow [2m] $ is given by $\sigma(-k)=2k-1$ and $\sigma(k)=2k$ for $1 \leq k \leq m$, then 
\begin{align*}
\sum_{\substack{ 
		\mathbf{j} :[2m]\rightarrow [N] \\  
		\kernel{\mathbf{j}} \geq \hat\pi } }
\prod_{k=1}^{m} A_{k}(j_{2k-1},j_{2k})
=
\Tr{A_{1}} \Tr{A_{2}} \cdots \Tr{A_{m}} 
=
\sum_{\substack{ 
		\mathbf{j} :[\pm m]\rightarrow [N] \\  
		\kernel{\mathbf{j}} \geq \pi } }
\prod_{k=1}^{m} A_{k}(j_{-k},j_{k})
\end{align*}
where $\hat\pi = \{\{1,2\},\{3,4\},\ldots,\{2m-1,2m\}\}$ and $\pi = \sigma^{-1}\circ \hat\pi = \{\{-1,1\},\{-2,2\},\ldots,\{-m,m\}\}$. 
The type of sums above are named graph sums because they can be associated to certain graphs that, as we will see next, help us analyze the corresponding sums.

\subsection{Bounds of graph sums of general square matrices}\label{sec.general.graph.sums}
\hspace*{\fill}\newline 

The main result in \cite{mingo2012sharp} concerns more general graph sums, allowing the matrices $A_{k}$ in (\ref{def.graph.sums})	 to be rectangular and not necessarily square. For graph sums of square matrices, however, the result takes the following form. %
%
\begin{theorem}\label{mingo-speicher}
	Suppose $\pi$ is a partition in $P(\pm m)$.
	Then there exists a rational number $\tau_{\pi} \in \{ 1, \frac{3}{2}, 2 
	,\ldots \}$ depending only on the partition $\pi$ such that for 
	every integer $N\geq 1$  the following two conditions hold:
	\begin{enumerate}
		\item[(a)]for all matrices $ A_{1}, A_{2}, \ldots,A_{m}\in \mathrm{Mat}_N(\mathbb{C}) $ we have
		\[
		\abs*{ \sum_{\substack{ 
					\mathbf{j} :[\pm m]\rightarrow [N] \\  \kernel{\mathbf{j}} \geq \pi } }
				\prod_{k=1}^{m} A_{k}( j_{-k},j_{k}) 
		}
		\leq
		N^{\tau_{\pi}} \prod_{k=1}^{m} \norm*{A_{k}}
		\] 
		\item[(b)]there are some non-zero matrices $B_{1}, B_{2}, \ldots,B_{m} \in \mathrm{Mat}_N(\mathbb{C}) $ satisfying
		\[ \Bigg|
			 \sum_{\substack{ 
					\mathbf{j} :[\pm m]\rightarrow [N] \\  \kernel{\mathbf{j}} \geq 
					\pi } }
			\prod_{k=1}^{m} B_{k}( j_{-k},j_{k}) 
		\Bigg|
		=
		N^{\tau_{\pi}} \prod_{k=1}^{m} \norm*{B_{k}}
		\]
	\end{enumerate}
	Note that $\tau_{\pi}$ is uniquely determined by (a) and (b). 
	We call $\tau_{\pi}$ the \textbf{graph sum exponent} of $\pi$.
\end{theorem}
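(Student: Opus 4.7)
The plan is to associate to $\pi \in P(\pm m)$ a multigraph $G_\pi$ whose vertices are the blocks of $\pi$ and whose edge set contains one edge $e_k$ for each $k \in [m]$ joining the block containing $-k$ to the block containing $k$ (an edge becomes a loop when $-k \sim_\pi k$). Under this correspondence, any function $\mathbf{j} : [\pm m] \to [N]$ with $\kernel{\mathbf{j}} \geq \pi$ factors through the quotient $[\pm m] \to \pi$, so the graph sum rewrites as a sum over assignments $f : V(G_\pi) \to [N]$:
\[
S_\pi(A_1, \ldots, A_m) = \sum_{f : V(G_\pi) \to [N]} \prod_{k=1}^{m} A_k\bigl(f(B_{-k}), f(B_k)\bigr).
\]

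For the upper bound in part (a), I would proceed by induction on the number of edges of $G_\pi$, eliminating one vertex at a time. The base cases are the elementary estimates $\abs*{\Tr{A}} \leq N \norm*{A}$ and $\abs*{\sum_{i,j} A(i,j) x_{i} y_{j}} \leq \norm*{A} \norm{x}_{2} \norm{y}_{2}$. At each inductive step one selects a vertex $v$ of minimum degree and does one of the following: if $v$ is a leaf carrying a single edge $e_k$, one performs the sum over $f(v)$, replacing that edge by a row or column sum of $A_k$ and picking up at most a factor $N^{1/2} \norm*{A_k}$ via Cauchy--Schwarz; if $v$ has degree two with incident edges $e_k, e_l$, one performs the sum over $f(v)$ as a matrix multiplication $\sum_j A_k(\cdot, j) A_l(j, \cdot)$, which contracts the two edges into a single edge labelled by $A_k A_l$ with no change in the exponent since $\norm*{A_k A_l} \leq \norm*{A_k}\norm*{A_l}$; and if $v$ has degree three or more, one applies Cauchy--Schwarz on $f(v)$ to split the sum at $v$ into two graph sums with strictly fewer vertices or edges, absorbing an extra factor of $N^{1/2}$ per split. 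The exponent $\tau_\pi$ that emerges is the unique value produced by the optimal reduction strategy and lies in $\{1, 3/2, 2, 5/2, \ldots\}$ because each Cauchy--Schwarz split contributes a half power of $N$ while matrix multiplications contribute no new powers.

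For the sharpness in part (b), I would construct extremal matrices $B_k$ coordinately, choosing rank-one or block-diagonal matrices whose singular vectors are dictated by the tree structure produced in the inductive reduction of part (a). Typical building blocks are the normalized all-ones vector $N^{-1/2}(1, 1, \ldots, 1)^{T}$ along directions that are contracted by matrix multiplication, and standard basis vectors along directions that survive as free summation indices. With such matrices, every Cauchy--Schwarz and operator-norm inequality used in the proof of (a) becomes an equality, so the resulting graph sum saturates the bound $N^{\tau_\pi} \prod_{k} \norm*{B_k}$.

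The principal technical obstacle is the precise combinatorial characterization of $\tau_\pi$ from $G_\pi$ and the verification that the upper bound from (a) matches the lower bound from (b). A clean expression for $\tau_\pi$ arises from analyzing the two-edge-connected (block) decomposition of $G_\pi$, but confirming that no reduction strategy in (a) can beat, and that the construction in (b) in fact achieves, that value requires careful bookkeeping of the half-integer contributions at each Cauchy--Schwarz step. Matching these two bounds is the delicate heart of the argument and is what forces the explicit formula for $\tau_\pi$ in terms of $G_\pi$; the uniqueness of the exponent then follows at once from the combination of (a) and (b).
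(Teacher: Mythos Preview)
The paper does not prove this theorem: it is quoted as the main result of \cite{mingo2012sharp} and used as a black box. What the paper does supply is the algorithm from that reference for computing $\tau_\pi$: form $\mathcal{G}_\pi$, pass to the forest $\mathcal{F}_\pi$ whose vertices are the two-edge-connected components and whose edges are the cutting-edges of $\mathcal{G}_\pi$, and set $\tau_\pi = \sum_{v} \mathfrak{l}(v)$ with $\mathfrak{l}(v)=1$ for isolated vertices, $1/2$ for leaves, $0$ otherwise. So there is no ``paper's own proof'' to compare against.

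Your outline is in the right spirit and is indeed close to how Mingo and Speicher argue: the graph $G_\pi$ you build is the same $\mathcal{G}_\pi$, the degree-two contraction is exactly the matrix-product reduction, and the final answer does come from the block (two-edge-connected) decomposition, as you anticipate. Where your sketch is genuinely incomplete is the step ``if $v$ has degree three or more, apply Cauchy--Schwarz on $f(v)$ to split the sum.'' A naive split of a high-degree vertex into two pieces does not, in general, reproduce the exponent dictated by the block-tree formula; getting the sharp exponent requires first reducing each two-edge-connected component to a single vertex (using that such a component admits a closed ear decomposition, hence can be collapsed by repeated degree-two contractions with no loss), and only then handling the resulting forest of cutting-edges, where each leaf contributes exactly $N^{1/2}$. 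Without that structural reduction, an arbitrary Cauchy--Schwarz split at a degree-$\geq 3$ vertex can overshoot. Your sharpness sketch for (b) is likewise too vague: the extremal matrices in \cite{mingo2012sharp} are built component-by-component from the block tree, and verifying equality requires tracking the ear decomposition as well. You correctly flag all of this as ``the delicate heart of the argument''; that is precisely the content of \cite{mingo2012sharp}, and the present paper simply cites it.
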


It is also shown in \cite{mingo2012sharp} that the graph sum exponent $\tau_{\pi}$ can be algorithmically computed analyzing the two-edge connectedness of a  graph associated to $\pi$. 
For the reader's convenience, we recount such algorithm next. 
%
%
%
\begin{enumerate}[\textit{Step }1.]
	\item Given a partition $\pi\in P(\pm m)$, consider the undirected graph  $\mathcal{G}_{\pi}$ resulting  from, first, taking edges $E_{1}, E_{2},\ldots,E_{m}$ with endpoints $-1,+1,-2,+2,\ldots,-m,+m$, respectively, and, then, identifying endpoints when they belong to the same block of $\pi$. 
	\item Identify the cutting-edges and the two-edge connected components of $\mathcal{G}_{\pi}$. Recall that a \textit{cutting-edge} of a graph, also known as a \textit{bridge}, is an edge whose removal increases the number of connected components. 
	Moreover, a graph is \textit{two-edge connected} if it is connected and has no cutting-edges, and, consequently, a \textit{two-edge connected component} of a graph is a sub-graph that is maximal, under the usual graph inclusion, in the set of all two-edge connected sub-graphs 
	\item Letting $\mathcal{F}_{\pi}$ denote the graph with vertex set given by the set of all two-edge connected components of $\mathcal{G}_{\pi}$ and edge set given by the set of all cutting-edges of $\mathcal{G}_{\pi}$, the graph sum exponent $\tau_{\pi}$ is given by
	\begin{align}\label{algrthm.sum.exponent}
	\tau_{\pi} = \sum_{v \text{ vertex in } \mathcal{F}_{\pi} } \mathfrak{l}(v)
	\quad \text{where} \quad
	\mathfrak{l}(v) := \left\{\begin{array}{cl}
	\frac{1}{2} & \text{ if } \deg(v) = 1, \\
	1			& \text{ if } \deg(v) = 0, \\
	0			& \text{otherwise}.
	\end{array}\right.
	\end{align}
	and $\deg(v)$ denotes the degree of the vertex $v$ in the graph $\mathcal{F}_{\pi}$. 
\end{enumerate}

\begin{example*} The undirected graph $\mathcal{G}_{\pi}$ associated to the partition 
	\[
	\pi =\left\{
	\begin{array}{c}
	\{-3\},\{+3,+1,-2\},\{-5,-1,-7,-4\},\{+7\},
	\{+2,+4\}, \{+6\}, \\ \{-6,+5,+8\},\{-8\}, 
	\{-10,+12\},\{+10,-12\},\{-11,+11,-9\},\{+9\}
	\end{array}	
	\right\} \in P(\pm 12)
	\]
	can be represented as
	\begin{center}\includegraphics[scale=1]{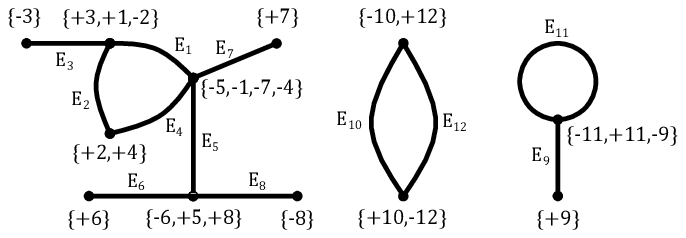}\end{center}
	Hence, the cutting-edges of $\mathcal{G}_{\pi}$ are $E_{3}$,  $E_{5}$, $E_{6}$, $E_{7}$,  $E_{8}$, and $E_{9}$; moreover, the two-edge connected components of $\mathcal{G}_{\pi}$ are exactly what remains of $\mathcal{G}_{\pi}$ after removing all of its cutting-edges. 
	The graph $\mathcal{F}_{\pi}$ can be obtained from $\mathcal{G}_{\pi}$  by shrinking each of the two-edge connected components of $\mathcal{G}_{\pi}$ to a vertex, and thus, if we represent the cutting-edges of $\mathcal{G}_{\pi}$ with dashed lines, we obtain 
	\begin{center}\includegraphics[scale=1]{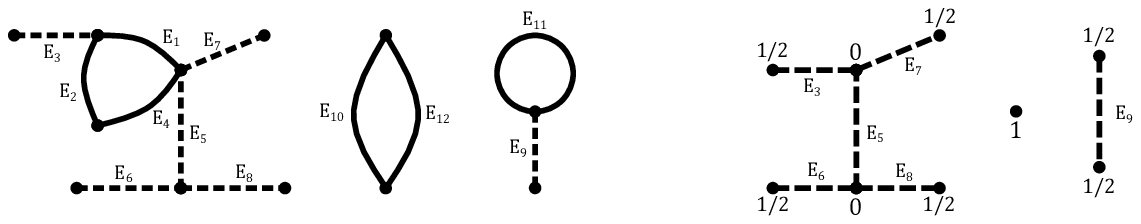}\end{center}
	where $\mathcal{F}_{\pi}$ is the graph on the right and next to each of its vertexes we have placed the corresponding contribution $\mathfrak{l}(v)$ to the graph sum exponent $\tau_{\pi}$.  
	Therefore, we have $\tau_{\pi} = 4$. 
\end{example*}

Having described the algorithm to compute  $\tau_{\pi}$, we can now show that graph sum exponents of even partitions can be easily calculated.
%
\begin{proposition}\label{prop.graph.sum.exp.bound1}
	If $\pi \in P( \pm m) $ is an even partition, 
	then the graph sum exponent $\tau_{\pi}$ equals the number of connected 
	components of $\mathcal{G}_{\pi}$.
\end{proposition}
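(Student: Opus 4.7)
The plan is to run the three-step algorithm described right before the statement on an even partition $\pi$, and show that because every block of $\pi$ has even cardinality, the graph $\mathcal{G}_\pi$ has no cutting-edges at all. Once this is established, the graph $\mathcal{F}_\pi$ will consist of isolated vertices, one for each connected component of $\mathcal{G}_\pi$, and each such isolated vertex contributes exactly $1$ to $\tau_\pi$ via the formula \eqref{algrthm.sum.exponent}.

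First I would observe the local degree translation. A vertex of $\mathcal{G}_\pi$ is obtained by identifying the endpoints of the edges $E_1,\dots,E_m$ that lie in a common block of $\pi$, so the degree of the vertex corresponding to a block $B\in\pi$ equals $|B|$. Since $\pi\in P_{\text{even}}(\pm m)$, every vertex of $\mathcal{G}_\pi$ has even degree.

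The key step is then the following bridge-free lemma, which I would prove via the handshake lemma: if $G$ is a (multi)graph in which every vertex has even degree, then $G$ has no cutting-edge. Indeed, suppose towards a contradiction that $e=\{u,v\}$ is a bridge, lying in a connected component $C$ of $G$. Removing $e$ splits $C$ into two connected components $C_u$ and $C_v$ containing $u$ and $v$ respectively. In $C_u$, the vertex $u$ has degree $\deg_G(u)-1$, which is odd, while every other vertex of $C_u$ keeps its original (even) degree. Hence $C_u$ has exactly one vertex of odd degree, contradicting the fact that any finite (multi)graph has an even number of odd-degree vertices.

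Applying this to $G=\mathcal{G}_\pi$, we conclude that $\mathcal{G}_\pi$ contains no cutting-edges, so its two-edge connected components are precisely its connected components. Therefore $\mathcal{F}_\pi$ has vertex set in bijection with the connected components of $\mathcal{G}_\pi$ and empty edge set, so every vertex of $\mathcal{F}_\pi$ has degree $0$. By \eqref{algrthm.sum.exponent} each such vertex contributes $\mathfrak{l}(v)=1$, yielding
\[
\tau_\pi \;=\; \sum_{v\text{ vertex in }\mathcal{F}_\pi}\mathfrak{l}(v) \;=\; \#\{\text{connected components of }\mathcal{G}_\pi\},
\]
as claimed. The only nontrivial point is the bridge-free lemma, and even that is a short parity argument, so I do not anticipate any real obstacle.
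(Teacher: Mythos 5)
Your proof is correct and follows essentially the same route as the paper: both reduce the claim via \eqref{algrthm.sum.exponent} to showing $\mathcal{G}_{\pi}$ has no cutting-edges, and both derive that from the parity of block sizes together with the handshake lemma. Your write-up just makes explicit the degree bookkeeping (that the vertex of a block $B$ has degree $\lvert B\rvert$) that the paper leaves implicit.
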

%
%
\begin{proof}
	By Equation (\ref{algrthm.sum.exponent}), it suffices to show that the graph $\mathcal{G}_{\pi}$ has no cutting-edges. 
	Suppose $\mathcal{G}_{\pi}$ has a cutting-edge. 
	If we remove such cutting-edge, we get two disjoint graphs, each of which has one single vertex of odd degree and the other vertexes of even degree. 
	But, this contradicts the handshaking lemma that in any graph the sum of degrees over all its vertices must be even. 
	Thus,  $\mathcal{G}_{\pi}$ has no cutting-edges, and hence all its connected components are two-edge connected. 
\end{proof}
%
%
%
%
%
%

Now, resulting from endowing each edge $E_{k}$ in the graph $\mathcal{G}_{\pi}$ with the direction that goes from $+k$ to $-k$, the directed graph $\vec{\mathcal{G}}_{\pi}$ can sometimes be used to describe the corresponding graph sum. 
In particular, a graph sum factors as a product of traces of matrices when all connected components of $\mathcal{G}_{\pi}$ are bouquets, to which we refer as multiple-loops, or cycles, each connected component gives rise to a trace. 
For example, for the partition 
\[\pi=\{\{1,-6\},\{6,5\},\{-5,7\},\{-7,-1,\},\{-2,3\},\{-3,2\},\{-4,4\} \}\]
and given matrices  $A_{1},A_{2},\ldots,A_{7} \in \mathrm{Mat}_N(\mathbb{C})$, we have the graph sum 
\begin{align}\label{exmp.graph.sum.cycles.1}
\sum_{\substack{ \mathbf{j} :[\pm 7]\rightarrow [N] \\
		  		\kernel{\mathbf{j}} \geq \pi } } 
	  	\prod_{k=1}^{7} A_{k}(j_{-k},j_{k}) 
=
\Tr{ A^{}_{1} A^{}_{6} A^{T}_{5} A^{T}_{7} } \Tr{ A^{}_{2} A^{}_{3} } \Tr{ A^{}_{4} \circ A_{8} } 
\end{align}
where the right hand side can be deduced from analyzing the directed graph $\vec{\mathcal{G}}_{\pi}$ as follows:
\begin{enumerate}
  	\item The corresponding directed graph $\vec{\mathcal{G}}_{\pi}$ has exactly three connected components, two cycles and one double-loop, and can be represented as 
  	\begin{center}\includegraphics[scale=1]{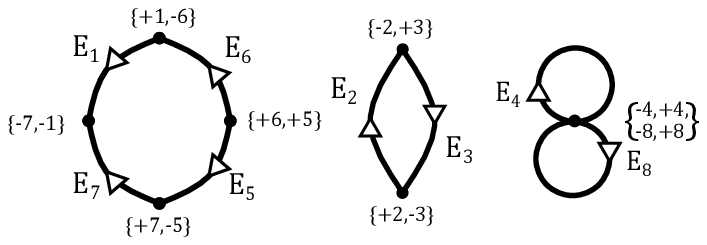}\end{center}
  	Each cycle and each one multiple-loop gives rise to a trace in the right hand side of (\ref{exmp.graph.sum.cycles.1}). 
  	\item If a connected component of $\vec{\mathcal{G}}_{\pi}$ is a cycle, we unfold it to obtain a horizontal line and replace each edge $E_{k}$ by the matrix $A_{k}$ if the direction of $E_{k}$ goes from right to left in the horizontal line, otherwise, we replace $E_{k}$ by $A^{T}_{k}$, the transpose of $A_{k}$. 
  	We then put the matrices $A_k$ or $A_{k}^{T}$ in a trace $\Tr{\cdot}$ as they appear when we read the resulting horizontal line from left to right.   
  	For instance, the longest cycle of $\vec{\mathcal{G}}_{\pi}$ gives 
  	\begin{center}\includegraphics[scale=1]{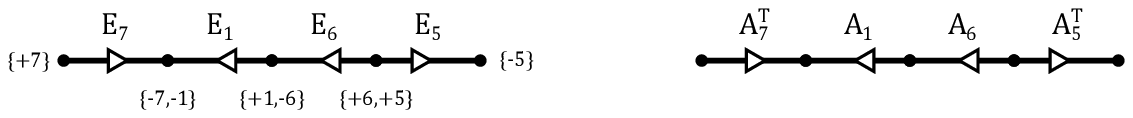}\end{center}
  	And so, we obtain the trace $\Tr{	A_{7}^{T}	A_{1}^{}	A_{6}^{}	A_{5}^{T}	}$ in (\ref{exmp.graph.sum.cycles.1}). 
  	Note that $\Tr{ A_{1}^{}	A_{6}^{}	A_{5}^{T} 	A_{7}^{T}	}$  and $\Tr{ A_{2}^{}	A_{2}^{} }$  do not depend on how the cycles in  $\vec{\mathcal{G}}_{\pi}$ are unfolded since for any matrices $A,B \in  \mathrm{Mat}_N(\mathbb{C})$ we have $\Tr{AB} = \Tr{BA}$, $\Tr{A} = \Tr{A^{T}}$, and $(AB)^{T} = B^{T}A^{T}$. 
  	\item On the other hand, a multiple-loop in $\vec{\mathcal{G}}_{\pi}$ with edges $E_{k_{1}},E_{k_{2}},\ldots,E_{k_{n}}$ yields to the trace of the Hadamard product of $A_{k_{1}},A_{k_{2}},\ldots,A_{k_{n}}$. 
  	This way, we get  $\Tr {A_{4} \circ A_{8}}$ in (\ref{exmp.graph.sum.cycles.1}). 
\end{enumerate}
Thus, if $\pi$ is now given by 
\[ \pi = \{\{-1,+6\},\{+1,-6\},\{-2,-7\},\{+2,+7\},\{-3,+3,-5,+5\},\{-4,+4\}\}, \]
the corresponding directed graph $\vec{\mathcal{G}}_{\pi}$ can be represented as 
\begin{center}\includegraphics[scale=1]{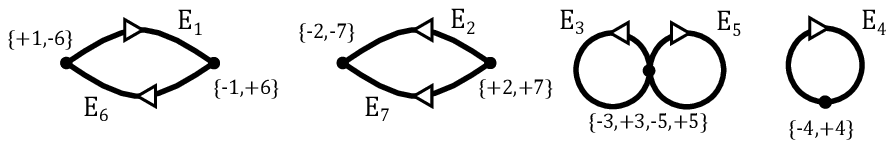}\end{center}
and hence, we obtain 
\[
\sum_{\substack{ \mathbf{j} :[\pm 7]\rightarrow [N] \\  		\kernel{\mathbf{j}} \geq \pi } } \prod_{k=1}^{7} A_{k}(j_{-k},j_{k}) = 
\Tr{A_{1} A_{6}}  \Tr{A^{}_{2} A^{T}_{7}}  \Tr{A^{}_{3} \circ A^{}_{5}} \Tr{A^{}_{4} } .
\]

\subsection{Bounds of graph sums of The Discrete Fourier Transform matrix}\label{subsec.graph.sums.dft}\hspace*{\fill}\newline 

Although the bound for graph sums given by Theorem \ref{mingo-speicher} is optimal in the set of all square matrices, it might not optimal for some graph sums involving the Discrete Fourier Transform matrix. 
Let us recall that the $N$-by-$N$ Discrete Fourier Transform matrix is the symmetric matrix $H$ with entries given by 
\begin{align}\label{def.DFT.matrix}
H(j_{1},j_{2}) =  \omega^{(j_{1}-1)(j_{2}-1)} 
\end{align}
where $\omega = \exp (-\frac{2 \pi}{N} \sqrt{-1} ) $ is a primitive $N$-th root of unity. 
Now, letting $\mathbf{h}(\mathbf{j})$ be given by 
\begin{align}\label{dfn.function.h}
\mathbf{h}(\mathbf{j}) 
	&=
		\prod_{k = 1 }^{m} 
			H(j_{-2k+1},j_{2k-1}) 	  H^{*}(j_{-2k},j_{2k}) 
\end{align}
for each function $\mathbf{j} : [\pm 2m] \rightarrow [N]$, Theorem \ref{mingo-speicher} gives us that
\begin{align}\label{eqn.DFT.graph.exponent}
\abs*{ \sum_{\substack{ 
			\mathbf{j} :[\pm 2m]\rightarrow [N] \\  
			\kernel{\mathbf{j}} \geq \pi } }
	\mathbf{h}(\mathbf{j}) 
}
\leq 
N^{\tau_{\pi}}  \prod_{k=1}^{m} \norm{H}  \prod_{k=1}^{m} \norm{H^{*}} =
N^{m+\tau_{\pi}} 
\end{align}
for any partition $\pi \in P(\pm 2m)$; on the other hand, since $	\mathbf{h}(\mathbf{j})$ has absolute value $1$, we also obtain
\begin{align}\label{eqn.DFT.number.blocks}
\abs*{ \sum_{\substack{ 
			\mathbf{j} :[\pm 2m]\rightarrow [N] \\  
			\kernel{\mathbf{j}} \geq \pi } }
	\mathbf{h}(\mathbf{j}) 
}
\quad 
\leq  \sum_{\substack{ 
			\mathbf{j} :[\pm 2m]\rightarrow [N] \\  
			\kernel{\mathbf{j}} \geq \pi } }
			1
\quad  = \quad 
		N^{\#(\pi)} .
\end{align} 
Thus, if $\pi$ is the partition $\{\{2k-1,-2k+1,2k,-2k\} \mid k = 1,2,\ldots, m\}$, then the graph sum exponent $\tau_{\pi}$ equals $m$, and hence $\tau_{\pi} + m = 2m$, but also $\#(\pi) = m$, so (\ref{eqn.DFT.number.blocks}) is a sharper bound than (\ref{eqn.DFT.graph.exponent}) in this case. 
In general, we prefer (\ref{eqn.DFT.number.blocks}) over (\ref{eqn.DFT.graph.exponent}) since (\ref{eqn.DFT.number.blocks}) is invariant under re-labeling of the entries of $H$ and $H^{*}$ in (\ref{dfn.function.h}), namely, if $\sigma:[\pm 2m] \rightarrow [\pm 2m]$ is bijective and we take
\begin{align*}
\mathbf{h}(\mathbf{j}\circ\bm{\sigma})
= 
\prod_{k = 1 }^{m} 
H(j_{\sigma(-2k+1)},j_{\sigma(2k-1)}) 	  H^{*}(j_{\sigma(-2k)},j_{\sigma(2k)})
\end{align*} 
for any function $\mathbf{j} : [\pm 2m] \rightarrow [N]$, then the inequality in (\ref{eqn.DFT.number.blocks}) implies
\begin{align*}
\abs*{
	\sum_{\substack{	\mathbf{j} :[\pm 2 m]\rightarrow [N] \\  
			\kernel{\mathbf{j}} \geq \pi } }
	\mathbf{h}(\mathbf{j}\circ\bm{\sigma})
} 
\leq  
N^{\#(\sigma^{-1} \circ \pi)}  = N^{\#(\pi)} 
\end{align*}
since we have the relation  	 
\begin{align}\label{eqn.shifted.DFT.graph.sums.alternative}
\sum_{\substack{	\mathbf{j} :[\pm 2 m]\rightarrow [N] \\  
		\kernel{\mathbf{j}} \geq \pi } }
\mathbf{h}(\mathbf{j}\circ\bm{\sigma})
\quad =
\sum_{\substack{	\mathbf{j} :[\pm 2 m]\rightarrow [N] \\  
		\kernel{\mathbf{j}} \geq \sigma^{-1} \circ \pi 
} }
\mathbf{h}(\mathbf{j}). 
\end{align}
Moreover,  in the proof of Theorem \ref{thm.second.order.asymptotics}, we will need to consider sums of the form 
\begin{align}\label{eqn.shifted.DFT.injective.graph.sums}
\sum_{\substack{	\mathbf{j} :[\pm 2 m]\rightarrow [N] \\  
		\kernel{\mathbf{j}} = \pi } }
\mathbf{h}(\mathbf{j}\circ\bm{\sigma})
\quad =
\sum_{\substack{	\mathbf{j} :[\pm 2 m]\rightarrow [N] \\  
		\kernel{\mathbf{j}} = \sigma^{-1} \circ \pi 
} }
\mathbf{h}(\mathbf{j})
\end{align}
where $m=m_{1}+m_{2}$ for some integers $m_{1},m_{2} \geq 1$ and $\sigma:[\pm 2m] \rightarrow [\pm 2m]$ is the permutation with cycle decomposition given by 
\begin{align}\label{def.permutation.sigma}
\sigma = (-1,1,-2,2,\ldots,-2m_{1},2m_{1})(-2m_{1}-1,2m_{1}+1,\ldots,-2m_{1}-2m_{2},2m_{1}+2m_{2}).   
\end{align}
Although the sum in (\ref{eqn.shifted.DFT.injective.graph.sums}) is not a graph sum, it can be determined up to a term of order $N^{\#{\pi}-1}$ analyzing (\ref{eqn.shifted.DFT.graph.sums.alternative}) since for every partition $ \pi \in P(\pm 2m)$ we have 
\begin{align}\label{eqn.graph.sums.h.injective.vs.non-injective}
\sum_{\substack{
		\mathbf{j}:[\pm {2m} ] \rightarrow [N] \\ 
		\kernel{\mathbf{j}}=\pi}} 
\mathbf{h}(\mathbf{j}) 
\quad = 
\sum_{\substack{
		\mathbf{j}:[\pm {2m} ] \rightarrow [N] \\ 
		\kernel{\mathbf{j}} \geq \pi}} 
\mathbf{h}(\mathbf{j})
\quad -
\sum_{\substack{
		\theta \in P( \pm {2m} ) \\ 
		\theta > \pi }}
\sum_{\substack{
		\mathbf{j}:[\pm {2m} ] \rightarrow [N] \\ 
		\kernel{\mathbf{j}}=\theta}} 
\mathbf{h}(\mathbf{j}). 
\end{align}
The rest of this section is devoted to find and classify partitions $\pi$ such that (\ref{eqn.DFT.number.blocks}) becomes an equality. 
To do that, let us first associate a polynomial to each partition $\pi \in P (\pm 2m)$. 

\subsubsection*{\textbf{The polynomial} $\bm{\mathrm{p}_{\pi}}$}
%
%
%
Given a partition $\pi=\{B_{1},B_{2},\ldots,B_{r}\} \in P(\pm 2m)$, we let   $\mathrm{p}_{\pi}(\mathrm{x}_{1},\mathrm{x}_{2},\ldots,\mathrm{x}_{r})$, or simply $\mathrm{p}_{\pi}$,  be the polynomial obtained from the expression
\begin{align}\label{eqn.initial.poly}
-\mathrm{x}_{-1}\mathrm{x}_{1} + \mathrm{x}_{-2}\mathrm{x}_{2} - \mathrm{x}_{-3}\mathrm{x}_{3} + \cdots + \mathrm{x}_{-2m}\mathrm{x}_{2m} 
\end{align}
after replacing each variable $\mathrm{x}_{k}$ by $\mathrm{x}_{l}$ whenever $k$ belongs to the block $B_{l}$.  
For instance, if $\pi=\{B_{1}=\{-1,3\},B_{2}=\{-3,1\},B_{3}=\{-2,2\},B_{4}=\{-4,4\}\}$, then 
\[\mathrm{p}_{\pi}(\mathrm{x}_{1},\mathrm{x}_{2},\mathrm{x}_{3},\mathrm{x}_{4})
=
-\mathrm{x}_{1}\mathrm{x}_{2} + \mathrm{x}_{3}\mathrm{x}_{3} - \mathrm{x}_{2}\mathrm{x}_{1} + \mathrm{x}_{4}\mathrm{x}_{4} . \]
Equivalently, the polynomial  $\mathrm{p}_{\pi}(\mathrm{x}_{1},\mathrm{x}_{2},\ldots,\mathrm{x}_{r})$  is the image of (\ref{eqn.initial.poly}) under the unique homomorphism from 
$\mathbb{Z}\left[\mathrm{x}_{-1},\mathrm{x}_{1},\ldots,\mathrm{x}_{-2m},\mathrm{x}_{2m}\right]$ to $\mathbb{Z}[\mathrm{x}_{1}, \mathrm{x}_{2}, \ldots, \mathrm{x}_{r}]$
such that $\mathrm{x}_{k} \mapsto \mathrm{x}_{l} $ whenever $k \in B_{l}$. 
Note that $\mathrm{p}_{\pi}(\mathrm{x}_{1},\mathrm{x}_{2},\ldots,\mathrm{x}_{r})$  has degree either $0$ or $2$ and can also be explicitly defined as
\begin{align} \label{quotient.polynomial}
\mathrm{p}_{\pi} (\mathrm{x}_1, \mathrm{x}_2,\ldots, \mathrm{x}_r) = 
\sum_{1 \leq t \leq s \leq r} a_{t,s} \mathrm{x}_{t} \mathrm{x}_{s}
\end{align}
where
\begin{align}\label{quotient.polynomial.coefficients}
 a_{t,t} =	\sum_{\substack{ k \in [2m] \\ -k , k \in  B_t }}
(-1)^{k} \quad \text{and} \quad
a_{t,s} = \sum_{\substack{ k \in [2m] \\ -k \in B_t , k \in B_s }}
(-1)^{k} +
\sum_{\substack{ l \in [2m] \\  l \in B_t , -l \in 	B_s }}
(-1)^{l} \quad \text{ for } t\neq s ;  
\end{align}
moreover, $\mathrm{p}_{\pi}(\mathrm{x}_{1},\mathrm{x}_{2},\ldots,\mathrm{x}_{r})$ satisfies the relation 
\begin{align}\label{eqn.function.h.gauss.sum.poly}
\sum_{\substack{		\mathbf{j} :[\pm 2 m]\rightarrow [N] \\  
		\kernel{\mathbf{j}} \geq \pi } }
\mathbf{h}(\mathbf{j})	
=
\sum_{j_1,j_2,\ldots,j_r=0}^{N-1} 
e^{\frac{2 \pi \sqrt{-1}}{N} \mathrm{p}_{\pi}(j_1,j_2,\ldots,j_r) }. 
\end{align}
Therefore, (\ref{eqn.DFT.number.blocks}) becomes an equality precisely when $\mathrm{p}_{\pi}(\mathrm{x}_{1},\mathrm{x}_{2},\ldots,\mathrm{x}_{r})$ is the zero polynomial. 
On the other hand, if $\mathrm{p}_{\pi}(\mathrm{x}_{1},\mathrm{x}_{2},\ldots,\mathrm{x}_{r})$ is a non-zero polynomial, we can then find a sharper bound than (\ref{eqn.DFT.number.blocks}) via the reciprocity theorem for generalized Gauss sums, see \cite[Section 1.2]{berndt1998gauss} for a proof of this theorem.

\begin{reciprocity.law}
	Suppose $a,b,c$ are integers with $a, c\neq 0$ and $ac+b$ even. Then
	\begin{align}\label{def.gauss.sums}
	S(a,b,c) 
	: =\sum_{j=0}^{\abs*{c}-1} e^{ \pi \sqrt{-1} \frac{a j^2+bj}{c}}
	=
	\abs*{\frac{c}{a}}^{\frac{1}{2}} e^{\pi \sqrt{-1} \frac{\abs*{ac} -  b^2}{4ac}}
	\sum_{j=0}^{\abs*{a}-1} e^{ \pi \sqrt{-1} \frac{-cj^2 - bj}{a}}
	\end{align}
\end{reciprocity.law}
%
%
%
%

\begin{proposition}\label{bound.gauss.sum.1}
	If $\mathrm{p}(\mathrm{x}_{1},\mathrm{x}_{2},\ldots,\mathrm{x}_{r})$ is a non-zero polynomial of degree at most 2 in $\mathbb{Z} \left[  \mathrm{x}_1, \mathrm{x}_2, \ldots, \mathrm{x}_r \right] $,  
	then there exist a constant $C_{\mathrm{p}}$ independent of $N$ such that
	\begin{align*}
		\abs*{ 	\sum_{j_1,j_2,\ldots,j_r=0}^{N-1} 
			e^{-\frac{2 \pi \sqrt{-1}}{N} \mathrm{p}(j_1,j_2,\ldots,j_r) } }
		\leq
		C_\mathrm{p} N^{r-\frac{1}{2}}
		\text{ .}
	\end{align*}
\end{proposition}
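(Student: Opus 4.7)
The strategy is to single out one variable $\mathrm{x}_k$ on which $\mathrm{p}$ depends genuinely, perform the inner sum over $j_k$ first while holding the remaining $j_l$ fixed, and bound that one-dimensional exponential sum by $O(\sqrt{N})$ using the reciprocity theorem for generalized Gauss sums stated above. Multiplying by the trivial $N^{r-1}$ bound on the outer sum will then deliver the claimed $C_\mathrm{p}\,N^{r-1/2}$.

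The main case is when some diagonal coefficient $a_{k,k}$ of $\mathrm{p}$ is non-zero. I would pick $k$ accordingly, write $\mathrm{p}=a\,\mathrm{x}_k^2+b\,\mathrm{x}_k+c$ with $a=a_{k,k}\ne 0$ and $b,c$ polynomials in the remaining variables of degree at most $1$ and $2$ respectively, and observe that for each fixed tuple of integer values $(j_l)_{l\ne k}$ the inner sum over $j_k$ factors as $e^{-2\pi i\,c(j_l)/N}\cdot S(-2a,\,-2b(j_l),\,N)$, where $S$ is the generalized Gauss sum from the reciprocity theorem; the parity hypothesis $(-2a)N+(-2b(j_l))$ even is automatic. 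Applying the reciprocity theorem rewrites $S(-2a,-2b,N)$ as $\sqrt{N/(2|a|)}$ times a unit phase times a sum of $2|a|$ unit-modulus terms, yielding $|S(-2a,-2b,N)|\le\sqrt{2|a|N}$ uniformly in $(j_l)_{l\ne k}$; summing over the remaining $N^{r-1}$ tuples then produces the bound $\sqrt{2|a|}\,N^{r-1/2}$.

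When every diagonal coefficient of $\mathrm{p}$ vanishes, $\mathrm{p}$ is built only from cross terms $\mathrm{x}_k\mathrm{x}_l$ with $k\ne l$ and lower-degree terms. Picking $k$ so that some term of $\mathrm{p}$ involves $\mathrm{x}_k$, I would write $\mathrm{p}=b\,\mathrm{x}_k+c$ with $b$ a non-zero polynomial in the remaining variables of degree at most one and $c$ of degree at most two; the inner sum over $j_k$ then equals either $N$ or $0$ according to whether $b(j_l)\equiv 0\pmod N$ or not, by orthogonality. Since $b$ has degree at most one with integer coefficients bounded by a constant depending only on $\mathrm{p}$, the congruence $b(j_l)\equiv 0\pmod N$ has at most $O(N^{r-2})$ solutions (either by solving one linear congruence in one of the variables and freely enumerating the rest, or, when $b$ is a non-zero integer constant, by noting that the congruence has no solutions for $N>|b|$ and absorbing the finitely many small $N$ into $C_\mathrm{p}$), so the total is $O(N^{r-1})$, which is sharper than needed. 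The only real obstacle I foresee is verifying the parity hypothesis and running the two-case split; the core idea --- reducing to a one-variable Gauss sum and invoking the reciprocity theorem --- is direct.
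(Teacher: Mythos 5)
Your proposal is correct and follows essentially the same route as the paper's proof: single out one variable, evaluate the inner sum as a generalized Gauss sum and apply the reciprocity theorem when the quadratic coefficient in that variable is non-zero (giving $O(\sqrt{N})$ per inner sum, hence $O(N^{r-1/2})$ overall), and otherwise use orthogonality of characters together with a count of the solutions of a linear congruence (giving the even sharper bound $O(N^{r-1})$). The only cosmetic difference is that the paper writes $\mathrm{p}=\mathrm{x}_1\mathrm{q}_{1}+\mathrm{q}_{2}$ with $\mathrm{q}_{1}$ a homogeneous linear form and splits on whether $\alpha_1=0$, which amounts to exactly your decomposition $a\mathrm{x}_k^2+b\mathrm{x}_k+c$; both arguments, like yours, tacitly assume $\mathrm{p}$ is non-constant, which is harmless since the polynomials $\mathrm{p}_{\pi}$ to which the proposition is applied are homogeneous of degree $2$.
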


\begin{proof}
Suppose
$\mathrm{p}(\mathrm{x}_{1},\ldots,\mathrm{x}_{r}) \in \mathbb{Z} \left[  \mathrm{x}_1,  \ldots, \mathrm{x}_r \right]$ 
is a non-zero polynomial of degree at most 2. 
Without loss of generality, we can assume that there is a non-zero linear polynomial 
$\mathrm{q}_{1}(\mathrm{x}_{1},\ldots,\mathrm{x}_{r})
= \alpha_{1} \mathrm{x}_{1} + \alpha_{2} \mathrm{x}_{2} +\cdots+ \alpha_{r} \mathrm{x}_{r}  \in \mathbb{Z} \left[  \mathrm{x}_{1}, \mathrm{x}_{2}, \ldots, \mathrm{x}_{r} \right] $
and a polynomial 
$\mathrm{q}_{2}(\mathrm{x}_{2},\ldots,\mathrm{x}_{r}) \in \mathbb{Z} \left[  \mathrm{x}_{2}, \mathrm{x}_{3}, \ldots, \mathrm{x}_{r} \right]  $ of degree at most 2 
such that
\begin{align*}
\mathrm{p}(\mathrm{x}_{1},\ldots,\mathrm{x}_{r})
=
\mathrm{x}_{1} \mathrm{q}_{1}(\mathrm{x}_{1},\ldots,\mathrm{x}_{r}) +
\mathrm{q}_{2}(\mathrm{x}_{2},\ldots,\mathrm{x}_{r}).
\end{align*}
Since we have the inequality
\begin{align*}
\abs*{ 	\sum_{j_1,j_2,\ldots,j_r=0}^{N-1} 
	e^{-\frac{2 \pi \sqrt{-1}}{N} \mathrm{p}(j_1,j_2,\ldots,j_r) }  }
\leq
\sum_{j_2,\ldots,j_r=0}^{N-1}
\abs*{\sum_{j_1=0}^{N-1} 
	e^{-\frac{2 \pi \sqrt{-1}}{N}j_1 
		\mathrm{q}_{1}(j_1,j_2,\ldots,j_r)}	},
\end{align*}
we only need to show that there is a constant $C_{\mathrm{p}}$ independent from $N$ such that
\begin{align*}
\sum_{j_2,\ldots,j_r=0}^{N-1}
\abs*{\sum_{j_1=0}^{N-1} 
	e^{-\frac{2 \pi \sqrt{-1}}{N}j_1 \mathrm{q}_{1}(j_1,j_2,\ldots,j_r)}	} 
\leq 
C_{\mathrm{p}} N^{r-\frac{1}{2}} .
\end{align*}
Suppose $\alpha_{1} \neq 0$. 
Then, we have that
\begin{align*}
\sum_{j_1=0}^{N-1} e^{-\frac{2 \pi \sqrt{-1}}{N} j_1 q(j_1,j_2,\ldots,j_m)}
=	
\sum_{j_1=0}^{N-1} e^{ \pi \sqrt{-1} \frac{-2 \alpha_1 j_1^2 -2 \sum_{k=2}^{r}\alpha_k j_k j_1}{N}}
=
S\left(-2\alpha_1,- 2 \sum_{k=2}^{r}\alpha_k j_k, N \right) 
\end{align*}
where $S(a,b,c)$ denotes the generalized Gauss quadratic sum as in (\ref{def.gauss.sums}). 
Thus, by the reciprocity theorem for generalized Gauss sums, we get 
\begin{align*}
\abs*{ S \left(-2\alpha_1,- 2 \sum_{k=2}^{r}\alpha_k j_k, N \right) }
\leq
\abs*{\frac{N}{-2a_{1}}}^{\frac{1}{2}} \abs*{-2a_{1}} 
= 
\abs*{2 \alpha_{1} N}^{\frac{1}{2}},
\end{align*}
and therefore, we obtain
\begin{align*}
\sum_{j_2,\ldots,j_r=0}^{N-1}
\abs*{\sum_{j_1=0}^{N-1} 
	e^{-\frac{2 \pi \sqrt{-1}}{N}j_1 \mathrm{q}_{1}(j_1,j_2,\ldots,j_r)}	} 
\leq 
\abs*{2 \alpha_{1}}^{\frac{1}{2}} N^{r-\frac{1}{2}}.
\end{align*}
Now, suppose $\alpha_{1} = 0$. 
Recall that 
\begin{align*}
\sum_{j_1=0}^{N-1} e^{-\frac{2 \pi \sqrt{-1}}{N} j_1 \mathrm{q}_{1}(j_1,j_2,\ldots,j_r)}	
=
\left\{\begin{array}{cl}
N, & \text{ if } \mathrm{q}_{1}(j_1,\ldots,j_r) 
= \sum_{l=2}^{r} \alpha_{l} j_{l} 
\equiv 0 \mod N \\
0, & \text{ otherwise}
\end{array}\right. 
\end{align*}
So, we have
\begin{align*}
& \sum_{j_2,\ldots,j_r=0}^{N-1}
\abs*{\sum_{j_1=0}^{N-1} 
	e^{-\frac{2 \pi \sqrt{-1}}{N}j_1 \mathrm{q}_{1}(j_1,\ldots,j_r)}	} 
= \nonumber \\ &
N \cdot \#\left\{ (j_2,\ldots,j_r) \in [0,N-1]^{r-1} :
\sum_{l=2}^{r} \alpha_{l} j_{l} \equiv 0  \mod N  \right\} .
\end{align*}
But, since the polynomial $\mathrm{q}_{1}(\mathrm{x}_{1},\ldots,\mathrm{x}_{r}) = \alpha_{1} \mathrm{x}_{1}  +\cdots+ \alpha_{r} \mathrm{x}_{r}$ is non-zero, we must have $\alpha_{k} \neq 0$ for some $k \neq 1$, and hence, the equation 
\begin{align*}
\alpha_{k} \mathrm{x} + \beta \equiv 0 \mod N
\end{align*}
has at most $\abs*{\alpha_{k}}$ solutions in the set $\{0,1,\ldots,N-1\}$ for any given integer $\beta$. 
Thus, we have
\begin{align*}
\#\left\{ (j_2,\ldots,j_r) \in [0,N-1]^{r-1} : 
\alpha_{k} j_{k} + \sum_{\substack{l=2 \\ l \neq k }}^{r} \alpha_{l} j_{l} \equiv 0  \mod N  \right\}
\leq 	
\abs*{\alpha_{k}} N^{r-2},
\end{align*}
and therefore, we get
\begin{align*}
\sum_{j_2,\ldots,j_r=0}^{N-1}
\abs*{\sum_{j_1=0}^{N-1} 
	e^{-\frac{2 \pi \sqrt{-1}}{N}j_1 \mathrm{q}_{1}(j_1,j_2,\ldots,j_r)}	} 
\leq 
\abs*{\alpha_{k}}  N^{r-1}.
\end{align*}
\end{proof}
%
%
%
%
%
As an immediate consequence from  (\ref{eqn.graph.sums.h.injective.vs.non-injective}), (\ref{eqn.function.h.gauss.sum.poly}), and Proposition \ref{bound.gauss.sum.1}, we have the following. 
%
\begin{corollary}\label{cor.bound.gauss.sum}
	If $\mathrm{p}_{\pi}(\mathrm{x}_{1},\mathrm{x}_{2},\ldots,\mathrm{x}_{r})$ is a non-zero polynomial for some partition $\pi = \{B_{1},B_{2},\ldots,B_{r}\} \in P(\pm 2m)$, then there is a constant $C_{}$ independent from $N$ so that  
	\begin{align*}
	\abs*{ \sum_{\substack{
				\mathbf{j}:[\pm {2m} ] \rightarrow [N] \\ 
				\kernel{\mathbf{j}} = \pi}} 
		\mathbf{h}(\mathbf{j}) }
	\leq 
	C_{} N ^{\#(\pi) - \frac{1}{2}} .
	\end{align*}
\end{corollary}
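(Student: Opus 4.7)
The plan is to derive the bound from the three ingredients cited just before the statement: the Möbius-type identity in equation (\ref{eqn.graph.sums.h.injective.vs.non-injective}), the Gauss-sum representation in equation (\ref{eqn.function.h.gauss.sum.poly}), and Proposition \ref{bound.gauss.sum.1}. The idea is that the restricted sum over $\kernel{\mathbf{j}} = \pi$ can be written as the full graph sum over $\kernel{\mathbf{j}} \geq \pi$ minus the contributions from strictly coarser partitions, and each of those pieces is small enough on its own.

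First, I would apply the triangle inequality to equation (\ref{eqn.graph.sums.h.injective.vs.non-injective}) to obtain
\begin{align*}
\left\lvert \sum_{\substack{\mathbf{j}:[\pm 2m] \to [N] \\ \kernel{\mathbf{j}} = \pi}} \mathbf{h}(\mathbf{j}) \right\rvert
\leq
\left\lvert \sum_{\substack{\mathbf{j}:[\pm 2m] \to [N] \\ \kernel{\mathbf{j}} \geq \pi}} \mathbf{h}(\mathbf{j}) \right\rvert
+ \sum_{\substack{\theta \in P(\pm 2m) \\ \theta > \pi}} \left\lvert \sum_{\substack{\mathbf{j}:[\pm 2m] \to [N] \\ \kernel{\mathbf{j}} = \theta}} \mathbf{h}(\mathbf{j}) \right\rvert.
\end{align*}
Then I would handle the first term on the right by invoking equation (\ref{eqn.function.h.gauss.sum.poly}), which rewrites the graph sum as the generalized Gauss sum $\sum_{j_1,\ldots,j_r=0}^{N-1} e^{\frac{2\pi\sqrt{-1}}{N}\mathrm{p}_\pi(j_1,\ldots,j_r)}$, where $r = \#(\pi)$. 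Because the hypothesis forces $\mathrm{p}_\pi$ to be a non-zero integer polynomial of degree at most $2$, Proposition \ref{bound.gauss.sum.1} applies and bounds this quantity by $C_\pi N^{\#(\pi) - 1/2}$ for some constant $C_\pi$ independent of $N$.

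For the remaining terms I would use only the trivial facts that $\lvert \mathbf{h}(\mathbf{j}) \rvert = 1$ for every $\mathbf{j}$ and that the number of $\mathbf{j}:[\pm 2m] \to [N]$ with $\kernel{\mathbf{j}} = \theta$ is at most $N^{\#(\theta)}$. Since $\theta > \pi$ forces $\#(\theta) \leq \#(\pi) - 1$, each inner sum contributes at most $N^{\#(\pi) - 1}$, and since the set $\{\theta \in P(\pm 2m) : \theta > \pi\}$ is finite with cardinality depending only on $m$ and $\pi$, the total contribution of these terms is $O(N^{\#(\pi) - 1})$, which is absorbed into $O(N^{\#(\pi) - 1/2})$. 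Combining the two estimates yields the claim with $C = C_\pi + \lvert \{\theta \in P(\pm 2m) : \theta > \pi\} \rvert$.

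Honestly there is no serious obstacle here since the non-trivial analytic input, namely the square-root cancellation for non-degenerate quadratic exponential sums, has already been extracted in Proposition \ref{bound.gauss.sum.1}; the only thing one must be careful about is that the polynomial $\mathrm{p}_\theta$ associated to a coarser $\theta$ may very well be zero, but this is irrelevant because we use only the trivial counting bound on those terms rather than a recursive Gauss-sum estimate, and the exponent $r$ in Proposition \ref{bound.gauss.sum.1} matches $\#(\pi)$ by construction of $\mathrm{p}_\pi$.
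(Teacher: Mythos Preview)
Your proof is correct and follows exactly the route the paper indicates: the corollary is stated as an immediate consequence of (\ref{eqn.graph.sums.h.injective.vs.non-injective}), (\ref{eqn.function.h.gauss.sum.poly}), and Proposition \ref{bound.gauss.sum.1}, and your argument spells out precisely this deduction, including the correct observation that the coarser terms need only the trivial bound $N^{\#(\theta)} \leq N^{\#(\pi)-1}$.
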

%
%
%
\begin{comment}
\begin{proof}
	 
\end{proof}
%
%
%

The next two propositions establish necessary and sufficient conditions for $\mathrm{p}_{\pi}(\mathrm{x}_{1},\mathrm{x}_{2},\ldots,\mathrm{x}_{r})$ to be the  zero polynomial.
Roughly speaking, the polynomial $\mathrm{p}_{\pi}(\mathrm{x}_{1},\mathrm{x}_{2},\ldots,\mathrm{x}_{r})$ is zero if only and if the blocks of the partition $\pi$ group the elements of the set $[\pm 2m]$ in such a way that the positive and negative signs appearing in (\ref{eqn.initial.poly}) cancel each other out. 
%
%
%

\begin{proposition}\label{necessary.condt.special.partitions}
	Suppose $\pi = \{B_{1}, B_{2}, \ldots, B_{2m}\}$ is a pairing partition in $P( \pm 2 m )$.
	Then the polynomial $\mathrm{p}_{\pi} (\mathrm{x}_{1},\mathrm{x}_{2},\ldots,\mathrm{x}_{2m})$
	is zero if and only if $\pi$ is a symmetric partition such that $k \sim_\pi l$ implies $k + l$ odd for all integers $k,l \in [\pm 2 m]$.
\end{proposition}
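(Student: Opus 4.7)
The plan is to analyze both directions via the explicit formula \eqref{quotient.polynomial}--\eqref{quotient.polynomial.coefficients} for the coefficients $a_{t,s}$, exploiting the fact that each block of a pairing has exactly two elements, so the interaction between two distinct blocks is very restricted.

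First I would unpack the coefficients under the pairing assumption. For each block $B_t$, the diagonal coefficient $a_{t,t}$ is nonzero only when $B_t$ contains the full opposite pair $\{-k,k\}$ for some $k\in[2m]$, in which case $a_{t,t}=(-1)^k\neq 0$. For two distinct blocks $B_t,B_s$, the off-diagonal coefficient $a_{t,s}$ is a sum of $(-1)^k$ over precisely those $k\in[2m]$ for which the opposite pair $\{-k,k\}$ is split between $B_t$ and $B_s$ (one element in each). Since $|B_t|=|B_s|=2$, at most two opposite pairs can be split between them: either $0$, $1$, or $2$. Moreover, if exactly two opposite pairs $\{-k_1,k_1\}$ and $\{-k_2,k_2\}$ are split, then $B_t\cup B_s=\{-k_1,k_1,-k_2,k_2\}$ and the splitting forces $B_s=-B_t:=\{-a\mid a\in B_t\}$.

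For the direction $(\Leftarrow)$, assume $\pi$ is symmetric with every block $\{a,b\}$ satisfying $a+b$ odd. Then no block can be a ``diameter'' $\{-k,k\}$, because $k+(-k)=0$ is even; hence all $a_{t,t}=0$. For distinct $B_t,B_s$, I would rule out the ``one split pair'' case using symmetry: if the pair $\{-a,a\}$ were split with $a\in B_t$, $-a\in B_s$, then by symmetry $-B_t=\{-a,-b\}$ is also a block of $\pi$; since $-a$ lies in a unique block, this forces $B_s=-B_t$, putting us in the two-split-pair case. In that case $a_{t,s}=(-1)^{|a|}+(-1)^{|b|}$, and the odd-sum condition makes $|a|$ and $|b|$ of opposite parity, so $a_{t,s}=0$. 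If no opposite pair is split, $a_{t,s}=0$ trivially. Hence $\mathrm{p}_\pi=0$.

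For the direction $(\Rightarrow)$, assume $\mathrm{p}_\pi=0$, so every $a_{t,s}$ vanishes. Vanishing of the diagonal coefficients immediately rules out diameter blocks. Now fix a block $B_t=\{a,b\}$ with $a\neq -b$. The element $-a$ lies in some block $B_s$, necessarily with $s\neq t$, and this splits $\{-a,a\}$ between $B_t$ and $B_s$. If this were the only opposite pair split between $B_t$ and $B_s$, then $a_{t,s}=(-1)^{|a|}\neq 0$, contradicting $\mathrm{p}_\pi=0$. Hence a second opposite pair must also be split between them; since $B_t$ has only the remaining element $b$, this forces $-b\in B_s$, giving $B_s=\{-a,-b\}=-B_t$. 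This is the symmetry of $\pi$. The relation $a_{t,s}=(-1)^{|a|}+(-1)^{|b|}=0$ then forces $|a|,|b|$ to have opposite parity, equivalently $a+b$ odd, which is the required condition on the sums.

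The main obstacle is the case-analysis step showing that the ``exactly one split pair'' configuration is incompatible with $\mathrm{p}_\pi=0$; once this is handled (using symmetry to pin down the second element of $B_s$), everything else follows by straightforward bookkeeping of the signs $(-1)^k$. I would be careful throughout to work with the invariant $|a|+|b|\equiv a+b\pmod 2$, so that the parity statement transfers cleanly between the indices $|a|,|b|$ appearing in the signs $(-1)^{|a|}$ and the block sums $a+b$ appearing in the statement.
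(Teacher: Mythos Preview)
Your proposal is correct and follows essentially the same approach as the paper: both directions are obtained by reading off the explicit coefficients $a_{t,s}$ in \eqref{quotient.polynomial.coefficients} and exploiting that, for a pairing, at most two opposite pairs $\{-k,k\}$ can be split between any two blocks. The only cosmetic difference is that the paper's forward direction starts from an element $k\in[2m]$ and locates the blocks containing $k$ and $-k$, whereas you start from a block $B_t$ and locate the block containing $-a$; this is the same argument viewed from the two ends.
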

%
%
%
\begin{proof}
	Suppose  $\mathrm{p}_{\pi} (\mathrm{x}_{1},\mathrm{x}_{2},\ldots,\mathrm{x}_{2m})$ is the zero polynomial and take $a_{t,s}$  as (\ref{quotient.polynomial.coefficients}) for $1 \leq t \leq s \leq 2m$. 
	To prove $\pi$ is a symmetric partition such that $k \sim_\pi l$ implies $k + l$ odd for all integers $k,l \in [\pm 2 m]$, it suffices to show that for every integer $k \in [2m]$ there exist an integer $l \in [2m]$ such that $k+l$ is odd and either 
	$k \sim_\pi l$ and $-k \sim_\pi -l$ or 
	$k \sim_\pi - l$ and $-k \sim_\pi l$.
	Fix $k \in [2m]$ and 
	let $t',s' \in [2m]$ such that $k \in B_{t'}$ and $-k \in B_{s'}$. 
	Since $\mathrm{p}_{\pi} (\mathrm{x}_1, \ldots, \mathrm{x}_{2m}) = \sum_{1 \leq t \leq s \leq r} a_{t,s} \mathrm{x}_{t} \mathrm{x}_{s}$ is the zero polynomial, we must have $a_{t',s'}= 0 $. 
	Now, if $t' \neq s'$, from (\ref{quotient.polynomial.coefficients}) we get that
	\begin{align*}
	a_{t',s'} = 
	(-1)^{k}
	+
	\sum_{\substack{ 	l \in [2m] \setminus \{k\} \\ 
			l \in B_{t'} , -l \in B_{s'} }}
	(-1)^{l} 
	+
	\sum_{\substack{ 	l \in [2m] \\ 
			-l \in B_{t'} , l \in B_{s'} }}
	(-1)^{l} 
	=
	0,
	\end{align*}
	which implies there exists $l \in [2m] $ such that $ (-1)^{k} + (-1)^{l}$ is zero and either $l \in B_{t'}$ and $-l \in B_{s'}$ or  $-l \in B_{t'}$ and $l \in B_{s'}$. But this is equivalent to the desired conclusion. A similar argument works for the case $s=t$.

	Now, if the partition $\pi$ is a symmetric pairing in $P(\pm 2m)$ such that $k \sim_\pi l$ implies $k + l$ odd for all integers $k,l \in [\pm 2 m]$, 
	we can write $\pi=\{B_{1},B_{2},\ldots,B_{2m}\}$ with $B_{1}=\{-k_{1},-l_{1}\},B_{2}=\{k_{1},l_{1}\}, B_{3}=\{-k_{2},-l_{2}\},B_{2}=\{k_{2},l_{2}\},\ldots,B_{2m}=\{k_{m},l_{2m}\}$
	and $k_{1},l_{1},k_{2},l_{2},\ldots,l_{m} \in [\pm 2m]$ satisfying $k_{i}+l_{i}$ odd for $i=1,2,\ldots, m$. 
	Moreover, since $\bigcup_{i=1}^{2m} B_{i}= [\pm 2m]$ and $(-1)^{k} = (-1)^{-k}$ for $k \in [\pm 2m]$, we have
	\[
	-\mathrm{x}_{-1} \mathrm{x}_{1} + \mathrm{x}_{-2} \mathrm{x}_{2} -\mathrm{x}_{-3} \mathrm{x}_{3} + \cdots  +\mathrm{x}_{-2m} \mathrm{x}_{2m}
	=
	\sum_{i=1}^{m} (-1)^{k_{i}} \mathrm{x}_{-k_{i}} \mathrm{x}_{k_{i}} +  (-1)^{l_{i}} \mathrm{x}_{-l_{i}} \mathrm{x}_{l_{i}}.  
	\]
	Therefore, from the definition of $\mathrm{p}_{\pi}(\mathrm{x}_{1},\mathrm{x}_{2},\ldots,\mathrm{x}_{2m})$ and the fact that $k_{i}+l_{i}$ is odd for $i=1,2,\ldots, m$, we get 
	\[
	\mathrm{p}_{\pi}(\mathrm{x}_{1},\mathrm{x}_{2},\ldots,\mathrm{x}_{2m})
	=
	\sum_{i=1}^{m} (-1)^{k_{i}} \mathrm{x}_{2i-1} \mathrm{x}_{2i} +  (-1)^{l_{i}} \mathrm{x}_{2i-1} \mathrm{x}_{2i} = 0.   
	\]
\end{proof}
%
%
%

\begin{proposition}\label{minimal.special.polynomials}
	Let $\pi = \{B_1, B_2, \ldots, B_n\}$ be a partition in $P( \pm 2 m )$.
	If there is a partition $\theta \in P( \pm 2 m )$ such that $\theta \leq \pi$ and $\mathrm{p}_{\theta}$ is the zero polynomial, then $\mathrm{p}_{\pi}$ is also the zero polynomial. 
	Conversely, if $\mathrm{p}_{\pi}$ is the zero polynomial, then there is symmetric pairing partition $\theta \leq \pi $ such that $\mathrm{p}_{\theta}$ is the zero polynomial.
\end{proposition}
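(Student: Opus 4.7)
The plan is to treat the two implications independently: the forward direction is almost immediate from how the polynomial $\mathrm{p}_{\pi}$ is defined through variable identifications, while the converse requires a combinatorial construction driven by matching monomials of opposite parity.

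For the forward implication, I would exploit the fact that refinement corresponds to further identification of variables. If $\theta=\{C_{1},\ldots,C_{s}\}\leq\pi=\{B_{1},\ldots,B_{n}\}$, then each $C_{j}$ lies in a unique $B_{\ell(j)}$, and the ring homomorphism $\phi:\mathbb{Z}[\mathrm{y}_{1},\ldots,\mathrm{y}_{s}]\to\mathbb{Z}[\mathrm{x}_{1},\ldots,\mathrm{x}_{n}]$ sending $\mathrm{y}_{j}\mapsto\mathrm{x}_{\ell(j)}$ satisfies $\phi(\mathrm{p}_{\theta})=\mathrm{p}_{\pi}$. Writing $t_{\theta}(i)$ and $t_{\pi}(i)$ for the block-indices of $i$ under $\theta$ and $\pi$, one has $t_{\pi}(i)=\ell(t_{\theta}(i))$, so applying $\phi$ term-by-term to the defining formula for $\mathrm{p}_{\theta}$ indeed returns the defining formula for $\mathrm{p}_{\pi}$. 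Hence $\mathrm{p}_{\theta}=0$ immediately gives $\mathrm{p}_{\pi}=0$.

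For the converse I would split the defining expression (\ref{eqn.initial.poly}) by the parity of $k$ to write
\[
\mathrm{p}_{\pi}=\sum_{\substack{k\in[2m]\\ k\text{ even}}}\mathrm{x}_{t_{\pi}(-k)}\mathrm{x}_{t_{\pi}(k)}-\sum_{\substack{k\in[2m]\\ k\text{ odd}}}\mathrm{x}_{t_{\pi}(-k)}\mathrm{x}_{t_{\pi}(k)}.
\]
Since monomials form a $\mathbb{Z}$-basis of $\mathbb{Z}[\mathrm{x}_{1},\ldots,\mathrm{x}_{n}]$, the vanishing of $\mathrm{p}_{\pi}$ is equivalent to the equality of the two multisets of degree-two monomials above; both have $m$ elements, so a bijection $\sigma$ from the odd elements of $[2m]$ onto the even elements of $[2m]$ can be extracted such that, for every odd $k$,
\[
\{t_{\pi}(-k),t_{\pi}(k)\}=\{t_{\pi}(-\sigma(k)),t_{\pi}(\sigma(k))\}
\]
as unordered pairs. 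Given $\sigma$, I would build $\theta$ as follows: for each odd $k$, the equality above forces at least one of the two cases $t_{\pi}(-k)=t_{\pi}(-\sigma(k))$ or $t_{\pi}(-k)=t_{\pi}(\sigma(k))$, and in the respective case I place the mirror pair $\{-k,-\sigma(k)\},\{k,\sigma(k)\}$ or the mirror pair $\{-k,\sigma(k)\},\{k,-\sigma(k)\}$ into $\theta$ (choosing arbitrarily when both cases hold). Because $\sigma$ is a bijection, every element of $[\pm2m]$ is used exactly once, so $\theta$ is a genuine pairing; each pair lives inside a single block of $\pi$, so $\theta\leq\pi$; the two pairs associated to each $k$ are mirrors, so $\theta$ is symmetric; and each pair consists of one odd integer and one even integer, so $k+l$ is odd whenever $k\sim_{\theta}l$. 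Proposition~\ref{necessary.condt.special.partitions} then yields $\mathrm{p}_{\theta}=0$.

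The main obstacle I anticipate is the careful justification that the multiset equality really produces the bijection $\sigma$ with the stated property and that, case by case, a consistent choice between the two mirror patterns can always be made so that $\theta$ comes out as a disjoint pairing. Once $\sigma$ and the case selection are in place, verifying the four properties of $\theta$ (covering $[\pm2m]$, refining $\pi$, symmetric, and opposite-parity within each pair) is routine, and invoking Proposition~\ref{necessary.condt.special.partitions} closes the argument.
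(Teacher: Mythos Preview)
Your proof is correct, and the forward direction is essentially identical to the paper's (both factor the substitution map through the intermediate polynomial ring). The converse, however, is argued quite differently.

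The paper proceeds by an extremal argument: it picks a minimal element $\theta$ of the set $\{\hat\pi\leq\pi : \mathrm{p}_{\hat\pi}=0\}$ and then shows, by contradiction, that $\theta$ must be a pairing. If $\theta$ had a block $C$ with at least three elements, Proposition~\ref{necessary.condt.special.partitions} (applied essentially to the coefficient $a_{t,s}$ attached to that block) produces $k,l$ with $k+l$ odd and the appropriate $\pm k,\pm l$ relation, and one checks that splitting $\{k,l\}$ (or the relevant pair) off of $C$ yields a strictly finer $\hat\theta$ with $\mathrm{p}_{\hat\theta}=0$, contradicting minimality. Your approach instead reads the vanishing of $\mathrm{p}_{\pi}$ as an equality of the two $m$-element multisets of monomials coming from even and odd $k$, extracts a bijection $\sigma$ between odd and even indices realizing this equality, and builds the symmetric pairing $\theta$ in one shot from $\sigma$. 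Your construction is explicit and avoids the inductive splitting; the paper's minimality argument trades that explicitness for not having to verify that the globally defined $\theta$ is a disjoint cover of $[\pm 2m]$ (which in your argument follows from $\sigma$ being a bijection). Both routes ultimately lean on Proposition~\ref{necessary.condt.special.partitions} to certify that the resulting pairing has $\mathrm{p}_{\theta}=0$.
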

%
%
%
\begin{proof}
Suppose $\theta \leq \pi$ and $\mathrm{p}_{\theta}$ is the zero polynomial. 
Write $\theta =\{B_{1,1},B_{1,2}, \ldots, B_{1,m_1},\ldots, B_{n,m_n} \}$ with $B_{i} = \cup_{j=1}^{m_i}B_{i,j}$ for $i=1,2,\ldots, n$.
Take 
$\mathcal{A} = \mathbb{Z}\left[\mathrm{x}_{1},\mathrm{x}_{-1},\ldots,\mathrm{x}_{2m},\mathrm{x}_{-2m}\right]$, 
$\mathcal{B}=\mathbb{Z}[\mathrm{x}_{1,1}, \mathrm{x}_{1,2}, \ldots, \allowbreak  \mathrm{x}_{1,m_1}, \ldots, \mathrm{x}_{n,m_n}]$, and 
$\mathcal{C} = \mathbb{Z}[\mathrm{x}_{1}, \mathrm{x}_{2}, \ldots, \mathrm{x}_{n}]$ and let  $\Phi :  \mathcal{A}  \rightarrow \mathcal{B}$ and $\Psi :  \mathcal{B}  \rightarrow \mathcal{C}$ be the unique homomorphisms such that $ \Phi(\mathrm{x}_{k})= \mathrm{x}_{i,j}$ if $k \in B_{i,j}$ and $\Psi(\mathrm{x}_{i,j}) = \mathrm{x}_{i}$. 
Note that $(\Psi \circ \Phi) (\mathrm{x}_{k}) = \mathrm{x}_{l}$ only if $k \in B_{l}$, and thus, by definition of $\mathrm{p}_{\pi}$ and $\mathrm{p}_{\theta}$, we have that 
\begin{align*}
\mathrm{p}_{\pi}
=
\Psi \circ \Phi \left(\sum_{k=1}^{m} (-1)^{k} \mathrm{x}_{-k} \mathrm{x}_{k}\right)
=
\Psi \left( \mathrm{p}_{\theta} \right) .
\end{align*}
Hence, if $\mathrm{p}_{\theta}$ is the zero polynomial, so is $\mathrm{p}_{\pi}$.
%
%
%

Suppose now $\mathrm{p}_{\pi}$ is the zero polynomial and let $\theta$ be a minimal element of the set 
\(  \{  \widehat{\pi}  \in P(\pm 2m) : \widehat{\pi} \leq \pi \text { and } \mathrm{p}_{ \widehat{\pi}} = 0 \} \) endowed with the partial order inherited from $P(\pm 2 m)$. 
By Proposition \ref{necessary.condt.special.partitions}, the partition $\theta$ has no singletons, and thus, either $\theta$ is a pairing partition or $\theta$ has a block with at least three elements.
%
%
Let us assume $\theta = \{C_1, C_2, \ldots, C_n\}$ has a block with at least three elements, say $C_{n}$. 
By Proposition \ref{necessary.condt.special.partitions}, there are integers $k, l \in [2m]$ such that $k+l$ is odd and at least one of the following conditions holds:
\begin{enumerate}
\item\label{finer.partition} $+k , +l \in C_{n}$ and $-k \sim_\theta -l$
\item\label{finer.partition.2} $+k , - l \in C_{n}$ and $-k \sim_\theta +l$
\item\label{finer.partition.3} $-k, -l \in C_{n}$ and $+k  \sim_\theta +l$
\item\label{finer.partition.4} $-k , +l \in C_{n}$ and $+k \sim_\theta -l$
\end{enumerate}
Assume (\ref{finer.partition}) holds. 
Then, $ C_{n} \setminus\{k,l\}$ is not empty, and hence,
letting $\widehat{C}_{i} = C_{i}$ for $i=1,2,\ldots n-1$,  $\widehat{C}_{n} = C_{n} \setminus\{k,l\}$, and $\widehat{C}_{n+1}= \{k,l\}$, we have $\widehat\theta = \{\widehat{C}_{1},  \widehat{C}_{2}, \ldots, \widehat{C}_{n+1} \}$ is a partition of $[\pm 2 m ]$ such that $\widehat\theta \lneq \theta$, i.e., $\theta \geq \widehat\theta$ but $\theta \neq \widehat\theta$. 
Let us show that $\mathrm{p}_{\widehat\theta}$ must be the zero polynomial, contradicting the minimality of $\theta$. 
Take
$\mathcal{A} = \mathbb{Z}\left[\mathrm{x}_{1},\mathrm{x}_{-1},\ldots,\mathrm{x}_{m},\mathrm{x}_{-m}\right]$, 
$\mathcal{B}=\mathbb{Z}[\mathrm{x}_{1}, \mathrm{x}_{2}, \ldots, \mathrm{x}_{n}]$, and 
$\widehat{\mathcal{B}} = \mathbb{Z}[\mathrm{x}_{1}, \mathrm{x}_{2}, \ldots, \mathrm{x}_{n+1}]$ and let  $\Phi :  \mathcal{A}  \rightarrow \mathcal{B}$ and $\widehat\Phi :  \mathcal{A}  \rightarrow \widehat{\mathcal{B}}$ be the unique homomorphisms such that $ \Phi(\mathrm{x}_{i})= \mathrm{x}_{j}$ if $i \in {C}_{j}$ and $\widehat\Phi(\mathrm{x}_{i}) = \mathrm{x}_{j}$ if $i \in \widehat{C}_{j}$. 
Since ${\Phi}(\mathrm{x}_{i}) = \widehat{\Phi}(\mathrm{x}_{i})$ for $i \in [\pm 2 m] \setminus \{k,l\}$, we have
\begin{align*}
\sum_{\substack{i=1 \\ i\neq k,l}}^{2m} (-1)^{i} 
{\Phi}(\mathrm{x}_{-i}) {\Phi}( \mathrm{x}_{i} ) 
=
\sum_{\substack{i=1 \\ i \neq k,l}}^{2m} (-1)^{i} 
\widehat{\Phi}(\mathrm{x}_{-i}) \widehat{\Phi}( \mathrm{x}_{i} ).
\end{align*}
Moreover, since  $-k \sim_\theta -l$ we have $\Phi( \mathrm{x}_{-k} ) =  \Phi( \mathrm{x}_{-l} ) = \widehat\Phi( \mathrm{x}_{-k} ) = \widehat\Phi( \mathrm{x}_{-l} ) $, so we get 
\begin{align*}
0
&=
(-1)^{k} 	\Phi(\mathrm{x}_{-k}) 
\Phi( \mathrm{x}_{k} )
+ 
(-1)^{l} 	\Phi(\mathrm{x}_{-l})	
\Phi( \mathrm{x}_{l} ) \\
&=
(-1)^{k} 	\widehat\Phi(\mathrm{x}_{-k}) 
\widehat\Phi( \mathrm{x}_{k} )
+ 
(-1)^{l} 	\widehat\Phi(\mathrm{x}_{-l})	
\widehat\Phi( \mathrm{x}_{l} ) 
\end{align*}
since $k+l$ is odd, $\Phi(\mathrm{x}_{k}) = \Phi(\mathrm{x}_{l}) = \mathrm{x}_{n}$, and  $\widehat\Phi(\mathrm{x}_{k}) = \widehat\Phi(\mathrm{x}_{l}) = \mathrm{x}_{n+1}$.
Thus, we obtain
\begin{align*}
\mathrm{p}_{\widehat\theta}
=
\sum_{\substack{i=1 }}^{2m} (-1)^{i} 
\widehat\Phi(\mathrm{x}_{-i}) \widehat\Phi( \mathrm{x}_{i} ) 
=
\sum_{\substack{i=1 }}^{2m} (-1)^{i} 
\Phi(\mathrm{x}_{-i}) \Phi( \mathrm{x}_{i} ) 
= 
\mathrm{p}_{\theta} = 0
\end{align*}
But then, $\theta$ is not minimal, and therefore, (\ref{finer.partition}) does not hold. Similar arguments show that neither (\ref{finer.partition.2}), nor (\ref{finer.partition.3}), nor (\ref{finer.partition.4}) hold. 
Therefore, the partition $\theta$ must be a pairing, and, in fact, a symmetric pairing by Proposition \ref{necessary.condt.special.partitions}. 
\end{proof}
%
%
%
%
%
As mentioned earlier, in proving Theorem \ref{thm.second.order.asymptotics}, we need to consider sums as in (\ref{eqn.shifted.DFT.injective.graph.sums}). 
Note that if $\sigma:[\pm 2m] \rightarrow [\pm 2m]$ is bijective and  $\mathrm{p}_{\sigma^{-1} \circ \pi}$ is the zero polynomial, then $\mathbf{h}(\mathbf{j}) = 1$ for any function  $\mathbf{j} :[\pm 2 m]\rightarrow [N] $ satisfying $\kernel{\mathbf{j}} \geq \sigma^{-1} \circ \pi$, and hence, we would get 
\[
\sum_{\substack{	\mathbf{j} :[\pm 2 m]\rightarrow [N] \\  
		\kernel{\mathbf{j}} = \pi } }
\mathbf{h}(\mathbf{j}\circ\bm{\sigma})  
\quad =
\sum_{\substack{	\mathbf{j} :[\pm 2 m]\rightarrow [N] \\  
		\kernel{\mathbf{j}} = \sigma^{-1} \circ \pi 
} }
\mathbf{h}(\mathbf{j})
\quad = \quad \frac{N!}{(N-\#(\pi))!} . 
\]
On the other hand, if the polynomial $\mathrm{p}_{\sigma^{-1} \circ \pi}$ is non-zero, we have that (\ref{eqn.shifted.DFT.injective.graph.sums}) is of order $N^{\#(\pi) - 1/2}$ by Corollary \ref{cor.bound.gauss.sum}. 
We will now use the previous results to classify all symmetric pairing partitions so that $\mathrm{p}_{\sigma^{-1} \circ \pi}$ is the zero polynomial.

\begin{lemma}\label{corollary.special.pairing.partitions}
	Let $m=m_{1} + m_{2}$ for some integers $m_{1}, m_{2}\geq 1$ and let $\sigma$ be the permutation given by (\ref{def.permutation.sigma}). 
	Suppose $k$ and $l$ are integers in $[2m]$ and $\pi$ is a symmetric pairing partition in $P(\pm 2 m)$ 
	such that  $\mathrm{p}_{\sigma^{-1} \circ \pi}$ 
	is the zero polynomial.
	If $-k \sim_\pi l$, then $\sigma^{-t}(-k) \sim_\pi \sigma^{t}(l)$ for every integer $t \geq 0$. 
	On the other hand, if $-k \sim_\pi -l$, then $\sigma^{t}(-k) \sim_\pi \sigma^{t}(-l)$ for every integer $t \geq 0$.
\end{lemma}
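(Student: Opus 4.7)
The plan is to translate the vanishing of $\mathrm{p}_{\sigma^{-1}\circ\pi}$ into a symmetry of $\pi$ itself, and then combine it with the symmetry assumption on $\pi$ to generate all the required equivalences via iteration.

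Since $\pi$ is a pairing, so is $\sigma^{-1}\circ\pi$. By Proposition \ref{necessary.condt.special.partitions} applied to $\sigma^{-1}\circ\pi$, the hypothesis $\mathrm{p}_{\sigma^{-1}\circ\pi}=0$ forces $\sigma^{-1}\circ\pi$ to be symmetric. Unpacking what this means, $a\sim_\pi b$ implies $\sigma(-\sigma^{-1}(a))\sim_\pi\sigma(-\sigma^{-1}(b))$. Combined with the symmetry of $\pi$ itself, we obtain two $\pi$-equivalence-preserving involutions of $[\pm 2m]$:
\[
\nu(a)=-a,\qquad \tau(a)=\sigma(-\sigma^{-1}(a)).
\]
Both $\nu$ and $\tau$ preserve each of the two cycles of $\sigma$, so the argument can be carried out inside each cycle separately; this means the analysis applies regardless of whether $-k$ and $l$ (resp.\ $-k$ and $-l$) lie in the same cycle of $\sigma$ or not.

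Next, I would parametrize each cycle of $\sigma$ by cyclic indices. For the first cycle, write $c_{2j}=-(j+1)$ and $c_{2j+1}=j+1$ for $0\le j\le 2m_1-1$, so that $\sigma(c_i)=c_{i+1}$ (indices mod $4m_1$). With this labelling, $\nu$ swaps $c_{2j}\leftrightarrow c_{2j+1}$, while a direct computation using the definitions gives that $\tau$ swaps $c_{2j-1}\leftrightarrow c_{2j}$ (indices mod $4m_1$). Summarising the action on indices:
\[
\nu:\ i\mapsto i+1\ \text{(}i\text{ even)},\ i\mapsto i-1\ \text{(}i\text{ odd)};\qquad
\tau:\ i\mapsto i-1\ \text{(}i\text{ even)},\ i\mapsto i+1\ \text{(}i\text{ odd)}.
\]
The analogous description holds for the second cycle (length $4m_2$). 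Note in particular that $-k$ always has even cycle index and $k$ always has odd cycle index.

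The key observation is that alternating applications of $\tau$ and $\nu$ implement $\sigma^{\pm 1}$ in the right combinations. For the first statement, $-k$ has even index and $l$ has odd index; applying $\tau$ first sends $(-k,l)$ to cycle-indices $(i-1,j+1)$ (shift $(-1,+1)$), and the parities swap; applying $\nu$ now sends it to $(i-2,j+2)$, and the parities swap back; continuing to alternate $\tau,\nu,\tau,\nu,\ldots$ produces $(\sigma^{-t}(-k),\sigma^{t}(l))$ for every $t\ge 0$. For the second statement, $-k$ and $-l$ both have even index; applying $\nu$ first sends them to $(i+1,i'+1)$ (shift $(+1,+1)$), and the parities become odd; $\tau$ then sends them to $(i+2,i'+2)$; continuing with $\nu,\tau,\nu,\tau,\ldots$ yields $(\sigma^{t}(-k),\sigma^{t}(-l))$ for every $t\ge 0$. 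In both cases each intermediate pair lies in the same block of $\pi$ by the $\nu$- and $\tau$-invariances.

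The main (and essentially the only) technical obstacle is pinning down the explicit action of $\tau$ on the cyclic labelling of each cycle of $\sigma$; once that is done, the combinatorial matching of parities with the two types of shift needed becomes a straightforward bookkeeping.
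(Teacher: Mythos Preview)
Your proof is correct and follows essentially the same approach as the paper's. Both arguments identify the two $\pi$-preserving involutions $\nu(a)=-a$ and $\tau(a)=\sigma(-\sigma^{-1}(a))$ and then alternate them to produce the required shifts; the paper phrases the alternation as a case split on the parity of $t$ (applying $\tau$ when the signs are opposite and $\nu$ when they agree), while you make the same alternation explicit via the cyclic-index parametrization, but the underlying mechanism is identical.
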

%
%
%
\begin{proof}
	Note that   $\hat{k} \sim_{\pi} \hat{l} $ implies $\sigma (- \sigma^{-1} (\hat{k}) ) \sim_{\pi} \sigma (- \sigma^{-1} (\hat{l}) )$.
	Indeed, by Proposition \ref{minimal.special.polynomials}, the partition $\sigma^{-1} \circ \pi$ is symmetric since $\mathrm{p}_{\sigma^{-1} \circ \pi}$ is the zero polynomial, and hence $- \sigma^{-1} (\hat{k})  \sim_{\sigma^{-1} \circ \pi} - \sigma^{-1} (\hat{l}) $ provided $\hat{k} \sim_{\pi} \hat{l} $, but in that case we must have $\sigma (- \sigma^{-1} (\hat{k}) ) \sim_{\pi} \sigma (- \sigma^{-1} (\hat{l}) )$. 
	Note also that for every integer $\hat{k} \in [\pm 2m]$ we have
	\begin{align*}
	\sigma ( -\sigma^{-1}(\hat{k}) ) = \left\{\begin{array}{cl}
	\sigma(\hat{k}) 		& \text{if } \hat{k} > 0 \\
	\sigma^{-1}(\hat{k}) 	& \text{if } \hat{k} < 0 \\
	\end{array}
	\right. 
	\end{align*}
	since for $1 \leq k \leq 2m $ we have $\sigma^{-1}(k)=-k$, $-\sigma^{-1}(-k)<0$, and  $\sigma(-k)=k$.

	Now, suppose $\sigma^{-t}(-k) \sim_{\pi} \sigma^{t}(l)$ for some integer  $t \geq 0$.  
	If $t$ is even, then $\hat{k}=\sigma^{-t}(-k) < 0 < \sigma^{t}(l)=\hat{l}$, and hence $ \sigma^{-t-1}(-k) = \sigma (- \sigma^{-1} (\hat{k}) ) \sim_{\pi} \sigma (- \sigma^{-1} (\hat{l}) ) = \sigma^{t+1}(l)$. 
	On the other hand, if $t$ is odd, we have $\sigma^{-t}(-k) > 0 > \sigma^{t}(l)$, and hence $ \sigma^{-t-1}(-k) = - \sigma^{-t}(-k) \sim_{\pi} -  \sigma^{t}(l) = \sigma^{t+1}(l)$ since $\pi$ is symmetric. 
	Thus, $-k \sim_{\pi} l$ implies $\sigma^{-t}(-k) \sim_\pi \sigma^{t}(l)$ for every integer $t \geq 0$ by induction on $t$. 
	Similarly, assuming $-k \sim_{\pi} -l$, we get $\sigma^{t}(-k) \sim_{\pi} \sigma^{t}(-l)$ for all $t\geq0$.
\end{proof}
%
%
%

%
%
\begin{proposition}\label{prop.minimal.special.partitions}
   	Let $m=m_{1} + m_{2}$ for some integers $m_{1}, m_{2}\geq 1$ and let $\sigma$ be the permutation given by (\ref{def.permutation.sigma}). 
	Suppose ${\pi}$ is a symmetric pairing partition of $[\pm 2m ]$ and denote by 
	${\pi}_1$ and ${\pi}_2$ 
	the restrictions of ${\pi}$ to $[\pm 2m_1]$ and $[\pm 2m ] \setminus [\pm 2m_1]$, respectively.
	Then $\mathrm{p}_{\sigma^{-1} \circ {\pi}}$ is the zero polynomial if and only if one of the following conditions holds:
	\begin{enumerate}
		\item\label{prop.minimal.special.partitions.1} ${\pi} \neq {\pi}_1 \sqcup {\pi}_2$, $m_{1}=m_{2}$,  and there are  integers $1 \leq k \leq 2m_{2}$  and $2m_{1}+1 \leq l \leq 2m_{1}+2m_{2}$ such that $k+l$ is even and 
		\[
		{\pi} = 
		\left\{ 
		\{\sigma^{t}(-k),\sigma^{-t}(l)\} \mid t=1,2,\ldots, 4m_{1} 
		\right\} .
		\]

		\item\label{prop.minimal.special.partitions.2} ${\pi} \neq {\pi}_1 \sqcup {\pi}_2$, $m_{1}=m_{2}$,  and there are integers $1 \leq k \leq 2m_{2}$  and $2m_{1}+1 \leq l \leq 2m_{1}+2m_{2}$ such that $k+l$ is odd and 
		\[
		{\pi} = \left\{ 
		\{\sigma^{t}(-k),\sigma^{t}(-l)\}	\mid t=1,2,\ldots, 4m_{1} 
		\right\} .
		\]
		
		\item\label{prop.minimal.special.partitions.3} ${\pi} = {\pi}_1 \sqcup {\pi}_2$ and there are integers $1 \leq k \leq 2m_{1}$ and $2m_{1}+1 \leq l \leq 2m_{1}+2m_{2}$ such that 
		\[
		{\pi}_{1} = \left\{	
		\{\sigma^{t_{1}}(-k),\sigma^{-t_{1}}(k)
		\}
		\mid t_{1}=1,2,\ldots, 2m_{1} \right\}
		\] and 
		\[
		{\pi}_{2} = \left\{	
		\{\sigma^{t_{2}}(-l),\sigma^{-t_{2}}(l)
		\}
		\mid t_{2}=1,2,\ldots, 2m_{2} \right\} . 
		\]
		
		\item\label{prop.minimal.special.partitions.4} ${\pi} = {\pi}_1 \sqcup {\pi}_2$, $m_{1}$ and $m_{2}$ are odd integers, 
		\[ 
		{\pi}_{1} = \left\{
		\{\sigma^{t_{1}}(-1),\sigma^{t_{1}}(-m_1-1)\} \mid t_{1}=1,\ldots, 2m_{1}
		\right\} ,
		\] and 
		\[{\pi}_{2} = \left\{
		\{\sigma^{t_{2}}(-2m_1-1),\sigma^{t_{2}}(-2m_1-m_2-1)\} \mid t_{2}=1,\ldots, 2m_{2}
		\right\} .
		\]
		
		\item\label{prop.minimal.special.partitions.5} ${\pi} = {\pi}_1 \sqcup {\pi}_2$, $m_{2}$ is odd, there is an integer $1 \leq k \leq 2m_{1}$ such that 
		\[
		{\pi}_{1} = \left\{	
		\{\sigma^{t_{1}}(-k),\sigma^{-t_{1}}(k)
		\}
		\mid t_{1}=1,2,\ldots, 2m_{1} \right\},
		\] and  
		\[
		{\pi}_{2} = \left\{
		\{\sigma^{t_{2}}(-2m_1-1),\sigma^{t_{2}}(-2m_1-m_2-1)\} \mid t_{2}=1,\ldots, 2m_{2}.
		\right\} 
		\] 
		
		\item\label{prop.minimal.special.partitions.6} ${\pi} = {\pi}_1 \sqcup {\pi}_2$, $m_{1}$ is odd, 
		\[
		{\pi}_{1} = \left\{
		\{\sigma^{t_{1}}(-1),\sigma^{t_{1}}(-m_1-1)\} \mid t_{1}=1,\ldots, 2m_{1}
		\right\},
		\] and there is an integer $2m_{1}+1 \leq l \leq 2m_{1}+2m_{2}$ such that  
		\[
		{\pi}_{2} = \left\{	
		\{\sigma^{t_{2}}(-l),\sigma^{-t_{2}}(l)
		\}
		\mid t_{2}=1,2,\ldots, 2m_{2} \right\}. 
		\]

	\end{enumerate}
\end{proposition}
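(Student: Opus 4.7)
The plan is to translate the hypothesis through $\sigma$: write $\tilde\pi = \sigma^{-1}\circ\pi$, still a pairing of $[\pm 2m]$. Proposition \ref{necessary.condt.special.partitions} tells me that $\tilde\pi$ must be symmetric and every pair $\{k,l\}\in\tilde\pi$ must satisfy $k+l$ odd; equivalently, every block $\{a,b\}\in\pi$ satisfies $\sigma^{-1}(a)+\sigma^{-1}(b)$ odd. Lemma \ref{corollary.special.pairing.partitions} then supplies the orbit structure inside $\pi$: an ``anti-diagonal'' block $\{-k,l\}$ forces $\{\sigma^{-t}(-k),\sigma^t(l)\}\in\pi$ for all $t\geq 0$, and a ``diagonal'' block $\{-k,-l\}$ forces $\{\sigma^t(-k),\sigma^t(-l)\}\in\pi$ for all $t\geq 0$. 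Since $\sigma$ has exactly two cycles, $C_1$ of length $4m_1$ on $[\pm 2m_1]$ and $C_2$ of length $4m_2$ on the complement, I split the argument on whether $\pi$ contains a pair straddling $C_1$ and $C_2$.

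If $\pi$ contains some inter-cycle pair $\{a,b\}$ with $a\in C_1$ and $b\in C_2$, the generated orbit has one coordinate running through $C_1$ and the other through $C_2$. The orbit closes to a valid pairing only when the two cycles have compatible ``rotation speeds'', which a direct position computation shows forces $m_1=m_2$. The two orbit shapes (anti-diagonal vs.\ diagonal) then yield the patterns in items \textit{(\ref{prop.minimal.special.partitions.1})} and \textit{(\ref{prop.minimal.special.partitions.2})}, and the odd-sum constraint on $\sigma^{-1}(a)+\sigma^{-1}(b)$ pins down the parity of $k+l$ (even in case \textit{(\ref{prop.minimal.special.partitions.1})}, odd in case \textit{(\ref{prop.minimal.special.partitions.2})}).

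If instead $\pi=\pi_1\sqcup\pi_2$ respects the cycle decomposition, I analyze each $\pi_i$ separately by the same orbit argument confined to a single cycle. Starting from $\{-k,k\}$ produces the anti-diagonal orbit $\{\sigma^{t}(-k),\sigma^{-t}(k)\}$, which I verify closes to a pairing of the full cycle of length $4m_i$ for any $k$ and automatically meets the odd-sum condition. Starting instead from a diagonal pair $\{-k,-l\}$ with $k\neq l$, a cycle-position calculation shows the orbit is a pairing only when $l\equiv k+m_i\pmod{2m_i}$, and the odd-sum condition, evaluated as $\sigma^{-1}(\sigma^t(-k))+\sigma^{-1}(\sigma^t(-l))$ varies with $t$, forces $m_i$ to be odd; in that regime the orbit is precisely, up to re-indexing, the one built around $\{-1,-m_i-1\}$ in items \textit{(\ref{prop.minimal.special.partitions.4})}--\textit{(\ref{prop.minimal.special.partitions.6})}. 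Combining the anti-diagonal and diagonal choices on $\pi_1$ and $\pi_2$ exhausts cases \textit{(\ref{prop.minimal.special.partitions.3})} through \textit{(\ref{prop.minimal.special.partitions.6})} with their parity constraints on $m_1$ and $m_2$.

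For the converse I will verify $\mathrm{p}_{\sigma^{-1}\circ\pi}=0$ directly in each of the six cases: writing the defining expression $\sum_{k=1}^{2m}(-1)^k \mathrm{x}_{-k}\mathrm{x}_k$ and applying the block collapse induced by $\sigma^{-1}\circ\pi$, the monomials group in pairs whose $\pm$ signs cancel thanks to the parity constraints singled out above. The main obstacle is the position-in-cycle computation in the intra-cycle diagonal case: proving that $\{\sigma^t(-k),\sigma^t(-l)\}$ is a pairing only when $l-k$ is exactly a half-cycle and then showing that the resulting odd-sum requirement is equivalent to $m_i$ being odd. Everything else reduces to careful but mechanical orbit bookkeeping and is guided at each step by Lemma \ref{corollary.special.pairing.partitions} and Proposition \ref{necessary.condt.special.partitions}.
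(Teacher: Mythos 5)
Your plan is correct and follows essentially the same route as the paper's proof: reduce via Proposition \ref{necessary.condt.special.partitions} to the symmetry and odd-sum conditions on $\sigma^{-1}\circ\pi$, propagate blocks along orbits using Lemma \ref{corollary.special.pairing.partitions}, split on whether $\pi$ respects the two-cycle decomposition of $\sigma$, and extract $m_1=m_2$ (inter-cycle) respectively $l\equiv k+m_i$ and $m_i$ odd (diagonal intra-cycle) from the requirement that the orbit close up to a pairing, with the converse checked by direct sign cancellation. The paper does exactly this, anchoring the intra-cycle analysis at the elements $1$ and $2m_1+1$ and verifying the converse via the collapsing homomorphism, so no substantive difference remains.
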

%
%
%
\begin{proof} 	
Put ${\hat\pi} = \sigma^{-1} \circ \pi$. 
Suppose ${\hat\pi} = \{B_{1}, B_{2},\ldots, B_{r}\}$ and let $\Phi$ be the unique homomorphism from $\mathbb{Z}[\mathrm{x}_{-1},\mathrm{x}_{1},\ldots,\mathrm{x}_{-2m},\mathrm{x}_{2m}]$ to $\mathbb{Z}[\mathrm{x}_{1},\mathrm{x}_{2},\ldots,\mathrm{x}_{r}]$ such that $\Phi(\mathrm{x}_{i}) = \mathrm{x}_{j}$ if $i \in B_{j}$. 
If condition \textit{(\ref{prop.minimal.special.partitions.1})} holds, 
then ${\hat\pi} = \left\{ \{\sigma^{t}(-k),\sigma^{-t-2}(l)\} \mid t = 1,2,\ldots, 4m_{1} \right\}  $ and  $\sigma^{t}(-k) + \sigma^{-t-2}(l)$ is odd for $t=1,2,\ldots, 4m_{1}$. 
Thus, since we can write
\begin{align*}
2 \cdot \sum_{i=1}^{2m} (-1)^{i} \mathrm{x}_{-i}\mathrm{x}_{i}
= 
\sum_{t=1}^{2m_{1}} (-1)^{\sigma^{2t}(-k)} 
\mathrm{x}_{\sigma^{2t}(-k)}
\mathrm{x}_{\sigma^{2t+1}(-k)} 
+
\sum_{t=1}^{2m_{1}} (-1)^{\sigma^{-2t-2}(l)} 
\mathrm{x}_{\sigma^{-(2t+1)-2}(l)}
\mathrm{x}_{\sigma^{-2t-2}(l)},   
\end{align*}
we get 
$
\mathrm{p}_{{\hat\pi}} = \Phi ( \sum_{i=1}^{2m} (-1)^{i} \mathrm{x}_{-i} \mathrm{x}_{i} )  = 0.
$
It follows from similar arguments that $\mathrm{p}_{{\hat\pi}}$ is the zero polynomial if \textit{(\ref{prop.minimal.special.partitions.2})}
holds. 
Now, if \textit{(\ref{prop.minimal.special.partitions.4})} holds and we take  $l=2m_{1}+1$, we have $\sigma^{t}(-k) + \sigma^{-t-2}(k)$ and $\sigma^{t}(-l) + \sigma^{t}(-m_{2}-l)$ are odd 
and ${\hat\pi} = \left\{ \{\sigma^{t}(-k),\sigma^{-t-2}(k)\} , \{\sigma^{t}(-l),\sigma^{t}(-m_{2}-l)\}\mid t \geq 0 \right\}$. 
Thus, since we can write
\begin{align*}
\sum_{i=1}^{2m} (-1)^{i} \mathrm{x}_{-i}\mathrm{x}_{i}
= & 
\frac{1}{2}	\sum_{t=1}^{2m_{1}} (-1)^{\sigma^{2t}(-k)} 	
\mathrm{x}_{\sigma^{2t}(-k)}
\mathrm{x}_{\sigma^{2t+1}(-k)} 
+
\frac{1}{2}	\sum_{t=1}^{2m_{1}} (-1)^{\sigma^{-2t-2}(k)} 
\mathrm{x}_{\sigma^{-(2t+1)-2}(k)}
\mathrm{x}_{\sigma^{-2t-2}(k)}
\\ & + 
\sum_{t=1}^{m_2} (-1)^{\sigma^{2t}(-l)} 	
\mathrm{x}_{\sigma^{2t}(-l)}
\mathrm{x}_{\sigma^{2t+1}(-l)} 
+ 
\sum_{t=1}^{m_{2}} (-1)^{\sigma^{2t}(-m_2 - l)} 
\mathrm{x}_{\sigma^{2t}(-m_2-l)}
\mathrm{x}_{\sigma^{2t+1}(-m_2-l)} 
\end{align*}
we get
$\displaystyle
\mathrm{p}_{\hat\pi}= 0$. Similar arguments show that if either \textit{(\ref{prop.minimal.special.partitions.3})}, 
\textit{(\ref{prop.minimal.special.partitions.5})}, or \textit{(\ref{prop.minimal.special.partitions.6})} holds, then $\mathrm{p}_{\hat\pi}$ is the zero polynomial.

Suppose now $\mathrm{p}_{{\hat\pi}}$ is the zero polynomial and let ${\hat\pi}_1$ and ${\hat\pi}_2$ be the restrictions of ${\hat\pi}$ to $[\pm 2m_1]$ and $[\pm (2m_1 + 2m_2)] \setminus [\pm 2m_1]$, respectively.
We will consider two cases ${\hat\pi} \neq {\hat\pi}_{1} \sqcup {\hat\pi}_{2}$ and ${\hat\pi} = {\hat\pi}_{1} \sqcup {\hat\pi}_{2}$. 
Assume first ${\hat\pi} \neq {\hat\pi}_{1} \sqcup {\hat\pi}_{2}$. 
By Proposition \ref{necessary.condt.special.partitions}, there are integers $1 \leq k \leq 2m_{1}$ and $2m_{1}+1 \leq l \leq 2m_{1}+2m_{2}$ such that $k+l$ is odd and one of the following holds:
\begin{enumerate}[\quad (1')]
	\item\label{prop.minimal.special.partitions.1.proof} $k \sim_{\hat\pi} - l$ and $-k \sim_{\hat\pi} l$.  
	\item\label{prop.minimal.special.partitions.2.proof} $k \sim_{\hat\pi} l$ and $-k \sim_{\hat\pi} -l$.
\end{enumerate}	
Suppose (\ref{prop.minimal.special.partitions.2.proof}') holds. Then, $-k=-\sigma(-k) \sim_{\pi} -\sigma(-l) = -l$, and by Lemma \ref{corollary.special.pairing.partitions}, we have that $\sigma^{t}(-k) \sim_{\pi} \sigma^{t}(-l)$ for every integer $t \geq 0$.
Moreover, since $-k=\sigma^{4m_1}(-k)$, $\sigma^{4m_1}(-k) \sim_{\pi} \sigma^{4m_1}(-l)$, and ${\pi}$ is a pairing, we must have $-l=\sigma^{4m_1}(-l)$. 
But, the equation $-l = \sigma^{t}(-l)$ holds only if $t$ is an integer multiple of $4m_2$, and hence, $4m_1$ is a multiple of $4m_2$. 
Similarly, $4m_2$ is a multiple of $4m_1$, and therefore, $4m_1 = 4m_2$, and the partition ${\hat\pi}$ satisfies condition (\ref{prop.minimal.special.partitions.2}). 
A similar argument shows that ${\hat\pi}$ satisfies condition (\ref{prop.minimal.special.partitions.1}) if we suppose (\ref{prop.minimal.special.partitions.1.proof}') holds.
Assume now ${\hat\pi} = {\hat\pi}_{1} \sqcup {\hat\pi}_{2}$. By Proposition \ref{necessary.condt.special.partitions}, there is an integer $1 < \hat{k} \leq 2m_{1}$ satisfying one of the following:
\begin{enumerate}[\quad (a)]
\item\label{prop.minimal.special.partitions.3a.proof} 
$1 \sim_{\hat\pi} \hat{k}$, $-1 \sim_{\hat\pi} -\hat{k}$, and $1+\hat{k}$ is odd.

\item\label{prop.minimal.special.partitions.3b.proof} 
$1 \sim_{\hat\pi} - \hat{k}$, $-1 \sim_{\hat\pi} \hat{k}$, and $1+\hat{k}$ is odd.
\end{enumerate}	
and there is an integer $2m_1+1 < \hat{l} \leq 2m_{1} + 2m_2 $ satisfying one of the following:
\begin{enumerate}[\quad (A)]
\item\label{prop.minimal.special.partitions.4a.proof} 
$2m_1 +1 \sim_{\hat\pi} \hat{l}$, $-2m_1 -1 \sim_{\hat\pi} -\hat{l}$, and $2m_1 +1+\hat{l}$ is odd.

\item\label{prop.minimal.special.partitions.4b.proof} 
$2m_1 +1 \sim_{\hat\pi} -\hat{l}$, $-2m_1 -1 \sim_{\hat\pi} \hat{l}$, and $2m_1 +1+\hat{l}$ is odd. 
\end{enumerate}	
If (\ref{prop.minimal.special.partitions.3a.proof}) holds, we know that $\sigma^{t}(-1) \sim_{\pi} \sigma^{t}(-\hat{k})$ for every integer $t \geq 0$ by Lemma \ref{corollary.special.pairing.partitions}.
But then, since $-\hat{k} = \sigma^{2\hat{k}-2}(-1)$,  $\sigma^{2\hat{k}-2}(-1) \sim_{{\pi}} \sigma^{2\hat{k}-2}(-\hat{k})$, and ${\pi}$ is a pairing, we must have $\sigma^{2\hat{k}-2}(-\hat{k})	 = -1$, 
or, equivalently, $4\hat{k} - 4$ is a multiple of $4m_{1}$.
Therefore,  $m_{1}=\hat{k}-1$ is odd, and $ \sigma^{t_1}(-1) \sim_{\hat\pi} \sigma^{t_1}(-m_1 - 1)$, and hence
\[ 
{\pi}_{1} = \left\{
			\{ \sigma^{t_{1}}(-1),\sigma^{t_{1}}(-m_1-1)\} \mid t_{1}=1,2,\ldots, 4m_{1}
	\right\} 
\]
On the other hand, if (\ref{prop.minimal.special.partitions.3b.proof}) holds, it follows from Lemma \ref{corollary.special.pairing.partitions} that $\sigma^{t}(1) \sim_{\pi} \sigma^{-t}(-\hat{k}')$ for every integer $t \geq 0$. 
Moreover, since $\hat{k}'$ has the same parity as $ 1+ \hat{k}$, we have $\hat{k}'=2k-1$ for some integer $k \geq 1$.
But then, since $k = \sigma^{2k-2}(1) =  -\sigma^{2-2k}(2k-1)$ and $\sigma^{2k-2}(1) \sim_{{\pi}} \sigma^{2-2k}(2k-1)$, we have $k \sim_{\pi} -k$, and therefore,
\[
{\pi}_{1} = \left\{	
\{\sigma^{t_{1}}(k),-\sigma^{-t_{1}}(-k)
\}
\mid t_{1}=1,2,\ldots, 2m_{1} \right\}
\]
Similar arguments show that if (\ref{prop.minimal.special.partitions.4a.proof}) holds, then $m_{2}$ is odd and
\[ 
{\pi}_{2} = \left\{
\{ \sigma^{t_{2}}(-2m_{1}-1),\sigma^{t_{2}}(-2m_{1}-m_{2}-1)\} \mid t_{2}=1,2,\ldots, 4m_{2}
\right\},
\]
and if (\ref{prop.minimal.special.partitions.4b.proof}) holds, then there is an integer $l$ such that 
\[
{\pi}_{2} = \left\{	
\{\sigma^{t_{2}}(l),\sigma^{-t_{2}}(-l)
\}
\mid t_{2}=1,2,\ldots, 2m_{2} \right\}
\]
This completes the proof that if $\mathrm{p}_{{\hat\pi}}$ is the zero polynomial, then ${\hat\pi}$ must satisfy either
\textit{(\ref{prop.minimal.special.partitions.1})}, 
\textit{(\ref{prop.minimal.special.partitions.2})},
\textit{(\ref{prop.minimal.special.partitions.3})}, \textit{(\ref{prop.minimal.special.partitions.4})},
\textit{(\ref{prop.minimal.special.partitions.5})}, or \textit{(\ref{prop.minimal.special.partitions.6})}.
\end{proof}
%
%
%
%
%

\begin{remark}\label{rmk.minimal.polynomial.graphs}\label{rmk.single.loops} 
	Notice the results regarding the polynomials $\mathrm{p}_{\pi}$ and $\mathrm{p}_{\sigma^{-1} \circ {\pi}}$ being zero can be restated in terms of the graphs $\mathcal{G}_{\pi}$ and  $\vec{\mathcal{G}}_{\pi}$ from Section \ref{sec.general.graph.sums}. 
	For instance, Proposition \ref{prop.minimal.special.partitions} states that if $\pi \in P(\pm 2m)$ is a symmetric partition, then the polynomial $\mathrm{p}_{\sigma^{-1} \circ {\pi}}$ is zero if and only if one of the following conditions for the directed graph $\vec{\mathcal{G}}_{\pi}$, where $F_{t}$ denotes the edge $E_{2m_{1}+t}$ for $t=1,2,\ldots,2m_{2}$,  holds:
	\begin{enumerate}
		\item\label{graph.minimal.special.partitions.1}$m_{1}=m_{2}$  and there is an integer $1 \leq l \leq m_{2}$ so that the graph $\vec{\mathcal{G}}_{\pi}$ can be represented as 
\begin{center}\includegraphics[scale=1]{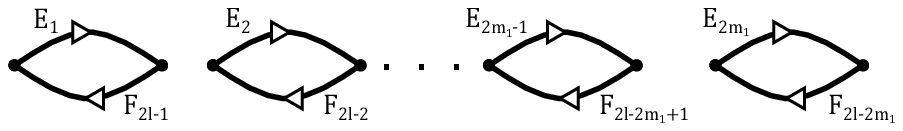}\end{center}
		%
		%

		\item\label{graph.minimal.special.partitions.2} $m_{1}=m_{2}$  and there is an integer $1 \leq l \leq m_{2}$ so that the graph $\vec{\mathcal{G}}_{\pi}$ can be represented as 
\begin{center}\includegraphics[scale=1]{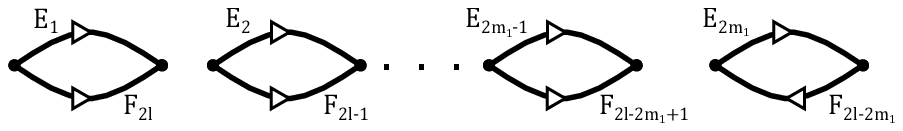}\end{center}
		%
		%

		\item\label{graph.minimal.special.partitions.3} $\vec{\mathcal{G}}_{\pi}$ is the disjoint union of  $\vec{\mathcal{G}}_{\pi_{1}}$ and $\vec{\mathcal{G}}_{\pi_{2}}$, there is an integer $1 \leq k \leq 2m_{1}$ so that $\vec{\mathcal{G}}_{\pi_{1}}$ can be represented as 
\begin{center}\includegraphics[scale=1]{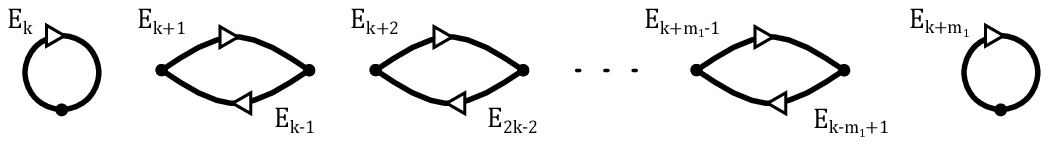}
		\end{center}
		and there is an integer $2m_{1}+1 \leq l \leq 2m_{1} + 2m_{2}$  so that $\vec{\mathcal{G}}_{\pi_{2}}$  can be represented as
	\begin{center}
		\includegraphics[scale=1]{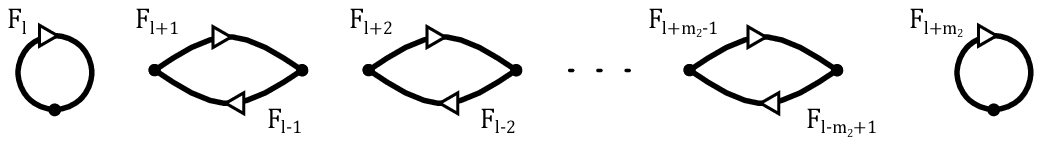}
	\end{center}

		\item\label{graph.minimal.special.partitions.6}  $m_{1}$ and $m_{2}$ are odd integers, the graph $\vec{\mathcal{G}}_{\pi}$ is the disjoint union of $\vec{\mathcal{G}}_{\pi_{1}}$ and $\vec{\mathcal{G}}_{\pi_{2}}$, the graph $\vec{\mathcal{G}}_{\pi_{1}}$ can be represented as 
		\begin{center}\includegraphics[scale=1]{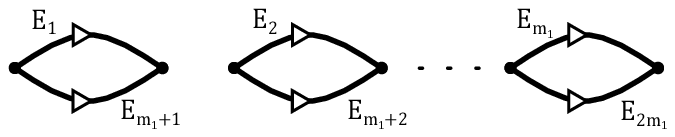}\end{center} 
		and the graph $\vec{\mathcal{G}}_{\pi_{2}}$ can be represented as 
		\begin{center}\includegraphics[scale=1]{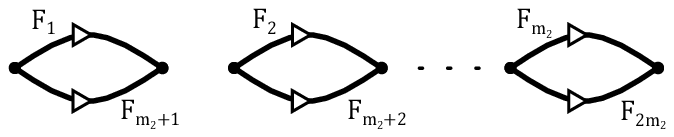}\end{center}

		\item\label{graph.minimal.special.partitions.4} $m_{2}$ is odd, $\vec{\mathcal{G}}_{\pi}$ is the disjoint union of  $\vec{\mathcal{G}}_{\pi_{1}}$ and $\vec{\mathcal{G}}_{\pi_{2}}$, there is an integer $1 \leq k \leq 2m_{1}$ so that $\vec{\mathcal{G}}_{\pi_{1}}$ can be represented as 
\begin{center}\includegraphics[scale=1]{special_pairings_cross_3a}
		\end{center}
		and the graph $\vec{\mathcal{G}}_{\pi_{2}}$ can be represented as 
\begin{center}\includegraphics[scale=1]{special_pairings_cross_6b}\end{center}

		\item\label{graph.minimal.special.partitions.5} $m_{1}$ is odd, the graph $\vec{\mathcal{G}}_{\pi}$ is the disjoint union of $\vec{\mathcal{G}}_{\pi_{1}}$ and $\vec{\mathcal{G}}_{\pi_{2}}$, the graph $\vec{\mathcal{G}}_{\pi_{1}}$ can be represented as 
		\begin{center}\includegraphics[scale=1]{special_pairings_cross_6a}\end{center}
		and there is an integer $1 \leq l \leq 2m_{1}$ so that $\vec{\mathcal{G}}_{\pi_{2}}$ can be represented as 
		\begin{center}\includegraphics[scale=1]{special_pairings_cross_3b}
		\end{center}

	\end{enumerate}
	In the graphs above, $2l-t$, $2l+t-1$, and $k \pm t $ are taken modulo $2m_{1}$ for $t=1,2,\ldots,2m_{1}$ and $l \pm t $ is taken modulo $2m_{2}$ for $t=1,2,\ldots,2m_{2}$. 
\end{remark}

\section{The Bounded Cumulants Property}\label{sec.proof.main.thm.1}
In this section, we first prove Lemma \ref{lemma.bounded.cumulant.property}, and then, before we can apply it to get the conclusion in Theorem \ref{thm.bounded.cumulants.property}, we need to establish the relations between the notion of asymptotic free independence, the bounded cumulants property, and linear functionals on an algebra of non-commutative polynomials. 
%
%
%
%
%
\begin{proof}[\textbf{Proof of Lemma \ref{lemma.bounded.cumulant.property}}] 
	%
	%
	%
	Put $V_k= U^{*}_{N,{i}_k} U_{N,{i}_{\gamma(k)}}$ for $k=1,2, \ldots, m$ and note that 
	\[
	\Tr{Y_{k}} = \Tr{	A_{m'_{k}+1} V_{m'_{k}+1} A_{m'_{k}+2} V_{m'_{k}+2} \cdots 
		A_{m'_{k}+m_{k}} V_{m'_{k}+m_{k}}}. 
	\]
	Now, letting $\mathbf{a(j)}, \mathbf{v}_1(\mathbf{j}), \mathbf{v}_2(\mathbf{j}), \ldots, \mathbf{v}_n(\mathbf{j})$ be given by
	\begin{align*}
	\mathbf{a(j)} = \prod_{k=1}^{m} A_{k}(j_{-k},j_{k})
	\text{ and } 
	\mathbf{v}_{k}(\mathbf{j}) = \prod_{l=m'_{k}+1}^{m'_{k}+m_{k}} V_{l}(j_{l},j_{-\gamma(l)})
	\text{ for } k=1,2,\ldots,n 
	\end{align*}
	for each function $\mathbf{j}:[\pm m] \rightarrow [N]$, 
	we have that
	\begin{align}\label{cumm.lemma.1}
	\mathfrak{c}_{n} \left[ \Tr{Y_1}, \ldots, \Tr{Y_n}	\right] 
	=&
	\sum_{ \mathbf{j}: [\pm m]\rightarrow [N] } 
	\mathbf{a}(\mathbf{j}) 
	\mathfrak{c}_{n} \left[ 
	\mathbf{v}_{1}(\mathbf{j}), \mathbf{v}_{2}(\mathbf{j})
	,\ldots,					\mathbf{v}_{n}(\mathbf{j})
	\right] 
	\end{align}
	since the matrices $A_k$ are deterministic and the classical cumulants are multi-linear.

	By hypothesis, 
	the family of random matrices $\left\{V_{l}\right\}_{l=1}^{m}$ 
	is distribution-invariant under conjugation by signed permutation matrices, 
	thus given a function $\mathbf{j}:[\pm m] \rightarrow [N]$ we have
	\begin{align*}
	\mathfrak{c}_{n} 
	\left[ 		
	\mathbf{v}_{1}(\mathbf{j}),\ldots,\mathbf{v}_{n}(\mathbf{j})
	\right]
	=
	\prod_{k=1}^{m} 	
	\epsilon_{\sigma(j_k)}\epsilon_{\sigma(j_{-k})}
	\mathfrak{c}_n \left[ 
	\mathbf{v}_1(\sigma \circ \mathbf{j}), \ldots ,
	\mathbf{v}_n(\sigma \circ \mathbf{j})
	\right]
	\end{align*}
	for all signs $\epsilon_{1},\epsilon_{2},\ldots,\epsilon_{N} \in \{\pm 1 \}$ 
	and permutations $\sigma \in  \{ f:[N] \rightarrow [N] \mid f \text{ is bijective}\}$. 
	This implies that
	\begin{align*}
	\mathfrak{c}_{n} \left[\mathbf{v}_{1}(\mathbf{j}),\ldots,\mathbf{v}_{n}(\mathbf{j})\right]=0
	\end{align*}
	whenever $\kernel{\mathbf{j}}$ contains at least one block of odd size, 
	and 
	\begin{align*}
	\mathfrak{c}_{n} \left[\mathbf{v}_{1}(\mathbf{j}),\ldots,\mathbf{v}_{n}(\mathbf{j})\right]
	= 
	\mathfrak{c}_{n} \left[\mathbf{v}_{1}(\mathbf{j'}),\ldots,\mathbf{v}_{n}(\mathbf{j'})\right]
	\end{align*}
	provided 
	a function $\mathbf{j'}:[\pm m] \rightarrow [N]$ satisfies 
	$\kernel{\mathbf{j'}}=\kernel{\mathbf{j}}$.
	Thus, 
	letting $\mathfrak{c}_n \left[ \pi \right]$ denote the common value
	$\mathfrak{c}_{n} \left[\mathbf{v}_{1}(\mathbf{j}),\ldots,\mathbf{v}_{n}(\mathbf{j})\right]$ 
	among all those functions $\mathbf{j}:[\pm m] \rightarrow [N]$ satisfying 
	$\kernel{\mathbf{j}}=\pi$, 	Equation (\ref{cumm.lemma.1}) becomes
	\begin{align}\label{cumm.lemma.2}
	\mathfrak{c}_n \left[ \Tr{Y_1}, \ldots, \Tr{Y_n}	\right] 
	=&
	\sum_{\pi \in P_{\text{even}}(\pm m)}
	\mathfrak{c}_n \left[ \pi \right]			
	\sum_{\substack{ \mathbf{j}: [\pm m]\rightarrow [N]  \\ \mathrm{ker}(\mathbf{j})=\pi }}
	\mathbf{a}(\mathbf{j})  .
	\end{align}
	Moreover, the M\"{o}bius inversion formula in (\ref{mobius.inversion.formula.0}) implies
	\[
	\sum_{\substack{ 
			\mathbf{j}: [\pm m]\rightarrow [N]  \\ \mathrm{ker}(\mathbf{j}) = \pi }}
	\mathbf{a}(\mathbf{j})
	=
	\sum_{\substack{ 
			\theta \in P(\pm m)  \\ 
			\theta \geq \pi }}
	\mu(\pi,\theta)
	\sum_{\substack{ 
			\mathbf{j}: [\pm m]\rightarrow [N]  \\ \mathrm{ker}(\mathbf{j}) \geq \theta }}
	\mathbf{a}(\mathbf{j})
	\]
	since for all partitions $\theta \in P(\pm m)$ we have the relation
	\[
	\sum_{\substack{ 
			\mathbf{j}: [\pm m]\rightarrow [N]  \\ \mathrm{ker}(\mathbf{j}) \geq \theta }}
	\mathbf{a}(\mathbf{j})
	= 
	\sum_{\substack{ 
			\pi \in P(\pm m)  \\ 
			\pi \geq \theta }}
	\sum_{\substack{ 
			\mathbf{j}: [\pm m]\rightarrow [N]  \\ \mathrm{ker}(\mathbf{j}) = \pi }}
	\mathbf{a}(\mathbf{j}).
	\]
	Hence, we get
	\begin{align}\label{cumm.lemma.3}
	\mathfrak{c}_n \left[ \Tr{Y_1}, \ldots, \Tr{Y_n}	\right] 
	=&
	\sum_{\pi \in P_{\text{even}}(\pm m)}
	\sum_{\substack{
			\theta \in P_{\text{even}}(\pm m) \\ 
			\theta \geq \pi}} 
	\mathfrak{c}_n \left[ \pi \right]	
	\mu(\pi, \theta) 
	\sum_{\substack{
			\mathbf{j}:[\pm m]\rightarrow[N] \\ \mathrm{ker}(\mathbf{j}) \geq \theta}}
	\mathbf{a}(\mathbf{j})
	\end{align}
	Note that if a partition $\theta \in P(\pm m)$ has a block of the form $\{k,-k\}$, then 
	\[
	\sum_{\substack{
			\mathbf{j}:[\pm m]\rightarrow[N] \\ \mathrm{ker}(\mathbf{j}) \geq \theta}}
	\mathbf{a}(\mathbf{j})
	=
	\Tr{A_k} (\star) 
	\]
	where $ (\star) $ is a sum excluding the entries of $A_{k}$. Therefore, since each $A_{k}$ is assumed to be of trace zero, we have  
	\begin{align}\label{cumm.lemma.4}
	\mathfrak{c}_n \left[ \Tr{Y_1}, \ldots, \Tr{Y_n}	\right] 
	=&
	\sum_{\pi \in P_{\text{even}}(\pm m)}
	\sum_{\substack{
			\theta \in P_{\chi}(\pm m) \\ 
			\theta \geq \pi}} 
	\mathfrak{c}_n \left[ \pi \right]	
	\mu(\pi, \theta) 
	\sum_{\substack{
			\mathbf{j}:[\pm m]\rightarrow[N] \\ \mathrm{ker}(\mathbf{j}) \geq \theta}}
	\mathbf{a}(\mathbf{j})
	\end{align}
	where $P_{\chi}(\pm m )$ denotes the set of all partitions in $P_{\text{even}}( \pm m)$ with no blocks of the form $\{k,-k\}$.

	Now, for a partition $\theta \in P_{\chi}(\pm m)$, each connected component of the graph $\mathcal{G}_{\theta}$, constructed as in Section \ref{sec.general.graph.sums}, has at least two edges, and hence $\mathcal{G}_{\theta}$ has at most  $\frac{m}{2}$ connected components. 
	Thus, from Theorem \ref{mingo-speicher}, Proposition \ref{prop.graph.sum.exp.bound1}, and the equality in (\ref{cumm.lemma.4}), we get
	\begin{align*}\label{cumm.lemma.5}
	\abs*{\mathfrak{c}_n \left[ \Tr{Y_1}, \ldots, \Tr{Y_n}	\right] }
	\leq &	
	\sum_{\pi \in P_{\text{even}}(\pm m)}
	\sum_{\substack{
			\theta \in P_{\chi}(\pm m) \\ 
			\theta \geq \pi}} 
	\abs*{ \mathfrak{c}_n \left[ \pi \right] }	
	\abs*{ \mu(\pi, \theta) }
	N^{\frac{m}{2}} \prod_{k=1}^{m} \norm*{A_{k}} . 
	\end{align*}
	Since the sums above are over the finite sets $P_{\text{even}}(\pm m)$ and $P_{\chi}(\pm m)$, our proof will be complete if we show that there is a constant $C_{n}$ independent from $N$ such that 
	\[
	\abs*{\mathfrak{c}_n\left[
		\mathbf{v}_{1}(\mathbf{j}),\ldots,\mathbf{v}_{n}(\mathbf{j})
		\right]	}
	\leq 
	C_{n} N^{-\frac{m}{2}}
	\]
	for all functions $\mathbf{j}:[\pm m] \rightarrow [N]$. 
	Let $\mathbf{j}:[\pm m] \rightarrow [N]$ be arbitrary. 
	By H\"{o}lder's inequality, letting $m_B := \sum_{k \in B} m_k $ for any given subset $B$ of $[n]$, we have 
	\begin{align*}
	\norm*{\prod_{k \in B} \mathbf{v}_{k}(\mathbf{j})}_{1} 
	=  
	\norm*{\prod_{k \in B} \prod_{l=m'_{k}+1}^{m'_{k}+m_{k}} V_{l}(j_{l},j_{-\gamma(l)})}_{1} 
	\leq 
	\prod_{k \in B} \prod_{l=m'_{k}+1}^{m'_{k}+m_{k}} \norm*{V_{l}(j_{l},j_{-\gamma(l)})}_{m_{B}}	.
	\end{align*}
	But, by hypothesis, the $p$-norms of the entries of $ \sqrt{N} V_{l}$ are uniformly bounded, i.e., there are constants $C_1, C_2, \ldots, C_m$ such that
	\begin{align*}
	\norm*{V_l (j_{l},j_{-\gamma(l)})}_p  
	\leq 
	C_{p} N^{-\frac{1}{2}} 
	\end{align*}
	for all integers $1 \leq j_{l},j_{-\gamma(l)} \leq N$ and $p,l=1,2,\ldots, m$, and hence, we get 
	\begin{align*}
	\norm*{\prod_{k \in B} \mathbf{v}_{k}(\mathbf{j})}_{1} 
	\leq
	\prod_{k \in B} \prod_{l=m'_{k}+1}^{m'_{k}+m_{k}}
	C_{m_{B}} N^{-\frac{1}{2}}
	=
	\left( C_{m_{B}} N^{-\frac{1}{2}} \right)^{m_{B}} . 
	\end{align*}
	Now, the moment-cumulants relation in  (\ref{eqn.moment.cumulants.relation}) implies 
	\begin{align*}
	\abs*{\mathfrak{c}_n\left[
		\mathbf{v}_{1}(\mathbf{j}),\ldots,\mathbf{v}_{n}(\mathbf{j})
		\right]	}
	\leq &
	\sum_{\substack{ \pi \in P(n) } }
	\abs*{\mu(\pi, 1_n)}
	\prod_{B \in \pi} 
	\norm*{
		\prod_{k \in B} 
		\mathbf{v}_{k}(\mathbf{j}) }_{1},
	\end{align*}
	so it follows that 
	\begin{align*}
	\abs*{\mathfrak{c}_n\left[
		\mathbf{v}_{1}(\mathbf{j}),\ldots,\mathbf{v}_{n}(\mathbf{j})
		\right]	} 
	\leq &	
	\sum_{\substack{ \pi \in P(n) } }
	\abs*{\mu(\pi, 1_n)} 
	\prod_{B \in \pi} \left( C_{m_{B}} N^{-\frac{1}{2}} \right)^{m_{B}} \\
	= &
	N^{-\frac{m}{2}}	
	\sum_{\substack{ \pi \in P(n) } }
	\abs*{\mu(\pi, 1_n)} 
	\prod_{B \in \pi} \left(C_{m_{B}} \right)^{m_{B}} .
	\end{align*}
	And the proof of Lemma \ref{lemma.bounded.cumulant.property} is now complete. 
\end{proof}
%
%
%
%
%
%
%
%
%
Now, asymptotic free independence and the bounded cumulants property can be stated in terms of some linear functionals, and, by doing so, we can show the bounded cumulants property is actually equivalent to a condition that is a consequence of Lemma \ref{lemma.bounded.cumulant.property}, see Proposition \ref{prop.asymptotic.centering} and Corollary \ref{corollary.asymptotic.centering} below. 
Thus, to prove Theorem \ref{thm.bounded.cumulants.property}, we first examine the relations between the notion of asymptotic free independence, the bounded cumulants property, and linear functionals on an algebra of non-commutative polynomials first. 
\subsection*{Multi-linear functionals on non-commutative polynomials and notions from free probability}\hspace*{\fill} \newline

Let $I$ be a non-empty set. 
Let $\mathcal{A}$ denote the algebra of non-commutative polynomials $\mathbb{C}\left\langle \mathrm{x}_{i} \mid i \in I \right\rangle$ and let  $\mathcal{A}_{i} \subset \mathcal{A}$ denote the algebra of polynomials $\mathbb{C}\left[ x_{i} \right] $ for each index $i \in I$.
Suppose we are given random matrix ensembles $\{X_{N,i}\}_{N=1}^{\infty}$ with $i\in I$ where each $X_{N,i}$ is a $N$-by-$N$ random matrix and consider the sequence of unital linear functional $\{\varphi_{N}: \mathcal{A}\rightarrow \mathbb{C}\}_{N=1}^{\infty}$ where each $\varphi_{N}$ is defined by 
\begin{align}\label{eqn.unital.funtionals} 
\varphi_{N} \left[  \mathrm{p} \right] 
:= \exptr{ \tr{ \mathrm{p} \left( \{X_{N,i} \}_{i \in I} \right) }} 
\quad \forall \mathrm{p} \in \mathcal{A} .
\end{align}
Note that the two conditions necessary for the random matrix ensembles $\{X_{N,i}\}_{N=1}^{\infty}$ with $i\in I$ to be asymptotically freely independent, namely,  \textit{({AF.}\ref{def.asymp.free.1.alternative})} and \textit{({AF.}\ref{def.asymp.free.2.alternative})} from Definition \ref{def.asymptotic.freenes.alternative}, can be stated in terms of the linear functionals $\varphi_{N}$ as
\begin{enumerate}[({AF.}1)]
	\item $\displaystyle  \lim_{N\rightarrow\infty}\varphi_{N} \left[  \mathrm{p}_{}\right] $ exists for every $\mathrm{p}_{} \in \mathcal{A}_{i}$ and every $i\in I$, and 
	\item  $\displaystyle \lim_{N \rightarrow \infty} \varphi_{N}\left[ (\mathrm{p}_{1} - \varphi_{N}\left[\mathrm{p}_{1}\right]   ) (\mathrm{p}_{2} - \varphi_{N}\left[\mathrm{p}_{2}\right]   ) \cdots (\mathrm{p}_{m} - \varphi_{N}\left[\mathrm{p}_{m}\right]   ) \right]  = 0 $ whenever $\mathrm{p}_{k} \in  \mathcal{A}_{i_{k}} $ with $i_{1}\neq i_{2}, i_{2} \neq i_{3}, \ldots, i_{m-1} \neq i_{m}$
\end{enumerate}
Moreover, assuming \textit{({AF.}\ref{def.asymp.free.1.alternative})} holds, we can replace each $\varphi_{N}\left[\mathrm{p}_{k}\right]$ appearing in \textit{({AF.}\ref{def.asymp.free.2.alternative})} by $\varphi_{}\left[  \mathrm{p}_{k}\right] :=\lim_{N\rightarrow\infty}\varphi_{N} \left[  \mathrm{p}_{k}\right] $; more concretely, we have the following.
%
%
\begin{proposition}\label{prop.asymptotic.centering.1}
Suppose each ensemble $\{X_{N,i}\}_{N=1}^{\infty}$ has a limiting distribution, namely, $\varphi_{}\left[  \mathrm{p}_{}\right] :=\lim_{N\rightarrow\infty}\varphi_{N} \left[  \mathrm{p}_{}\right] $ exists for every $\mathrm{p}_{} \in \mathcal{A}_{i}$ and every $i\in I$. 
Then the ensembles $\{X_{N,i}\}_{N=1}^{\infty}$ with $i\in I$  are asymptotically freely independent if and only if the following holds:
\begin{enumerate}[({AF.2.}1)]
	\item[({AF.2}')]\label{asymptotic.centering.on.the.go.2}  $\displaystyle \lim_{N \rightarrow \infty} \varphi_{N}\left[  (\mathrm{p}_{1} - \varphi_{}\left[	\mathrm{p}_{1}\right]  ) (\mathrm{p}_{2} - \varphi_{}\left[	\mathrm{p}_{2}\right]  ) \cdots (\mathrm{p}_{m} - \varphi_{}\left[	\mathrm{p}_{m}\right]  ) \right]  = 0 $ whenever $\mathrm{p}_{k} \in  \mathcal{A}_{i_{k}} $ with $i_{1}\neq i_{2}, i_{2} \neq i_{3}, \ldots, i_{m-1} \neq i_{m}$
\end{enumerate}
Moreover, if either {({AF.}\ref{def.asymp.free.2.alternative})} or \textit{({AF.2}')} holds, then $\lim_{N\rightarrow\infty}\varphi_{N} \left[  \mathrm{p}_{}\right]$ exists for every $ \mathrm{p}_{} \in \mathcal{A}$ . 
\end{proposition}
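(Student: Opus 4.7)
The plan is to run a strong induction on $m$ carrying three parallel claims at length $m$: condition \textit{(AF.\ref{def.asymp.free.2.alternative})}, the candidate condition \textit{(AF.2')}, and an auxiliary claim $Q(m)$ that $\varphi_{N}[\mathrm{p}_{1} \mathrm{p}_{2} \cdots \mathrm{p}_{m}]$ converges as $N \to \infty$ for every choice of $\mathrm{p}_{k} \in \mathcal{A}_{i_{k}}$, with no alternation condition on the indices. The last claim for all $m$ is exactly the "Moreover" assertion of the proposition, so proving the three in tandem settles everything at once. The base case $m=1$ is precisely \textit{(AF.\ref{def.asymp.free.1.alternative})}, and the centering conditions are vacuous there.

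For the inductive step at length $m$, assuming $Q(k)$ and the equivalence of the centering conditions for all $k<m$, I first establish $Q(m)$ from either centering condition. If adjacent indices $i_{j}=i_{j+1}$ coincide, I concatenate $\mathrm{p}_{j}\mathrm{p}_{j+1}$ into a single element of $\mathcal{A}_{i_{j}}$, reducing to length $m-1$; the border case $i_{m}=i_{1}$ is handled identically after one cyclic shift using cyclicity of the trace. In the truly cyclically alternating case, I apply the elementary identity
\begin{align*}
\prod_{k=1}^{m}(\mathrm{p}_{k} - c_{k}) = \sum_{S \subseteq [m]} (-1)^{|S|} \Big(\prod_{k \in S} c_{k}\Big) \vec{\prod_{k \notin S}} \mathrm{p}_{k}
\end{align*}
with scalars $c_{k}=\varphi_{N}[\mathrm{p}_{k}]$ (or $c_{k}=\varphi[\mathrm{p}_{k}]$) and apply $\varphi_{N}$. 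The $S=\emptyset$ summand is $\varphi_{N}[\mathrm{p}_{1}\cdots\mathrm{p}_{m}]$, and every $S\neq\emptyset$ summand is a product of $\varphi_{N}[\mathrm{p}_{k}]$'s (convergent by \textit{(AF.\ref{def.asymp.free.1.alternative})}) with a factor $\varphi_{N}[\vec{\prod_{k \notin S}}\mathrm{p}_{k}]$ of length strictly less than $m$ (convergent by $Q(m-1)$). Since the left-hand side tends to $0$ by the assumed centering condition, $\varphi_{N}[\mathrm{p}_{1}\cdots\mathrm{p}_{m}]$ must converge as well.

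With $Q(m)$ secured, the equivalence of the two centerings at length $m$ is pure bookkeeping. Write $\delta_{k} := \varphi_{N}[\mathrm{p}_{k}] - \varphi[\mathrm{p}_{k}]$, which tends to $0$ by \textit{(AF.\ref{def.asymp.free.1.alternative})}, so $(\mathrm{p}_{k} - \varphi_{N}[\mathrm{p}_{k}]) = (\mathrm{p}_{k} - \varphi[\mathrm{p}_{k}]) - \delta_{k}$. Expanding the length-$m$ product and applying $\varphi_{N}$ expresses each of the two centered limits as the other plus a finite sum of correction terms, each of which carries at least one vanishing $\delta_{k}$ multiplied by $\varphi_{N}$ of a strictly shorter product of centered factors. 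Expanding those shorter products one more time via the same identity and invoking $Q(|T|)$ for the resulting lengths $|T| < m$ shows that each correction factor is a convergent, hence bounded, sequence, so the corrections vanish in the limit and the two centering conditions agree. The main obstacle is organizational rather than conceptual: one has to formulate $Q(m)$ as an unconditional statement (no alternation) so that the "shorter products" that appear throughout the merging, cyclic-shift, and binomial-expansion reductions are always covered by the inductive hypothesis, which is what lets the triple induction close up cleanly.
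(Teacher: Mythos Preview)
Your argument is correct and follows essentially the same route as the paper: a strong induction on the word length $m$, the inclusion--exclusion expansion $\prod_k(\mathrm{p}_k - c_k) = \sum_{S}(-1)^{|S|}(\prod_{k\in S}c_k)\vec{\prod_{k\notin S}}\mathrm{p}_k$, and the observation $\varphi_N[\mathrm{p}_k]-\varphi[\mathrm{p}_k]\to 0$ to pass between the two centerings. The only cosmetic difference is that the paper substitutes $\mathrm{q}_k=\mathrm{p}_k-\varphi[\mathrm{p}_k]$ (so that $\mathrm{p}_k-\varphi_N[\mathrm{p}_k]=\mathrm{q}_k-\varphi_N[\mathrm{q}_k]$ and $\varphi_N[\mathrm{q}_k]\to 0$) and reuses the same expansion once, whereas you work with $\delta_k$ and expand twice; your formulation of $Q(m)$ without any alternation hypothesis is a helpful explicit bookkeeping device that the paper leaves implicit.
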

%
%
%
%
\begin{proof} 
	Let $J$ be the set of all positive integers $m$ satisfying the following property: 
	if $\mathrm{p}^{}_{1} \in \mathcal{A}_{i^{}_{1}}$, $\mathrm{p}^{}_{2} \in \mathcal{A}_{i^{}_{2}}$, $\ldots$, $\mathrm{p}^{}_{m} \in \mathcal{A}_{i^{}_{m}}$ with $i_{1},i_{2},\ldots,i_{m}\in I$ and $i^{}_{1} \neq i^{}_{2},  i^{}_{2} \neq i^{}_{3}, \ldots , i^{}_{m_{}-1} \neq i^{}_{m_{}} $, 
	then
	$\lim_{N \rightarrow \infty} \varphi_{N} \left[  \mathrm{p}_{1}   \mathrm{p}_{2} \cdots   \mathrm{p}_{m} \right]$ exists. 
	Since the algebras $\mathcal{A}_{i}$ with $i \in I$ generate $\mathcal{A}$ and each $\varphi_{N}$ is linear, $\lim_{N\rightarrow\infty}\varphi_{N} \left[  \mathrm{p}_{}\right]$ exists for every $ \mathrm{p}_{} \in \mathcal{A}$ if the set $J$ contains every positive integer. 
	Now, by hypothesis, $1$ belongs to $J$, so let us assume $1, 2, \ldots, m-1$ belong to $J$ and suppose $\mathrm{p}^{}_{1}, \mathrm{p}^{}_{2}, \ldots, \mathrm{p}^{}_{m}$ are as above. 
	Thus, if $S=\{ k_{1} < k_{2} < \cdots < k_{\abs{S}} \}$ is a strict subset of $[m]$, the limits
	\begin{align*}
	\lim_{N\rightarrow \infty} (-1)^{\vert S^c \vert}
	\prod_{k \in S^c} \varphi_{N}\left[ \mathrm{p}_{k} \right]
	\varphi_N \left[	\vec{\prod_{k \in S }} \mathrm{p}_{k}	\right] 
	\qquad \text{and} \qquad 
	\lim_{N\rightarrow \infty} (-1)^{\vert S^c \vert}
	\prod_{k \in S^c} \varphi_{}\left[ \mathrm{p}_{k} \right]
	\varphi_N \left[\vec{\prod_{k \in S }} \mathrm{p}_{k}\right] , 
	\end{align*}
	where $S^{c}$ denotes the complement of $S$ in the set $[m]$ and ${\vert S^{c} \vert}$ denotes the cardinality of $S^{c}$, exist. 
	Moreover, if \textit{({AF.}\ref{def.asymp.free.2.alternative})}  holds, the equality 
	\begin{align*}
	\varphi_{N}\left[ 
	(\mathrm{p}_{1} - \varphi_{N}\left[\mathrm{p}_{1}\right]   ) \cdots 
	(\mathrm{p}_{m} - \varphi_{N}\left[\mathrm{p}_{m}\right]   ) \right]  	
	& =  		
	\varphi_N \left[ \mathrm{p}_{1}  \cdots \mathrm{p}_{m}  \right] +
	\sum_{ S \varsubsetneq [m]} (-1)^{\vert S^c \vert}
	\prod_{k \in S^c} \varphi_{N}\left[ \mathrm{p}_{k} \right]
	\varphi_N \left[
	\vec{\prod_{k \in S }} \mathrm{p}_{k}
	\right] ,
	\end{align*}
	implies $\lim_{N\rightarrow \infty}\varphi_N \left[ \mathrm{p}_{1}  \cdots \mathrm{p}_{m}  \right] $ exists. 
	And therefore, $J$ contains every positive integer by induction on $m$. 
	Similarly, if \textit{({AF.2}')} holds, then $\lim_{N\rightarrow \infty}\varphi_N \left[ \mathrm{p}_{1}  \cdots \mathrm{p}_{m}  \right] $ exists, and hence $J$ contains every positive integer, since each $\varphi_{N}\left[ \mathrm{p}_{k} \right]$ in the equality above can be replaced by $\varphi_{}\left[ \mathrm{p}_{k} \right]$. 
	%


Let us now show that \textit{({AF.}\ref{def.asymp.free.2.alternative})} and  \textit{({AF.2}')} are equivalent. 
Suppose $\mathrm{p}^{}_{1} \in \mathcal{A}_{i^{}_{1}}, \mathrm{p}^{}_{2} \in \mathcal{A}_{i^{}_{2}}, \ldots, \mathrm{p}^{}_{m} \in \mathcal{A}_{i^{}_{m}}$ with $i_{1},i_{2},\ldots,i_{m}\in I$ and $i^{}_{1} \neq i^{}_{2},  i^{}_{2} \neq i^{}_{3}, \ldots , i^{}_{m_{}-1} \neq i^{}_{m_{}} $ and  take $\mathrm{q}_k=\mathrm{p}_k-\varphi \left[ \mathrm{p}_k \right] $ for $k=1,2,\ldots, m$. 
We then have $\lim_{N\rightarrow \infty} \varphi_{N}\left[ \mathrm{q}_{k} \right] = 0 $ and the equality
\begin{align*}
\varphi_N \left[ (\mathrm{p}_{1}-\varphi_N \left[ \mathrm{p}_{1}\right] ) \cdots(\mathrm{p}_{m}-\varphi_N \left[ \mathrm{p}_{m}\right] ) \right]
=&
\varphi_N \left[ \mathrm{q}_{1} \cdots \mathrm{q}_{m}  \right] +
\sum_{ S \varsubsetneq [m]} (-1)^{\vert S^c \vert}
\prod_{k \in S^c} \varphi_N \left[ \mathrm{q}_{k}\right] 
\varphi_N \left[ \vec{\prod_{k \in S}} \mathrm{q}_{k} \right] \\ 
=& 	
\varphi_N \left[ (\mathrm{q}_{1}-\varphi_N \left[ \mathrm{q}_{1}\right] ) \cdots(\mathrm{q}_{m}-\varphi_N \left[ \mathrm{q}_{m}\right] ) \right]
\end{align*}
Thus, if condition \textit{({AF.2}')} holds, then $\lim_{N\rightarrow\infty}\varphi_{N} \left[  \mathrm{p}_{}\right]$ exists for every $ \mathrm{p}_{} \in \mathcal{A}$, and hence
\begin{align*}
0 = 
\lim_{N\rightarrow \infty} (-1)^{\vert S^c \vert}
\prod_{k \in S^c} \varphi_{N}\left[ \mathrm{q}_{k} \right]
\varphi_N \left[	\vec{\prod_{k \in S }} \mathrm{q}_{k}	\right]
\qquad \forall S \varsubsetneq [m];   
\end{align*}
additionally, we have $
\lim_{N\rightarrow\infty}
\varphi_N \left[ (\mathrm{p}_{1}-\varphi\left[ \mathrm{p}_{1} \right]) \cdots(\mathrm{p}_{m}-\varphi\left[\mathrm{p}_{m} \right]) \right]  =  \lim_{N\rightarrow\infty}\varphi_{N} \left[  \mathrm{q}_{1} \cdots \mathrm{q}_{m} \right] = 0 $, and thus \[\lim_{N\rightarrow\infty}
\varphi_N \left[ (\mathrm{p}_{1}-\varphi_N \left[ \mathrm{p}_{1}\right] ) \cdots(\mathrm{p}_{m}-\varphi_N \left[ \mathrm{p}_{m}\right] ) \right] = 0 .\]
This shows that  \textit{({AF.2}')} implies \textit{({AF.}\ref{def.asymp.free.2.alternative})}.  
Similarly,  \textit{({AF.}\ref{def.asymp.free.2.alternative})} implies \textit{({AF.2}')}. 
\end{proof}
%
%
%
%
In the literature, however, the most common definition of asymptotic free independence for random matrix ensembles in terms of the linear functionals $\varphi_{N}$ defined by (\ref{eqn.unital.funtionals}) goes as follows: $\{X_{N,i}\}_{N=1}^{\infty}$ with $i\in I$ are asymptotically freely independent if they have a joint limiting (algebraic) distribution, i.e., $ \lim_{N \rightarrow \infty} \varphi_{N}\left[  \mathrm{p} \right]$ exist for every polynomial $\mathrm{p}\in \mathcal{A}$, and letting $\varphi_{} := \lim_{N \rightarrow \infty} \varphi_{N}$, we have 
\[
\varphi_{}\big[  (\mathrm{p}_{1} - \varphi_{}\left[	\mathrm{p}_{1}\right]  ) (\mathrm{p}_{2} - \varphi_{}\left[	\mathrm{p}_{2}\right]  ) \cdots (\mathrm{p}_{m} - \varphi_{}\left[	\mathrm{p}_{m}\right]  ) \big]  = 0 
\]
whenever $\mathrm{p}_{k} \in  \mathcal{A}_{i_{k}} $ with $i_{1}\neq i_{2}, i_{2} \neq i_{3}, \ldots, i_{m-1} \neq i_{m}$. 
The previous proposition shows equivalence between the common definition of asymptotic free independence and the one given in the introduction of this paper.

Now, the bounded cumulants property for the random matrix ensemble $\{ \{X_{N,i}\}_{i\in I}\}_{N=1}^{\infty}$ can also be established in terms of multi-linear functionals.
If for each integer $n\geq 1$, we consider the $n$-linear map $\rho_{N}: \mathcal{A}\times \cdots \times \mathcal{A} \rightarrow \mathbb{C}$ defined by 
\begin{align}\label{eqn.n-linear.maps}
\rho_{N}\left[ \mathrm{p}_{1},\mathrm{p}_{2}, \ldots, \mathrm{p}_{n}\right]
=
\mathfrak{c}_{n}\left[ \Tr{\mathrm{p}_{1}(\{X^{}_{N,i} \}_{i \in I}) } , \ldots, \Tr{\mathrm{p}_{n}(\{X^{}_{N,i} \}_{i \in I})} \right]
\end{align}
for all  $\mathrm{p}_{1}, \mathrm{p}_{2},\ldots, \mathrm{p}_{n} \in \mathcal{A}$ and where $\mathfrak{c}_{n}[\cdot,\ldots,\cdot]$ denotes the classical cumulant, from Section \ref{sec.classical.cumulants},
the random matrix ensemble  $\{ \{ X^{}_{N,i} \}_{i \in I} \}_{N=1}^{\infty}$ has then the bounded cumulants property if only if
\begin{align}\label{ineq.bnd.cum.propty.poly}
\sup_{N} \abs*{	\rho_{N}\left[\mathrm{p}_{1},\mathrm{p}_{2}, \ldots, \mathrm{p}_{n}\right]} < \infty  
\end{align}
for all $\mathrm{p}_{1}, \mathrm{p}_{2},\ldots, \mathrm{p}_{n} \in \mathcal{A}$ and all integers $n\geq 1$. 
Moreover, under some mild assumptions, each polynomial $\mathrm{p}_{k}$ appearing in (\ref{ineq.bnd.cum.propty.poly}) can be replaced by 
\[ (\mathrm{p}^{(k)}_1 - \varphi_N[\mathrm{p}^{(k)}_1] )
(\mathrm{p}^{(k)}_2 - \varphi_N[\mathrm{p}^{(k)}_2] )
\cdots
(\mathrm{p}^{(k)}_{m_k}-\varphi_N [\mathrm{p}^{(k)}_{m_{k}} ] ) 
\]
for some polynomials $\mathrm{p}^{(k)}_{1} \in \mathcal{A}_{i^{(k)}_{1}}, \mathrm{p}^{(k)}_{2} \in \mathcal{A}_{i^{(k)}_{2}}, \ldots ,\mathrm{p}^{(k)}_{m_{k}} \in \mathcal{A}_{i^{(k)}_{m_{k}}}$ and still get the bounded cumulants property. 
%
%
%
\begin{proposition}\label{prop.asymptotic.centering}
	Suppose $\varphi_N: \mathcal{A} \rightarrow \mathbb{C}$ is a unital linear functional and $\rho_N: \mathcal{A}\times \cdots \times \mathcal{A} \rightarrow \mathbb{C}$ is an $n$-linear functional for integer each $N \geq 1$. 
	If the limits $ 
	\lim_{N \rightarrow \infty}\varphi_N\left[\mathrm{p}\right]$ and $\lim_{N \rightarrow \infty}\rho_N\left[\mathrm{p}_1,\mathrm{p}_2,\ldots,\mathrm{p}_n\right]$ exist for all $\mathrm{p} \in \mathcal{A},  \mathrm{p}_1 \in \mathcal{A}_{{i}_1},\mathrm{p}_2 \in \mathcal{A}_{{i}_2},\ldots, \mathrm{p}_n \in \mathcal{A}_{{i}_n} $ with ${i}_{1},{i}_{2},\ldots,{i}_{n} \in I$,
	then the following are equivalent:
	\begin{enumerate}[(1)]
		\item\label{asymptotic.no.centering.1}
		$ \displaystyle\sup_{N} \abs*{	\rho_N\left[\mathrm{p}_1,\mathrm{p}_2,\ldots,\mathrm{p}_n\right]} < \infty$ for all $\mathrm{p}_1,\mathrm{p}_2,\ldots,\mathrm{p}_n \in \mathcal{A}$
		\item\label{asymptotic.no.centering.2}
		$\displaystyle\sup_{N} \abs*{ \rho_N\left[\mathrm{q}_1,\mathrm{q}_2,\ldots,\mathrm{q}_n\right]}  < \infty$	if each $\mathrm{q}_k$ is of the form
		\[\mathrm{q}_k=\mathrm{p}^{(k)}_1 \mathrm{p}^{(k)}_2 \cdots \mathrm{p}^{(k)}_{m_k}\]
		with $\mathrm{p}^{(k)}_{j} \in \mathcal{A}_{i^{(k)}_{j}}$ and $i^{(k)}_{1} \neq i^{(k)}_{2}, i^{(k)}_{2} \neq  i^{(k)}_{3}, \ldots, i^{(k)}_{m_{k}-1} \neq i^{(k)}_{m_{k}} $
		\item\label{asymptotic.centering.on.the.go}
		$\displaystyle\sup_{N} \abs*{ 
			\rho_N \left[\mathrm{q}_{N,1} , \mathrm{q}_{N,2} , \ldots , \mathrm{q}_{N,n} \right]} < \infty$
		if each $\mathrm{q}_{N,k}$ is of the form
		\[\mathrm{q}_{N,k}= 	(
		\mathrm{p}^{(k)}_1 - \varphi_N[\mathrm{p}^{(k)}_1]  
		)
		(
		\mathrm{p}^{(k)}_2 - \varphi_N[\mathrm{p}^{(k)}_2] 
		)
		\cdots
		(
		\mathrm{p}^{(k)}_{m_k}-\varphi_N [\mathrm{p}^{(k)}_{m_{k}} ] 
		) 
		\]
		with $\mathrm{p}^{(k)}_{j} \in \mathcal{A}_{i^{(k)}_{j}}$ and  $i^{(k)}_{1} \neq i^{(k)}_{2}, i^{(k)}_{2} \neq  i^{(k)}_{3}, \ldots, i^{(k)}_{m_{k}-1} \neq i^{(k)}_{m_{k}} $
	\end{enumerate}
\end{proposition}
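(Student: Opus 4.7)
My plan is to prove the cycle $(\ref{asymptotic.no.centering.1}) \Leftrightarrow (\ref{asymptotic.no.centering.2}) \Leftrightarrow (\ref{asymptotic.centering.on.the.go})$, with $(\ref{asymptotic.no.centering.1}) \Rightarrow (\ref{asymptotic.no.centering.2})$ immediate since every $\mathrm{q}_k$ of the form in (\ref{asymptotic.no.centering.2}) lies in $\mathcal{A}$. For $(\ref{asymptotic.no.centering.2}) \Rightarrow (\ref{asymptotic.no.centering.1})$, I would use multilinearity of $\rho_N$ together with the observation that every non-commutative monomial in $\mathcal{A}$ becomes an alternating product of the form in (\ref{asymptotic.no.centering.2}) after collecting maximal runs of the same letter $\mathrm{x}_i$. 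Consequently every $\mathrm{p}_k \in \mathcal{A}$ is a finite $\mathbb{C}$-linear combination of such alternating monomials, and $\rho_N[\mathrm{p}_1,\ldots,\mathrm{p}_n]$ is a finite $\mathbb{C}$-linear combination of quantities bounded by (\ref{asymptotic.no.centering.2}).

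The equivalence $(\ref{asymptotic.no.centering.2}) \Leftrightarrow (\ref{asymptotic.centering.on.the.go})$ will rest on the centering identity $\mathrm{p}^{(k)}_j = (\mathrm{p}^{(k)}_j - \varphi_N[\mathrm{p}^{(k)}_j]) + \varphi_N[\mathrm{p}^{(k)}_j]$. Expanding each $\mathrm{q}_{N,k} = \prod_j (\mathrm{p}^{(k)}_j - \varphi_N[\mathrm{p}^{(k)}_j])$ by distributivity and then applying multilinearity of $\rho_N$ yields a finite identity whose leading term is $\rho_N[\mathrm{q}_1,\ldots,\mathrm{q}_n]$ and whose remaining terms have the shape
\[
\prod_k(-1)^{m_k-|S_k|}\prod_{j\notin S_k}\varphi_N[\mathrm{p}^{(k)}_j]\cdot\rho_N\Big[\vec{\prod_{j\in S_1}}\mathrm{p}^{(1)}_j,\ldots,\vec{\prod_{j\in S_n}}\mathrm{p}^{(n)}_j\Big]
\]
indexed by tuples $(S_1,\ldots,S_n) \ne ([m_1],\ldots,[m_n])$ with $S_k \subseteq [m_k]$. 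Since the convergence hypothesis of the proposition forces each $\varphi_N[\mathrm{p}^{(k)}_j]$ to be a bounded sequence, $(\ref{asymptotic.no.centering.2}) \Rightarrow (\ref{asymptotic.centering.on.the.go})$ will follow by bounding each $\rho_N$-factor via (\ref{asymptotic.no.centering.2}), while $(\ref{asymptotic.centering.on.the.go}) \Rightarrow (\ref{asymptotic.no.centering.2})$ will follow by solving the identity for $\rho_N[\mathrm{q}_1,\ldots,\mathrm{q}_n]$ and closing an induction on $M = \sum_k m_k$: the term $\rho_N[\mathrm{q}_{N,1},\ldots,\mathrm{q}_{N,n}]$ is bounded by (\ref{asymptotic.centering.on.the.go}), and every remaining $\rho_N$-factor, after consecutive factors lying in the same subalgebra $\mathcal{A}_i$ are regrouped into a single polynomial in that subalgebra, is an alternating product of the form in (\ref{asymptotic.no.centering.2}) with strictly smaller total factor count. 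The base case $M = n$ reduces to the convergence hypothesis, which directly supplies a bound for $\rho_N[\mathrm{p}_1,\ldots,\mathrm{p}_n]$ with $\mathrm{p}_k \in \mathcal{A}_{i_k}$.

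The main obstacle is the combinatorial bookkeeping in the inductive step of $(\ref{asymptotic.centering.on.the.go}) \Rightarrow (\ref{asymptotic.no.centering.2})$. After regrouping, the product $\vec{\prod}_{j \in S_k} \mathrm{p}^{(k)}_j$ is alternating of length at most $|S_k|$, and when $S_k = \emptyset$ the corresponding slot reduces to $1$, which must be viewed as a length-one alternating product in an arbitrary subalgebra $\mathcal{A}_i$. Verifying that every non-leading term of the identity is controlled by the inductive hypothesis---in particular checking that the total factor count strictly decreases once same-subalgebra factors are merged, and that the slots occupied by $1$ are absorbed either by the inductive hypothesis or directly by the convergence hypothesis of the proposition---is the delicate point on which the argument turns.
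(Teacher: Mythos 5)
Your plan reproduces the paper's own proof: the identification of \textit{(\ref{asymptotic.no.centering.1})} and \textit{(\ref{asymptotic.no.centering.2})} via multilinearity and the fact that the $\mathcal{A}_i$ generate $\mathcal{A}$, the same centering expansion for \textit{(\ref{asymptotic.no.centering.2})}$\Rightarrow$\textit{(\ref{asymptotic.centering.on.the.go})}, and the same induction on the total length $M=\sum_k m_k$ for the converse. The ``delicate point'' you flag at the end is, however, a genuine gap, and it is present in the paper's proof as well. In the inductive step of \textit{(\ref{asymptotic.centering.on.the.go})}$\Rightarrow$\textit{(\ref{asymptotic.no.centering.2})}, consider a non-leading term in which the only indices $k$ with $S_k\neq[m_k]$ satisfy $m_k=1$, so that $S_k=\emptyset$ there and the $k$-th slot becomes the constant $1$. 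Counted as a length-one word in some $\mathcal{A}_i$, as you propose, this term has total length exactly $M$ again: it is covered neither by the inductive hypothesis nor by the convergence hypothesis, because some other slot still carries an alternating word of length at least two. Re-running the expansion on such a term yields nothing, since $1-\varphi_N[1]=0$ makes the centered tuple vanish identically and the identity collapses to $0=0$.

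This is not repairable bookkeeping: the implication \textit{(\ref{asymptotic.centering.on.the.go})}$\Rightarrow$\textit{(\ref{asymptotic.no.centering.1})} is false for general $n$-linear functionals. Take $I=\{1,2\}$, $n=2$, let $\varphi_N[\mathrm{p}]$ be the coefficient of the empty word in $\mathrm{p}$ (a unital, multiplicative functional, independent of $N$), let $\lambda[\mathrm{p}]$ be the coefficient of the word $\mathrm{x}_1\mathrm{x}_2$, and set $\rho_N[\mathrm{p},\mathrm{q}]=N\,\lambda[\mathrm{p}]\,\varphi_N[\mathrm{q}]$. All limits in the hypothesis exist (they equal $0$, since $\lambda$ vanishes on each $\mathcal{A}_i$), and \textit{(\ref{asymptotic.centering.on.the.go})} holds because multiplicativity of $\varphi_N$ forces $\varphi_N[\mathrm{q}_{N,2}]=0$ for every centered product; yet $\rho_N[\mathrm{x}_1\mathrm{x}_2,1]=N$ is unbounded, so \textit{(\ref{asymptotic.no.centering.1})} and \textit{(\ref{asymptotic.no.centering.2})} fail. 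The statement, your induction, and the paper's proof are all rescued by the additional hypothesis that $\rho_N$ vanishes whenever one of its arguments is a constant: then every problematic term drops out, and every surviving non-leading term has total length strictly below $M$ once same-subalgebra neighbours are merged, so your argument closes. This extra hypothesis does hold for the functionals to which the proposition is actually applied, namely the trace cumulants (\ref{eqn.n-linear.maps}) with $n\geq 2$, because classical cumulants of order at least two vanish when one entry is deterministic; so Theorem \ref{thm.bounded.cumulants.property} is unaffected, but the proposition as stated needs the additional assumption.
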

%
%
%
\begin{proof} 
Conditions \textit{(\ref{asymptotic.no.centering.1})} and \textit{(\ref{asymptotic.no.centering.2})} are equivalent since each $\rho_{N}$ is $n$-linear and the algebra $\mathcal{A}$ is generated by the sub-algebras $\{\mathcal{A}_{i}\}_{i \in I}$.
We only need to prove that \textit{(\ref{asymptotic.no.centering.1})} implies \textit{(\ref{asymptotic.centering.on.the.go})} and \textit{(\ref{asymptotic.centering.on.the.go})} implies \textit{(\ref{asymptotic.no.centering.2})}.

Suppose \textit{(\ref{asymptotic.no.centering.1})} holds and let $\mathrm{q}_{N,k}$ is as in \textit{(\ref{asymptotic.centering.on.the.go})} for $k=1,2,\ldots,n$. 
Then, by multi-linearity, we have
\begin{align*} 
& 
\rho_N \left[ 	\mathrm{q}_{N,1} ,  \ldots , \mathrm{q}_{N,n} \right]  \\
& 
= \sum_{ J_{1} \subset [m_{1}], \ldots, J_{n} \subset [m_{n}] 	}
\left(
\prod_{k =1}^{n} 
\prod_{\substack{ j \in J_{k}^c   } } 
(-1)^{\vert J_{k}^c \vert} 
\varphi_{N} [ \mathrm{p}^{(k)}_{j} ]
\right)
\cdot
\rho_N \left[
\vec{\prod_{j \in J_{1} }} \mathrm{p}^{(1)}_{j},
\ldots,
\vec{\prod_{j \in J_{n} }} \mathrm{p}^{(k)}_{j}
\right] 
\end{align*}
The sum above is a finite sum and,  by hypothesis, each of its elements is uniformly bounded with respect to $N$.
Hence, \textit{(\ref{asymptotic.centering.on.the.go})} follows.

Let us assume now  \textit{(\ref{asymptotic.centering.on.the.go})} holds and let $J$ be the set of all positive integers $m$ satisfying the following property: 
if $m=m_{1}+m_{2}+\cdots + m_{n}$ for some positive integers $m_{1},m_{2},\ldots,m_{n}$,  and  $\mathrm{p}^{(k)}_{j} \in \mathcal{A}_{i^{(k)}_{j}}$ for $j=1,2,\ldots,m_k$ and $i^{(k)}_{1} \neq i^{(k)}_{2},  i^{(k)}_{2} \neq i^{(k)}_{3}, \ldots , i^{(k)}_{m_{k}-1} \neq i^{(k)}_{m_{k}} $ for $k=1,2,\ldots,n$, then
$\displaystyle{
\sup_{N} \abs*{
\rho_N (\mathrm{q}_{1} , \mathrm{q}_{2} , \ldots , \mathrm{q}_{n} ) } } < \infty$  where each $\mathrm{q}_k$ is given by $\mathrm{q}_{k}=\mathrm{p}^{(k)}_1 \mathrm{p}^{(k)}_2 \cdots \mathrm{p}^{(k)}_{m_k}$.
Note that we are done if we show that $J=\{n,n+1,n+2,\ldots\}$.
By hypothesis, $n$ belongs to $J$, so let us assume $n, n+1, \ldots, m-1$ belong to $J$ and let $\mathrm{p}^{(k)}_j$, $\mathrm{q}_{N,k}$ and $\mathrm{q}_k$ as above. Consider the equality
\begin{align*} 
& 
\rho_N \left[	\mathrm{q}_{N,1} ,  \ldots , \mathrm{q}_{N,n} \right]
-	\rho_N \left[	\mathrm{q}_{1} ,  \ldots , \mathrm{q}_{n} \right]  \\
& 
= 
\sum_{ \substack{ 
J_{1} \subset [m_{1}], \ldots, J_{n} \subset [m_{n}] 	\\
\cup^{n}_{k} J^{c}_{k}   \neq \emptyset
}}
\left(
\prod_{k =1}^{n} 
\prod_{\substack{ j \in J_{k}^c   } } 
(-1)^{\vert J_{k}^c \vert} 
\varphi_{N} [ \mathrm{p}^{(k)}_i ]
\right)
\cdot
\rho_N \left[
\vec{\prod_{j \in J_{1} }} \mathrm{p}^{(1)}_{j},
\ldots,
\vec{\prod_{j \in J_{n} }} \mathrm{p}^{(k)}_{j}
\right].
\end{align*}
given by $n$-linearity of $\rho_N$. Now, since  \textit{(\ref{asymptotic.centering.on.the.go})} holds, $\rho_N (	\mathrm{q}_{N,1} ,  \ldots , \mathrm{q}_{N,n} )$ is uniformly bounded with respect to $N$ and, by induction hypothesis, so is
$\rho_N [\vec{\prod_{j \in J_{1} }} \mathrm{p}^{(k)}_{j},
\ldots,
\vec{\prod_{j \in J_{n} }} \mathrm{p}^{(k)}_{j}] $  
if at least one $J_{k}$ is not $[m_{k}]$. 
Thus, $	\rho_N (	\mathrm{q}_{1} ,  \ldots , \mathrm{q}_{n} )$ is also uniformly bounded with respect to $N$, and hence $m$ belongs to $J$.
\end{proof}
%
%
%
%

The multi-linear functionals $\rho_{N}: \mathcal{A}\times \cdots \times \mathcal{A} \rightarrow \mathbb{C}$ given by (\ref{eqn.n-linear.maps}) are \textit{tracial in each entry}, i.e., for ever integer $k \in [n]$ and polynomials $\mathrm{q}_{0},\mathrm{q}_{1},\ldots,\mathrm{q}_{n} \in \mathcal{A}$, we have 
\[
\rho_{N}[ \mathrm{q}_{1},\ldots,\mathrm{q}_{k-1},\mathrm{q}_{0} \mathrm{q}_{k},\mathrm{q}_{k+1},\ldots,\mathrm{q}_{n}]
=
\rho_{N}[\mathrm{q}_{1},\ldots,\mathrm{q}_{k-1},\mathrm{q}_{k}\mathrm{q}_{0},\mathrm{q}_{k+1},\ldots,\mathrm{q}_{n}] .
\]
This traciality allows us to impose the condition that $i^{(k)}_{m_{k}} \neq i^{(k)}_{1}$ in \textit{(\ref{asymptotic.no.centering.2})} and \textit{(\ref{asymptotic.centering.on.the.go})} from Proposition \ref{prop.asymptotic.centering} and still get uniform boundedness of $\rho_N\left[\mathrm{p}_1,\mathrm{p}_2,\ldots,\mathrm{p}_n\right]$ with respect to $N$. 
%
%

%
%
\begin{corollary}\label{corollary.asymptotic.centering}
	Suppose $\mathcal{A}$, $\mathcal{A}_{i}$, $\varphi_{N}$, and $\rho_{N}$ are as in Proposition \ref{prop.asymptotic.centering}. 
	If $\rho_{N}$ is tracial in each entry,
	then condition \textit{(\ref{asymptotic.no.centering.1})} from Proposition \ref{prop.asymptotic.centering} is equivalent to any of the following: 
	\begin{enumerate}
		\item[\textit{(\ref{asymptotic.no.centering.2}')}]
		$\displaystyle\sup_{N} \abs*{\left[\mathrm{q}_1,\mathrm{q}_2,\ldots,\mathrm{q}_n\right]}  < \infty$	if each $\mathrm{q}_k$ is of the form
		$\mathrm{q}_k=\mathrm{p}^{(k)}_1 \mathrm{p}^{(k)}_2 \cdots \mathrm{p}^{(k)}_{m_k}$
		with $\mathrm{p}^{(k)}_{j} \in \mathcal{A}_{i^{(k)}_{j}}$ and $i^{(k)}_{1} \neq i^{(k)}_{2}, i^{(k)}_{2} \neq  i^{(k)}_{3}, \ldots, i^{(k)}_{m_{k}-1} \neq i^{(k)}_{m_{k}}$, and $i^{(k)}_{m_{k}} \neq i^{(k)}_{1} $
		\item[\textit{(\ref{asymptotic.centering.on.the.go}')}]
		$\displaystyle\sup_{N} \abs*{\rho_N \left[\mathrm{q}_{N,1} , \mathrm{q}_{N,2} , \ldots , \mathrm{q}_{N,n} \right] } < \infty$
		if each $\mathrm{q}_{N,k}$ is of the form
		\[\mathrm{q}_{N,k}= 	(
		\mathrm{p}^{(k)}_1 - \varphi_N[\mathrm{p}^{(k)}_1]  
		)
		(
		\mathrm{p}^{(k)}_2 - \varphi_N[\mathrm{p}^{(k)}_2] 
		)
		\cdots
		(
		\mathrm{p}^{(k)}_{m_k}-\varphi_N [\mathrm{p}^{(k)}_{m_{k}} ] 
		) 
		\]
		with $\mathrm{p}^{(k)}_{j} \in \mathcal{A}_{i^{(k)}_{j}}$ and  $i^{(k)}_{1} \neq i^{(k)}_{2}, i^{(k)}_{2} \neq  i^{(k)}_{3}, \ldots, i^{(k)}_{m_{k}-1} \neq i^{(k)}_{m_{k}}$, and $i^{(k)}_{m_{k}} \neq i^{(k)}_{1}$
	\end{enumerate}
\end{corollary}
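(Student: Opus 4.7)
The plan is to reduce the corollary to Proposition \ref{prop.asymptotic.centering} by proving the equivalences $(2) \Leftrightarrow (2')$ and $(3) \Leftrightarrow (3')$ under the traciality of $\rho_N$. The implications $(2) \Rightarrow (2')$ and $(3) \Rightarrow (3')$ are immediate, since the primed conditions only add the cyclic constraint $i^{(k)}_{m_k} \neq i^{(k)}_1$ to tuples already covered by their unprimed counterparts.

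The nontrivial direction rests on a cyclic rotation identity supplied by traciality. Given an entry $\mathrm{q}_k = \mathrm{p}^{(k)}_1 \mathrm{p}^{(k)}_2 \cdots \mathrm{p}^{(k)}_{m_k}$ with $m_k \geq 2$, $\mathrm{p}^{(k)}_j \in \mathcal{A}_{i^{(k)}_j}$, consecutive alternation, and a violation of the cyclic condition (so $i^{(k)}_{m_k} = i^{(k)}_1$), traciality of $\rho_N$ in the $k$-th slot yields
\[
\rho_N[\ldots, \mathrm{p}^{(k)}_1 \mathrm{p}^{(k)}_2 \cdots \mathrm{p}^{(k)}_{m_k}, \ldots]
=
\rho_N[\ldots, (\mathrm{p}^{(k)}_{m_k} \mathrm{p}^{(k)}_1) \mathrm{p}^{(k)}_2 \cdots \mathrm{p}^{(k)}_{m_k-1}, \ldots],
\]
where $\mathrm{p}^{(k)}_{m_k} \mathrm{p}^{(k)}_1 \in \mathcal{A}_{i^{(k)}_1}$. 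The new $k$-th entry has length $m_k - 1$, preserves consecutive alternation, and carries indices $i^{(k)}_1, i^{(k)}_2, \ldots, i^{(k)}_{m_k-1}$. I would iterate this rotation within each entry, noting that a length-$2$ product with $i_1 \neq i_2$ automatically satisfies the cyclic condition. Hence each entry stabilizes either as a product of length $\geq 2$ obeying both alternations (when $m_k \geq 2$ originally) or as a single polynomial in a sub-algebra (when $m_k = 1$), since the rotation can never drive a length from $\geq 2$ down to $1$.

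When every stabilized entry is of the first kind, the desired uniform bound comes from $(2')$; when every stabilized entry is a single polynomial in a sub-algebra, it follows from the standing hypothesis that $\lim_{N\to\infty}\rho_N[\mathrm{p}_1,\ldots,\mathrm{p}_n]$ exists when each $\mathrm{p}_k \in \mathcal{A}_{i_k}$. The main obstacle I anticipate is the mixed regime, where some stabilized entries are cyclically alternating products of length $\geq 2$ while others are single polynomials. I would resolve it by induction on the number of single-polynomial entries, exploiting multilinearity of $\rho_N$ together with the fact that, for $n \geq 2$, substituting a scalar into any slot annihilates $\rho_N$: this allows each single-polynomial entry $\mathrm{p}^{(k)}_1$ to be replaced by its centered form $\mathrm{p}^{(k)}_1 - \varphi_N[\mathrm{p}^{(k)}_1]$ without changing $\rho_N$, and a careful bookkeeping reduces the configuration to cases already handled. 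The implication $(3') \Rightarrow (3)$ follows the identical template, with the rotation applied to products of centered factors and the re-centering of $\mathrm{p}^{(k)}_{m_k} \mathrm{p}^{(k)}_1$ within $\mathcal{A}_{i^{(k)}_1}$ leaving $\rho_N$ unchanged by the same cumulant-of-constant vanishing principle.
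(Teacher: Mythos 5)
Your overall strategy --- using traciality to rotate the wrap-around factor to the front and merge $\mathrm{p}^{(k)}_{m_k}\mathrm{p}^{(k)}_{1}$ into a single element of $\mathcal{A}_{i^{(k)}_{1}}$ --- is exactly the mechanism the paper uses, and your treatment of the equivalence of the two product conditions (your $(2)\Leftrightarrow(2')$) is fine: there the rotation is an exact identity, the merged factor is a legitimate single factor of the product, and the length of an entry never drops below $2$, so the iteration terminates in a cyclically alternating tuple.

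The gap is in the direction $(3')\Rightarrow(3)$. After rotating a slot whose factors are \emph{centered}, the leading pair is $(\mathrm{p}^{(k)}_{m_k}-\varphi_N[\mathrm{p}^{(k)}_{m_k}])(\mathrm{p}^{(k)}_{1}-\varphi_N[\mathrm{p}^{(k)}_{1}])$, which lies in $\mathcal{A}_{i^{(k)}_{1}}$ but is \emph{not} of the form $\mathrm{r}-\varphi_N[\mathrm{r}]$. Your claim that replacing it by $\mathrm{p}^{(k)}_{m_k}\mathrm{p}^{(k)}_{1}-\varphi_N[\mathrm{p}^{(k)}_{m_k}\mathrm{p}^{(k)}_{1}]$ leaves $\rho_N$ unchanged ``by the cumulant-of-constant vanishing principle'' is false: the difference of the two expressions equals $-\varphi_N[\mathrm{p}^{(k)}_{m_k}]\,\mathrm{p}^{(k)}_{1}-\varphi_N[\mathrm{p}^{(k)}_{1}]\,\mathrm{p}^{(k)}_{m_k}+\varphi_N[\mathrm{p}^{(k)}_{m_k}]\varphi_N[\mathrm{p}^{(k)}_{1}]+\varphi_N[\mathrm{p}^{(k)}_{m_k}\mathrm{p}^{(k)}_{1}]$, and once this is multiplied by the remaining centered factors and inserted into the $k$-th slot, the resulting terms are values of $\rho_N$ at genuinely different (shorter) arguments. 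The vanishing of cumulants on constants applies only when an \emph{entire} slot is a scalar, which is not the case here; moreover that vanishing is a feature of the concrete cumulant functionals and is not among the hypotheses on the abstract $n$-linear $\rho_N$ of Proposition \ref{prop.asymptotic.centering}. These correction terms are precisely where the work lies: the paper absorbs them by an induction on the total number of factors $m_1+\cdots+m_n$, under which each correction is a product of centered, consecutively alternating factors of strictly smaller total length and is therefore bounded by the induction hypothesis. Your proposed induction on the number of single-polynomial entries does not capture this reduction --- the corrections shorten one slot's product rather than change the number of length-one slots --- so as written the argument does not close. Replacing it by the induction on total degree repairs the proof and essentially reproduces the paper's argument.
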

%
%
%
%
\begin{proof} 
Note that while condition \textit{(\ref{asymptotic.no.centering.2})} from Proposition \ref{prop.asymptotic.centering} allows the indexes $i^{k}_{1}$ and $i^{k}_{m_{k}}$ to be possibly the same, condition \textit{(\ref{asymptotic.no.centering.2}')} above explicitly prohibits this. 
Thus, by traciality of $\rho_{N}$ in each entry, we have that \textit{(\ref{asymptotic.no.centering.2}'}) and  \textit{(\ref{asymptotic.no.centering.2})} are equivalent, and hence, it only remains to show that \textit{(\ref{asymptotic.centering.on.the.go}')} above implies  \textit{(\ref{asymptotic.centering.on.the.go})} from Proposition \ref{prop.asymptotic.centering}. 
Assume \textit{(\ref{asymptotic.centering.on.the.go}')} holds and let $J$ be the set of all positive integers $m$ satisfying the following property: 
if $m=m_{1}+m_{2}+\cdots + m_{n}$ for some positive integers $m_{1},m_{2},\ldots,m_{n}$,  and  $\mathrm{p}^{(k)}_{j} \in \mathcal{A}_{i^{(k)}_{j}}$ for $j=1,2,\ldots,m_k$ and $i^{(k)}_{1} \neq i^{(k)}_{2},  i^{(k)}_{2} \neq i^{(k)}_{3}, \ldots , i^{(k)}_{m_{k}-1} \neq i^{(k)}_{m_{k}} $ for $k=1,2,\ldots,n$, then
\begin{align}\label{eqn.bounded.n.linear.qnk}
\sup_{N} \abs*{\rho_N \left[\mathrm{q}_{N,1} , \mathrm{q}_{N,2} , \ldots , \mathrm{q}_{N,n} \right] }  < \infty 
\end{align}
where each $\mathrm{q}_{N,k}$ as above.
%
%
By hypothesis, $n$ belongs to $J$, so let us assume $n, n+1, \ldots, m-1$ belong to $J$ and let $\mathrm{p}^{(k)}_j$, $\mathrm{q}_{N,k}$ and $\mathrm{q}_k$ as above. 
Thus, if $i^{(k)}_{m_{k}} \neq i^{(k)}_{1}$ for $k=1,2,\ldots,n$, Inequality (\ref{eqn.bounded.n.linear.qnk}) holds. 
On the other hand, if $i^{(k)}_{m_{k}} = i^{(k)}_{1}$ for some $k \in \{1,2,\ldots,n\}$, let us consider polynomials $\widetilde{\mathrm{p}}^{}_{N,k}$ and $\widetilde{\mathrm{r}}_{N,k}$ given by 
\begin{align*}
\widetilde{\mathrm{p}}^{}_{N,k} 
= & 
(
\mathrm{p}^{(k)}_{m_k}-\varphi_N [\mathrm{p}^{(k)}_{m_k}]
) 				
(
\mathrm{p}^{(k)}_{1} - \varphi_N[\mathrm{p}^{(k)}_{1} ]
) \\
\widetilde{\mathrm{r}}_{N,k} 
= &
(
\mathrm{p}^{(k)}_{2} - \varphi_N[\mathrm{p}^{(k)}_{2}] 
)
(
\mathrm{p}^{(k)}_{3} - \varphi_N[\mathrm{p}^{(k)}_{3}]
)
\cdots
(
\mathrm{p}^{(k)}_{m_k-1}-\varphi_N[\mathrm{p}^{(k)}_{m_k-1}]
) 
\end{align*}
By traciality of $\rho_{N}$ in the $k$-th entry, we have
\begin{align*}
\rho_N \left[ \mathrm{q}_{N,1} , \ldots ,\mathrm{q}_{N,k} , \ldots , \mathrm{q}_{N,n} \right] 
= 	&
\rho_N \left[ \mathrm{q}_{N,1} , \ldots ,  \widetilde{\mathrm{p}}_{N,k}  \widetilde{\mathrm{r}}_{N,k} , \ldots , \mathrm{q}_{N,n} \right] ;
\end{align*}
moreover, from the relation  
\begin{align*}
\widetilde{\mathrm{p}}^{}_{N,k} 
= & 	
(\mathrm{p}^{(k)}_{m_k} \mathrm{p}^{(k)}_{1} 
-	\varphi_N [ \mathrm{p}^{(k)}_{m_k} \mathrm{p}^{(k)}_{1}	]
)
+	\varphi_N[	\mathrm{p}^{(k)}_{m_k} \mathrm{p}^{(k)}_{1}	]
-	\varphi_N[	\mathrm{p}^{(k)}_{m_k}	] 
\varphi_N[	\mathrm{p}^{(k)}_{1}	] \\
&	
-	\varphi_N[\mathrm{p}^{(k)}_{m_k} ]
( \mathrm{p}^{(k)}_{1} - \varphi_N[\mathrm{p}^{(k)}_{1}	 ] ) 
-	\varphi_N[ \mathrm{p}^{(k)}_{1}	]
( \mathrm{p}^{(k)}_{m_k}  - \varphi_N[\mathrm{p}^{(k)}_{m_k} ] )
\end{align*}
we get the equality
\begin{align*}
\rho_N [ \mathrm{q}_{N,1} , \ldots  , \mathrm{q}_{N,n} ]
=	&  
	\rho_N  [ \mathrm{q}_{N,1} 
				, \ldots ,
			(\mathrm{p}^{(k)}_{m_k} \mathrm{p}^{(k)}_{1} 
			-	\varphi_N [ \mathrm{p}^{(k)}_{m_k} \mathrm{p}^{(k)}_{1}	]
			)		\widetilde{\mathrm{r}}_{N,k}
				,\ldots, 
			\mathrm{q}_{N,n} ]   
\\ &
	+ 
		\varphi_N [ \mathrm{p}^{(k)}_{m_k} \mathrm{p}^{(k)}_{1}	]
		\rho_N [ \mathrm{q}_{N,1} , \ldots , \widetilde{\mathrm{r}}_{N,k} , \ldots , \mathrm{q}_{N,n} ] 
\\ &
	-
		\varphi_N[\mathrm{p}^{(k)}_{m_k} ] 
		\varphi_N[\mathrm{p}^{(k)}_{1}	]
		\rho_N [\mathrm{q}_{N,1} , \ldots , \widetilde{\mathrm{r}}_{N,k} , \ldots , \mathrm{q}_{N,n} ]	
\\ &
	-
		\varphi_N[\mathrm{p}^{(k)}_{1} ]
		\rho_N [\mathrm{q}_{N,1} 
				, \ldots ,	
		( \mathrm{p}^{(k)}_{m_k}  - \varphi_N[\mathrm{p}^{(k)}_{m_k} ] )
		\widetilde{\mathrm{r}}_{N,k}
		,\ldots, 
		\mathrm{q}_{N,n} ]  
\\ &
	-
		\varphi_N[\mathrm{p}^{(k)}_{m_k} ]
		\rho_N [\mathrm{q}_{N,1} 
		, \ldots ,	
		( \mathrm{p}^{(k)}_{1}  - \varphi_N[\mathrm{p}^{(k)}_{1} ] )
		\widetilde{\mathrm{r}}_{N,k}
		,\ldots, 
		\mathrm{q}_{N,n} ] 
\end{align*}
by linearity of $\rho_{N}$ in the  $k$-th entry. 
But then, by induction hypothesis, every element in the right hand side of the equality above is uniformly bounded with respect to $N$, and therefore, so is $\rho_N [ \mathrm{q}_{N,1} , \ldots  , \mathrm{q}_{N,n} ]$. 
\end{proof}
%
%
%
Having proved Proposition \ref{prop.asymptotic.centering.1} and Corollary \ref{corollary.asymptotic.centering}, 
we can now show that, under the hypothesis of Theorem \ref{thm.bounded.cumulants.property}, the family of random matrix ensembles $\{ \{ U^{}_{N,i} D^{}_{N,i} U^{*}_{N,i} \}_{N=1}^{\infty} \}_{i \in I}$ has the bounded cumulants property.

\begin{proof}[\textbf{Proof of Theorem \ref{thm.bounded.cumulants.property}}]
Let $\mathcal{A}$ denote the algebra of non-commutative polynomials $\mathbb{C}\left\langle \mathrm{x}_{i} \mid i \in I \right\rangle$, and let $\mathcal{A}_{i} \subset \mathcal{A} $ denote the algebra $\mathbb{C}\left[ x_{i} \right] $ for each index $i \in I$. 
For each integer $N\geq 1$, take $X_{N,i} =  U^{}_{N,i} D^{}_{N,i} U^{*}_{N,i} $  for every index $i \in I$ and let  $\varphi_{N}: \mathcal{A}\rightarrow \mathbb{C}$ be the unital linear map defined by
 (\ref{eqn.unital.funtionals}).
Note that if $\mathrm{p} \in \mathcal{A}_{j}$ for some $j \in I$, then  $\mathrm{p}\left( \{X_{N,i}\}_{i \in I} \right) = U^{}_{N,j} \mathrm{p}( D^{}_{N,j} ) U^{*}_{N,j} $, and hence, the limit 
$\lim_{N\rightarrow \infty} \varphi_{N}[\mathrm{p}]$
exists for every $\mathrm{p} \in \mathcal{A}_{i}$ and every $i \in I$. 
Now, suppose we are given polynomials $\mathrm{p}^{}_{1} \in \mathcal{A}_{i^{}_{1}}, \mathrm{p}^{}_{2} \in \mathcal{A}_{i^{}_{2}}, \ldots, \mathrm{p}^{}_{m} \in \mathcal{A}_{i^{}_{m}}$ with $i_{1},i_{2},\ldots,i_{m}\in I$ and $i^{}_{1} \neq i^{}_{2},  i^{}_{2} \neq i^{}_{3}, \ldots , i^{}_{m_{}-1} \neq i^{}_{m_{}} $, and $i^{}_{m} \neq i^{}_{1}$. 
Note that
\begin{align*}
N \varphi_{N}\left[ (\mathrm{p}_{1} - \varphi_{N}\left[\mathrm{p}_{1}\right]   ) (\mathrm{p}_{2} - \varphi_{N}\left[\mathrm{p}_{2}\right]   ) \cdots (\mathrm{p}_{m} - \varphi_{N}\left[\mathrm{p}_{m}\right]   ) \right]  
=
\exptr{\Tr{Y_{N}}}
\end{align*}
where
\begin{align*}
Y_{N} 
=
\big( U^{}_{N,i^{}_{1}} A^{}_{N,i^{}_{1}} U^{*}_{N,i^{}_{1}}  \big)	
\big( U^{}_{N,i^{}_{2}} A^{}_{N,i^{}_{2}} U^{*}_{N,i^{}_{2}}  \big)
\cdots
\big( U^{}_{N,i^{}_{m}} A^{}_{N,i^{}_{m}} U^{*}_{N,i^{}_{m}}  \big)
\end{align*}
and each $A^{}_{N,i^{}_{j}}$ is of trace zero and given by 
\begin{align*}
A^{}_{N,i^{}_{j}}
&	=
U^{*}_{N,i^{}_{j}} \left( 
\mathrm{p}^{}_{j} ( X_{N,i^{}_{j}} ) - \mathbb{E}[ \mathrm{tr} ( \mathrm{p}^{}_{j} ( X_{N,i^{}_{j}} )  ) ] I_{N} \right)
U^{}_{N,i^{}_{j}}
=
\mathrm{p}^{}_{j} ( D_{N,i^{}_{j}} ) -  \mathrm{tr} ( \mathrm{p}^{}_{j} ( D_{N,i^{}_{j}} )  ) I_{N}   .  
\end{align*}
Thus, by Lemma \ref{lemma.bounded.cumulant.property}, there is a constant $C$ depending only on the indexes $i^{}_{j}$ such that 
\begin{align*}
\abs*{	N^{}_{} \varphi_{N}\left[ (\mathrm{p}_{1} - \varphi_{N}\left[\mathrm{p}_{1}\right]   ) (\mathrm{p}_{2} - \varphi_{N}\left[\mathrm{p}_{2}\right]   ) \cdots (\mathrm{p}_{m} - \varphi_{N}\left[\mathrm{p}_{m}\right]   ) \right]  } 
\leq 
C  \prod_{j=1}^{m} \norm*{A^{}_{N,i_{j}}} . 
\end{align*}
But then, since $\sup_{N} \norm*{D_{N,i}} < \infty$ and $\lim_{N\rightarrow \infty}\mathrm{tr}( D^{k}_{N,i} )$ exists for every  $k\geq 1$ and any $i \in I$, we have $\sup_{N} \norm{A^{}_{N,i^{}_{j}}} < \infty$, and therefore
\begin{align}\label{eqn.no.centred.asymp.free.lim}
\lim_{N\rightarrow \infty}
\varphi_{N}\left[ (\mathrm{p}_{1} - \varphi_{N}\left[\mathrm{p}_{1}\right]   ) (\mathrm{p}_{2} - \varphi_{N}\left[\mathrm{p}_{2}\right]   ) \cdots (\mathrm{p}_{m} - \varphi_{N}\left[\mathrm{p}_{m}\right]   ) \right]
= 0 . 
\end{align}
Each linear functional $\varphi_{N}$ is tracial, i.e., $\varphi_{N}[pq] = \varphi_{N}[qp]$ for all $p,q \in \mathcal{A}$, and thus, following similar arguments to those in the proof Corollary \ref{corollary.asymptotic.centering}, we can remove the condition $i_{m} \neq i_{1}$ and still get (\ref{eqn.no.centred.asymp.free.lim}). 
Therefore, by Proposition \ref{prop.asymptotic.centering.1}, the random matrix ensembles $\{ U^{}_{N,i} D^{}_{N,i} U^{*}_{N,i} \}_{N=1}^{\infty}$ with $i \in I$ are asymptotically free, $ \lim_{N\rightarrow \infty} \varphi_{N}[ \mathrm{p}  ] $ exists for every $\mathrm{p} \in \mathcal{A} $, and (\ref{eqn.bnd.cumulants.prpty}) holds for $n = 1$. 
%
%
%


Fix now an arbitrary integer $n\geq 2$ and let $\rho_{N}: \mathcal{A}\times \cdots \times \mathcal{A} \rightarrow \mathbb{C}$ be the $n$-linear map given by 
 (\ref{eqn.n-linear.maps})  for every integer $N \geq 1$.   
Note that 
\begin{align*}
\rho_{N}(\mathrm{p}_{1},\mathrm{p}_{2}, \ldots, \mathrm{p}_{n})
&	=
\mathfrak{c}_{n}\left[ 
\Tr{\mathrm{p}_{1}( D^{}_{N,i_{1}} ) } , \ldots, 
\Tr{\mathrm{p}_{n}( D^{}_{N,i_{n}} ) } \right] 	
=
0
\end{align*}
%
if $\mathrm{p}_{1} \in \mathcal{A}_{i_{1}}, \mathrm{p}_{2} \in \mathcal{A}_{i_{2}}, \ldots, \mathrm{p}_{n} \in \mathcal{A}_{i_{n}}$ for some $i_{1},i_{2},\ldots,i_{n} \in I$. 
Thus, since $n$ is arbitrary, the family of ensembles $\{ \{ U^{}_{N,i} D^{}_{N,i} U^{*}_{N,i} \}_{N=1}^{\infty} \}_{i \in I}$ has the bounded cumulants property if the multi-linear functional $\rho_{N}$  satisfies \textit{(\ref{asymptotic.centering.on.the.go}')} from Corollary  \ref{corollary.asymptotic.centering}, namely, 
\[ \sup_{N} \abs*{\rho_N \left[\mathrm{q}_{N,1} , \mathrm{q}_{N,2} , \ldots , \mathrm{q}_{N,n} \right] } < \infty\]
whenever each $\mathrm{q}_{N,k}$ is of the form
\[\mathrm{q}_{N,k}
= 	
(\mathrm{p}^{(k)}_1 - \varphi_N[\mathrm{p}^{(k)}_1] )
(\mathrm{p}^{(k)}_2 - \varphi_N[\mathrm{p}^{(k)}_2]	)
\cdots	
(\mathrm{p}^{(k)}_{m_k}-\varphi_N [\mathrm{p}^{(k)}_{m_{k}}]) 
\]
with $\mathrm{p}^{(k)}_{j} \in \mathcal{A}_{i^{(k)}_{j}}$ and  $i^{(k)}_{1} \neq i^{(k)}_{2}, i^{(k)}_{2} \neq  i^{(k)}_{3}, \ldots, i^{(k)}_{m_{k}-1} \neq i^{(k)}_{m_{k}}$, and $i^{(k)}_{m_{k}} \neq i^{(k)}_{1}$.
Suppose $i^{(k)}_{j}$, $\mathrm{p}^{(k)}_{j}$, and $\mathrm{q}_{N,k}$ are as above and take $Y_{N,k}=\mathrm{q}_{N,k}\left( \{X_{N,i}\}_{i \in I} \right)$ for $k=1,2,\ldots, n$. 
Then, we have
\[
\rho_{N}  \left[ \mathrm{q}_{N,1} , \mathrm{q}_{N,2} , \ldots , \mathrm{q}_{N,n} \right]
=
\mathfrak{c}_{n}\left[ 
\Tr{Y^{}_{N,1}} , \Tr{Y^{}_{N,2}} ,\ldots,\Tr{Y^{}_{N,n}} \right]  . 
\]
Moreover, letting $A_{N,i^{(k)}_{j}} = 
\mathrm{p}^{(k)}_{j} ( D_{N,i^{(k)}_{j}} ) -  \mathrm{tr} ( \mathrm{p}^{(k)}_{j} ( D_{N,i^{(k)}_{j}} )  ) I_{N} $ for each $i^{(k)}_{j}$ and every $N \geq 1$, we get $A_{N,i^{(k)}_{j}}$ is of trace zero, $\sup_{N} \norm{A^{}_{N,i^{(k)}_{j}}} < \infty$, and 
\begin{align*}
Y_{N,k}
& =
\big(U^{}_{N,i^{(k)}_{1}} A^{}_{N,i^{(k)}_{1}} U^{*}_{N,i^{(k)}_{1}} \big)  
\big(
U^{}_{N,i^{(k)}_{2}} A^{}_{N,i^{(k)}_{2}} U^{*}_{N,i^{(k)}_{2}}   
\big)
\cdots
\big( U^{}_{N,i^{(k)}_{m_{k}}} A^{}_{N,i^{(k)}_{m_{k}}} U^{*}_{N,i^{(k)}_{m_{k}}}  \big) . 
\end{align*}
Therefore, by Lemma \ref{lemma.bounded.cumulant.property}, there is a constant $C$ depending only on the indexes $i^{(k)}_{j}$ such that 
\[
\abs*{\rho_{N}  \left[ \mathrm{q}_{N,1} , \mathrm{q}_{N,2} , \ldots , \mathrm{q}_{N,n} \right] } 
\leq 
C \prod_{k=1}^{n} \prod_{j=1}^{m_{k}} \norm*{A_{N,i^{(k)}_{j}}} 
< 
\infty . 
\]
\end{proof}
%
%
%

\section{Fluctuation moments}\label{sec.proof.main.thm.2}
%
%
The proofs of Theorem \ref{thm.second.order.asymptotics} and Theorem \ref{thm.second.order.freeness} are very similar, and thus, in order to avoid redundancies, this section is devoted to prove only Theorem \ref{thm.second.order.asymptotics}; nonetheless, what has to be modified to obtain the conclusions from Theorem \ref{thm.second.order.freeness} is pointed out in the next section. 

Let $X_{N,1}$ and $X_{N,2}$ be as in Theorem \ref{thm.second.order.asymptotics}. 
Assume $Y_{N}=Y_{N,1} Y_{N,2} \cdots Y_{N,2m_{1}}$ and $Z_{N} = Z_{N,1} Z_{N,2} \cdots Z_{N,2m_{2}}$ where $Y_{N,k}$ and $Z_{N,l}$ are given by \eqref{yz.centered.matrices} for some polynomials $\mathrm{p}_{1}, \mathrm{p}_{2},\ldots, \mathrm{p}_{2m_{1}}, \mathrm{q}_{1},   \mathrm{q}_{2}, \ldots , \mathrm{q}_{2m_{2}}\in \mathbb{C}[\mathrm{x}]$ and some indexes $i_1, i_2, \ldots, \allowbreak i_{2m_{_1}}, j_{1},j_{2},\ldots,j_{2m_{_2}} \in \{1,2\}$ satisfying $i_{1}=j_{1}$ and (\ref{cyclic.alternating.indexes}). 
Note that 
\[
Y_{N} = \big( U^{}_{N,i_{1}}A^{}_{N,1}U^{*}_{N,i_{1}} \big)
\big( U^{}_{N,i_{2}}A^{}_{N,2}U^{*}_{N,i_{2}} \big) \cdots
\big( U^{}_{N,i_{2m_{1}}}A^{}_{N,2m_{1}}U^{*}_{N,i_{2m_{1}}} \big)
\]
and
\[
Z_{N} =\big( U^{}_{N,j_{1}}B^{}_{N,1}U^{*}_{N,j_{1}} \big)
\big( U^{}_{N,j_{2}}B^{}_{N,2}U^{*}_{N,j_{2}} \big) \cdots
\big( U^{}_{N,j_{2m_{1}}}B^{}_{N,2m_{1}}U^{*}_{N,j_{2m_{1}}} \big)
\]
with $A_{N,k}$ and $B_{N,l}$ defined as in \eqref{ab.centered.matrices.alternative}; moreover, we have $ i_{2k-1} = j_{2l-1} = i_{1}  \neq i_{2} = i_{2k} = j_{2l}$. 
Thus, following similar arguments to those in the proof of Lemma \ref{lemma.bounded.cumulant.property}, we obtain
\begin{align}\label{eqn.cov.YN.ZN.1}
\covt{Y_{N}}{Z_{N}} 
=& 
\sum_{\theta \in P_{\chi}(\pm {2m})}
\left(\sum_{\substack{
		\pi \in P_{\text{even}}(\pm {2m}) \\ 
		\pi \leq \theta}} 
\mathfrak{c}_{2} \left[ \pi \right]	
\mu(\pi, \theta) \right) 
\sum_{\substack{
		\mathbf{j}:[\pm {2m}]\rightarrow[N] \\ \mathrm{ker}(\mathbf{j}) \geq \theta}}
\mathbf{a}(\mathbf{j}) 
\end{align}
where $P_{\chi}(\pm {2m})$ denotes the set of all even partitions of $[\pm {2m}]$ with no blocks of the form $\{k,-k\}$,  
$\mu:P(\pm {2m} ) \times P( \pm {2m})\rightarrow \mathbb{C}$ is the M\"{o}bius inversion function, 
$\mathbf{a(j)}$ is given by 
\begin{align*}
\mathbf{a(j)} = \prod_{k=1}^{2m_{1}} A_{N,k}(j_{-k},j_{+k}) \cdot 
\prod_{l=1}^{2m_{2}} B_{N,l}(j_{-2m_{1}-l},j_{+2m_{1}+l}), 
\end{align*}
for function each $\mathbf{j} : [\pm {2m} ] \rightarrow [N]$, and if $\mathbf{j} : [\pm {2m} ] \rightarrow [N]$ satisfies $\kernel{\mathbf{j}} = \pi $, then  
\begin{align}\label{eqn.normalized.fluct.c[pi]}
\mathfrak{c}_{2} \left[ \pi \right] & =
\cov{ \prod_{k =1}^{2m_{1}}	
	V_{k}(j_{\sigma(-k)},j_{\sigma(k)})}{
	\prod_{k = 	2m_{1}+1}^{2m_{1}+2m_{2}}						V_{k}(j_{\sigma(-k)},j_{\sigma(k)})}  
\end{align}
with $V^{}_{2k-1} =  V^{*}_{2k} = U^{*}_{N,{i}_{1}} U^{}_{N,{i}_{2}}  $ for $k = 1,2,\ldots, m$ and $\sigma:[\pm 2m] \rightarrow [\pm 2m]$ is the cyclic permutation given by 
\[
\sigma = (-1,1,-2,2,\ldots,-2m_{1},2m_{1})(-2m_{1}-1,2m_{1}+1,\ldots,-2m_{1}-2m_{2} ,2m_{1}+2m_{2}).
\]
%
%
%
%
%
%
%
It turns out that (\ref{eqn.cov.YN.ZN.1}) becomes 
\begin{align}\label{eqn.fluct.moments.c2.chi.chi.1}
\covt{Y_{N} }{Z_{N} } 
=& 
\sum_{\theta \in P_{\chi\chi}(\pm {2m})}
\left(\sum_{\substack{
		\pi \in P_{\text{even}}(\pm {2m}) \\ 
		\pi \leq \theta}} 
	\mathfrak{c}_{2} \left[ \pi \right]	
	\mu(\pi, \theta) \right)
\sum_{\substack{
		\mathbf{j}:[\pm {2m}]\rightarrow[N] \\ \mathrm{ker}(\mathbf{j}) \geq \theta}}
\mathbf{a}(\mathbf{j}) 
+ O(N^{-1}) 
\end{align}
where $P_{\chi\chi}(\pm {2m})$ denotes the set of all partitions $\theta \in P_{\chi}(\pm {2m})$ such that the graph sum exponent $\tau_{\theta}$, defined in Section \ref{sec.general.graph.sums}, equals $m$. 
Indeed, if we are given partitions $\pi \in P_{\text{even}}(\pm {2m})$ and $\theta \in P_{\chi}(\pm {2m}) $ satisfying $\theta \geq \pi$, then Theorem \ref{mingo-speicher} and Proposition \ref{prop.graph.sum.exp.bound1} imply 
\begin{align}\label{eqn.bound.fluct.moments.c2.chi.chi.graph.sum}
\abs*{
	\mathfrak{c}_{2} \left[ \pi \right]	
	\mu(\pi, \theta) 	
	\sum_{\substack{
			\mathbf{j}:[\pm 2m]\rightarrow[N] \\ \mathrm{ker}(\mathbf{j}) \geq \theta}}
	\mathbf{a}(\mathbf{j})  }
\leq 
\abs*{	\mathfrak{c}_{2} \left[ \pi \right]	}
\abs*{	\mu(\pi, \theta) }
N^{\tau_{\theta}} 
\prod_{k=1}^{2m_{1}} \norm*{ A_{N,k} }
\prod_{l=1}^{2m_{2}} \norm*{ B_{N,l} }
\end{align}
where $\tau_{\theta}$ is the number of connected components of the graph $\mathcal{G}_{\theta}$. 
Now, by hypothesis, $\sup_{N} \norm*{D_{N,i}} < \infty$ and $\lim_{N\rightarrow \infty}\mathrm{tr}( D^{k}_{N,i} )$ exists for every  $k\geq 1$ and any $i \in \{1,2\}$, so we have 
\[\sup_{N} \prod_{k=1}^{2m_{1}} \norm*{ A_{N,k} }
\prod_{l=1}^{2m_{2}} \norm*{ B_{N,l} } < \infty . \]
Moreover, every connected component of $\mathcal{G}_{\theta}$ contains at least two edges since $\theta$ is even and has no blocks of the form $\{-k,k\}$, and hence, the graph sum exponent $\tau_{\theta}$ satisfies 
\[
\tau_{\theta} \leq m .
\]
Additionally, since the unitary ensemble $\{\{U_{N,1},U_{N,2}\}\}_{N=1}^{\infty}$ satisfies 
\textit{(\ref{pnorms-uniformly-bounded})} from Lemma \ref{lemma.bounded.cumulant.property}, it follows from the proof of Lemma \ref{lemma.bounded.cumulant.property} that there is a constant $C_{2}$ independent from $N$ satisfying
\[
\abs*{\mathfrak{c}_{2} \left[ \pi \right] }  \leq C_{2} N^{-m}. 
\]
Therefore, from (\ref{eqn.bound.fluct.moments.c2.chi.chi.graph.sum}) we obtain  
\begin{align}
	\mathfrak{c}_{2} \left[ \pi \right]	
	\mu(\pi, \theta) 	
	\sum_{\substack{
			\mathbf{j}:[\pm 2m]\rightarrow[N] \\ \mathrm{ker}(\mathbf{j}) \geq \theta}}
	\mathbf{a}(\mathbf{j})  
=
O(N^{-1})
\end{align}
unless the graph sum exponent $\tau_{\theta} = m$, and, consequently, we get (\ref{eqn.fluct.moments.c2.chi.chi.1}). 

Note that the condition $\tau_{\theta} = m$, for an even partition $\theta\in P(\pm 2m)$ with no blocks of the form $\{+k,-k\}$, forces each component of the undirected graph ${\mathcal{G}}_{\theta}$ to have exactly two edges.
Thus, for any partition $\theta \in P_{\chi\chi}(\pm {2m})$, each component of the directed graph $\vec{\mathcal{G}}_{\theta}$ has one of the following forms:  
\begin{center}\includegraphics[scale=1]{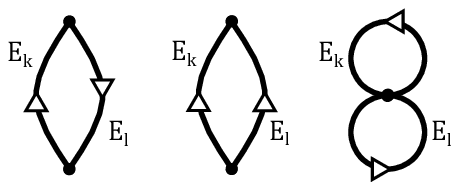}\end{center}
And therefore, as illustrated it at the end of Section \ref{sec.general.graph.sums}, each graph sum $\sum_{\substack{ \mathrm{ker}(\mathbf{j}) \geq \theta}}\mathbf{a}(\mathbf{j})$ appearing in (\ref{eqn.fluct.moments.c2.chi.chi.1})  can be written as a product of traces of matrices where each trace is of the followings forms: $\Trn{C_{N,k}C_{N,l}}$,  
$\Trn{C^{}_{N,k}C^{T}_{N,l}}$, or  $\Trn{C_{N,k} \circ C_{N,l}}$ where $C_{N,k}$ and $C_{N,l}$ belong to the set $\{A_{N,1},  \ldots, \allowbreak A_{N,2m_{1}},B_{N,1},\ldots,  B_{N,2m_{2}}\}$.
%
Hence, letting $\mathfrak{c}[\pi]$ be given by (\ref{eqn.normalized.fluct.c[pi]}) for each partition $\pi \in P(\pm 2m)$, the conclusions in Theorem \ref{thm.second.order.asymptotics} will follow from (\ref{eqn.fluct.moments.c2.chi.chi.1}) once we determine the order of 
\begin{align}\label{sum.c2.pi.mu.pi.theta}
\sum_{\substack{
		\pi \in P_{\text{even}}(\pm 2m)	\\
		\theta \geq \pi}} 
\mathfrak{c}_{2} \left[ \pi \right]	
\mu(\pi, \theta)  . 
\end{align}
Now, recall the values of M\"{o}bius inversion function are determined by (\ref{eqn.defining.mobius.func.r}), and given explicitly by (\ref{mobius.inversion.function}). 
Thus, to determine the order of (\ref{sum.c2.pi.mu.pi.theta}), it is enough to compute $\mathfrak{c}_{2}[\pi]$ for even partitions $\pi\in P(\pm 2m)$ satisfying  $\pi  \leq \theta$ for some another partition $\theta$ in the set $P_{\chi\chi}(\pm {2m} )$. 
%
%
%
%
\begin{proposition}\label{prop.fluc} 
	Suppose $\theta$ is a partition in $P_{\chi\chi}(\pm {2m} )$. 
	If  $\pi$ is an even partition such that $\pi \leq \theta$ and $\mathfrak{c}_{2}[\pi]$  is given by (\ref{eqn.normalized.fluct.c[pi]}), 
	then the following holds:
	\begin{enumerate}
		\item\label{prop.fluc.h} for $U^{*}_{N,{i}_{1}} U^{}_{N,{i}_{2}} = \frac{1}{\sqrt{N}} W^{*} H^{} W$, we have 
		\begin{align*}
		N^{m}\mathfrak{c}_{2}[ \pi ]  
		& =
		\left\{\begin{array}{cl}
		1 +  O\left( N^{-1}\right)  
		& \text{if there symmetric pairing partition } \hat\theta \leq \pi \text{ satisfying either (\ref{prop.minimal.special.partitions.1}) or 
			(\ref{prop.minimal.special.partitions.2})} 
		\\ & \text{from Proposition \ref{prop.minimal.special.partitions}} \\ 
		O\left( N^{-1/2}\right) 
		& \text{otherwise. } 
		\end{array} \right.
		\end{align*}	

		\item\label{prop.fluc.xh} for $U^{*}_{N,{i}_{1}} U^{}_{N,{i}_{2}} = \frac{1}{\sqrt{N}} W^{*} X H^{} W$, we obtain
		\begin{align*}
		N^{m}\mathfrak{c}_{2}[ \pi ]  
		& =
		\left\{\begin{array}{cl}
		1 +  O\left( N^{-1}\right)  
		& \text{if there symmetric pairing partition } \hat\theta \leq \pi \text{ satisfying (\ref{prop.minimal.special.partitions.1}) from Proposi-} 
		\\ & \text{tion \ref{prop.minimal.special.partitions}} \\ 
		1 +  O\left( N^{-1}\right)  
		& \text{if there symmetric pairing partition } \hat\theta \leq \pi \text{ satisfying (\ref{prop.minimal.special.partitions.2}) from Proposi-} 
		\\ & \text{tion \ref{prop.minimal.special.partitions} and the graph } \mathcal{G}_{\pi} \text{ has only double-loops as components} \\
		O\left( N^{-1/2}\right) 
		& \text{otherwise. } 
		\end{array} \right.
		\end{align*}	
		\item\label{prop.fluc.xhx} for $U^{*}_{N,{i}_{1}} U^{}_{N,{i}_{2}} = \frac{1}{N} W^{*} H^{*}X H^{} W$, we get
		\begin{align*}
		N^{m}\mathfrak{c}_{2}[ \pi ]  
		& =
		\left\{\begin{array}{cl}
		1 +  O\left( N^{-1}\right)  
		& \text{if } m_{1} = m_{2} \text{ and there are integers } 1 \leq k \leq 2m_{1} < l \leq 2m_{1} + 2m_{2}	\text{ so} 	\\ &  				
		\text{that } \sigma^{t}(-k) \sim_{\pi} \sigma^{-t}(l) \text{ for every integer } t \geq 0	\\ 	
		2 +  O\left( N^{-1}\right)  
		&\text{if there are integers } 1 \leq k \leq 2m_{1} < l \leq 2m_{1} + 2m_{2} \text{ so that for each} \\ & 
		\text{integer } t \geq 0  \text{ we have } \sigma^{-t}(k)	\sim_{\pi} 	\sigma^{t+1}(k)	\text{ and } 
		\sigma^{-t}(l)	\sim_{\pi}	\sigma^{t+1}(l)  \\
		O\left( N^{-1/2}\right) 
		& \text{otherwise. } 
		\end{array} \right.
		\end{align*}
	\end{enumerate}
\end{proposition}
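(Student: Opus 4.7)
The plan is to reduce the computation of $\mathfrak{c}_2[\pi]$ to graph sums involving only the Discrete Fourier Transform matrix $H$, for which the bounds and classifications from Section \ref{subsec.graph.sums.dft} apply directly. First, I would fix a representative function $\mathbf{j}:[\pm 2m] \to [N]$ with $\kernel{\mathbf{j}} = \pi$ and expand each factor $V_k(j_{\sigma(-k)},j_{\sigma(k)})$ entry-wise by writing $V_k$ as a product $N^{-\alpha_k/2} W^* M_k W$ with $M_k$ being one of $H$, $H^*$, $XH$, $(XH)^*$, $H^*XH$, or $HXH^*$ depending on the case and on the parity of $k$. This introduces inner summation indices $\mathbf{r}, \mathbf{s}: [\pm 2m] \to [N]$ (and, in case (3), a third layer $\mathbf{t}$) and turns the covariance into a sum of products of entries of $W$, entries of $H$ or $H^*$, and (in cases (2) and (3)) entries of $X$.

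Next, I would take expectation over $X$ (when present) and over $W$ using the distribution formulas (\ref{dist.entries.signature.matrix}) and (\ref{dist.entries.signed.perm.matrix}). The $W$-expectation forces the kernels of the leftmost and rightmost inner-index tuples to coincide and to be even partitions, while the $X$-expectation forces its own diagonal-type constraints. After these expectations, and after using the identity (\ref{eqn.graph.sums.h.injective.vs.non-injective}) to convert $\kernel{\mathbf{j}} = \theta$ to $\kernel{\mathbf{j}} \geq \theta$ sums, the covariance reduces, up to factors of the form $\tfrac{(N-r)!}{N!} = N^{-r}(1+O(N^{-1}))$, to a linear combination of DFT graph sums (\ref{eqn.shifted.DFT.injective.graph.sums}) indexed by the permitted inner kernels. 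Each such DFT graph sum is either of exact size $\tfrac{N!}{(N-\#(\theta))!}$, when the associated polynomial $\mathrm{p}_{\sigma^{-1}\circ\theta}$ is zero, or of size $O(N^{\#(\theta)-1/2})$ by Corollary \ref{cor.bound.gauss.sum}.

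The leading-order contributions therefore come precisely from those inner kernels $\theta$ for which $\mathrm{p}_{\sigma^{-1}\circ\theta}$ is the zero polynomial, and these are classified by Proposition \ref{prop.minimal.special.partitions}. I would then match each contributing $\theta$ to a sub-case of the conclusion: in case (1), both crossed types (\ref{prop.minimal.special.partitions.1}) and (\ref{prop.minimal.special.partitions.2}) survive and each contributes the leading constant $1$ whenever $\pi$ is fine enough to contain the associated symmetric pairing; in case (2), the additional $X$-expectation imposes an evenness constraint on the pairing which disqualifies (\ref{prop.minimal.special.partitions.2}) except when $\mathcal{G}_\pi$ is a disjoint union of double-loops, in which case the constraint happens to hold; in case (3), the two layers of $H$-entries interact so that the pairings of type (\ref{prop.minimal.special.partitions.3}) combine into two coincident leading contributions, producing the stated constant $2$, while type (\ref{prop.minimal.special.partitions.1}) pairings remain and contribute the constant $1$.

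The main obstacle will be the combinatorial bookkeeping needed to translate the constraints on the inner-index kernels $\theta$ back into conditions on the outer partition $\pi$, and to verify that the numerical constants $1$ and $2$ arise correctly. Case (3) is the most delicate: the structure $W^* H^* X H W$ produces two separate DFT graph sums whose individual vanishing-polynomial conditions must be satisfied simultaneously, and one must check that the $\sigma^{-t}$-versus-$\sigma^{t+1}$ symmetry of the pairings $\sigma^{-t}(k)\sim_\pi \sigma^{t+1}(k)$ in the proposition is exactly what causes two originally distinct contributing inner kernels to coincide at leading order. Once this combinatorial matching is carried out, assembling the terms and tracking the normalization factors $N^{-m}$ yields the three displayed asymptotics.
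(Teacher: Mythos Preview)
Your approach matches the paper's: expand the entries of $V_k$, take expectations over $W$ and $X$ via (\ref{dist.entries.signature.matrix}) and (\ref{dist.entries.signed.perm.matrix}), reduce to DFT graph sums, and invoke Proposition \ref{prop.minimal.special.partitions}. The paper packages this through intermediate quantities $\mathfrak{C}_2[\pi]$ (Proposition \ref{prop.cpi-and-Cpi}) and, for part (\ref{prop.fluc.xhx}), $\mathfrak{C}_2[\pi,\alpha]$ indexed by pairings $\alpha\in P_2(2m)$ (Propositions \ref{prop.cpi-and-Cpi-nu} and \ref{prop.fluc.xhx.C[p,a]}).

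There is, however, a genuine gap. You say the leading contributions come from inner kernels with $\mathrm{p}_{\sigma^{-1}\circ\theta}=0$ and then assert that in case (\ref{prop.fluc.h}) only types (\ref{prop.minimal.special.partitions.1}) and (\ref{prop.minimal.special.partitions.2}) ``survive''. But types (\ref{prop.minimal.special.partitions.3})--(\ref{prop.minimal.special.partitions.6}) also make $\mathrm{p}_{\sigma^{-1}\circ\pi}$ vanish, and you give no mechanism for discarding them. The point you are missing is that $\mathfrak{c}_2[\pi]$ is a \emph{covariance}, hence a difference of two expectations: one constrained by $\kernel{\mathbf{i}}=\pi$ and one by $\kernel{\mathbf{i}}=\pi_1\sqcup\pi_2$. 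When the minimal pairing $\hat\theta$ is of type (\ref{prop.minimal.special.partitions.3})--(\ref{prop.minimal.special.partitions.6}) one has $\hat\theta=\hat\theta_1\sqcup\hat\theta_2\leq\pi_1\sqcup\pi_2$, so $\mathrm{p}_{\sigma^{-1}\circ(\pi_1\sqcup\pi_2)}$ is zero as well, both terms equal $1+O(N^{-1})$, and they cancel. When $\hat\theta$ is of type (\ref{prop.minimal.special.partitions.1}) or (\ref{prop.minimal.special.partitions.2}) it genuinely links the two cycles, $\mathrm{p}_{\sigma^{-1}\circ(\pi_1\sqcup\pi_2)}$ is \emph{nonzero}, the subtracted term is only $O(N^{-1/2})$, and the leading $1$ survives. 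Without this cancellation argument the constants are wrong.

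Your account of the constant $2$ in part (\ref{prop.fluc.xhx}) is also off: it is not that ``two distinct contributing inner kernels coincide''. In the paper the middle $X$-layer forces a sum over pairings $\alpha\in P_2(2m)$; each contributing $\alpha$ gives $\mathfrak{C}_2[\pi,\alpha]=1+O(N^{-1})$, and a separate combinatorial count shows that under the second condition of (\ref{prop.fluc.xhx}) exactly two such $\alpha$ exist. The $2$ is a cardinality, not a merging of graph sums.
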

%
%
%
The proof of Proposition \ref{prop.fluc} is based on the expected value of products of entries from $X$ and $W$, see relations (\ref{dist.entries.signature.matrix}) and (\ref{dist.entries.signed.perm.matrix}), and  the results on graph sums of the Discrete Fourier Transform from Section \ref{subsec.graph.sums.dft}, however, it requires some technical intermediate steps, so we will omit it for now and leave it to the end of this section. 
Nonetheless, the computation of (\ref{sum.c2.pi.mu.pi.theta}) up to a term of order $N^{-m-1/2}$ is quite simple assuming Proposition \ref{prop.fluc} holds. 
%
%
%
\begin{lemma}\label{lemma.fluc.mobius}
	Suppose $\theta$ is a partition in $P_{\chi\chi}(\pm {2m} )$ and let $\mathfrak{c}[\pi]$  be given by (\ref{eqn.normalized.fluct.c[pi]}) for each partition $\pi \in P(\pm 2m)$.    
	Then the following holds: 
	\begin{enumerate}
		\item for $ U^{*}_{N,{i}_{1}} U^{}_{N,{i}_{2}}  = \frac{1}{\sqrt{N}} W^{*} H W$, we have 	
		\begin{align*}
		\sum_{\substack{
				\pi \in P_{\text{even}}(\pm {2m})	\\
				\pi \leq \theta }} 
		N^{m}
		\mathfrak{c}_{2} \left[ \pi \right]	
		\mu(\pi, \theta)
		=&
		\left\{\begin{array}{cl}
	1  + 	O\left( N^{-1/2} \right)& 			
		\text{if  } \theta \text{ is a pairing partition satisfying either  (\ref{prop.minimal.special.partitions.1}) or} \\
		& \text{(\ref{prop.minimal.special.partitions.2}) from  Proposition \ref{prop.minimal.special.partitions}}, \\
	O\left( N^{-1/2}\right)	& 
		\text{ otherwise } 
		\end{array}\right.
		\end{align*}

		\item for $ U^{*}_{N,{i}_{1}} U^{}_{N,{i}_{2}}  = \frac{1}{\sqrt{N}} W^{*} XH W^{*}$, we obtain 
		\begin{align*}
		\sum_{\substack{
				\pi \in P_{\text{even}}(\pm 2m)	\\
				\pi \leq \theta }} 
		N^{m}
		\mathfrak{c}_{2} \left[ \pi \right]	
		\mu(\pi, \theta)
		=&
	\left\{\begin{array}{cl}
	1	+	O\left( N^{-1/2} \right)  & 			
		\text{if  }  
		\theta \text{ is a pairing partition satisfying 			(\ref{prop.minimal.special.partitions.1})  from Pro-} \\ & \text{position \ref{prop.minimal.special.partitions}}, \\
	1	+	O\left( N^{-1/2} \right) & 
		\text{if there exists a pairing partition } \hat\theta \leq \theta  \text{ satisfying} \\ 
		& \text{(\ref{prop.minimal.special.partitions.2}) from  Proposition \ref{prop.minimal.special.partitions} and } \theta  \text{ has only blocks of} \\ 
		& \text{the form } \{k,-k,l,-l\}, \\ 
	O\left( N^{-1/2}\right)	& 
		\text{otherwise } 
	\end{array}\right.
		\end{align*}

		\item for $ U^{*}_{N,{i}_{1}} U^{}_{N,{i}_{2}}  = \frac{1}{N}  W^{*} H^{*}XH  W^{}$, we get 
		\begin{align*}
		\sum_{\substack{
				\pi \in P_{\text{even}}(\pm 2m)	\\
				\pi \leq \theta }} 
		N^{m}
		\mathfrak{c}_{2} \left[ \pi \right]	
		\mu(\pi, \theta) 
		& =	\left\{\begin{array}{cl}
1	+	O\left( N^{-1/2} \right) 
		& \text{if } m_{1} = m_{2} \text{ and there is an integer }  \\ 
		& 2m_{1}+1 \leq l \leq  2m_{1} + 2m_{2} \text{ such that } \\
		& \theta= \{ \sigma^{t}(-1) , \sigma^{-t-1}(-l) \} \mid t \geq 0 
		\} ,  	\\  	
2	+	O\left( N^{-1/2} \right)  
		& \text{if there are integers }  1 \leq l_{1} \leq 2m_{1} \text{ and } \\
		& 2m_{1}+1 \leq l_{2} \leq  2m_{1} + 2m_{2} \text{ so that }  \theta= \theta_{1} \sqcup \theta_{2} \\ 
		& \text{where } \theta_{1}=\{ \{ \sigma^{t}(-l_{1}) , \sigma^{-t-1}(-l_{1}) \}  \mid t \geq 0 \}  \\ 
		&  \theta_{2}=\{ \{ \sigma^{t}(-l_{2}) , \sigma^{-t-1}(-l_{2}) \} \mid t \geq 0 \} ,	\\
	O\left( N^{-1/2}\right)  
		& \text{otherwise. } 
		\end{array} \right.
		\end{align*}
	\end{enumerate}
\end{lemma}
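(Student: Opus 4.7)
The plan is to combine Proposition~\ref{prop.fluc} — which pins down the asymptotic order of $N^m\mathfrak{c}_2[\pi]$ depending on whether $\pi$ admits a suitable symmetric pairing as a refinement — with the defining identity of the Möbius function
\[
\sum_{\hat\theta \leq \pi \leq \theta} \mu(\pi,\theta) = \delta_{\hat\theta,\theta}
\]
from \eqref{eqn.defining.mobius.func.r}. The structural input is that, because $\theta \in P_{\chi\chi}(\pm 2m)$ satisfies $\tau_\theta = m$, the graph $\mathcal{G}_\theta$ splits into exactly $m$ two-edge components by Proposition~\ref{prop.graph.sum.exp.bound1}; each is either a double-edge (two size-$2$ blocks in $\theta$) or a double-loop (one size-$4$ block). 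Consequently, every even refinement $\pi \leq \theta$ is obtained by independently deciding, for each double-loop block of $\theta$, whether to keep it intact or to replace it with one of the three pairings of its four elements. I will call $\pi$ \emph{regular} when $N^m\mathfrak{c}_2[\pi]$ is a nonzero constant up to $O(N^{-1})$, and \emph{exceptional} otherwise.

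First, I would bound the exceptional contribution by $O(N^{-1/2})$: there are only finitely many $\pi \leq \theta$, the Möbius values $\mu(\pi,\theta)$ are bounded integers, and Proposition~\ref{prop.fluc} guarantees $N^m\mathfrak{c}_2[\pi] = O(N^{-1/2})$ for each exceptional $\pi$. Next, for the regular contribution, I would group the regular $\pi$'s according to the minimal symmetric pairing $\hat\theta \leq \pi$ that certifies their regularity via Proposition~\ref{prop.minimal.special.partitions}, and then use the Möbius identity above to collapse each $\hat\theta$-indexed sub-sum to $\delta_{\hat\theta,\theta}$. The conclusion follows because $\theta$ can coincide with such an admissible pairing only in the specific cases enumerated in the lemma.

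The three cases require different catalogues of admissible $\hat\theta$'s. In cases (1) and (2) only the \textit{connected} pairings from \textit{(\ref{prop.minimal.special.partitions.1})} and \textit{(\ref{prop.minimal.special.partitions.2})} of Proposition~\ref{prop.minimal.special.partitions} appear; in case (2), the extra graph-theoretic constraint from Proposition~\ref{prop.fluc}\textit{(\ref{prop.fluc.xh})} that $\mathcal{G}_\pi$ consist only of double-loops forces $\pi$ to preserve each two-edge component of $\theta$ as a double-loop, which (together with $\pi \leq \theta$) is possible precisely when $\theta$ already consists entirely of double-loops of the form $\{k,-k,l,-l\}$. In case (3), two disjoint families of admissible $\hat\theta$ contribute independently — the cross pairings of type \textit{(\ref{prop.minimal.special.partitions.1})} yielding coefficient $1$, and the split pairings $\hat\theta=\hat\theta_1 \sqcup \hat\theta_2$ from \textit{(\ref{prop.minimal.special.partitions.3})}–\textit{(\ref{prop.minimal.special.partitions.6})} yielding coefficient $2$.

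The main obstacle is the combinatorial bookkeeping in the Möbius step when $\theta$ contains size-$4$ blocks: several distinct $\pi \leq \theta$ may be regular, possibly certified by several admissible $\hat\theta$'s, so one must verify via inclusion–exclusion that no spurious nonzero contribution survives unless $\theta$ itself matches one of the listed forms. In case (3) this amounts to ruling out any overlap between the cross and split families inside the interval $[\hat\theta,\theta]$, while in case (2) it amounts to reconciling the pairing condition on $\hat\theta$ with the double-loop constraint on $\mathcal{G}_\pi$. Both reduce to a direct analysis of how a single size-$4$ block of $\theta$ can be refined while satisfying the structural conditions demanded by Proposition~\ref{prop.fluc}.
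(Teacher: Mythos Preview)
Your proposal is essentially the paper's approach: separate the sum into regular and exceptional $\pi$'s via Proposition~\ref{prop.fluc}, bound the exceptional part by $O(N^{-1/2})$, and collapse the regular part using the M\"obius identity \eqref{eqn.defining.mobius.func.r}. The paper streamlines the last step slightly: rather than grouping regular $\pi$'s by certifying pairings and invoking inclusion--exclusion, it selects a \emph{single} admissible pairing $\hat\theta\leq\theta$ and asserts that the regular $\pi\leq\theta$ are exactly those in the interval $[\hat\theta,\theta]$ (this is equation \eqref{cumulant.reduction}); the M\"obius sum over that interval is then $\delta_{\hat\theta,\theta}$ in one stroke, so the ``main obstacle'' you anticipate never materializes. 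In the double-loop sub-case of part~(2) the paper further observes that within $[\hat\theta,\theta]$ only $\pi=\theta$ itself is regular, so the sum reduces to the single term $N^{m}\mathfrak{c}_{2}[\theta]\,\mu(\theta,\theta)=1$ without needing the full interval identity.
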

%
%
%
%
\begin{proof} 
	Suppose $U^{*}_{N,{i}_{1}} U^{}_{N,{i}_{2}}=W^{*}XHW/\sqrt{N}$. Proposition \ref{prop.minimal.special.partitions} and Proposition \ref{prop.fluc} imply that
	\begin{align*}
	\sum_{\substack{	\pi \in P_{\text{even}}(\pm 2m)	\\
			\pi \leq \theta }} 
	N^{m}\mathfrak{c}_{2} \left[ \pi \right]	
	\mu(\pi, \theta)
	=
	O\left( N^{-1/2}\right)					
	\end{align*}
	unless there are integers $ 1\leq k \leq 2m_{1} < l \leq 2m_{1} + 2m_{2}$ satisfying one of the following: 
	\begin{enumerate}[(i)]
		\item\label{lemma.fluc.mobius.1.case1} $k+l$ is even and $\sigma^{t}(-k) \sim_{\theta} \sigma^{-t}(l)$  for all integers  $t \geq 0$ or
		\item\label{lemma.fluc.mobius.1.case2} $k+l$ is odd, $\sigma^{t}(-k) \sim_{\theta} \sigma^{t}(-l)$  for all integers  $t \geq 0$ and $\mathcal{G}_{\theta}$ has only double-loops as	 components
	\end{enumerate} 
	Assuming (\ref{lemma.fluc.mobius.1.case1}) above holds, consider the pairing partition 
	$\hat\theta = \{ \{ \sigma^{t}(-k) , \sigma^{-t}(l) \}\mid t \geq 0\}$ 
	and note that Proposition \ref{prop.fluc} implies that 
	\begin{align}\label{cumulant.reduction}
	\sum_{\substack{
			\pi \in P_{\text{even}}(\pm 2m)	\\	\pi \leq \theta }} 
	N^{m}\mathfrak{c}_{2} \left[ \pi \right]	\mu(\pi, \theta)
	& =
	\sum_{\substack{	\pi \in P_{\text{even}}(\pm 2m)	\\
			\hat\theta \leq \pi \leq \theta }} 
	N^{m}\mathfrak{c}_{2} \left[ \pi \right]	
	\mu(\pi, \theta)
	+
	O\left( N^{-1/2}\right) .
	\end{align}
	Moreover, since $N^{m}\mathfrak{c}_{2} \left[ \pi \right] = 1 + O(N^{-1})$ for any partition $\pi$ satisfying $\hat\theta \leq \pi \leq \theta$, we get
	\begin{align*}
	\sum_{\substack{
			\pi \in P_{\text{even}}(\pm 2m)	\\	\pi \leq \theta }} 
	N^{m}\mathfrak{c}_{2} \left[ \pi \right]	\mu(\pi, \theta)
	& =
	\sum_{\substack{	\pi \in P_{\text{even}}(\pm 2m)	\\
			\hat\theta \leq \pi \leq \theta }} 
	\mu(\pi, \theta)
	+
	O ( N^{-1/2} )
	=
	\left\{\begin{array}{cl}
	1 + O\left( N^{-1/2}\right) & \text{if } \hat\theta = \theta \\
	O\left( N^{-1/2}\right)  & \text{if } \hat\theta < \theta
	\end{array}\right.
	\end{align*}
	from equations in (\ref{eqn.defining.mobius.func.r}) defining the M\"{o}bius inversion function.  
	On the other hand, if (\ref{lemma.fluc.mobius.1.case2}) above holds, consider $\hat\theta = \{ \{ \sigma^{t}(-k) , \sigma^{t}(-l) \}\mid t \geq 0\}$ instead and note that (\ref{cumulant.reduction}) above holds also in this case. 
	Hence, since $N^{m}\mathfrak{c}_{2} \left[ \theta \right] = 1 + O(N^{-1})$ and $N^{m}\mathfrak{c}_{2} \left[ \pi \right] = O(N^{-1/2})$ for any partition $\hat\theta \leq \pi < \theta$, we obtain
	\begin{align*}
	\sum_{\substack{
			\pi \in P_{\text{even}}(\pm 2m)	\\	\pi \leq \theta }} 
	N^{m}\mathfrak{c}_{2} \left[ \pi \right]	\mu(\pi, \theta)
	=
	N^{m}\mathfrak{c}_{2} \left[ \theta \right]	
	\mu(\theta, \theta)	
	+
	\sum_{\substack{	\pi \in P_{\text{even}}(\pm 2m)	\\
			\hat\theta \leq \pi < \theta }} 
	N^{m}\mathfrak{c}_{2} \left[ \pi \right]	
	\mu(\pi, \theta)
	=
	1 +	O( N^{-1/2})	
	\end{align*}
	The other cases, namely, $U^{*}_{N,{i}_{1}} U^{}_{N,{i}_{2}} = W^{*}HW /\sqrt{N} $ and $U^{*}_{N,{i}_{1}} U^{}_{N,{i}_{2}} = W^{*} H^{*}XH W/N$, are proved in the same way, one chooses a suitable pairing partition $\hat\theta$ such that  (\ref{cumulant.reduction}) holds, and then the corresponding conclusion follows from Proposition \ref{prop.fluc} and the equations in  (\ref{eqn.defining.mobius.func.r}) defining the M\"{o}bius function. 
\end{proof}
%
%
%
%
%
%
%
As mentioned earlier, the proof of Theorem \ref{thm.second.order.asymptotics} is complete once we apply Lemma \ref{lemma.fluc.mobius} to the relation  (\ref{eqn.fluct.moments.c2.chi.chi.1}). 
For instance, suppose $ U^{*}_{N,{i}_{1}} U^{}_{N,{i}_{2}}  = W^{*} H^{*}XH  W^{} / N$. 
Then, Theorem \ref{mingo-speicher}, Lemma \ref{lemma.fluc.mobius}, and (\ref{eqn.fluct.moments.c2.chi.chi.1}) imply that 
\begin{align}\label{rel.cov.YN.ZN.last}
\covt{Y_{N} }{Z_{N} } 
= &  
\delta_{m_{1},m_{2}}
\sum_{ \theta \in \mathcal{P}_{1} }
\sum_{\substack{
		\mathbf{j}:[\pm {2m}]\rightarrow[N] 
		\\ \mathrm{ker}(\mathbf{j}) \geq \theta}}
\mathbf{a}(\mathbf{j}) 
 + 
\sum_{\theta \in \mathcal{P}_{2} }
2 \sum_{\substack{
		\mathbf{j}:[\pm {2m}]\rightarrow[N] 
		\\ \mathrm{ker}(\mathbf{j}) \geq \theta}} 
\mathbf{a}(\mathbf{j})
%
+ 
O\left(N^{-\frac{1}{2}}\right) 
\end{align}
where $\mathcal{P}_{1}$ and $\mathcal{P}_{2}$ are subsets of $P_{\chi\chi}(\pm {2m})$ given by 
\[
\mathcal{P}_{1} = \{
\{ \{ \sigma^{t}(-1) , \sigma^{-t-1}(-l) \} \mid t = 0,1, 2, \ldots, 4m_{}  \}  \mid  l \in [2m] \setminus [2m_{1}]  \}
\]  
and 
\[
\mathcal{P}_{2} = \{	\{ \{ \sigma^{t}(-l_{1}) , \sigma^{-t-1}(-l_{1}) \} , 
\{ \sigma^{t}(-l_{2}) , \sigma^{-t-1}(l_{2}) \} \mid t \geq 0 \}	\mid  l_{1} \in [2m_{1}] ,  l_{2} \in [2m] \setminus [2m_{1}] \}. 
\]  
Now, note the set $\mathcal{P}_{1}$ has cardinality $2m_{1}$ provided $m_{1} = m_{2}$. Moreover,  $m_{1} = m_{2}$ implies that a partition $\theta \in P(\pm 2m)$ belongs to the set $\mathcal{P}_{1}$ if and only if for some integer $1\leq l \leq 2m_{1}$ the directed graph $\vec{\mathcal{G}}_{\theta}$ can be represented as 
\begin{center}\includegraphics[scale=1]{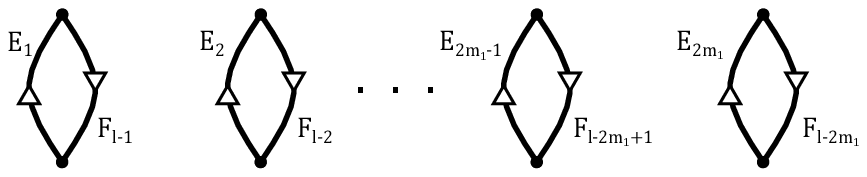}\end{center}
where $F_{k}$ denotes the edge $	E_{2m_{1}+k}$ and $l-k$ is taken module $2m_{1}$ for $k=1,2,\ldots, 2m_{1}$. 
Thus, for each integer $1\leq l \leq 2m_{1}$, there exists a unique $\theta  \in  \mathcal{P}_{1}$ so that  
\begin{align*}
\sum_{\substack{\mathbf{j}:[\pm {m}]\rightarrow[N] \\ 
		\mathrm{ker}(\mathbf{j}) \geq \theta}}
\mathbf{a}(\mathbf{j}) 
%
=\prod_{k=1}^{2m_{1}} \Tr{A_{N,k} B_{N,l-k}}
, 
\end{align*}
and hence, we obtain 
\begin{align*}
\covt{ Y_{N} }{ Z_{N} }  
=& 
 \delta_{m_{1},m_{2}} \sum_{l=1}^{2m_{1}} 
\left( \prod_{k=1}^{2m_{1}} \tr{A_{N,k}B_{N,l-k}} \right)
+ 
\sum_{\theta \in \mathcal{P}_{2} }
2 \sum_{\substack{
		\mathbf{j}:[\pm {2m}]\rightarrow[N] 
		\\ \mathrm{ker}(\mathbf{j}) \geq \theta}} 
\mathbf{a}(\mathbf{j})
+
	O \left( N^{-\frac{1}{2	}}\right) . 
\end{align*}
On the other hand, the set $\mathcal{P}_{2}$ has cardinality $m_{1} \cdot m_{2}$ since 
\[ 	
		\{ \{ \sigma^{t}(-l_{1}) , \sigma^{-t-1}(-l_{1}) \} \mid t \geq 0 \}  
	=
		\{ \{ \sigma^{t}(-m_{1}-l_{1}) , \sigma^{-t-1}(-m_{1}-l_{1}) \} \mid t \geq 0 \} 	
\]
and
\[ 	
\{ \{ \sigma^{t}(-l_{2}) , \sigma^{-t-1}(-l_{2}) \} \mid t \geq 0 \}  
=
\{ \{ \sigma^{t}(-m_{2}-l_{2}) , \sigma^{-t-1}(-m_{2}-l_{2}) \} \mid t \geq 0 \} 	
\]
for $l_{1}=1,2,\ldots, m_{1}$ and $l_{2}=1,2,\ldots, m_{2}$. Moreover, a partition $\theta \in P(\pm 2m)$ belongs to the set $\mathcal{P}_{2}$ if and only if for some integers $1\leq l \leq m_{1}$ and  $1\leq l_{2} \leq m_{2}$ the directed graph $\vec{\mathcal{G}}_{\theta}$ can be represented as 
\begin{center}\includegraphics[scale=1]{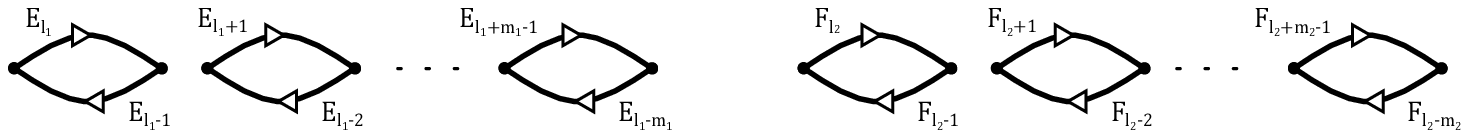}\end{center}
where $l_{1}-k_{1}$ and $l_{2}-k_{2}$ are taken modulo $2m_{1}$ and $2m_{2}$, respectively, for $k_{1}=1,2,\ldots,m_{1}$ and $k_{2} = 1,2,\ldots,m_{2}$. 
Thus, for each partition $\theta  \in  \mathcal{P}_{2}$ there are integers $1 \leq l_{1} \leq m_{1}$ and $1 \leq l_{2} \leq m_{2}$ satisfying   
\begin{align*}
\sum_{\substack{\mathbf{j}:[\pm {m}]\rightarrow[N] \\ 
		\mathrm{ker}(\mathbf{j}) \geq \theta}}
\mathbf{a}(\mathbf{j})
%
=& \prod_{k_{1}=1}^{m_{1}} \Tr{ A_{N,l_{1}+k_{1}-1} A_{N,l_{1}-k_{1}}} \cdot \prod_{k_{2}=1}^{m_{2}} \Tr{B_{N,l_{2}+k_{2}-1} B_{N,l_{2}-k_{2}}}	.
\end{align*}
Therefore, we have 
\begin{align*}
\covt{ Y_{N} }{ Z_{N} }  
=& 
2
\sum_{l_{1}=1}^{m_{1}} \sum_{l_{2}=1}^{m_{2}} 
\prod_{k_{1}=1}^{m_{1}} 
\tr{A_{N,l_{1}+k_{1}-1} A_{N,l_{1}-k_{1}}}
\prod_{k_{2}=1}^{m_{2}}		
\tr{B_{N,l_{2}+k_{2}-1} B_{N,l_{2}-k_{2}}}
\\	&		
+ \delta_{m_{1},m_{2}} \sum_{l=1}^{2m_{1}} 
\left( \prod_{k=1}^{2m_{1}} \tr{A_{N,k}B_{N,l-k}} \right)	
+
O ( N^{ -1/2 } ) . 
\end{align*}
The other cases, namely, $U^{*}_{N,{i}_{1}} U^{}_{N,{i}_{2}} = W^{*}HW /\sqrt{N} $ and $U^{*}_{N,{i}_{1}} U^{}_{N,{i}_{2}} = W^{*} XH W/N$, are proved in the same way, applying Lemma \ref{lemma.fluc.mobius} to the relation  (\ref{eqn.fluct.moments.c2.chi.chi.1}) we obtain similar relations to that in (\ref{rel.cov.YN.ZN.last}) that lead to (\ref{thm.second.order.asymptotics.1}) and (\ref{thm.second.order.asymptotics.2}) in Theorem \ref{thm.second.order.asymptotics}.

%
%
%
%
The remaining of this section is devoted to the proof of Proposition \ref{prop.fluc}. 
For clarity, we have considered two cases: 	$ U^{*}_{N,{i}_{1}} U^{}_{N,{i}_{2}} = W^{*} H^{*} X H^{}  W / N $  and $ U^{*}_{N,{i}_{1}} U^{}_{N,{i}_{2}} =  W^{*} Y H^{}  W /\sqrt{N} $ where $Y$ is either the identity matrix $I_{N}$ or an $N$-by-$N$ uniformly distributed signature matrix $X$. 
But first, let us introduce some more notation for partitions.  

Given a partition $\pi \in P(\pm 2m)$, we let $\pi_{\text{even}}$ and $\pi_{\text{odd}}$ denote the restriction of $\pi$ to the sets  
$\{ k \in [\pm 2m] \mid k \text{ is even}  \}$ and $\{ k \in [\pm 2m] \mid k \text{ is odd}  \}$, respectively. 
Moreover, we let $\pi^{\text{even}}$ and $\pi^{\text{odd}}$ denote the partitions of $\{ k \in [\pm 4m] \mid k \text{ is even }  \} $ and $\{ k \in [\pm 4m] \mid k \text{ is odd} \}$, respectively, given by $\pi^{\text{even}} = \{\{ 2k \mid k \in B \} \mid B \in \pi \}$ and 
$\pi^{\text{odd}} = \{\{ 2k-\sign{(k)} \mid k \in B \} \mid B \in \pi \}$ where $\sign(k) = 1$, if $k$ is positive, and $\sign(k) = -1$,  otherwise. 
For instance, if $\pi$ is the partition in $P(\pm 6)$ given by 
\[\pi=\{\{-1,4\},\{1,-3\},\{-2,3\},\{2,-4\}  \}\]
then 
\begin{align*}
\pi_{\text{even}} & =  \{ \{4\}, \{-2\},\{2,-4\} \}, 
	& \pi_{\text{odd}} =& \{\{-1\}, \{1,-3\}, \{3\} \},\\ 
\pi^{\text{even}}  & =  \{\{-2,8\},\{2,-6\},\{-4,6\},\{4,-8\} \},
\quad \text{and} \quad 
		& \pi^{\text{odd}} = &\{\{-1,7\},\{1,-5\},\{-3,5\},\{3,-7\} \}. 
\end{align*}

\subsection*{Case $\bm{ U^{*}_{N,{i}_{1}} U^{}_{N,{i}_{2}}=\frac{1}{\sqrt{N}} W^{*} Y H^{}  W}$ } 
%
%
%
%
%
Let $Y$ be an $N$-by-$N$ diagonal random matrix independent from $W$.  
Given a function $\mathbf{i}: [\pm 2m] \rightarrow [N]$, we let 
$$\mathbf{h}(\mathbf{i}) = \mathbf{h}_{1}(\mathbf{i}) \mathbf{h}_{2}(\mathbf{i}) \quad  \text{and} \quad 
\mathbf{y}(\mathbf{i}) = \mathbf{y}_{1}(\mathbf{i}) \mathbf{y}_{2}(\mathbf{i})
$$ where  $\mathbf{h_{1}}(\mathbf{i})$, $\mathbf{h_{2}}(\mathbf{i})$, $\mathbf{y_{1}}(\mathbf{i})$, and $\mathbf{y_{2}}(\mathbf{i})$ are given by 
\begin{align*}
%
%
%
\mathbf{h_{1}}(\mathbf{i}) &=
\prod_{k=1}^{m_{1}} 
H(i_{-2k+1},i_{2k-1}) H^{*}(i_{-2k},i_{2k}) , 
&\mathbf{h}_{2}(\mathbf{i})=&
\prod_{k=m_{1}+1}^{m_{1}+m_{2}}
H(i_{-2k+1},i_{2k-1}) H^{*}(i_{-2k},i_{2k}) , \\ 
%
%
%
%
\mathbf{y}_{1}(\mathbf{i}) &=
\prod_{k=1}^{m_{1}} 
Y(i_{-2k+1},i_{-2k+1}) Y(i_{2k-1},i_{2k-1}), 
&\mathbf{y}_{2}(\mathbf{i})=&
\prod_{k=m_{1}+1}^{m_{1}+m_{2}} 
Y(i_{-2k+1},i_{-2k+1}) Y(i_{2k-1},i_{2k-1});
\end{align*}
additionally, if we are given a function $\mathbf{j}: [\pm 2m] \rightarrow [N]$,  we put $$\mathbf{w}(\mathbf{i,j}) = \mathbf{w}_{1}(\mathbf{i,j}) \mathbf{w}_{2}(\mathbf{i,j})$$ where $\mathbf{w}_{1}(\mathbf{i,j})$ and  $\mathbf{w}_{2}(\mathbf{i,j})$ are given by 
%
%
\begin{align*}
\mathbf{w}_{1}(\mathbf{i,j}) &=
\prod_{k=1}^{2m_{1}} 
W(i_{k},j_{k}) W(i_{-k},j_{-k})
\quad \text{ and }
&\mathbf{w}_{2}(\mathbf{i,j})=&
\prod_{k=2m_{1}+1}^{2m_{1}+2m_{2}} 
W(i_{k},j_{k}) W(i_{-k},j_{-k}). 
\end{align*}
Now, for every partition $\pi \in P_{}(\pm {2m})$ and any function $\mathbf{j}:[\pm {2m}] \rightarrow [N]$ satisfying $ \kernel{\mathbf{j}} = \pi$, we define $\mathfrak{C}_{2} \left[ \pi \right]$ by
%
%
\begin{align}\label{eqn.unnormalized.flucts}
& 
\sum_{\substack{ \mathbf{i} : [\pm 2m] \rightarrow [N] \\ \kernel{\mathbf{i}}=\pi}} 
\mathbf{h}(\mathbf{i} \circ \bm{\sigma} ) 
\exptr{\mathbf{w}(\mathbf{i,j})}\exptr{\mathbf{y}(\mathbf{i})}
-
\sum_{\substack{ \mathbf{i} : [\pm 2m] \rightarrow [N] \\  \kernel{\mathbf{i}}=\pi_{1}\sqcup\pi_{2}  }} 
\mathbf{h}(\mathbf{i}  \circ \bm{\sigma}  ) 
\exptr{\mathbf{w}_{1}(\mathbf{i,j})}
\exptr{\mathbf{w}_{2}(\mathbf{i,j})}
\exptr{\mathbf{y}_{1}(\mathbf{i})}
\exptr{\mathbf{y}_{2}(\mathbf{i})}  \nonumber \\
& = 
\mathfrak{C}_{2} \left[ \pi \right] 
\end{align}
where ${\pi}_1$ and ${\pi}_2$ denote the restrictions of ${\pi}$ to $[\pm 2m_1]$ and $[\pm 2m ] \setminus [\pm 2m_1]$, respectively, and $\sigma$ is the cycle permutation given by 
\[
\sigma = (-1,1,-2,2,\ldots,-2m_{1},2m_{1})(-2m_{1}-1,2m_{1}+1,\ldots,-2m_{1}-2m_{2}, 2m_{1}+2m_{2}).
\]
%
%
%
%
%
\begin{proposition}\label{prop.cpi-and-Cpi}
	Let $\pi$ be an even partition in $P(\pm {2m})$. 
	Suppose $U^{*}_{N,{i}_{1}} U^{}_{N,{i}_{2}}  = W^{*} Y H^{} W / \sqrt{N}$ where $Y$ is an $N$-by-$N$ diagonal matrix independent from $W$ so that each entry $Y(i,i)$ takes values in the set $\{-1,1\}$.   
	If $\mathfrak{c}_{2}[\pi]$ and $\mathfrak{C}_{2} \left[ \pi \right]$ are given by (\ref{eqn.normalized.fluct.c[pi]}) and (\ref{eqn.unnormalized.flucts}), respectively, then 
	\begin{align}\label{eqn.relating.c[pi].C[pi]}
	N^{m}\mathfrak{c}_{2} \left[ \pi \right]  
	=
	\mathfrak{C}_{2} \left[ \pi \right] 	+	O(N^{-1})
	\end{align}
\end{proposition}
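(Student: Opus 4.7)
The plan is to expand each factor $V_k$ entrywise, using $V_{2k-1}=W^{*}YHW/\sqrt{N}$ and $V_{2k}=V_{2k-1}^{*}$, introducing two dummy summation indices per factor. After relabeling so that the resulting sum matches the prescribed structure, one obtains
\[
\prod_{k=1}^{2m_{1}}V_{k}(j_{\sigma(-k)},j_{\sigma(k)})
= \frac{1}{N^{m_{1}}}\sum_{\mathbf{i}_{1}:[\pm 2m_{1}]\to[N]} \mathbf{h}_{1}(\mathbf{i}_{1}\circ\bm{\sigma})\,\mathbf{y}_{1}(\mathbf{i}_{1})\,\mathbf{w}_{1}(\mathbf{i}_{1},\mathbf{j}),
\]
and likewise for the product over $k=2m_{1}+1,\dots,2m$, giving the combined prefactor $N^{-m}$ and the factorizations $\mathbf{h}=\mathbf{h}_{1}\mathbf{h}_{2}$, $\mathbf{y}=\mathbf{y}_{1}\mathbf{y}_{2}$, $\mathbf{w}=\mathbf{w}_{1}\mathbf{w}_{2}$. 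Since $\mathbf{h}$ is deterministic, $\mathbf{y}$ depends only on $Y$, $\mathbf{w}$ depends only on $W$, and $Y$ and $W$ are independent, the identity $\mathrm{cov}[A_{1}B_{1},A_{2}B_{2}]=\mathbb{E}[A_{1}A_{2}]\mathbb{E}[B_{1}B_{2}]-\mathbb{E}[A_{1}]\mathbb{E}[A_{2}]\mathbb{E}[B_{1}]\mathbb{E}[B_{2}]$, valid for independent pairs $(A_{1},A_{2}),(B_{1},B_{2})$, yields
\[
N^{m}\mathfrak{c}_{2}[\pi]= \sum_{\mathbf{i}:[\pm 2m]\to[N]} \mathbf{h}(\mathbf{i}\circ\bm{\sigma})\bigl\{\mathbb{E}[\mathbf{y}(\mathbf{i})]\mathbb{E}[\mathbf{w}(\mathbf{i},\mathbf{j})] - \mathbb{E}[\mathbf{y}_{1}(\mathbf{i})]\mathbb{E}[\mathbf{y}_{2}(\mathbf{i})]\mathbb{E}[\mathbf{w}_{1}(\mathbf{i},\mathbf{j})]\mathbb{E}[\mathbf{w}_{2}(\mathbf{i},\mathbf{j})]\bigr\}.
\]

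Next I apply the signed-permutation moment formula (2.6): it forces $\mathbb{E}[\mathbf{w}(\mathbf{i},\mathbf{j})]=0$ unless $\ker\mathbf{i}=\ker\mathbf{j}=\pi$, and $\mathbb{E}[\mathbf{w}_{1}]\mathbb{E}[\mathbf{w}_{2}]=0$ unless the restrictions of $\ker\mathbf{i}$ to $[\pm 2m_{1}]$ and to its complement equal $\pi_{1}$ and $\pi_{2}$ respectively. The first sum therefore collapses to $\{\mathbf{i}:\ker\mathbf{i}=\pi\}$ and matches the first sum in $\mathfrak{C}_{2}[\pi]$ exactly, while the second sum collapses to those $\mathbf{i}$ whose two restrictions are precisely $\pi_{1}$ and $\pi_{2}$, i.e., those with $\ker\mathbf{i}=\hat\pi$ for some $\hat\pi\geq\pi_{1}\sqcup\pi_{2}$ whose restrictions coincide with $\pi_{1},\pi_{2}$. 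The $\hat\pi=\pi_{1}\sqcup\pi_{2}$ contribution is exactly the second sum in $\mathfrak{C}_{2}[\pi]$, so $N^{m}\mathfrak{c}_{2}[\pi]-\mathfrak{C}_{2}[\pi]$ equals the tail over the remaining $\hat\pi>\pi_{1}\sqcup\pi_{2}$, which is a finite collection of partitions independent of $N$.

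To bound this tail, I use $|\mathbf{h}(\mathbf{i}\circ\bm{\sigma})|=1$, $|\mathbb{E}[\mathbf{y}_{1}(\mathbf{i})]|,|\mathbb{E}[\mathbf{y}_{2}(\mathbf{i})]|\leq 1$, and the explicit identity $|\mathbb{E}[\mathbf{w}_{k}(\mathbf{i},\mathbf{j})]|=(N-\#\pi_{k})!/N!=O(N^{-\#\pi_{k}})$ from (2.6), together with $\#\{\mathbf{i}:\ker\mathbf{i}=\hat\pi\}\leq N^{\#\hat\pi}$; each inner sum is thereby bounded by $O(N^{\#\hat\pi-\#\pi_{1}-\#\pi_{2}})$. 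Since $\hat\pi>\pi_{1}\sqcup\pi_{2}$ forces $\#\hat\pi\leq\#\pi_{1}+\#\pi_{2}-1$, each contribution is $O(N^{-1})$, and summing over the finite set of admissible $\hat\pi$ the total tail is $O(N^{-1})$, establishing the claim. The only real obstacle is the bookkeeping: I must verify carefully that the relabeling of dummy indices in the expansion of $\prod_{k}V_{k}$ produces exactly the product $\mathbf{h}(\mathbf{i}\circ\bm{\sigma})\,\mathbf{y}(\mathbf{i})\,\mathbf{w}(\mathbf{i},\mathbf{j})$, in particular that the permutation $\bm{\sigma}$ enters only in the $\mathbf{h}$-factor and not in $\mathbf{y}$ or $\mathbf{w}$; once this setup is in place, the rest of the argument is a direct application of independence and the signed-permutation moment formula.
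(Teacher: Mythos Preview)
Your proof is correct and follows essentially the same approach as the paper's: expand the entries of $V_k$ to introduce the summation over $\mathbf{i}$, use the independence of $W$ and $Y$ to factor the covariance, apply the signed-permutation moment formula (2.6) to restrict to $\ker\mathbf{i}=\pi$ in the first term and to $\ker\mathbf{i}\geq\pi_1\sqcup\pi_2$ in the second, and bound the tail over $\hat\pi>\pi_1\sqcup\pi_2$ by counting blocks. The bookkeeping caveat you flag about where $\bm\sigma$ lands is exactly what the paper handles via the change of variable $\mathbf{i}\mapsto\mathbf{i}\circ\bm\sigma$ together with the invariances $\mathbf{w}_k(\mathbf{i}\circ\bm\sigma,\mathbf{j}\circ\bm\sigma)=\mathbf{w}_k(\mathbf{i},\mathbf{j})$ and $\mathbf{y}_k(\mathbf{i}\circ\bm\sigma)=\mathbf{y}_k(\mathbf{i})$.
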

%
%
\begin{proof}
	Fix a function $\mathbf{j}:[\pm {2m}] \rightarrow [N]$ satisfying  $\kernel{\mathbf{j}} = \pi$.
	Note that the $(j_{-2k+1},j_{2k-1})$-entry of $U^{*}_{N,{i}_{1}} U^{}_{N,{i}_{2}}$ and the $(j_{-2k},j_{2k})$-entry of $U^{*}_{N,{i}_{2}} U^{}_{N,{i}_{1}}$ are given by
	\begin{align*}
	\sum_{i_{-2k+1},i_{2k+1}=1}^{N} 
	\frac{1}{\sqrt{N}}  W^{*}(j_{-2k+1},i_{-2k+1}) Y (i_{-2k+1},i_{-2k+1}) H^{}(i_{-2k+1},i_{2k-1})  W(i_{2k-1},j_{2k-1})  
	\end{align*}
	and
	\begin{align*}
	\sum_{i_{-2k},i_{2k}=1}^{N} 
	\frac{1}{\sqrt{N}}	W^{*}(j_{-2k},i_{-2k}) H^{*} (i_{-2k},i_{2k}) Y(i_{2k},i_{2k}),  W(i_{2k},j_{2k}),  
	\end{align*}
	respectively. 
	Thus, from (\ref{eqn.normalized.fluct.c[pi]}) and the linearity of the covariance, we have that
	\begin{align*}
	N^{m} \mathfrak{c}_{2} \left[ \pi \right]
	\quad =	
	\sum_{\mathbf{i}:[\pm {2m}]\rightarrow[N]}
	{\mathbf{h}}(\mathbf{i}) \cdot
	\text{cov} \big[ {\mathbf{w}}_{1}\mathbf{(i, j \circ \bm{{\sigma}})} \cdot \mathbf{y}_{1}\mathbf{(i)} , \mathbf{w}_{2}\mathbf{(i, j \circ \bm{{\sigma}})} \cdot \mathbf{y}_{2}\mathbf{(i )} \big]
	\end{align*} 
	where ${\mathbf{h}}(\mathbf{i})$, ${\mathbf{w}}_{1}\mathbf{(i,j)}$, ${\mathbf{w}}_{2}\mathbf{(i,j)}$, $\mathbf{y}_{1}\mathbf{(i)}$, $\mathbf{y}_{2}\mathbf{(i)}$, and $\bm{\sigma}$ are defined as above. 
	But, for every function $\mathbf{i}:[\pm {2m}]\rightarrow[N]$, we have that
	\[
	{\mathbf{w}}_{k}\mathbf{(i,j)} =
	{\mathbf{w}}_{k}\mathbf{(i\circ \bm{ {\sigma}}, j \circ \bm{{\sigma}})}  
	\quad \text{ and } \quad
	\mathbf{y}_{k}\mathbf{(i)} =	\mathbf{y}_{k}\mathbf{(i\circ \bm{{\sigma}})}
	\quad \text{ for }	\quad k=1,2,
	\]
	so we obtain
	\begin{align}\label{eqn.c2[pi].1}
	N^{m}\mathfrak{c}_{2} \left[ \pi \right]
	\quad =	
	\sum_{\mathbf{i}:[\pm {2m}]\rightarrow[N]}
	{\mathbf{h}}(\mathbf{i}\circ \bm{{\sigma}}) \cdot
	\cov{ {\mathbf{w}}_{1}\mathbf{(i, j )} \cdot \mathbf{y}_{1}\mathbf{(i)} }{\mathbf{w}_{2}\mathbf{(i, j )} \cdot \mathbf{y}_{2}\mathbf{(i )} } .
	\end{align}
	Moreover, from (\ref{dist.entries.signed.perm.matrix}) we get that 
	\[
	\exptr{{\mathbf{w}}_{1}(\mathbf{i},\mathbf{j}) {\mathbf{w}}_{2}(\mathbf{i},\mathbf{j})}= 0
	\quad  \text{and} \quad
	\exptr{{\mathbf{w}}_{1}(\mathbf{i},\mathbf{j})}\exptr{{\mathbf{w}}_{2}(\mathbf{i},\mathbf{j})}=0
	\] 
	provided  $\kernel{\mathbf{i}} \neq \pi$ and $\kernel{\mathbf{i}} \not\geq \pi_{1} \sqcup \pi_{2} $, respectively. 
	And hence, equality in (\ref{eqn.c2[pi].1}) becomes 
	\begin{align*}
	N^{m}\mathfrak{c}_{2} \left[ \pi \right]
	\quad = & 
	\sum_{\substack{ 	\mathbf{i}:[\pm {2m}]\rightarrow[N] \\ 
			\mathrm{ker}(\mathbf{i}) = \pi}}
	\mathbf{h}(\mathbf{i \circ \bm{\sigma}})
	\exptr{\mathbf{w}_{1}(\mathbf{i,j}) \mathbf{w}_{2}(\mathbf{i,j})} \exptr{\mathbf{y}_{1}(\mathbf{i})\mathbf{y}_{2}(\mathbf{i})} \\ 
	& -
	\sum_{\substack{	\theta \in P(\pm {2m})  \\ 
			{\theta} \geq \pi_{1} \sqcup \pi_{2}}}
	\sum_{\substack{	\mathbf{i}:[\pm {2m}]\rightarrow[N] \\ 
			\mathrm{ker}(\mathbf{i}) = \theta}}
	\mathbf{h}(\mathbf{i \circ \bm{\sigma}})
	\exptr{\mathbf{w}_{1}(\mathbf{i,j})} 	\exptr{\mathbf{w}_{2}(\mathbf{i,j})}
	\exptr{\mathbf{y}_{1}(\mathbf{i})}		\exptr{\mathbf{y}_{2}(\mathbf{i})}. 
	\end{align*}
	To obtain (\ref{eqn.relating.c[pi].C[pi]}), it only remains to show that for ${\theta} \gneq \pi_{1} \sqcup \pi_{2} $, i.e., $\theta \geq \pi_{1} \sqcup \pi_{2}$ but $\theta \neq \pi_{1} \sqcup \pi_{2}$, implies 
	\begin{align*}
	& 
	\sum_{\substack{  \mathbf{i}:[\pm {2m}]\rightarrow[N] \\ \kernel{\mathbf{i}}={\theta}   }} 
	{\mathbf{h}}(\mathbf{i}\circ \bm{{\sigma}})
	\exptr{{\mathbf{w}}_{1}(\mathbf{i},\mathbf{j})}
	\exptr{{\mathbf{w}}_{2}(\mathbf{i},\mathbf{j})}
	\exptr{{\mathbf{y}}_{1}(\mathbf{i})}
	\exptr{{\mathbf{y}}_{2}(\mathbf{i})}				 
	= 
	O\left(N^{ -1  }\right) . 
	\end{align*}
	Suppose ${\theta} \in P(\pm {2m})$ satisfies ${\theta} \gneq \pi_{1} \sqcup \pi_{2} $. 
	Then, we must have $\#(\theta) < \#(\pi_{1} \sqcup \pi_{2})  = \#(\pi_{1}) +  \#(\pi_{2})$, or, equivalently,  
	\[
	\#(\theta) - \#(\pi_{1}) -  \#(\pi_{2}) \leq -1.
	\]
	Now, $\mathbf{h}(\mathbf{i}\circ \bm{{\sigma}})$ has absolute value $1$ for any function $\mathbf{i}:[\pm {2m}]\rightarrow[N]$ and $\abs*{\exptr{{\mathbf{y}}_{1}(\mathbf{i})}
		\exptr{{\mathbf{y}}_{2}(\mathbf{i})}}\leq 1$, so	(\ref{dist.entries.signed.perm.matrix}) implies 
	\begin{align*}
	& \abs*{
		\sum_{\substack{  \mathbf{i}:[\pm {2m}]\rightarrow[N] \\ \kernel{\mathbf{i}}={\theta}   }} 
		{\mathbf{h}}(\mathbf{i}\circ \bm{{\sigma}})
		\exptr{{\mathbf{w}}_{1}(\mathbf{i},\mathbf{j})}
		\exptr{{\mathbf{w}}_{2}(\mathbf{i},\mathbf{j})}
		\exptr{{\mathbf{y}}_{1}(\mathbf{i})}
		\exptr{{\mathbf{y}}_{2}(\mathbf{i})}				} \\
	& \leq 
	\frac{(N-\#(\pi_{1}))!}{N!} \cdot \frac{(N-\#(\pi_{2}))!}{N!}  \cdot 
	\abs*{\sum_{\substack{  \mathbf{i}:[\pm {2m}]\rightarrow[N] \\ \kernel{\mathbf{i}}={\theta}   }} 
		{\mathbf{h}}(\mathbf{i}\circ \bm{{\sigma}})			} 
	= 
	O\left(N^{- \#(\pi_{1}) -\#(\pi_{2}) + \#({\theta})  }\right)
	= 
	O\left(N^{	-1	}\right).   
	\end{align*}
\end{proof}
%
%
%
%

\begin{proof}[\textbf{Proof of (\ref{prop.fluc.h}) from Proposition \ref{prop.fluc}}]
	Let $Y$ be the identity matrix $I_{N}$. 
	By Proposition \ref{prop.cpi-and-Cpi}, we only need to show that 
	\begin{align*}
	\mathfrak{C}_{2}[ \pi ]  
	& =
	\left\{\begin{array}{cl}
	1 +  O\left( N^{-1}\right)  
	& \text{if there symmetric pairing partition } \hat\theta \leq \pi \text{ satisfying either (\ref{prop.minimal.special.partitions.1}) or 
		(\ref{prop.minimal.special.partitions.2}) }
	\\ & \text{from Proposition \ref{prop.minimal.special.partitions}}, \\ 
	O\left( N^{-1/2}\right) 
	& \text{otherwise. } 
	\end{array} \right.
	\end{align*}	
	where $\mathfrak{C}_{2} \left[ \pi \right]$ is given by (\ref{eqn.unnormalized.flucts}).
	Note that from (\ref{dist.entries.signed.perm.matrix}) and (\ref{eqn.unnormalized.flucts}) we obtain the inequality 
	\begin{align*}
	\abs*{\mathfrak{C}_{2}[\pi]  }	
	\leq
	\frac{(N-\#(\pi))!}{N!}
	\abs*{ \sum_{\substack{
				\mathbf{i}:[\pm {2m} ] \rightarrow [N] \\ 
				\kernel{\mathbf{i}}=\pi}} 
		\mathbf{h}(\mathbf{i}\circ\bm{\sigma}) }
	+
	\frac{(N-\#(\pi_{1}))!}{N!}
	\frac{(N-\#(\pi_{2}))!}{N!}
	\abs*{ \sum_{\substack{
				\mathbf{i}:[\pm {2m} ] \rightarrow [N] \\ 
				\kernel{\mathbf{i}}=\pi_{1}\sqcup\pi_{2} }} 
		\mathbf{h}(\mathbf{i}\circ\bm{\sigma}) } . 
	\end{align*}	
	But then, if $\mathrm{p}_{\sigma^{-1} \circ \pi}$ is a non-zero polynomial, so is $\mathrm{p}_{\sigma^{-1} \circ \pi_{1} \sqcup \pi_{2} }$ by Proposition \ref{minimal.special.polynomials}, and therefore, the last inequality and Corollary \ref{cor.bound.gauss.sum} would imply  
	\(
	\mathfrak{C}_{2} \left[ \pi \right] = O(N^{-1/2}) . \text{ } 
	\)
	And so, we can assume $\mathrm{p}_{\sigma^{-1} \circ \pi}$ is the zero polynomial without loss of generality. 

	Now, for every function $\mathbf{i}:[\pm {2m} ] \rightarrow [N]$ satisfying $\kernel{\mathbf{i}}=\pi$ we have $\mathbf{h}(\mathbf{i}\circ\bm{\sigma}) = 1$ since $\mathrm{p}_{\sigma^{-1} \circ \pi}$ is the zero polynomial; additionally, (\ref{dist.entries.signed.perm.matrix}) gives
	\(
	\exptr{\mathbf{w}(\mathbf{i,j})} = 
	\frac{(N-\#(\pi_{}))!}{N!} 
	\)
	since $\pi$ is an even partition. 
	Thus, from (\ref{eqn.unnormalized.flucts}) we obtain 
	\begin{align}\label{C2pi.sigma-pi-zero.case.1}
	\mathfrak{C}_{2}[\pi] = 1 -	
	\sum_{\substack{ 
			\mathbf{i}:[\pm {2m} ] \rightarrow [N] \\ \kernel{\mathbf{i}}=\pi_1 \sqcup \pi_2 }} 
	\mathbf{h}(\mathbf{i}\circ\bm{\sigma})  \exptr{\mathbf{w}_1(\mathbf{i,j})} \exptr{\mathbf{w}_2(\mathbf{i,j})} .
	\end{align}
	Moreover, by Proposition \ref{minimal.special.polynomials}, there is a symmetric pairing partition $\hat\theta \leq \pi$ such that $\mathrm{p}_{\sigma^{-1} \circ \hat\theta}$ is also the zero polynomial, and hence,  
	%
	the partition $\hat\theta$ must satisfy one of the conditions \textit{(1)-(6)} from Proposition \ref{prop.minimal.special.partitions}.  
	Notice $\hat\theta = \hat\theta_{1} \sqcup \hat\theta_{2}$, where  $\hat\theta_1$ and $\hat\theta_2$ denote the restrictions of $\hat\theta$ to $[\pm 2m_1]$ and $[\pm 2m ] \setminus [\pm 2m_1]$, respectively, implies 
	\begin{align}\label{C2pi.pi=pi1_pi2.case.1}
	\mathfrak{C}_{2}[\pi] 
	= 
	1 -	
	\frac{(N-\#(\pi_1))!}{N!}	\cdot \frac{(N-\#(\pi_2))!}{N!} \cdot \frac{N!}{(N-\#(\pi_1\sqcup\pi_2))!} 
	= 
	O\left( N^{-1}\right).
	\end{align}
	Indeed, if $\hat\theta = \hat\theta_{1} \sqcup \hat\theta_{2}$, then
	 $\hat\theta_{1}$ and  $\hat\theta_{2}$ must be even partitions, and so are $\pi_{1}$ and $\pi_{2}$ since  $\hat\theta \leq \pi$ implies $\hat\theta_{1} \leq \pi_{1}$ and  $\hat\theta_{2} \leq \pi_{2}$, so (\ref{dist.entries.signed.perm.matrix}) gives
	\begin{align*}
	\exptr{\mathbf{w}_{1}(\mathbf{i,j})} = 
	\frac{(N-\#(\pi_{1}))!}{N!} 
	\quad\text{and}\quad
	\exptr{\mathbf{w}_{2}(\mathbf{i,j})} = 
	\frac{(N-\#(\pi_{2}))!}{N!} 
	\end{align*}
	for every function $\mathbf{i}:[\pm {2m} ] \rightarrow [N]$ satisfying $\kernel{\mathbf{i}}=\pi_{1} \sqcup \pi_{2}$; moreover, Proposition \ref{minimal.special.polynomials} implies the polynomial  $\mathrm{p}_{\sigma^{-1} \circ \pi_1 \sqcup \pi_2}$ is also zero since $\hat\theta = \hat\theta_{1} \sqcup \hat\theta_{2}  \leq \pi_1 \sqcup \pi_2$, and thus, we obtain
	%
	%
	$$\mathbf{h}(\mathbf{i}\circ\bm{\sigma}) = 1 .$$
	%
	%
	%
	Hence, (\ref{C2pi.pi=pi1_pi2.case.1}) follows from (\ref{C2pi.sigma-pi-zero.case.1}) provided $\hat\theta$ satisfies either 
	\textit{(\ref{prop.minimal.special.partitions.3})},
	\textit{(\ref{prop.minimal.special.partitions.4})},
	\textit{(\ref{prop.minimal.special.partitions.5})}, or
	\textit{(\ref{prop.minimal.special.partitions.6})} from Proposition \ref{prop.minimal.special.partitions}. 

 	Assume now $\hat\theta$ satisfies either \textit{(\ref{prop.minimal.special.partitions.1})} or \textit{(\ref{prop.minimal.special.partitions.2})} from Proposition \ref{prop.minimal.special.partitions}. 
	Then, either $\pi_1 \sqcup \pi_2$ contains some singletons, if $\{k,l\} \in \pi$ or $\{k,-l\} \in \pi$ for some integers $1\leq k \leq 2m_{1} < l \leq 2m_{1}+2m_{2}$, or $\pi_1 \sqcup \pi_2 = \{\{-k,k\} \mid k \in [\pm 2m ]\}$, otherwise. 
	In any case, the graph $\vec{\mathcal{G}}_{\pi_1 \sqcup \pi_2}$ does not satisfy none of the conditions \textit{(1)-(6)} from Remark \ref{rmk.single.loops} since  $m_{1}+m_{2} > 2$, and hence,  the polynomial $\mathrm{p}_{\sigma^{-1} \circ \pi_1 \sqcup \pi_2}$ is non-zero. 
	Thus, by (\ref{dist.entries.signed.perm.matrix})  and Corollary \ref{cor.bound.gauss.sum}, we have 
	\begin{align}\label{ineq.h.w1.w2}	
	\abs*{
		\sum_{\substack{ 
				\mathbf{i}:[\pm {2m} ] \rightarrow [N] \\ \kernel{\mathbf{i}}=\pi_1 \sqcup \pi_2 }} 
		\mathbf{h}(\mathbf{i}\circ\bm{\sigma})  \exptr{\mathbf{w}_1(\mathbf{i,j})} \exptr{\mathbf{w}_2(\mathbf{i,j})}
	}
	\leq
	C N^{\#( \pi_1 \sqcup \pi_2 ) - \frac{1}{2} } \cdot	\frac{(N-\#(\pi_1))!}{N!}	\cdot \frac{(N-\#(\pi_2))!}{N!}
	\end{align}
	for some constant $C > 0$ independent from $N$. 
	Therefore,  from  (\ref{C2pi.sigma-pi-zero.case.1}) we get that
	\[
	\mathfrak{C}_{2}[\pi] 
	= 
	1 +	O( N^{-1/2}). 
	\]
\end{proof}
%
%
%
%
%
\begin{proof}[\textbf{Proof of (\ref{prop.fluc.xh}) from Proposition \ref{prop.fluc}}]
	Let $Y$ be a random $N$-by-$N$ signature matrix independent from $W$.
	Similar to the previous case, $U^{*}_{N,{i}_{1}} U^{}_{N,{i}_{2}} = W^{*} H^{}  W / \sqrt{N}$, 
	we can assume  $\mathrm{p}_{\sigma^{-1} \circ \pi}$ is the zero polynomial and it suffices to show that
	\begin{align*}
	\mathfrak{C}_{2}[ \pi ]  
	& =
	\left\{\begin{array}{cl}
	1 +  O\left( N^{-1}\right)  
	& \text{if there symmetric pairing partition } \hat\theta \leq \pi \text{ satisfying (\ref{prop.minimal.special.partitions.1}) from } 
	\\ & \text{Proposition \ref{prop.minimal.special.partitions}}, \\ 
	1 +  O\left( N^{-1}\right)  
	& \text{if there symmetric pairing partition } \hat\theta \leq \pi \text{ satisfying (\ref{prop.minimal.special.partitions.2}) from } 
	\\ & \text{Proposition \ref{prop.minimal.special.partitions} and the graph } \mathcal{G}_{\pi} \text{ has only double-loops as components}, \\
	O\left( N^{-1/2}\right) 
	& \text{otherwise. } 
	\end{array} \right.
	\end{align*}		

	Let $\pi_{\text{odd},1}$ and  $\pi_{\text{odd},2}$ denote the restrictions of $\pi_{\text{odd}}$ to $[\pm 2m_1]$ and $[\pm 2m ] \setminus [\pm 2m_1]$, respectively. 
	Note that if $\pi_{\text{odd}}$ is not an even partition, then either  $\pi_{\text{odd},1}$ or  $\pi_{\text{odd},2}$ is not even, and hence, we obtain $\mathfrak{C}_{2}[\pi]=0$ since (\ref{dist.entries.signature.matrix}) would imply   
	$\exptr{\mathbf{y}_1(\mathbf{i})} \exptr{\mathbf{y}_2(\mathbf{i})} =\exptr{\mathbf{y}(\mathbf{i})} 
	=0
	$
	for every function $\mathbf{i}:[\pm {2m} ] \rightarrow [N]$ satisfying $\kernel{\mathbf{i}}=\pi$. 
	Thus, we can further assume $\pi_{\text{odd}}$ is even.
	It then follows from (\ref{dist.entries.signature.matrix}) and (\ref{dist.entries.signed.perm.matrix}) that 
	\[
	\mathbf{h}(\mathbf{i}\circ\bm{\sigma}) = 1,  
	\quad 
	\exptr{\mathbf{y}(\mathbf{i})} =1
	\quad \text{and} \quad 
	\exptr{\mathbf{w}(\mathbf{i,j})} = \frac{(N-\#(\pi))!}{N!} 
	\] 
	for $\mathbf{i}:[\pm {2m} ] \rightarrow [N]$ satisfying $\kernel{\mathbf{i}}=\pi$, and hence, we obtain  
	\begin{align}\label{C2pi.sigma-pi-zero.case.2}
	\mathfrak{C}_{2}[\pi] = 1 -	
	\sum_{\substack{ 
			\mathbf{i}:[\pm {2m} ] \rightarrow [N] \\ \kernel{\mathbf{i}}=\pi_1 \sqcup \pi_2 }} 
	\mathbf{h}(\mathbf{i}\circ\bm{\sigma}) 
	\exptr{\mathbf{y}_1(\mathbf{i})} \exptr{\mathbf{y}_2(\mathbf{i})}  \exptr{\mathbf{w}_1(\mathbf{i,j})} \exptr{\mathbf{w}_2(\mathbf{i,j})} .  
	\end{align}
	By Proposition \ref{minimal.special.polynomials}, there is a symmetric pairing partition $\hat\theta \leq \pi$ such that $\mathrm{p}_{\sigma^{-1} \circ \hat\theta}$ is also the zero polynomial, and thus,  
	the partition $\hat{\theta}$ must satisfy one of the conditions \textit{(1)-(6)} from Proposition \ref{prop.minimal.special.partitions}. 
	However, if $\hat{\theta}$ satisfies either 
	\textit{(\ref{prop.minimal.special.partitions.3})},
	\textit{(\ref{prop.minimal.special.partitions.4})},
	\textit{(\ref{prop.minimal.special.partitions.5})}, or
	\textit{(\ref{prop.minimal.special.partitions.6})}, then 
	\begin{align}\label{C2pi.pi=pi1_pi2.case.2}
	\mathfrak{C}_{2}[\pi] 
	= 
	1 -	\frac{N!}{(N-\#(\pi_1\sqcup\pi_2))!} \cdot
	\frac{(N-\#(\pi_1))!}{N!}	\cdot
	\frac{(N-\#(\pi_2))!}{N!}
	= 
	O\left( N^{-1}\right). 
	\end{align}
	Indeed, suppose $\hat{\theta}$ satisfies either 
	\textit{(\ref{prop.minimal.special.partitions.3})},
	\textit{(\ref{prop.minimal.special.partitions.4})},
	\textit{(\ref{prop.minimal.special.partitions.5})}, or
	\textit{(\ref{prop.minimal.special.partitions.6})} from Proposition \ref{prop.minimal.special.partitions}, let $\hat{\theta}_1$ and $\hat{\theta}_2$ denote the restrictions of $\hat{\theta}$ to $[\pm 2m_1]$ and $[\pm 2m ] \setminus [\pm 2m_1]$, respectively, and let $\mathbf{i}:[\pm {2m} ] \rightarrow [N]$ be a function satisfying $\kernel{\mathbf{i}}=\pi_{1} \sqcup \pi_{2}$. 
	Note that $\hat{\theta} = \hat{\theta}_1 \sqcup \hat{\theta}_2  \leq  \pi_{1} \sqcup \pi_{2}$ since $\hat{\theta} \leq \pi$ implies $\hat{\theta}_{1} \leq \pi_{1}$ and  $\hat{\theta}_{2} \leq \pi_{2}$, and thus, by Proposition \ref{minimal.special.polynomials},  the polynomial $\mathrm{p}_{\sigma^{-1} \circ \pi_{1} \sqcup \pi_{2}}$ is zero, and hence, we get  
	\begin{align*}
	\mathbf{h}(\mathbf{i}\circ\bm{\sigma}) = 1. 
	\end{align*}
	Moreover, $\pi_{1}$ and $\pi_{2}$ are even partitions since  
	$\hat{\theta}$ is even and $\hat{\theta} = \hat{\theta}_1 \sqcup \hat{\theta}_2 \leq \pi$, so, from (\ref{dist.entries.signed.perm.matrix}),  we get 
	\begin{align*}
	\exptr{\mathbf{w}_{1}(\mathbf{i,j})} = 
	\frac{(N-\#(\pi_{1}))!}{N!} 
	\quad\text{and}\quad
	\exptr{\mathbf{w}_{2}(\mathbf{i,j})} = 
	\frac{(N-\#(\pi_{2}))!}{N!}  . 
	\end{align*}
	%
	%
	%
	%
	The partitions $\pi_{\text{odd},1}$ and $\pi_{\text{odd},2}$ are also even since $\hat{\theta}_{\text{odd}}$ is even and   $\hat{\theta} = \hat{\theta}_1 \sqcup \hat{\theta}_2 \leq \pi$ implies $\hat{\theta}_{\text{odd}} = \hat{\theta}_{\text{odd},1} \sqcup \hat{\theta}_{\text{odd},2}$, $\hat{\theta}_{\text{odd},1} \leq \pi_{\text{odd},1}$, and  $\hat{\theta}_{\text{odd},2} \leq \pi_{\text{odd},2}$ where  $\hat{\theta}_{\text{odd},1}$ and  $\hat{\theta}_{\text{odd},2}$ denote the restrictions of $\hat{\theta}_{\text{odd}}$ to $[\pm 2m_1]$ and $[\pm 2m ] \setminus [\pm 2m_1]$, respectively.
	Thus, from (\ref{dist.entries.signature.matrix}), we have   
	\[
	\exptr{\mathbf{y}_1(\mathbf{i})} \exptr{\mathbf{y}_2(\mathbf{i})} = 1  .
	\]
	Consequently, we obtain (\ref{C2pi.pi=pi1_pi2.case.2}) from (\ref{C2pi.sigma-pi-zero.case.2}). 
	Now, similar to the case $U^{*}_{N,{i}_{1}} U^{}_{N,{i}_{2}} = W^{*} H^{}  W / \sqrt{N}$, if $\hat{\theta}$ satisfies either  \textit{(\ref{prop.minimal.special.partitions.1})} or \textit{(\ref{prop.minimal.special.partitions.2})} from Proposition \ref{prop.minimal.special.partitions}, then $\mathrm{p}_{\sigma^{-1} \circ \pi_1 \sqcup \pi_2}$ is a non-zero polynomial and (\ref{ineq.h.w1.w2}) holds, so, from (\ref{C2pi.sigma-pi-zero.case.2}), we obtain  
	\[	
	\mathfrak{C}_{2}[\pi] 
	=
	1 +	O( N^{-1/2}) 
	\]
	since we have $\abs*{\exptr{\mathbf{y}_1(\mathbf{i})} \exptr{\mathbf{y}_2(\mathbf{i})} = 1 } \leq 1$ for any function $\mathbf{i}:[\pm {2m} ] \rightarrow [N]$. 

	It only remains to show that the undirected graph $\mathcal{G}_{\pi}$ must have only double-loops as connected components if  $\hat{\theta}$ satisfies \textit{(\ref{prop.minimal.special.partitions.2})} from Proposition \ref{prop.minimal.special.partitions}. 
	So, suppose $\hat{\theta}$ satisfies \textit{(\ref{prop.minimal.special.partitions.2})} from Proposition \ref{prop.minimal.special.partitions}. 
	Note that if $\pi$ has a block of the form $\{k,l\}$, then  $k+l$ is odd, and hence, we must have either $\{k\}$ or $\{l\}$ is a block of $\pi_{\text{odd}}$, contradicting the assumption that $\pi_{\text{odd}}$ is an even partition. 
	Thus, $\pi$ has only blocks of the form $\{k,l,-k,-l\}$, or, equivalently, the undirected graph $\mathcal{G}_{\pi}$ has only double loops as connected components. 
\end{proof}
%
%
%

%
%
\subsection*{Case $\bm{ U^{*}_{N,{i}_{1}} U^{}_{N,{i}_{2}} =\frac{1}{N} W^{*} H^{*} X H^{}  W}$ } 
%
%
%
%
For each function $\mathbf{i}:[\pm {4m}] \rightarrow [N]$, we let $\widehat{\mathbf{h}}_{}(\mathbf{i})$, $\widehat{\mathbf{g}}_{}(\mathbf{i})$, $\widehat{\mathbf{x}}_{1}(\mathbf{i})$, and $\widehat{\mathbf{x}}_{2}(\mathbf{i})$ be given by 
\begin{align*}
%
%
%
%
\widehat{\mathbf{h}}_{}(\mathbf{i})	=&
\prod_{k=1}^{2m}
H^{*}(i_{-2k+1},i_{2k-1}) H(i_{-2k},i_{2k}) , 
&\widehat{\mathbf{g}}_{}(\mathbf{i})=&
\prod_{k=1}^{2m}
H^{*}(i_{-2k+1},i_{-2k}) H(i_{2k},i_{2k-1})  ,  \\
%
%
%
%
\widehat{\mathbf{x}}_{1}(\mathbf{i}) = & 
\prod_{k=1}^{2m_{1}} 
X(i_{-2k},i_{2k}), \qquad \quad \text{and}
&\widehat{\mathbf{x}}_{2}(\mathbf{i}) =&
\prod_{k=2m_{1}+1}^{2m_{1}+2m_{2}} 
X(i_{-2k},i_{2k});
\end{align*}
additionally, if we are given a function $\mathbf{j}:[\pm {2m}] \rightarrow [N]$, we take  
%
%
%
%
\begin{align}\label{eqn.variable.t}
\mathbf{t}  =
(t_{-1},t_{1},t_{-3},t_{3},\ldots,t_{4m_{1}+4m_{2}-1}) 
=
(j_{-1},j_{1},j_{-2},j_{2},\ldots,j_{-2m},j_{2m})
\end{align}
%
%
%
%
%
and let $\widehat{\mathbf{w}}_{1}(\mathbf{i,t})$ and $\widehat{\mathbf{w}}_{2}(\mathbf{i,t})$, also denoted  $\widehat{\mathbf{w}}_{1}(\mathbf{i})$ and $\widehat{\mathbf{w}}_{2}(\mathbf{i})$, respectively, be defined by 
%
%
\begin{align*}
\widehat{\mathbf{w}}_{1}(\mathbf{i,t})
=&
\prod_{k=1}^{2m_{1}} 
W(i_{2k-1},t_{2k-1}) W(i_{-2k+1},t_{-2k+1})  \quad \text{and}\\ 
\widehat{\mathbf{w}}_{2}(\mathbf{i,t})
=&
\prod_{k=2m_{1}+1}^{2m_{1}+2m_{2}} 
W(i_{2k+1},t_{2k+1}) W(i_{-2k+1},t_{-2k+1}) . 
\end{align*}
Now, given partitions $\pi \in P(\pm 2m)$ and ${\alpha} \in P_{2}(2m)$ and a function $\mathbf{j}:[\pm {2m}] \rightarrow [N]$ satisfying $ \kernel{\mathbf{j}} = \pi$, we define $\mathfrak{C}_{2} \left[ \pi, {\alpha} \right]$ by 
\begin{align}\label{eqn.unnormalized.fluct.HXH}
&
\sum_{\substack{ \mathbf{i}:[\pm {4m} ] \rightarrow [N] \\  \kernel{\mathbf{i}}= \widehat{\alpha} \sqcup \pi^{\text{odd}} }} 
\widehat{\mathbf{h}}(\mathbf{i} \circ \bm{\widehat{\sigma}} ) 
\exptr{\widehat{\mathbf{w}}(\mathbf{i})}
\exptr{\widehat{\mathbf{x}}(\mathbf{i})}  \nonumber
-\sum_{\substack{ \mathbf{i}:[\pm {4m} ] \rightarrow [N] \\  \kernel{\mathbf{i}}=\widehat{\alpha} \sqcup \pi_{1}^{\text{odd}} \sqcup \pi_{2}^{\text{odd}}  }} 
\widehat{\mathbf{h}}(\mathbf{i} \circ \bm{\widehat{\sigma}}) 
\exptr{\widehat{\mathbf{w}}_{1}(\mathbf{i})}
\exptr{\widehat{\mathbf{w}}_{2}(\mathbf{i})}
\exptr{\widehat{\mathbf{x}}_{1}(\mathbf{i})}
\exptr{\widehat{\mathbf{x}}_{2}(\mathbf{i})}  \\
&= N^{m}\mathfrak{C}_{2} \left[ \pi, {\alpha} \right]
\end{align}
where $\pi^{\text{odd}}_{1}$ and $\pi^{\text{odd}}_{2}$ denote the restrictions of $\pi^{\text{odd}}$ to the sets $[\pm 4m_{1}]$ and $[\pm (4m_{1}+4m_{2}) ] \setminus [\pm 4m_{1}]$, respectively, $\widehat\alpha$ is the partition given by $\widehat\alpha=\{\{-2k,2k,-2l,2l\} \mid \{k,l\} \in \alpha \}$, and  $\widehat{\sigma} : [\pm {4m}] \rightarrow [\pm {4m}]$ is the permutation with cycle decomposition 
\begin{align*}
\widehat{\sigma} = 	(-1,1,-2,2,\ldots,-4m_{1},4m_{1})
(-4m_{1}-1,4m_{1}+1,-4m_{2}-2,\ldots,4m_{1}+4m_{2}). 
\end{align*}
%
%
%
%
%
\begin{proposition}\label{prop.cpi-and-Cpi-nu}
	Let $\pi$ be an even partition in $P(\pm {2m})$. 
	Suppose $U^{*}_{N,{i}_{1}} U^{}_{N,{i}_{2}} =  W^{*} H^{*}X H^{} W / N$. 
	If $\mathfrak{c}_{2} \left[ \pi \right] $ is given by (\ref{eqn.normalized.fluct.c[pi]}) and  $\mathfrak{C}_{2} \left[ \pi, {\alpha} \right] $ is given by (\ref{eqn.unnormalized.fluct.HXH}) for every pairing partition ${\alpha} \in P_{2}(2m)$, then 
	\begin{align}\label{eqn.c[pi].And.C[pi,eta]}
	N^{2m}\mathfrak{c}_{2} \left[ \pi \right] 
	=	&
	\left(\sum_{\substack{ {\alpha} \in {P}_{2}( 2m )   }}
	\mathfrak{C}_{2} \left[ \pi, {\alpha} \right]  \right)
	+
	O\left( N^{m-1} \right) 
	\end{align}
\end{proposition}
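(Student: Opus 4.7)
The plan is to follow the same overall strategy as in the proof of Proposition \ref{prop.cpi-and-Cpi}, adapted to account for the additional factors $H^{*}$ and $X$ appearing in the expansion of $U^{*}_{N,{i}_{1}} U^{}_{N,{i}_{2}}$. I would begin by expanding each entry $V_{k}(j_{\sigma(-k)}, j_{\sigma(k)})$ via the identity $V_{k} = W^{*} H^{*} X H W / N$, which produces five matrix factors per entry. Tracking the diagonality of $X$ with two labels per internal index, the resulting summation indices can be collected into a single function $\mathbf{i}:[\pm 4m]\to[N]$ in exact correspondence with the conventions used to define $\widehat{\mathbf{h}}$, $\widehat{\mathbf{x}}$, and $\widehat{\mathbf{w}}$. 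Since $X$, $H$, and $W$ are mutually independent and $H$ is deterministic, each monomial's expectation factors as $\widehat{\mathbf{h}}(\mathbf{i}) \cdot \mathbb{E}[\widehat{\mathbf{x}}(\mathbf{i})] \cdot \mathbb{E}[\widehat{\mathbf{w}}(\mathbf{i},\mathbf{t})]$, and a change of summation variable $\mathbf{i}\mapsto \mathbf{i}\circ\bm{\widehat{\sigma}}$, analogous to the $\sigma$-substitution used in the proof of Proposition \ref{prop.cpi-and-Cpi}, realigns the $\widehat{\mathbf{w}}$-indices with the prescribed function $\mathbf{j}$ while transferring the permutation onto $\widehat{\mathbf{h}}$.

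Next, I would exploit the distributions of $X$ and $W$ to cut down the sum over $\mathbf{i}$. By \eqref{dist.entries.signature.matrix}, $\mathbb{E}[\widehat{\mathbf{x}}(\mathbf{i})]$ vanishes unless the restriction of $\kernel{\mathbf{i}}$ to the even positions is an even partition, and, because $X$ is diagonal, the leading-order even partitions on those positions are precisely the $\widehat{\alpha}$ with $\alpha\in P_{2}(2m)$: each $\alpha$-pair $\{k,l\}$ gives a block $\{-2k,2k,-2l,2l\}$ in which the $X$-arguments of the $k$-th and $l$-th factor are identified and the product of the corresponding $X$-entries contributes $1$. By \eqref{dist.entries.signed.perm.matrix}, $\mathbb{E}[\widehat{\mathbf{w}}(\mathbf{i},\mathbf{t})]$ is supported on $\kernel{\mathbf{i}}_{\text{odd}}=\pi^{\text{odd}}$, whereas $\mathbb{E}[\widehat{\mathbf{w}}_{1}]\mathbb{E}[\widehat{\mathbf{w}}_{2}]$ requires only $\kernel{\mathbf{i}}_{\text{odd}}\geq \pi_{1}^{\text{odd}}\sqcup\pi_{2}^{\text{odd}}$. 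Combining the pair $(\widehat{\alpha},\pi^{\text{odd}})$ coming from $\mathbb{E}[\widehat{\mathbf{w}}]$ with the pair $(\widehat{\alpha},\pi_{1}^{\text{odd}}\sqcup\pi_{2}^{\text{odd}})$ coming from $\mathbb{E}[\widehat{\mathbf{w}}_{1}]\mathbb{E}[\widehat{\mathbf{w}}_{2}]$ and summing over $\alpha$ recovers exactly $\sum_{\alpha\in P_{2}(2m)}\mathfrak{C}_{2}[\pi,\alpha]$.

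The main obstacle is quantitative: I must show that all remaining contributions combine to give an error of $O(N^{m-1})$. Two sources of subleading terms need to be controlled. Any even partition of the even positions that is strictly coarser than every $\widehat{\alpha}$ identifies at least three $X$-arguments and so loses at least one free summation index, costing a factor of $N^{-1}$ and producing $O(N^{m-1})$; I will make this rigorous by combining Theorem \ref{mingo-speicher} for the $\widehat{\mathbf{h}}$-sum with Corollary \ref{cor.bound.gauss.sum}, the latter handling the borderline cases in which the quadratic polynomial $\mathrm{p}_{\bm{\widehat{\sigma}}^{-1}\circ\theta}$ happens to vanish and the $\widehat{\mathbf{h}}$-sum is not automatically small. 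The subleading $W$-terms with $\kernel{\mathbf{i}}_{\text{odd}}\gneq \pi_{1}^{\text{odd}}\sqcup\pi_{2}^{\text{odd}}$ are controlled exactly as at the end of the proof of Proposition \ref{prop.cpi-and-Cpi}, using $|\widehat{\mathbf{h}}|=1$, $|\mathbb{E}[\widehat{\mathbf{x}}]|\leq 1$, and the fact that each additional identification of odd indices costs a factor of $N^{-1}$ in the $W$-expectation but saves at most one summation index.
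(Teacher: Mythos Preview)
Your outline matches the paper's proof closely: expand each entry via $W^{*}H^{*}XHW$, reindex by $\mathbf{i}:[\pm 4m]\to[N]$, perform the $\widehat{\sigma}$-substitution, and use the supports of $\mathbb{E}[\widehat{\mathbf{x}}]$ and $\mathbb{E}[\widehat{\mathbf{w}}]$ to isolate the partitions $\widehat{\alpha}\sqcup\pi^{\text{odd}}$ (resp.\ $\widehat{\alpha}\sqcup\pi_{1}^{\text{odd}}\sqcup\pi_{2}^{\text{odd}}$) as the leading contributions.

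The one place where you diverge from the paper is the error estimate, and you are making it harder than it needs to be. The paper does \emph{not} invoke Theorem~\ref{mingo-speicher} or Corollary~\ref{cor.bound.gauss.sum} here at all. It simply uses the trivial bound $|\widehat{\mathbf{h}}(\mathbf{i}\circ\bm{\widehat{\sigma}})|=1$ together with the block-count inequality
\[
\#(\theta)\;\leq\;\#(\theta_{\text{odd}})+\#(\theta_{\text{even}})\;\leq\;\#(\pi_{1})+\#(\pi_{2})+m,
\]
with equality forcing $\theta_{\text{odd}}=\pi_{1}^{\text{odd}}\sqcup\pi_{2}^{\text{odd}}$, $\theta=\theta_{\text{even}}\sqcup\theta_{\text{odd}}$, and every block of $\theta_{\text{even}}$ of size exactly four. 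Combined with $\mathbb{E}[\widehat{\mathbf{w}}_{1}]\mathbb{E}[\widehat{\mathbf{w}}_{2}]=O(N^{-\#(\pi_{1})-\#(\pi_{2})})$ this gives $O(N^{m-1})$ immediately for every non-leading $\theta$; the same counting with $\pi$ in place of $\pi_{1}\sqcup\pi_{2}$ handles the first sum. Your remark that Corollary~\ref{cor.bound.gauss.sum} is needed ``when $\mathrm{p}_{\widehat{\sigma}^{-1}\circ\theta}$ vanishes'' is also backwards: that corollary applies only when the polynomial is \emph{non}-zero, and in any case the trivial bound already suffices, so no case distinction on $\mathrm{p}_{\widehat{\sigma}^{-1}\circ\theta}$ is required for this proposition.
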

%
%
\begin{proof}
	Fix a function $\mathbf{j}:[\pm {2m}] \rightarrow [N]$ satisfying   $\kernel{\mathbf{j}} = \pi$ and let $\mathbf{t}$ be as in (\ref{eqn.variable.t}). 
	The $(j_{-k},j_{k})$-entry of $U^{*}_{N,{i}_{1}} U^{}_{N,{i}_{2}}$ is then given by the sum
	\begin{align*}
	\sum_{i_{-2k+1},i_{2k+1},i_{-2k},i_{2k}=1}^{N} 
	W^{*}(t_{-2k+1},i_{-2k+1}) H^{*}(i_{-2k+1},i_{-2k}) X(i_{-2k},i_{2k}) H^{}(i_{2k},i_{2k-1})  W(i_{2k-1},t_{2k-1}),  
	\end{align*}
	and hence, by Equation (\ref{eqn.normalized.fluct.c[pi]}) and the linearity of the covariance, we get 
	\begin{align*}
	N^{2m} \mathfrak{c}_{2} \left[ \pi \right] 
	& =	
	\sum_{\mathbf{i}:[\pm {4m}]\rightarrow[N]}
	\widehat{\mathbf{g}}(\mathbf{i}) \cdot
	\text{cov} \big[ \widehat{\mathbf{w}}_{1}\mathbf{(i, t \circ \bm{\widetilde{\sigma}})} \cdot \widehat{\mathbf{x}}_{1}\mathbf{(i)} , \mathbf{w}_{2}\mathbf{(i, t \circ \bm{\widetilde{\sigma}})} \cdot \widehat{\mathbf{x}}_{2}\mathbf{(i )} \big]
	\end{align*} 
	where $\widehat{\mathbf{g}}(\mathbf{i})$, $\widehat{\mathbf{w}}_{1}\mathbf{(i,t)}$, $\widehat{\mathbf{w}}_{2}\mathbf{(i,t)}$, $\widehat{\mathbf{x}}_{1}\mathbf{(i)}$, and $\widehat{\mathbf{x}}_{2}\mathbf{(i)}$ are defined as above and $\widetilde\sigma : [\pm 4m] \rightarrow [\pm 4m] $ is the permutation with cycle decomposition
	\[
	\widetilde\sigma = 	(-1,1,-3,3,\ldots,-4m_{1}+1,4m_{1}-1)
	(-4m_{1}-1,4m_{1}+1,\ldots,4m_{1}+4m_{2}-1). 
	\]	
	Note that for every function $\mathbf{i}:[\pm {4m}]\rightarrow[N]$ we have
	\[
	\widehat{\mathbf{h}}(\mathbf{i}\circ \bm{\widehat{\sigma}}) = 
	\widehat{\mathbf{g}}(\mathbf{i}\circ \bm{\widetilde{\sigma}}), \quad 
	\widehat{\mathbf{w}}_{k}\mathbf{(i,t)} =
	\widehat{\mathbf{w}}_{k}\mathbf{(i\circ \bm{\widetilde{\sigma}}, t \circ \bm{\widetilde{\sigma}})} ,\quad \text{ and } \quad
	\widehat{\mathbf{x}}_{k}\mathbf{(i)} =	\widehat{\mathbf{x}}_{k}\mathbf{(i\circ \bm{\widetilde{\sigma}})}
	\]
	for $k=1,2$, so we get
	\begin{align*}
	N^{2m}\mathfrak{c}_{2} \left[ \pi \right]
	&=	
	\sum_{\mathbf{i}:[\pm {4m}]\rightarrow[N]}
	\widehat{\mathbf{h}}(\mathbf{i}\circ \bm{\widehat{\sigma}}) \cdot
	\cov{ \widehat{\mathbf{w}}_{1}\mathbf{(i, t )} \cdot \widehat{\mathbf{x}}_{1}\mathbf{(i)} }{\mathbf{w}_{2}\mathbf{(i, t )} \cdot \widehat{\mathbf{x}}_{2}\mathbf{(i )} } .
	\end{align*}
	Now, suppose $\theta = \kernel{\mathbf{i}}$ for a function $\mathbf{i}:[\pm {4m}]\rightarrow[N]$. 
	Since $\pi^{\text{odd}} = \kernel{\mathbf{t}}$, from (\ref{dist.entries.signed.perm.matrix}) we have that
	\[
	\exptr{\widehat{\mathbf{w}}_{1}(\mathbf{i},\mathbf{t}) \widehat{\mathbf{w}}_{2}(\mathbf{i},\mathbf{t})}= 0
	\quad  \text{and} \quad
	\exptr{\widehat{\mathbf{w}}_{1}(\mathbf{i},\mathbf{t})}\exptr{\widehat{\mathbf{w}}_{2}(\mathbf{i},\mathbf{t})}=0
	\] 
	provided  ${\theta}_{\text{odd}} \neq \pi^{\text{odd}}$ and ${\theta}_{\text{odd}} \not\geq (\pi_{1} \sqcup \pi_{2})^{\text{odd}} = \pi_{1} ^{\text{odd}}  \sqcup \pi_{2}^{\text{odd}} $, respectively;
	%
	%
	moreover, (\ref{dist.entries.signature.matrix}) implies that
	\[
	\exptr{\widehat{\mathbf{x}}_{1}(\mathbf{i})\widehat{\mathbf{x}}_{2}(\mathbf{i})}=\exptr{\widehat{\mathbf{x}}_{1}(\mathbf{i})}\exptr{\widehat{\mathbf{x}}_{2}(\mathbf{i})}=0 
	\]
	if ${\theta}_{\text{even}}$ is not an even partition, ${\theta}_{\text{even}}$ has a block of the form $\{ 2k, -2k\}$, or $2k \not\sim_{{\theta}} -2k$ for some $k \in [2m]$. 
	Thus, we obtain
	\begin{align*}
	N^{2m} \mathfrak{c}_{2} \left[ \pi \right]
	= & 
	\sum_{\substack{ {\theta} \in \widetilde{P}_{\pi}( \pm 4m )   }}
	\sum_{\substack{  \mathbf{i}:[\pm {4m}]\rightarrow[N] \\ \kernel{\mathbf{i}}={\theta}   }} 
	\widehat{\mathbf{h}}(\mathbf{i}\circ \bm{\widehat{\sigma}})	\exptr{\widehat{\mathbf{w}}_{1}(\mathbf{i},\mathbf{t}) \widehat{\mathbf{w}}_{2}(\mathbf{i},\mathbf{t})}
	\exptr{\widehat{\mathbf{x}}_{1}(\mathbf{i})\widehat{\mathbf{x}}_{2}(\mathbf{i})}  \\ 	& -
	\sum_{\substack{ {\theta} \in \widetilde{P}_{\pi_{1} \sqcup \pi_{2}}( \pm 4m )   }}
	\sum_{\substack{  \mathbf{i}:[\pm {4m}]\rightarrow[N] \\ \kernel{\mathbf{i}}={\theta}   }}
	\widehat{\mathbf{h}}(\mathbf{i}\circ \bm{\widehat{\sigma}})
	\exptr{\widehat{\mathbf{w}}_{1}(\mathbf{i},\mathbf{t})}
	\exptr{\widehat{\mathbf{w}}_{2}(\mathbf{i},\mathbf{t})}
	\exptr{\widehat{\mathbf{x}}_{1}(\mathbf{i})}
	\exptr{\widehat{\mathbf{x}}_{2}(\mathbf{i})} 
	\end{align*}
	where $\widetilde{P}_{\beta}(\pm 4m)$ denotes the set of all partitions ${\theta} \in P(\pm 4m)$ such that ${\theta}_{\text{odd}} \geq \beta^{\text{odd}}$ and for every integer $k \in [2m]$ there exists  $l \in [2m]\setminus\{k\}$ such that $2k \sim_{{\theta}} -2k \sim_{{\theta}} -2l \sim_{{\theta}} 2l$.

	Now, letting $\widehat{P}_{\beta}(\pm 4m)$ denote the set of partitions ${\theta}  \in \widetilde{P}_{\beta}(\pm 4m)$ so that ${\theta} = {\theta}_{\text{even}} \sqcup {\theta}_{\text{odd}}$, ${\theta}_{\text{odd}} = \beta^{\text{odd}}$, and every block of ${\theta}_{\text{even}}$ is of the form $\{2k,-2k,2l,-2l\}$ with $k,l \in [2m]$ and $k\neq l$, 
	note the mapping $${\alpha} \mapsto \widehat{\alpha} \sqcup \beta^{\text{odd}}$$ with $\widehat{{\alpha}} = \left\{ \{2k,-2k,2l,-2l\} \mid \{k,l\} \in {\alpha} \right\}$ gives a bijection between the set of pairing partitions $P_{2}(2m)$ and the set $\widehat{P}_{\beta}(\pm 4m)$ for any partition $\beta \in P(\pm 2m)$. 
	Thus, to get (\ref{eqn.c[pi].And.C[pi,eta]}), it only remains to show that 
	\begin{align*}
	N^{2m} \mathfrak{c}_{2} \left[ \pi \right] 
	= & 
	\sum_{\substack{ {\theta} \in \widehat{P}_{\pi}( \pm 4m )   }}
	\sum_{\substack{  \mathbf{i}:[\pm {4m}]\rightarrow[N] \\ \kernel{\mathbf{i}}={\theta}   }} 
	\widehat{\mathbf{h}}(\mathbf{i}\circ \bm{\widehat{\sigma}})
	\exptr{\widehat{\mathbf{w}}(\mathbf{i},\mathbf{t})}
	\exptr{\widehat{\mathbf{x}}(\mathbf{i})}  \\
	& -
	\sum_{\substack{ 	{\theta} \in \widehat{P}_{\pi_{1} \sqcup \pi_{2} } ( \pm 4m ) }}
	\sum_{\substack{  \mathbf{i}:[\pm {4m}]\rightarrow[N] \\ \kernel{\mathbf{i}}={\theta}   }} 
	\widehat{\mathbf{h}}(\mathbf{i}\circ \bm{\widehat{\sigma}})
	\exptr{\widehat{\mathbf{w}}_{1}(\mathbf{i},\mathbf{t})}
	\exptr{\widehat{\mathbf{w}}_{2}(\mathbf{i},\mathbf{t})}
	\exptr{\widehat{\mathbf{x}}_{1}(\mathbf{i})}
	\exptr{\widehat{\mathbf{x}}_{2}(\mathbf{i})}
	\\ & 	+
	O\left( N^{m-1} \right).  
	\end{align*}
	%
	%
	%
	%
	%
	%
	%
	%
	%
	%
	%
	%
	%
	Suppose ${\theta} \in \widetilde{P}_{\pi_{1} \sqcup \pi_{2}}( \pm 4m )$.
	Then, since ${\theta}_{\text{odd}} \geq (\pi_{1} \sqcup \pi_{2})^{\text{odd}} = \pi_{1}^{\text{odd}} \sqcup  \pi_{2}^{\text{odd}}$ and each block of ${\theta}_{\text{even}}$ has at least 4 elements, we get the inequality 
	\[
	\#({\theta})  \leq 
	\#({\theta}_{\text{odd}})+\#({\theta}_{\text{even}}) 
	\leq \#(\pi_{1}) + \#(\pi_{2}) + m   
	\] 
	with equality only if ${\theta} = {\theta}_{\text{even}} \sqcup {\theta}_{\text{odd}}$, ${\theta}_{\text{odd}} =  \pi_{1}^{\text{odd}} \sqcup  \pi_{2}^{\text{odd}}$, 
	and each block of ${\theta}_{\text{even}}$ has exactly 4 elements, i.e.,  ${\theta} \in \widehat{P}_{\pi_{1} \sqcup  \pi_{2} }( \pm 4m )$;
	moreover, (\ref{dist.entries.signature.matrix}) and (\ref{dist.entries.signed.perm.matrix}) imply that 
	\begin{align*}
	& \abs*{
		\sum_{\substack{  \mathbf{i}:[\pm {4m}]\rightarrow[N] \\ \kernel{\mathbf{i}}={\theta}   }} 
		\widehat{\mathbf{h}}(\mathbf{i}\circ \bm{\widehat{\sigma}})
		\exptr{\widehat{\mathbf{w}}_{1}(\mathbf{i},\mathbf{t})}
		\exptr{\widehat{\mathbf{w}}_{2}(\mathbf{i},\mathbf{t})}
		\exptr{\widehat{\mathbf{x}}_{1}(\mathbf{i})}
		\exptr{\widehat{\mathbf{x}}_{2}(\mathbf{i})}				} \\
	& \leq 
	\frac{(N-\#(\pi_{1}))!}{N!} \cdot \frac{(N-\#(\pi_{2}))!}{N!}  \cdot 
	\abs*{\sum_{\substack{  \mathbf{i}:[\pm {4m}]\rightarrow[N] \\ \kernel{\mathbf{i}}={\theta}   }} 
		\widehat{\mathbf{h}}(\mathbf{i}\circ \bm{\widehat{\sigma}})			} 
	= 
	O\left(N^{- \#(\pi_{1}) -\#(\pi_{2}) + \#({\theta})  }\right).   
	\end{align*}
	Hence, if ${\theta} \in \widetilde{P}_{\pi_{1} \sqcup \pi_{2}}( \pm 4m ) \setminus \widehat{P}_{\pi_{1} \sqcup  \pi_{2} }( \pm 4m )$,  we have
	\begin{align*}
	& 
	\sum_{\substack{  \mathbf{i}:[\pm {4m}]\rightarrow[N] \\ \kernel{\mathbf{i}}={\theta}   }} 
	\widehat{\mathbf{h}}(\mathbf{i}\circ \bm{\widehat{\sigma}})
	\exptr{\widehat{\mathbf{w}}_{1}(\mathbf{i},\mathbf{t})}
	\exptr{\widehat{\mathbf{w}}_{2}(\mathbf{i},\mathbf{t})}
	\exptr{\widehat{\mathbf{x}}_{1}(\mathbf{i})}
	\exptr{\widehat{\mathbf{x}}_{2}(\mathbf{i})}				 
	= 
	O\left(N^{ m-1  }\right) . 
	\end{align*}
	Similar arguments show that $ \#({\theta}) \leq m + \#(\pi_{})$ for every partition ${\theta} \in \widetilde{P}_{\pi_{}}( \pm 4m )$ with equality only if ${\theta} \in \widehat{P}_{\pi}( \pm 4m )$, and hence, we get
	\[
	\sum_{\substack{  \mathbf{i}:[\pm {4m}]\rightarrow[N] \\ \kernel{\mathbf{i}}={\theta}   }} 
	\widehat{\mathbf{h}}(\mathbf{i}\circ \bm{\widehat{\sigma}})
	\exptr{\widehat{\mathbf{w}}_{}(\mathbf{i},\mathbf{t})}
	\exptr{\widehat{\mathbf{x}}_{}(\mathbf{i})}		
	= 
		O\left(N^{ m-1  }\right) 
	\]
	for any ${\theta} \in  \widetilde{P}_{\pi_{}}( \pm 4m ) \setminus \widehat{P}_{\pi}( \pm 4m )$. 
\end{proof}
%
%
%
%
%

\begin{proposition}\label{prop.fluc.xhx.C[p,a]}
	Suppose $\pi \in P_{\text{}}(\pm 2m )$ and ${\alpha} \in P_{2}(2m)$ and let  ${\alpha}_{1}$ and  ${\alpha}_{2}$ denote the restrictions of  ${\alpha}$ to the sets $[2m_{1}]$ and $[ 2m_{1} + 2m_{2} ] \setminus [2m_{1}]$, respectively. 
	If 	 $\mathfrak{C}_{2}[\pi,{\alpha}] $ is given by (\ref{eqn.unnormalized.fluct.HXH}), then
	%
	\[
	\mathfrak{C}_{2}[\pi,{\alpha}] 
	=
	\left\{\begin{array}{cl}
1 +  O\left( N^{-1}\right)  
	& \text{if there is a symmetric pairing } \eta \in P(\pm {4m}) \text{ satisfying (\ref{prop.minimal.special.partitions.1}) from Proposi-}  \\
	& \text{tion \ref{prop.minimal.special.partitions} and such that }   \eta \leq \widehat{\alpha} \sqcup \pi^{\text{odd}} , \\
1 +  O\left( N^{-1}\right)  
	& \text{if } {\alpha} \neq {\alpha}_{1} \sqcup {\alpha}_{2} \text{ and there is a symmetric pairing }   \eta \in P(\pm {4m}) \text{ satisfying}	\\ 
	&  \text{(\ref{prop.minimal.special.partitions.3}) from Proposition \ref{prop.minimal.special.partitions}} \text{ with } $k$ \text{ and } $l$ \text{ even and such that } \eta \leq \widehat{\alpha} \sqcup \pi^{\text{odd}} , \\
O( N^{-\frac{1}{2}} )  
	& \text{otherwise. } \\
	\end{array} \right.
	\]
\end{proposition}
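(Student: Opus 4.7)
\medskip
\noindent
\emph{Proof proposal.} The plan is to mirror the strategy used to establish Proposition \ref{prop.fluc} in the two previous cases. The quantity $\mathfrak{C}_{2}[\pi,{\alpha}]$ defined by \eqref{eqn.unnormalized.fluct.HXH} is a difference of two sums over functions $\mathbf{i} : [\pm 4m] \to [N]$ whose kernels are $\widehat{\alpha} \sqcup \pi^{\text{odd}}$ and $\widehat{\alpha} \sqcup \pi_{1}^{\text{odd}} \sqcup \pi_{2}^{\text{odd}}$, respectively, weighted by Discrete Fourier Transform factors $\widehat{\mathbf{h}}(\mathbf{i}\circ \bm{\widehat{\sigma}})$ and by expected values of products of entries of $W$ and $X$. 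The first step is to dispose of those cases in which $\mathrm{p}_{\widehat{\sigma}^{-1} \circ (\widehat{\alpha} \sqcup \pi^{\text{odd}})}$ is a non-zero polynomial: in that situation Corollary \ref{cor.bound.gauss.sum} yields a bound $C N^{\#(\widehat{\alpha} \sqcup \pi^{\text{odd}}) - 1/2}$ for the Gauss-type sum, which together with the uniform bounds $(N-\#(\pi))!/N! = O(N^{-\#(\pi)})$ coming from \eqref{dist.entries.signed.perm.matrix} and the trivial bound $|\exptr{\widehat{\mathbf{x}}(\mathbf{i})}| \leq 1$ from \eqref{dist.entries.signature.matrix} gives the desired $O(N^{-1/2})$. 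The same argument applies to the second sum, using Proposition \ref{minimal.special.polynomials} to conclude that $\mathrm{p}_{\widehat{\sigma}^{-1} \circ (\widehat{\alpha} \sqcup \pi_1^{\text{odd}} \sqcup \pi_2^{\text{odd}})}$ is non-zero as well.

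From this point one may assume that $\mathrm{p}_{\widehat{\sigma}^{-1} \circ (\widehat{\alpha} \sqcup \pi^{\text{odd}})}$ is the zero polynomial, so by Proposition \ref{minimal.special.polynomials} there exists a symmetric pairing $\eta \leq \widehat{\alpha} \sqcup \pi^{\text{odd}}$ with $\mathrm{p}_{\widehat{\sigma}^{-1} \circ \eta}$ zero, and $\eta$ must be of one of the six types listed in Proposition \ref{prop.minimal.special.partitions}. Under this assumption $\widehat{\mathbf{h}}(\mathbf{i}\circ \bm{\widehat{\sigma}})=1$ for every $\mathbf{i}$ with $\kernel{\mathbf{i}} = \widehat{\alpha} \sqcup \pi^{\text{odd}}$, so the first sum in \eqref{eqn.unnormalized.fluct.HXH} simplifies, using \eqref{dist.entries.signed.perm.matrix} and \eqref{dist.entries.signature.matrix} together with the fact that $\widehat{\alpha}$ and $\pi^{\text{odd}}$ are automatically even, to exactly $1$. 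The analysis of the second sum splits into two sub-cases according to whether $\mathrm{p}_{\widehat{\sigma}^{-1} \circ (\widehat{\alpha} \sqcup \pi_1^{\text{odd}} \sqcup \pi_2^{\text{odd}})}$ vanishes or not. If it does not vanish, Corollary \ref{cor.bound.gauss.sum} forces the second sum to be $O(N^{-1/2})$, yielding the value $1 + O(N^{-1/2})$ after the cancellation. If it does vanish, Proposition \ref{minimal.special.polynomials} again produces a symmetric pairing $\eta' \leq \widehat{\alpha} \sqcup \pi_1^{\text{odd}} \sqcup \pi_2^{\text{odd}}$ satisfying one of (1)--(6) of Proposition \ref{prop.minimal.special.partitions}, and the second sum equals $\frac{(N-\#(\pi_1^{\text{odd}}))!(N-\#(\pi_2^{\text{odd}}))!}{((N-\#(\pi_1^{\text{odd}} \sqcup \pi_2^{\text{odd}}))!)^2} \cdot \frac{(N-\#(\widehat\alpha \sqcup \pi_1^{\text{odd}} \sqcup \pi_2^{\text{odd}}))!}{(N-\#(\widehat\alpha \sqcup \pi_1^{\text{odd}} \sqcup \pi_2^{\text{odd}}))!}$, which simplifies to $1+O(N^{-1})$ after a short combinatorial computation, giving total value $O(N^{-1})$.

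The sub-case that must remain as $1+O(N^{-1})$ in the statement is precisely the one where $\eta$ exists with $\eta \leq \widehat{\alpha} \sqcup \pi^{\text{odd}}$ satisfying one of (1)--(6) of Proposition \ref{prop.minimal.special.partitions} but for which $\eta$ cannot be chosen so that $\eta \leq \widehat{\alpha} \sqcup \pi_1^{\text{odd}} \sqcup \pi_2^{\text{odd}}$. A careful inspection shows that this happens precisely in the two cases highlighted in the statement: either $\eta$ satisfies condition (1) of Proposition \ref{prop.minimal.special.partitions} (which mixes indices across the $[\pm 4m_1]$ / $[\pm 4m]\setminus[\pm 4m_1]$ divide through the odd part $\pi^{\text{odd}}$), or else ${\alpha} \neq {\alpha}_1 \sqcup {\alpha}_2$ and $\eta$ satisfies (3) with $k$ and $l$ even (so the crossing between the two groups occurs through $\widehat{\alpha}$ rather than through $\pi^{\text{odd}}$). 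In both subcases $\eta \not\leq \widehat\alpha \sqcup \pi_1^{\text{odd}} \sqcup \pi_2^{\text{odd}}$, so there is no symmetric pairing below $\widehat\alpha \sqcup \pi_1^{\text{odd}} \sqcup \pi_2^{\text{odd}}$ killing the polynomial, and the second sum is $O(N^{-1/2})$.

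The main obstacle I anticipate is the careful bookkeeping required in the final case distinction: one must verify that the classifications (2), (4), (5), (6) of Proposition \ref{prop.minimal.special.partitions} together with (3) in the case where either $k$ or $l$ is odd, or ${\alpha} = {\alpha}_1 \sqcup {\alpha}_2$, all give rise to a symmetric pairing $\eta'$ satisfying $\eta' \leq \widehat\alpha \sqcup \pi_1^{\text{odd}} \sqcup \pi_2^{\text{odd}}$, forcing the cancellation to $O(N^{-1})$. This verification is essentially a combinatorial case analysis on the shape of $\eta$ and the interaction between the blocks of $\widehat\alpha$ and those of $\pi^{\text{odd}}$ and requires translating between the cyclic index $\widehat\sigma$ on $[\pm 4m]$ and the original cyclic index $\sigma$ on $[\pm 2m]$ used to parametrize $\pi$.
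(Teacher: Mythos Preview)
Your overall strategy matches the paper's, but there is a genuine gap in the error term you obtain in the two ``good'' cases. You propose to bound the second sum in \eqref{eqn.unnormalized.fluct.HXH} by showing that $\mathrm{p}_{\widehat{\sigma}^{-1}\circ(\widehat{\alpha}\sqcup\pi_1^{\text{odd}}\sqcup\pi_2^{\text{odd}})}$ is non-zero and invoking Corollary~\ref{cor.bound.gauss.sum}. That corollary only yields $O(N^{-1/2})$, so your argument would give $\mathfrak{C}_2[\pi,\alpha]=1+O(N^{-1/2})$, not the $1+O(N^{-1})$ claimed in the statement. The paper uses a different and sharper mechanism: in both of the cases where the answer is $1+O(N^{-1})$ one has $\alpha\neq\alpha_1\sqcup\alpha_2$, which forces $\alpha_1$ and $\alpha_2$ to contain singletons, hence not to be even partitions. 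Relation \eqref{dist.entries.signature.matrix} then gives $\exptrn{\widehat{\mathbf{x}}_1(\mathbf{i})}\exptrn{\widehat{\mathbf{x}}_2(\mathbf{i})}=0$ for every $\mathbf{i}$ with $\kernel{\mathbf{i}}=\widehat{\alpha}\sqcup\pi_1^{\text{odd}}\sqcup\pi_2^{\text{odd}}$, so the second sum vanishes \emph{identically}, and the remaining $O(N^{-1})$ comes only from the first sum. (Your formula for the second sum when its polynomial vanishes is also garbled; the last factor is $1$ as written.)

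You also make the case analysis harder than necessary. Because $\eta\leq\widehat{\alpha}\sqcup\pi^{\text{odd}}$ with $\widehat{\alpha}$ supported on even-indexed points and $\pi^{\text{odd}}$ on odd-indexed points, any symmetric pairing $\eta$ must split as $\eta=\eta_{\text{even}}\sqcup\eta_{\text{odd}}$. Combined with the fact that here the role of $m_1,m_2$ in Proposition~\ref{prop.minimal.special.partitions} is played by $2m_1,2m_2$, which are even, this rules out types \textit{(\ref{prop.minimal.special.partitions.2})}, \textit{(\ref{prop.minimal.special.partitions.4})}, \textit{(\ref{prop.minimal.special.partitions.5})}, \textit{(\ref{prop.minimal.special.partitions.6})} outright; you do not need to show they lead to cancellation. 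Only \textit{(\ref{prop.minimal.special.partitions.1})} and \textit{(\ref{prop.minimal.special.partitions.3})} survive, and for \textit{(\ref{prop.minimal.special.partitions.3})} the paper splits according to the parities of $k$ and $l$: when both are odd, or $k+l$ is odd, one checks $\alpha=\alpha_1\sqcup\alpha_2$ and $\pi_1,\pi_2$ even (or $\pi=\pi_1\sqcup\pi_2$), so both polynomials vanish and the two sums cancel to $O(N^{-1})$; when both are even and $\alpha\neq\alpha_1\sqcup\alpha_2$, the signature-matrix argument above kicks in.
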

%
%
\begin{proof}  
	%
	Note that if the polynomial $\mathrm{p}_{\widehat\sigma^{-1} \circ (\widehat{\alpha} \sqcup \pi^{\text{odd}})}$ is non-zero, 
	then $\mathfrak{C}_{2}[\pi,{\alpha}] =  O(N^{-1/2})$.  
	Indeed, if $\mathrm{p}_{\widehat\sigma^{-1} \circ (\widehat{\alpha} \sqcup \pi^{\text{odd}})}$ is a non-zero polynomial, so is  $\mathrm{p}_{\widehat\sigma^{-1}  \circ (\widehat{\alpha} \sqcup \pi_{1}^{\text{odd}} \sqcup \pi_{2}^{\text{odd}} ) }$ by Proposition \ref{minimal.special.polynomials},   
	and thus, Corollary \ref{cor.bound.gauss.sum} implies there is a constant $C$ independent from $N$ such that 
	\begin{align*}	
	\abs*{ \sum_{\substack{ 
			\mathbf{i}:[\pm {4m} ] \rightarrow [N] \\  
			\kernel{\mathbf{i}} = {\widehat{\sigma}}^{-1}  \circ (  \widehat{\alpha} \sqcup \beta^{\text{odd}}  ) } } 		
			\widehat{\mathbf{h}}(\mathbf{i}  ) }
	=
	\abs*{ \sum_{\substack{ \mathbf{i}:[\pm {4m} ] \rightarrow [N] \\  \kernel{\mathbf{i}}= \widehat{\alpha} \sqcup \beta^{\text{odd}} }} 		\widehat{\mathbf{h}}(\mathbf{i} \circ \bm{\widehat{\sigma}} ) }	
	\leq 
	C_{} N ^{ m + \#(\beta) - \frac{1}{2}} 	
	\end{align*}
	for $\beta = \pi$ and  $\beta =\pi_{1} \sqcup \pi_{2}$. 
	But then, we get that $\mathfrak{C}_{2}[\pi,{\alpha}] =  O(N^{-1/2})$ since from (\ref{dist.entries.signature.matrix}),  (\ref{dist.entries.signed.perm.matrix}), and (\ref{eqn.unnormalized.fluct.HXH}) we have 
	\begin{align*}
	\Big|\mathfrak{C}_{2}[\pi, {\alpha}]  \Big|	
	\leq
	C \cdot	\frac{(N-\#(\pi))!}{N!} \cdot N ^{ \#(\pi_{})  - \frac{1}{2}} 
	+ 
	C \cdot \frac{(N-\#(\pi_{1}))!}{N!} \cdot \frac{(N-\#(\pi_{2}))!}{N!}
	\cdot N ^{ \#(\pi_{1}) + \#(\pi_{2}) - \frac{1}{2}}.
	%
	\end{align*}	

	Assume  $\mathrm{p}_{\widehat\sigma^{-1} \circ (\widehat{\alpha} \sqcup \pi^{\text{odd}})}$ is the zero polynomial. 
	Then, by Proposition \ref{minimal.special.polynomials}, there is a symmetric pairing partition $\eta \leq \widehat{\alpha} \sqcup \pi^{\text{odd}}$ such that $\mathrm{p}_{\widehat\sigma^{-1} \circ \eta}$ is also the zero polynomial, and hence, the partition $\eta$ must satisfy one of the conditions \textit{(1)-(6)} from Proposition \ref{prop.minimal.special.partitions}.
	However, we have $\eta=\eta_{\text{odd}} \sqcup \eta_{\text{even}}$, since $\eta \leq \widehat{\alpha} \sqcup \pi^{\text{odd}}$, and neither $2m_1$ or $2m_2$ is odd, so conditions \textit{(\ref{prop.minimal.special.partitions.2})} and \textit{(4)-(6)} can not hold. 
	Now, note that if $\mathrm{p}_{\widehat\sigma^{-1} \circ (\widehat{\alpha} \sqcup \beta^{\text{odd}})}$ is zero polynomial for some partition $\beta \in P(\pm 2m)$, then
	\begin{align*}
	\sum_{\substack{ \mathbf{i}:[\pm {4m} ] \rightarrow [N] \\  \kernel{\mathbf{i}}= \widehat{\alpha} \sqcup \beta^{\text{odd}} }} 		\widehat{\mathbf{h}}(\mathbf{i} \circ \bm{\widehat{\sigma}} ) 	
	=
	\frac{N!}{	(	N	-	\#(\widehat{\alpha} \sqcup \beta^{\text{odd}})  + 1) !}
	=
	\frac{N!}{	(	N	-	m  - \#( \beta)  + 1) !} 
	\end{align*}
	since we would have $\widehat{\mathbf{h}}(\mathbf{i} \circ \bm{\widehat{\sigma}} ) =1$ for any function $ \mathbf{i}:[\pm {4m} ] \rightarrow [N]$ satisfying $\kernel{\mathbf{i}}= \widehat{\alpha} \sqcup \beta^{\text{odd}}$. 
	Hence, if ${\alpha}_{1}$, ${\alpha}_{2}$, $\pi_{1}$, and $\pi_{2} $ are all even partitions and the polynomial $\mathrm{p}_{\sigma^{-1} \circ \hat{\alpha} \sqcup \pi^{\text{odd}}_{1} \sqcup \pi^{\text{odd}}_{2} }$ is zero, from (\ref{dist.entries.signature.matrix}),  (\ref{dist.entries.signed.perm.matrix}), and (\ref{eqn.unnormalized.fluct.HXH}), we obtain 
	\begin{align}\label{eqn.vanishing.pi.eta}
	& 
	\frac{(N-\#(\pi))!}{ N!} \cdot 
	\sum_{\substack{ 
			\mathbf{i}:[\pm {4m} ] \rightarrow [N]  \\ \kernel{\mathbf{i}} = \hat{{\alpha}} \sqcup \pi^{\text{odd}}} }
	\widehat{\mathbf{h}}(\mathbf{i}\circ \bm{\widehat{\sigma}}) - 	
	\frac{(N-\#(\pi_{1}))!}{ N!} \cdot
	\frac{(N-\#(\pi_{2}))!}{ N!}	\cdot
	\sum_{\substack{ 
			\mathbf{i}:[\pm {4m} ] \rightarrow [N]  \\ \kernel{\mathbf{i}}= \hat{{\alpha}}\sqcup \pi^{\text{odd}}_1 \sqcup \pi^{\text{odd}}_2}}
	\widehat{\mathbf{h}}(\mathbf{i}\circ \bm{\widehat{\sigma}})  \nonumber \nonumber \\ 	
	& = N^{m}\mathfrak{C}_{2}[\pi,{\alpha}] = O\left( N^{m-1} \right)  
	\end{align}
	On the other hand, if either ${\alpha}_{1}$ or ${\alpha}_{2}$ is not an even partition, from (\ref{dist.entries.signature.matrix})  and (\ref{eqn.unnormalized.fluct.HXH}), 
	we get
	\begin{align}\label{eqn.non-vanishing.pi.eta}
	\mathfrak{C}_{2}[\pi,{\alpha}]
	=	&  
	\frac{1}{N^{ m } } \cdot \frac{(N-\#(\pi))!}{ N!} \cdot 
	\sum_{\substack{ 
			\mathbf{i}:[\pm {4m} ] \rightarrow [N]  \\ \kernel{\mathbf{i}} = \hat{{\alpha}} \sqcup \pi^{\text{odd}}} }
	\widehat{\mathbf{h}}(\mathbf{i}\circ \bm{\widehat{\sigma}}) 
	=
	1 + O\left(N^{-1}\right) .
	\end{align}
	Suppose ${\eta}$ satisfies \textit{(\ref{prop.minimal.special.partitions.3})}  from  Proposition \ref{prop.minimal.special.partitions} and let $1\leq k \leq 4m_{1}$ and $4m_{1}+1 \leq l \leq 4m_{1}+4m_{2}$ such that 
	\(
	\eta = 	\left\{
	\{\widehat\sigma^{t_{1}}(-k),\widehat\sigma^{-t_{1}}(k)\},
	\{\widehat\sigma^{t_{2}}(-l),\widehat\sigma^{-t_{2}}(l)\}
	\mid
	t_{2}, t_{2} \geq 0	\right\}. 
	\)
	We need to consider three cases: $k$ and $l$ are both odd, $k+l$ is odd, and $k$ and $l$ are both even. 
	First, if $k$ and $l$ are both odd, then $\mathrm{p}_{\sigma^{-1} \circ \hat{\alpha} \sqcup \pi^{\text{odd}}_{1} \sqcup \pi^{\text{odd}}_{2} }$ is also the zero polynomial, $\pi_{1}$ and $\pi_{2} $ are both even partitions, and ${\alpha} = {\alpha}_{1} \sqcup {\alpha}_{2}$, so  (\ref{eqn.vanishing.pi.eta}) holds. 
	Second, if $k+l$ is odd, then ${\alpha} = {\alpha}_{1} \sqcup {\alpha}_{2}$ and $\pi = \pi_{1} \sqcup \pi_{2} $, but then (\ref{eqn.vanishing.pi.eta}) holds too. 
	Third, if $k$ and $l$ are both even, then $\pi = \pi_{1} \sqcup \pi_{2} $ and either ${\alpha} = {\alpha}_{1} \sqcup {\alpha}_{2}$ or ${\alpha} \neq {\alpha}_{1} \sqcup {\alpha}_{2}$. 
	However, if  ${\alpha} = {\alpha}_{1} \sqcup {\alpha}_{2}$, we already know that  $\mathfrak{C}_{2}[\pi,{\alpha}]= O(N^{-1})$ from (\ref{eqn.vanishing.pi.eta}), and if ${\alpha} \neq {\alpha}_{1} \sqcup {\alpha}_{2}$, then  ${\alpha}_{1}$ and ${\alpha}_{2}$ are not even partitions, so (\ref{eqn.non-vanishing.pi.eta}) holds. 
	Finally, if ${\eta}$ satisfies  \textit{(\ref{prop.minimal.special.partitions.1})}  from  Proposition \ref{prop.minimal.special.partitions}, we must have ${\alpha} \neq {\alpha}_{1}\sqcup{\alpha}_{2}$, so we obtain  $\mathfrak{C}_{2}[\pi,{\alpha}] = 1 + O\left(N^{-1}\right) $. 
\end{proof}
%
%
%

\begin{proof}[\textbf{Proof of (\ref{prop.fluc.xhx}) from Proposition \ref{prop.fluc}}]  
	%
	%
	%
	%
	%
	Fix an even partition $\pi \in P(\pm 2m)$ such that $\pi \leq \theta$ for some partition $\theta \in P_{\chi\chi}(\pm 2m)$ and 
	%
	%
	let $\mathfrak{C}_{2}[\pi,\alpha] $ be  given by (\ref{eqn.unnormalized.fluct.HXH}) for each pairing partition $\alpha \in P_{2}(2m)$.  
	By Proposition \ref{prop.fluc.xhx.C[p,a]}, we have that
	\[
	\sum_{\alpha\in P_{2}(2m)} \mathfrak{C}_{2} \left[ \pi, \alpha \right] 
	= 
	\abs*{E_{\pi}} + \abs*{F_{\pi}} - \abs*{E_{\pi} \cap F_{\pi}} 
	+ O ( N^{-1/2}  ) 
	\]
	where $E_{\pi}$ and $F_{\pi}$ are the subsets of $P_{2}(2m)$ given by
	\[
	E_{\pi} = \left\{ \alpha\in P_{2}(2m) \left| \begin{array}{c}
	\eta \leq \widehat{\alpha} \sqcup \pi^{\text{odd}} \text{ for some symmetric pairing } \eta \in P(\pm 4m)   \\ 
	\text{satisfying (\ref{prop.minimal.special.partitions.1}) from Proposition \ref{prop.minimal.special.partitions} }
	\end{array} \right. \right\}
	\]
	and
	\[
	F_{\pi} = \left\{ \alpha\in P_{2}(2m) \left| \begin{array}{c}
	\alpha \neq \alpha_{1} \sqcup \alpha_{2} \text{ and }\eta \leq \widehat{\alpha} \sqcup \pi^{\text{odd}} \text{ for some symmetric pairing } \eta \in P(\pm 4m)   \\ 
	\text{satisfying (\ref{prop.minimal.special.partitions.3}) from Proposition \ref{prop.minimal.special.partitions} with } k \text { and } l \text{ even}
	\end{array} \right. \right\}. 
	\]  
	Thus, by Proposition \ref{prop.cpi-and-Cpi-nu}, we only need to show that  $E_{\pi} \neq \emptyset$ implies $\abs*{E_{\pi}} =1$, $F_{\pi} \neq \emptyset$ implies $\abs*{F_{\pi}} = 2$, and $E_{\pi} \cap F_{\pi}$ is empty. 
	%
	%
	%
	%
	%

	%
	%
	%
	%
	Let $\alpha$ and $\beta$ be pairing partitions in $P_{2}(2m)$ and suppose there are symmetric pairings $\eta_{\alpha}, \eta_{\beta} \in P(\pm {4m})$ satisfying (\ref{prop.minimal.special.partitions.1}) from Proposition \ref{prop.minimal.special.partitions}, $\eta_{\alpha} \leq \widehat{\alpha} \sqcup \pi^{\text{odd}}$,  and  $\eta_{\beta} \leq \widehat{\beta} \sqcup \pi^{\text{odd}}$. 
	Then, there are integers $ 2m_{1} + 1  \leq l_{\alpha} , l_{\beta} \leq  2m_{1} + 2m_{2}$ so that 
	\[
	\widehat{\sigma}^{-t}(2l_{\alpha}-1)  
	\sim_{\eta_{\alpha}}
	\widehat{\sigma}^{t}(-1) 
	\sim_{\eta_{\beta}}
	\widehat{\sigma}^{-t}(2l_{\beta}-1) 
	%
	\quad \forall  t\geq 0
	\]
	where $\widehat{\sigma}$ is the permutation given by
	\[
	\widehat{\sigma} = 	(-1,1,-2,2,\ldots,-4m_{1},4m_{1})
	(-4m_{1}-1,4m_{1}+1,-4m_{1}-2,\ldots,-4m_{1}-4m_{2},4m_{1}+4m_{2}).
	\]
	But then, we must have
	\begin{align}\label{rel.sigma.hat.alpha}
	\widehat{\sigma}^{-t}(2l_{\alpha}-1) 
	\sim_{\widehat\alpha \sqcup \pi^{\text{odd}}}
	\widehat{\sigma}^{t}(-1) 
	\sim_{\widehat\beta \sqcup \pi^{\text{odd}}} 
	\widehat{\sigma}^{-t}(2l_{\beta}-1) 
	%
	\quad \forall  t\geq 0
	\end{align}
	since $\eta_{\alpha} \leq \widehat{\alpha} \sqcup \pi^{\text{odd}}$ and  $\eta_{\beta} \leq \widehat{\beta} \sqcup \pi^{\text{odd}}$,  
	and thus, we get that
	\[
	{\sigma}^{-t}(l_{\alpha})
	\sim_{ \pi } 
	{\sigma}^{t}(-1) 
	\sim_{ \pi } 
	{\sigma}^{-t}(l_{\beta}) 
	%
	\quad \forall  t\geq 0
	\]	
	where ${\sigma}$ is the permutation given by
	\[{\sigma} = 	(-1,1,-2,2,\ldots,-2m_{1},2m_{1})
	(-2m_{1}-1,2m_{1}+1,-2m_{1}-2,\ldots,-2m_{1}-2m_{2},2m_{1}+2m_{2}) . \]
	In particular, for $t=0$, we obtain $l_{\alpha} \sim_{ \pi } -1 \sim_{ \pi } l_{\beta}$, and thus, we have  $l_{\alpha}=l_{\beta}$ since $\pi$ is an even partition with only blocks of the form  $\{-k,+k,-l,l\}$ and $\{+k,-l\}$. 
	Therefore, it follows from (\ref{rel.sigma.hat.alpha}) that  $\widehat{\alpha}=\widehat{\beta}$, or, equivalently, ${\alpha}={\beta}$. 
	This shows that  $E_{\pi} \neq \emptyset$ implies $\abs*{E_{\pi}} =1$.

	%
	%
	%
	%
	Suppose now there are symmetric pairings $\eta_{\alpha}, \eta_{\beta} \in P(\pm {4m})$ satisfying (\ref{prop.minimal.special.partitions.3}) from Proposition \ref{prop.minimal.special.partitions} with $k$ and $l$ even, $\eta_{\alpha} \leq \widehat{\alpha} \sqcup \pi^{\text{odd}}$,  and  $\eta_{\beta} \leq \widehat{\beta} \sqcup \pi^{\text{odd}}$. 
	Then, there exists integers $ 1  \leq k_{\alpha}  \leq  k_{\beta} \leq  2m_{1}$ so that %
	\[
	\widehat{\sigma}^{t}(-2 k_{\alpha}) \sim_{\eta_{\alpha}} \widehat{\sigma}^{-t}(2 k_{\alpha}) 
	\quad \text{and} \quad 
	\widehat{\sigma}^{t}(-2 k_{\beta}) \sim_{\eta_{\beta}} \widehat{\sigma}^{-t}(2 k_{\beta})
	\quad \forall  t\geq 0, 
	\]
	and hence, we get  
	\begin{align}\label{rel.sigma.hat.alpha.pi.odd}
	\widehat{\sigma}^{t}(-2 k_{\alpha}) \sim_{ \widehat{\alpha} \sqcup \pi^{\text{odd}}} \widehat{\sigma}^{-t}(2 k_{\alpha}) 
	\quad \text{and} \quad 
	\widehat{\sigma}^{t}(-2 k_{\beta}) \sim_{\widehat{\beta} \sqcup \pi^{\text{odd}}} \widehat{\sigma}^{-t}(2 k_{\beta})
	\quad \forall  t\geq 0
	\end{align}
	since $\eta_{\alpha} \leq \widehat{\alpha} \sqcup \pi^{\text{odd}}$ and  $\eta_{\beta} \leq \widehat{\beta} \sqcup \pi^{\text{odd}}$; 
	in particular, we must have 
	\[
	\widehat{\sigma}^{4t}(2 k_{\alpha}) \sim_{ \widehat{\alpha} } \widehat{\sigma}^{-4t}(-2 k_{\alpha}) 
	\quad \text{and} \quad 
	\widehat{\sigma}^{4t}(2 k_{\beta}) \sim_{\widehat{\beta} } \widehat{\sigma}^{-4t}(-2 k_{\beta}) 
	\quad \forall  t\geq 0. 
	\]
	Now, since ${\alpha}$ and ${\beta}$ are pairing partitions of $[2m_{1}+2m_{2}]$,  $\widehat{\alpha} = \{\{+2k,-2k,+2l,-2l\} \mid \{k,l\} \in \alpha \} $, and  $\widehat{\beta} = \{\{+2k,-2k,+2l,-2l\} \mid \{k,l\} \in \beta \} $, we get 
	\[
	{\sigma}^{2t}( k_{\alpha} )	\sim_{\alpha} 	{\sigma}^{-2t}( k_{\alpha} ) 
	\quad \text{and} \quad 
	{\sigma}^{2t}( k_{\beta} )	\sim_{\beta} 	{\sigma}^{-2t}( k_{\beta} )
	\quad \forall  t\geq 0, 
	\]
	where ${\sigma}$ is the permutation defined above; hence, we obtain
	\[
	\alpha_{1} 	= \{\{k_{\alpha}\},\{k_{\alpha}+1,k_{\alpha}-1\},\ldots,\{k_{\alpha}+m_{1}-1,k_{\alpha}-m_{1}+1\},\{k_{\alpha}+m_{1}\}\} 
	\]
	and
	\[
	\beta_{1} = \{\{k_{\beta}\},\{k_{\beta}+1,k_{\beta}-1\},\ldots,\{k_{\beta}+m_{1}-1,k_{\beta}-m_{1}+1\},\{k_{\beta}+m_{1}\}\} 
	\]
	where $\alpha_{1}$ and $\beta_{1}$ denote the restrictions of $\alpha$ and $\beta$, respectively, to the set $[\pm 2m_{1}]$. 
	Let us show that $\alpha_{1}=\beta_{1}$. 
	From (\ref{rel.sigma.hat.alpha.pi.odd}), we also have that 
	\begin{align*}
	\widehat{\sigma}^{4t}( - 2 k_{\alpha}-1) \sim_{ \pi^{\text{odd}} } \widehat{\sigma}^{-4t}( 2 k_{\alpha}-1), & & 
	\widehat{\sigma}^{4t}( 2 k_{\alpha}+1) \sim_{\pi^{\text{odd}} } \widehat{\sigma}^{-4t}( -2 k_{\alpha}+1), \\
	\widehat{\sigma}^{4t}( - 2 k_{\beta}-1) \sim_{ \pi^{\text{odd}} } \widehat{\sigma}^{-4t}( 2 k_{\beta}-1), &
	\quad \text{and}  &
	\widehat{\sigma}^{4t}( 2 k_{\beta}+1) \sim_{\pi^{\text{odd}} } \widehat{\sigma}^{-4t}( -2 k_{\beta}+1)
	\end{align*}
	for every integer $t \geq 0$, and thus, since $\pi^{\text{odd}} = \{\{ 2k-\sign{(k)} \mid k \in B \} \mid B \in \pi \}\}$, we obtain  
	\[
	{\sigma}^{t+1}( k_{\alpha} )	= 
	{\sigma}^{t}( - k_{\alpha} - 1  ) 
			\sim_{ \pi } 
	{\sigma}^{-t}( k_{\alpha} )	
	\quad \text{and} \quad 
	{\sigma}^{t+1}( k_{\beta} )		=
	{\sigma}^{t}( -k_{\beta} - 1 )  
			\sim_{ \pi } 
	{\sigma}^{-t}( k_{\beta} )	
	\quad \forall  t\geq 0. 
	\]
	Let $t=k_{\beta}-k_{\alpha}$ and note that 
	\[
	{\sigma}^{2t}(	k_{\alpha} )
	\sim_{\pi}
	{\sigma}^{-2t+1}(	k_{\alpha} )
	\sim_{\pi}
	{\sigma}^{2t+1}(	k_{\alpha} )  
	\]
	since 
	\[
	k_{\beta} = 
	{\sigma}^{2t}(	k_{\alpha} )
	\sim_{\pi}
	{\sigma}^{-2t+1}(	k_{\alpha} ) 
	\quad	\text{and}	\quad 	
	{\sigma}^{2t+1}(k_{ \alpha } ) = \sigma^{}(k_{\beta}) \sim_{\pi}  k_{\beta}; 
	 \] 
	moreover, since ${\sigma}^{2t}(k_{\alpha} ) > 0$, ${\sigma}^{-2t+1}(	k_{\alpha} ), {\sigma}^{2t+1}(	k_{\alpha} ) < 0$, and $\pi$ is a partition with only blocks of the form  $\{-k,+k,-l,l\}$ and $\{+k,-l\}$ with $k,l > 0$, we must have 
	\[
	{\sigma}^{2t}(	k_{\alpha} ) 	=	-{\sigma}^{-2t+1}(	k_{\alpha} ) ,
	\quad 
	{\sigma}^{2t}(	k_{\alpha} ) =	-{\sigma}^{2t+1}(	k_{\alpha} ), 
	\quad	\text{or}	\quad 
	{\sigma}^{-2t+1}(	k_{\alpha} )  = {\sigma}^{2t+1}(	k_{\alpha} ), 
	\]
	or, equivalently, 
	\begin{align*}
	{\sigma}^{4t-2}(	k_{\alpha} ) 	=		k_{\alpha} ,
	\quad 
	{\sigma}^{-2}(	k_{\alpha} )		=		k_{\alpha} ,
	\quad	\text{or}	\quad 
	{\sigma}^{-4t}(	k_{\alpha} )  		=		k_{\alpha}. 
	\end{align*}
	But the equality ${\sigma}^{s}(k_{\alpha}) = k_{\alpha}$ holds if only if $s \equiv 0 \mod 4m_{1}$, so only ${\sigma}^{-4t}( k_{\alpha} ) = k_{\alpha}$ can hold, and thus, we get 	$k_{\alpha} = k_{\beta}$ or $k_{\beta} = k_{\alpha} + m_{1}$ since $0 \leq t = k_{\beta} - k_{\alpha} \leq 2m_{1}-1$. 
	Therefore, $\alpha_{1} = \beta_{1}$ and there is an integer $1\leq k=k_{\alpha} \leq 2m_{1}$ so that 
	\begin{align*}
	\alpha_{1}
	& =
	\{ \{k\}, \{k+1,k-1\}, \ldots, \{k+m_{1}-1,k-m_{1}+1\}, \{k+m_{1}\} \}	 
	= 
	\beta_{1} \\ 
	& =
	\{
	\{ \sigma^{2t}(k) ,\sigma^{-2t}(k)    \} 
	\mid 0 \leq t \leq m_{1}  \} .
	\end{align*}
	Similarly, letting $\alpha_{2}$ and $\beta_{2}$ denote the restrictions of $\alpha$ and $\beta$, respectively, to the set $[\pm (2m_{1}+2m_{2})] \setminus [\pm 2m_{1}]$, we have $\alpha_{2} = \beta_{2}$ and there is an integer $2m_{1}+1 \leq \l_{} \leq 2m_{1}+2m_{2}$ so that 
	\begin{align*}
	\alpha_{2} 
			&= 
	\{ \{l\}, \{l+1,l-1\}, \ldots, \{l+m_{2}-1,l-m_{2}+1\}, \{l+m_{2}\} \}	
			=
	\beta_{2} \\  
			&=
	\{	\{ \sigma^{2t}(l) ,\sigma^{-2t}(l)    \} 
			\mid 0 \leq t \leq m_{2}  \}.  
	\end{align*}
	This shows that $\abs*{F_{\pi}} = 2$ provided $F_{\pi} \neq \emptyset$ since  $\gamma \in F_{\pi}$ implies 
	\[
	\gamma  = 
	\{\{k, l\} , \{k+m_{1},l+m_{2}\}\} 	\cup \widetilde{\alpha}
	\quad \text{ or } \quad 
	\gamma  = 
	\{\{k, l+m_{2}\}, \{k+m_{1},l\}\} \cup \widetilde{\alpha}
	\]
	where  $\widetilde{\alpha} =\{\{ \sigma^{2t}(k) ,\sigma^{-2t}(k)    \} \mid 1 \leq t \leq m_{1}-1  \} 
	\cup \{\{ \sigma^{2t}(l) ,\sigma^{-2t}(l)    \} \mid 1 \leq t \leq m_{2}-1  \} $. 
	%
	
	%
	%
	%
	
	%
	%
	%
	%
	Finally, $E_{\pi} \cap F_{\pi}$ is empty since $E_{\pi} \neq \emptyset$ implies $\pi \neq \pi_{1} \sqcup \pi_{2}$, and, on the other hand, $ F_{\pi} \neq \emptyset $ implies $\pi=\pi_{1} \sqcup \pi_{2}$. 
\end{proof}
%
%
%
%
%

\section{Concluding remarks}\label{sec.concluding}

\begin{enumerate}
	\item The random matrix ensemble $\{W_{N,1},H_{N}W_{N,2}\}_{N=1}^{\infty}$ satisfies the hypothesis in Lemma \ref{lemma.bounded.cumulant.property}, and hence, Theorem \ref{thm.second.order.freeness} is proved once we show \eqref{thm.second.order.freeness.moments} holds.
	To that end, we first define appropriate versions of the functions  $\mathbf{w}_{}(\mathbf{i,j})$, $\mathbf{w}_{1}(\mathbf{i,j})$, $\mathbf{w}_{2}(\mathbf{i,j})$, and show that \eqref{eqn.relating.c[pi].C[pi]} still holds in this case. 
	Then, following similar steps to those in the proof of Proposition \ref{prop.fluc} and Lemma \ref{lemma.fluc.mobius} and letting $U^{*}_{N,{i}_{1}} U^{}_{N,{i}_{2}} = \frac{1}{\sqrt{N}} W_{N,1}^{*} H^{}_{N} W_{N,2}$, we conclude 
	\begin{align*}
	\sum_{\substack{
			\pi \in P_{\text{even}}(\pm {2m})	\\
			\pi \leq \theta }} 
	N^{m}
	\mathfrak{c}_{2} \left[ \pi \right]	
	\mu(\pi, \theta)
	=&
	\left\{\begin{array}{cl}
	1  + 	O\left( N^{-1/2} \right)& 			
	\text{if  } \theta \text{ is a pairing partition satisfying (\ref{prop.minimal.special.partitions.1})} \\
	& \text{from Proposition \ref{prop.minimal.special.partitions}}, \\
	O\left( N^{-1/2}\right)	& 
	\text{otherwise.} 
	\end{array}\right.
	\end{align*}
	\item One can replace the Discrete Fourier transform $H_{N}$ in the unitary random matrix ensemble  $\{W_{N},  H_{N}W_{N} /\sqrt{N}, X_{N}H_{N}W_{N}/\sqrt{N} \}_{N=1}^{\infty}$  by any Hadamard matrix $H'_{N}$ and still get an asymptotically liberating ensemble, see \cite{anderson2014asymptotically}. 
	Moreover, key equations in this paper involving $H_{N}$ still holds when we replace $H_{N}$ by a general Hadamard matrix $H'_{N}$, for instance, \eqref{eqn.DFT.number.blocks}, \eqref{eqn.shifted.DFT.injective.graph.sums}, and \eqref{eqn.graph.sums.h.injective.vs.non-injective}. 
	Thus, to determine the corresponding induced fluctuations moments, one needs to compute graph sums of $H'_{N}$ and obtain similar results to those from Section \ref{subsec.graph.sums.dft}. 
	However, the results for graph sums of $H_{N}$ were possible thanks to the reciprocity theorem for generalized Gauss sums and it is not obvious what could be used for a general $H'_{N}$. 	 
	\item Although Proposition \ref{prop.asymptotic.centering} and Corollary \ref{corollary.asymptotic.centering} give equivalent conditions only for point-wise uniform boundedness, similar statements and proofs provide us with corresponding conditions for the point-wise convergence of a sequence of multi-linear functionals. 
	These conditions together with bounds for graph sums can be exploited to study higher order moments.
	In particular, the relations \eqref{cumm.lemma.4} and \eqref{eqn.fluct.moments.c2.chi.chi.1} can be used to determine the higher order moments induced by Haar-unitary and Haar-orthogonal via the Weingarten Calculus from \cite{MR1959915} and \cite{MR2217291}. 
\end{enumerate}

\bibliographystyle{acm}
\bibliography{bibliography}

\end{document}